\DeclareMathOperator{\R}{\mathbb{R}}
\DeclareMathOperator{\Z}{\mathbb{Z}}
\DeclareMathOperator{\A}{\mathbb A}
\DeclareMathOperator{\PSL}{PSL}
\DeclareMathOperator{\SL}{SL}
\DeclareMathOperator{\AdS}{AdS}
\DeclareMathOperator{\hull}{hull}
\DeclareMathOperator{\supp}{supp}
\DeclareMathOperator{\TAff}{TAff}
\renewcommand{\P}{\mathbb{P}}
\DeclareMathOperator{\SO}{\mathbb{SO}}
\DeclareMathOperator{\RP}{\mathbb{R}\textrm{P}}
\DeclareMathOperator{\HH}{\mathbb{H}}
\newcommand{\fo}{\mathfrak o}
\newcommand{\hk}{\widehat{k}}
\DeclareMathOperator{\CoMin}{CoMin}
\newcommand{\N}{\mathbb{N}}
\renewcommand{\Z}{\mathbb{Z}}
\renewcommand{\R}{\mathbb{R}}
\newcommand{\C}{\mathbb{C}}
\renewcommand{\P}{\mathbb{P}}
\newcommand{\inverse}{^{-1}}
\newcommand{\coaff}{\left(\mathbb A^3\right)^*}
\newcommand{\cM}{\mathcal M}
\newcommand{\cN}{\mathcal N}
\newcommand{\PML}{\mathcal{PML}}
\newcommand{\ML}{\mathcal{ML}}
\DeclareMathOperator{\GL}{GL}
\DeclareMathOperator{\Aff}{Aff}
\DeclareMathOperator{\im}{im}
\DeclareMathOperator{\Hit}{Hit}
\DeclareMathOperator{\diam}{diam}
\DeclareMathOperator{\hol}{hol}
\newcommand{\tlambda}{\widetilde \lambda}
\newcommand{\hlambda}{\widehat \lambda}
\newcommand{\tgamma}{\widetilde \gamma}
\theoremstyle{definition}
\newtheorem{theorem}{Theorem}[section]
\newtheorem{maintheorem}{Theorem}
\newtheorem{maincorollary}[maintheorem]{Corollary}
\newtheorem{definition}[theorem]{Definition}
\newtheorem{lemma}[theorem]{Lemma}
\newtheorem{corollary}[theorem]{Corollary}
\newtheorem{proposition}[theorem]{Proposition}
\newtheorem{remark}[theorem]{Remark}
\newtheorem{claim}[theorem]{Claim}
\newtheorem{example}[theorem]{Example}
\newcommand{\Addresses}{{
  \bigskip
  \footnotesize
  \noindent M. D. Bobb, \textsc{Max Planck Institute for Mathematics in the Sciences, Leipzig}\par\nopagebreak
  \textit{E-mail address}: \texttt{mbobbmath@gmail.com}
  
  \noindent James Farre, \textsc{Max Planck Institute for Mathematics in the Sciences, Leipzig}\par\nopagebreak
  \textit{E-mail address}: \texttt{james.farre@mis.mpg.de}
  }}
\title{Affine laminations and coaffine representations}
\author{M. D. Bobb and James Farre}
\date{}
\begin{document}

\maketitle

\begin{abstract}
    We study surface subgroups of $\SL(4,\R)$ acting convex cocompactly on $\RP^3$ with image in the coaffine group.  
    The boundary of the convex core is stratified, and the one dimensional strata form a pair of \emph{bending laminations}.
    We show that the bending data on each component consist of a convex $\RP^2$ structure and an \emph{affine measured lamination} depending on the underlying convex projective structure on $S$ with (Hitchin) holonomy $\rho: \pi_1S \to \SL(3,\R)$.
    We study the space $\ML^\rho(S)$ of bending data compatible with $\rho$ and prove that its projectivization is a sphere of dimension $6g-7$. \end{abstract}

\section{Introduction}
%\james{Alternate title: Affine laminations and coaffine representations}
In this paper we initiate a systematic study of surface groups acting faithfully and convex cocompactly on $\RP^3$.
Every such action is \emph{projective Anosov}; see e.g.,  \cite{Labourie:Anosov,GW12, BPS19, KLP17, GGKW17,DGK}.
This three-dimensional setting provides a unique opportunity to expand our knowledge of an extremely rich class of Anosov representations and their degenerating behaviors. 

Let $\Aff^*(3,\R)< \SL(4,\R)$ be the stabilizer of a point in $\R^4$.  
We restrict our investigation to the space
\[\{\eta:\pi_1 S\to \Aff^*(3,\R) \text{ faithful and convex cocompact}\},\]
up to $\Aff^*(3,\R)$-conjugation. 
Necessarily, the linear part $L(\eta) : \pi_1S \to \SL(3,\R)$ is Hitchin, i.e., $L(\eta)$ is the holonomy representation of a properly convex $\RP^2$ structure on $S$ \cite{Choi-Goldman,Hitchin}.
Every such irreducible $\eta$ has a minimal convex domain of discontinuity in $\RP^3$ on which it acts with compact quotient homeomorphic to $S\times [0,1]$. 
%Since the recent extraordinarily successful development of Anosov representations, e.g.,  \cite{Labourie:Anosov,GW12, BPS19, KLP17, GGKW17}, the community has gained a huge amount of information about large open families of discrete and faithful representations of surface groups that were previously inaccessible. (Projective) Anosov representations of surface groups that satisfy a topological condition automatically act convex cocompactly on the real projective space (see \cite{DGK}). %The smallest interesting setting for such representations are those acting on $\RP^3$; all of the classical convex cocompact representations are examples of such. 

%Building upon the arsenal of techniques developed by the community over the last half-century, we contribute a first step towards unifying the classical cases and developing a general theory to encompass their many facets.
We work to understand these actions by way of their minimal domains of discontinuity/convex cores and the corresponding boundary data (see Section \ref{sec: coaffine geometry}).
There are  bending (geodesic) laminations on the boundary of the convex core, and it is easy to obtain natural convex projective structures on the boundary components, each with holonomy $L(\eta)$. However, the \emph{bending data} transverse to the laminations is more subtle. In developing the technology to understand these data, we reveal a completely new phenomenon. 

The key insight of this paper is that the bending data take values in a flat line bundle over the bending lamination. The holonomy of this bundle encodes delicate dynamical features of the lamination and representation $L(\eta)$.
We summarize our main results informally here.% which is trivial in the quadratic cases, but is non-trivial in the general case. 
%From this observation, we are able to investigate the nature of the bending data and arrive at a satisfying generalization, which we summarize informally here.
\begin{maintheorem}\label{mainthm: summary}
    The bending data for a $3$-dimensional convex cocompact coaffine representation is an affine measured lamination on a convex projective surface.
    \smallskip
    
    That is, given a convex projective structure on a surface and a compatibly constructed affine measured lamination, there is exactly one conjugacy class of coaffine representations so that this is the data at the boundary of one component of the convex core. 
\end{maintheorem}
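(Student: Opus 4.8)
The plan is to prove the theorem as a parametrization result, with the correspondence between boundary data and representations realized by an explicit \emph{bending} construction, and the uniqueness obtained from a rigidity statement for coaffine representations with a common linear part. The starting point is an algebraic reformulation: writing a coaffine representation in block form
\[
\eta(\gamma)=\begin{pmatrix}\rho(\gamma) & 0\\ c(\gamma) & 1\end{pmatrix},\qquad \rho=L(\eta),
\]
the homomorphism property of $\eta$ is equivalent to $c$ being a cocycle in $Z^1(\pi_1 S; V_\rho)$, where $V\cong(\R^3)^*$ carries the natural $\rho$--twisted action; conjugating $\eta$ by the unipotent radical $V$ of $\Aff^*(3,\R)$ changes $c$ by a coboundary, and conjugating by $\diag(sI_3,s^{-3})$ rescales $c$ by a positive factor. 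Since a Hitchin $\rho$ is irreducible, $H^0(\pi_1 S;V_\rho)=H^2(\pi_1 S;V_\rho)=0$, so $\dim H^1(\pi_1 S;V_\rho)=-3\chi(S)=6g-6$, and the conjugacy classes of coaffine representations with linear part $\rho$ are parametrized by $H^1(\pi_1 S;V_\rho)$ up to positive scaling. Modulo the geometry set up earlier, the theorem then reduces to: (a) extracting from a convex cocompact coaffine $\eta$ a well-defined affine measured lamination compatible with $\rho$; (b) recovering, from any such affine measured lamination, a convex cocompact coaffine $\eta$; and (c) showing the data on a single boundary component already pins down the class $[c]$, hence $\eta$ up to conjugacy.

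For (a), I would use the structure of the minimal domain of discontinuity --- whose quotient is $S\times[0,1]$ with convex projective boundary surfaces of holonomy $\rho$ --- together with the stratification of its boundary. The one--dimensional strata form geodesic laminations $\lambda_\pm$; across a leaf $\ell$ the two adjacent developed totally geodesic plaques in $\RP^3$ differ by a coaffine transformation fixing the developed image of $\ell$ pointwise. Such a transformation cannot alter the linear part, so it lies in the unipotent radical, and fixing the developed leaf forces it to have the form $\begin{pmatrix}I & 0\\ t\,v_\ell & 1\end{pmatrix}$, where $v_\ell\in(\R^3)^*$ is the functional --- defined up to scale --- cutting out the chord of the convex domain $\Omega$ along which the leaf develops. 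The assignment $\ell\mapsto\R\,v_\ell$ is the $\rho$--equivariant flat line bundle over the lamination, and recording the weights $t$ along transversals defines a transverse measure valued in it; finite additivity, closedness and $\pi_1 S$--invariance --- precisely what makes this an \emph{affine} measured lamination compatible with $\rho$ --- follow from the local product structure of $\lambda_\pm$, $\rho$--equivariance, and the fact that the monodromy of this line bundle around a closed leaf $\gamma$ is multiplication by a definite scalar built from the eigenvalues of $\rho(\gamma)$.

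For (b), given $(\rho,\mu)$ with $\mu$ a compatible affine measured lamination on the convex projective surface $(S,\rho)$, I would construct a bending cocycle in the style of Epstein--Marden and Bonahon's transverse cocycles: to each ordered pair of plaques of $\widetilde S\setminus\widetilde\lambda$ one assigns the product of the elementary bendings $\begin{pmatrix}I & 0\\ v_\ell & 1\end{pmatrix}$ over the separating leaves, integrated against $\mu$. Because the unipotent radical is abelian this product is the exponential of the $(\R^3)^*$--valued integral $\int v_\ell\,d\mu$ along a transversal, so convergence on compact transversals is immediate from finiteness of $\mu$; the cocycle relation together with $\rho$--equivariance produces a representation $\eta\colon\pi_1 S\to\Aff^*(3,\R)$ with $L(\eta)=\rho$ and prescribed class $[c]$, and bending the developing map of the convex projective structure by this cocycle yields an $\eta$--equivariant pleated surface in $\RP^3$. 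One then shows this surface embeds onto the boundary of a properly convex domain on which $\eta$ acts cocompactly, so $\eta$ is convex cocompact (hence projective Anosov), and by construction the boundary data of one component of its convex core is exactly $(\rho,\mu)$. For the uniqueness in (c): if $\eta'$ is convex cocompact coaffine with $(\rho,\mu)$ the data on one boundary component, then the associated pleated boundary surface has holonomy $\eta'$ up to conjugacy, since the boundary inclusion into the convex core $\cong S\times[0,1]$ is a homotopy equivalence; but that surface and its holonomy are reconstructed from $(\rho,\mu)$ via the same bending cocycle, so $\eta'$ has linear part $\rho$ and cohomology class $[c]$, whence $\eta'\sim\eta$ by the algebraic reformulation.

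The step I expect to be the main obstacle is the convex cocompactness assertion in (b). Unlike the quasi-Fuchsian analogue, there is no ``Fuchsian'' basepoint to perturb from: the natural undeformed representation $\rho\oplus 1$ is reducible --- it fixes both a point and a hyperplane of $\RP^3$ --- and fails to be convex cocompact, so openness/closedness and stability arguments do not directly apply. Instead the bent developing map and the domain it bounds must be controlled directly, for instance by a ping-pong/thickening argument in coaffine geometry that uses positivity of the Hitchin representation $\rho$ to guarantee that the elementary bendings all push the surface consistently and keep its image properly convex --- mirroring the fact that bending a Fuchsian group along a measured lamination preserves discreteness. A useful reduction is that $\diag(sI_3,s^{-3})$--conjugacy makes convex cocompactness invariant along each ray $\{t\mu : t>0\}$, so it suffices to verify it for a single representative of each ray; combined with (a)--(c) this also identifies compatible affine measured laminations with $H^1(\pi_1 S;V_\rho)$, so that the projectivized space is the sphere of dimension $6g-7$ claimed in the abstract. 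A secondary, bookkeeping-level difficulty is setting up the flat line bundle and the compatibility condition precisely enough for that identification, which I would handle with a $\rho$--twisted version of Bonahon's transverse-cocycle calculus.
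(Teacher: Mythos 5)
Your overall architecture --- parametrize coaffine representations with linear part $\rho$ by $H^1(\pi_1 S;V_\rho)$, extract an affine measured lamination from the convex core boundary, reconstruct $\eta$ by a bending cocycle, and use $\diag(sI_3,s^{-3})$-conjugation to handle scaling --- does match the shape of the paper's proof, and the cohomological dimension count $6g-6$ (hence a $(6g-7)$-sphere after projectivizing) is correct. But you have misidentified where the difficulty lies, and the step you treat as routine is actually the crux.

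On the obstacle you flag: you argue that convex cocompactness in (b) is the main difficulty because the reducible basepoint $\rho\oplus 1$ ``fails to be convex cocompact'' so that openness and stability arguments do not apply, and you propose a ping-pong or thickening argument in coaffine geometry. This is not needed, and the reasoning is off: $\eta_0=\rho\oplus 1$ is \emph{projective Anosov} (its limit curve is $\partial\Omega$ sitting in the invariant hyperplane), and the Anosov condition is open; moreover, conjugation by $\diag(s^{-1/2},s^{-1/2},s^{-1/2},s^{3/2})$ carries $\eta_{t\varphi}$ to $\eta_{st\varphi}$, so Anosov-ness at small $t$ propagates to all $t$. The paper's Lemmas~\ref{lem: eta is Anosov} and~\ref{lemma: def is cccca} settle this in a few lines. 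You correctly observe the scaling symmetry but do not connect it to openness at the reducible basepoint.

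The genuine gap is in your step (a), and it infects (b) and (c). Your extraction map takes ``the two adjacent developed plaques'' across a leaf $\ell$ and records a weight $t$ from the unipotent $\begin{pmatrix}I & 0\\ t\,v_\ell & 1\end{pmatrix}$ relating them. This only makes sense when $\ell$ is \emph{isolated}, so that adjacent plaques exist; for the generic bending lamination (minimal, with Cantor transversals) there is no ``weight across a single leaf.'' More fundamentally, the line $\R\,v_\ell=E_2^\rho|_\ell$ lives in a bundle that is flat \emph{along} leaves but carries no a priori flat structure \emph{transverse} to them; without a flat transverse connection one cannot compare $v_\ell$ with $v_{\ell'}$ across a Cantor set of leaves, cannot normalize the weights, and cannot integrate $v_\ell$ against a measure to form a cocycle. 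Your phrases ``local product structure,'' ``finite additivity,'' and ``compatibly constructed'' are standing in for exactly the content the paper has to supply: the slithering connection (Theorem~\ref{thm:flat connection over lamination}), which makes $\overline{E}_2^\rho|_\lambda$ flat; the equivariant-measure machinery of Sections~\ref{sec:equivariant measures}--\ref{sec: measures and cocycles}, which constructs the bundle-valued measure out of the macroscopic bending cocycle and identifies it with a cohomology class; and the dynamical constraints of Theorem~\ref{thm: dynamical characterize bending}, which determine \emph{which} laminations can carry such a measure (your ``compatibility'' is a real constraint, not a bookkeeping condition). As written, your (a)--(c) give the right picture on Ungemach-type examples with isolated leaves but do not define a well-posed correspondence in the general case.
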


\medskip

For context, there are three classical settings of convex cocompact surface groups acting on homogeneous (pseudo)-Riemannian $3$-manifolds, all of which are sub-geometries of $(\SL(4,\R), \RP^3)$:
\begin{enumerate}
    \item  hyperbolic space, $\mathbb H^3$ by isometries in $\SO_\circ(3,1)\cong \PSL(2,\C)$
    %\item  Minkowski space, $\Min^3$ by isometries in $\SO^\circ (2,1)\ltimes \R^{2,1}$
    \item  co-Minkowski space, $\CoMin^3$ by isometries in $\SO_\circ(2,1)\ltimes (\R^{3})^*$
    \item  anti-de Sitter space, $\AdS^3$ by isometries in $\SO_\circ (2,2)\cong \left(\SL(2,\R)\times \SL(2,\R)\right)/(\Z/2\Z)$
\end{enumerate}

 The first of the classical settings sheds light on the topology and geometry of hyperbolic three manifolds \cite{Thurston:notes, Thurston:bulletin_3-manifolds}, while the last links more closely to Teichm\"uller theory by way of Mess' beautiful proof of Thurston's Earthquake Theorem \cite{Mess, Thurston:EQ_universal,NotesOnMess,DS:EQ}. The middle sibling, the co-Minkowski space, may be interpreted as an infinitesimal version of the others; Danciger describes how suitable conjugacy limits of shrinking convex cocompact $\mathbb H^3$ or $\AdS^3$ surface group actions limit to convex cocompact $\CoMin^3$ actions \cite{Danciger:transition}.
Every convex cocompact co-Minkowski action is, in particular, one of the coaffine actions studied in this paper.

%Away from the Fuchsian locus in all three of these classical cases, t
%here is a 
Away from the Fuchsian locus, the minimal convex domain of discontinuity for any of the three classical geometries listed above has open interior and is bounded by two copies of $\widetilde S$ (the universal cover of the underlying surface). %The quotient by the surface group action is called the \emph{convex core}. %From a short argument in convex geometry, one concludes that e
Each of these two topological planes is naturally stratified into a geodesic lamination and its complement. 
The path-metrics on these boundary components define hyperbolic structures on $S$, and the laminations are endowed with the data of \textit{transverse measures}. The measures record the amount by which the supporting hyperplanes tilt or bend as one passes from stratum to stratum. 

A motivating example from Ungemach's thesis \cite{Ungemach:thesis} (see \S\ref{appendix}) shows that the bending laminations in the coaffine setting may fail to carry \emph{any} transverse measure of full support. Thus, the concept of an affine measure is an absolutely necessary novel development.

In the three classical cases, 
a hyperbolic metric and suitable measured lamination identify a unique conjugacy class of convex cocompact action.
Other data are also known to determine uniquely a conjugacy class of convex cocompact actions in some cases.  For example, Thurston's Bending Conjecture, recently resolved by Dular--Schlenker \cite{DS:bending_conjecture}, states that pairs of compatible bending measures parameterize convex cocompact surface group actions on $\HH^3$.  By work of Bonahon, certain pairs of measured laminations related to Kerckhoff's \emph{lines of minima} parameterize $\CoMin^3$ convex cocompact surface group actions \cite{Bonahon:bending,Kerckhoff:lines}; see also \cite{BF:QFcoMin}. 
%  (see also \cite{BO:bending,Bonahon:bending,Series:bending} for previous partial results and \cite{MS:AdSbending} for related work).

\medskip

In forthcoming work, we analyze the convex core boundary of general deformations of Hitchin representations, not necessarily into the coaffine group.
The theory and techniques developed here constitute a substantial part of the necessary toolkit for the general case.

\subsection{Main results}

For convenience, choose a hyperbolic metric $X$ on $S$ with geodesic flow denoted by $g_t: T^1X \to T^1X$ and let $\rho: \pi_1S\to \SL(3,\R)$ be a Hitchin representation, and let $\eta: \pi_1S \to \Aff^*(3,\R)$ be an irreducible coaffine representation with linear part $L(\eta) = \rho$.
Let $\Xi_\eta \subset \RP^3$ be the complement of the limit set in the convex hull of the Anosov limit curve $\xi_\eta$. 
We are interested in the following questions:
\begin{enumerate}
    \item Which geodesic laminations can appear as the bending locus on a component of $\partial \Xi_\eta$? %for an irreducible coaffine representation $\eta$ with linear part $\rho$?
    \item What are the topological and measure theoretic models for the geometric bending data?
\end{enumerate}

In the classical setting, every geodesic lamination on $S$ without infinite, isolated leaves admits (up to scale) a simplex of transverse measures along which any hyperbolic structure may be developed onto a convex core boundary component.
The bending ``angle'' makes sense in these cases because of the existence of a pseudo-Riemannian metric induced by the invariant quadratic form.

Without the rigidity granted by the metric, we must examine what structure remains.
Let $F^\rho \to X$ be the flat $(\R^3)^*$-bundle with holonomy $\rho$ acting naturally on $(\R^3)^*$ and choose a norm $\| \cdot \|$ on $F^\rho$.
Consider also the pullback $E^\rho \to T^1X$ of $F^\rho$ along the tangent projection $\pi: T^1X \to X$.
The Anosov property satisfied by $\rho$ furnishes $E^\rho$ with a dynamical splitting into line bundles
\[E^\rho = E^\rho_1\oplus E^\rho_2\oplus E^\rho_3\]
that are invariant under the geodesic flow.
In particular, each of these line bundles is equipped with a flat connection along each leaf of the geodesic foliation of $T^1X$, but if $\rho$ is Zariski-dense, then $E^\rho_i\to T^1 X$ is not flat. 
%The line in $E^\rho_2$ over a point $p \in T^1X$ may be identified with the pointwise stabilizer in $\Aff^*(3,\R)$ of the geodesic through $p$.

The set of planes in coaffine space form an affine space, so their differences are defined.
The geometry of the Anosov limit maps $\xi:\partial \pi_1S \to \RP^2$ and $\xi^*: \partial \pi_1S \to \left(\RP^2\right)^*$ implies the following: a pair of supporting hyperplanes to  $\partial \Xi_\eta$ which nearly intersect in a leaf $\ell$ of the bending lamination have difference which is close to $E^\rho_2|_{\ell}$ (see  \S\ref{sec: coaffine geometry}).

%One might first seek to understand bending along simple closed curves. 
In contrast to the classical cases, it is generically not possible to bend along a closed curve. Indeed, the datum for bending along a simple closed curve $\gamma$ transforms by the $E^\rho_2$ holonomy of $\gamma$ and must therefore be zero (unless the holonomy is $1$). In this case, the holonomy is the middle eigenvalue of $\rho(\gamma)$ acting on $(\R^3)^*$. 

The exponential growth of holonomy along a leaf gives a dynamical characterization for when a minimal lamination can be the bending locus on a component of $\partial \Xi_\eta$.
The theorem makes precise the notion that, asymptotically, the middle eigenvalue along a leaf of the lamination must be $1$ or smaller in both directions.
\begin{theorem}\label{thm: dynamical characterize bending}
    Let $\lambda\subset X$ be a minimal geodesic lamination and let $\hlambda\subset T^1X$ be its set of tangents.
    Then $\lambda$ is the bending locus for a coaffine representation $\eta$ with linear part $\rho$ if and only if there is a $p\in \hlambda$ such that 
    \[\limsup_{t\to \infty} \frac{1}{t} \log  {\left\|\left(g_{[0,t]}\right)_*e_2 \right\|_{g_tp}}   = 0 \text{\hspace{.5cm} and \hspace{.5cm}\ } \limsup_{t\to -\infty} \frac{1}{t} \log  {\left\|\left(g_{[0,t]}\right)_*e_2 \right\|_{g_tp}}   = 0,\]
    where $e_2 \in E^\rho_2|_p$ and $\left(g_{[0,t]}\right)_*$ is the parallel transport for the flat connection along $g_t$-orbits on $E^\rho_2$.
\end{theorem}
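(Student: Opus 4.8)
The plan is to read the statement as a fact about the ergodic theory of the additive cocycle
\[ c(p,t) := \log\bigl\| \bigl(g_{[0,t]}\bigr)_* e_2 \bigr\|_{g_t p}, \qquad p \in \hlambda,\ t \in \R, \]
(for a fixed unit $e_2 \in E^\rho_2|_p$) over the flow $(\hlambda, g_t)$, which is \emph{minimal} by hypothesis. First I would invoke the correspondence recorded in Section~\ref{sec: coaffine geometry} and Theorem~\ref{mainthm: summary}: $\lambda$ is the bending locus of a coaffine $\eta$ with $L(\eta)=\rho$ if and only if $\hlambda$ admits an affine measured lamination of full support compatible with $\rho$. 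Unwinding the definition, such a datum is a transverse measure $\mu$ on $\hlambda$ valued in the line bundle $E^\rho_2$ with its leafwise-flat connection, positive on every subarc crossing $\lambda$; writing $\mu$ locally as $\sigma\cdot\theta$ with $\sigma$ a norm-one section of $E^\rho_2$ and $\theta\ge 0$ a scalar Radon measure, invariance of $\mu$ under the transverse holonomy — which for a geodesic lamination \emph{is} the flow $g_t$, acting on both $\hlambda$ and $E^\rho_2$ — forces $\theta$ to be \emph{$c$-conformal}, i.e. $(g_t)_*\theta = e^{-c(\cdot,t)}\theta$ (the sign is immaterial). Since $\hlambda$ is minimal, the support of any nonzero $c$-conformal measure is closed and flow-invariant, hence all of $\hlambda$, so full support is automatic. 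Thus the theorem reduces to: \emph{$\hlambda$ carries a nonzero $c$-conformal measure $\iff$ there is a $p$ with $\limsup_{t\to+\infty}\tfrac1t c(p,t) = \limsup_{t\to-\infty}\tfrac1t c(p,t) = 0$}.

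For the implication ``conformal measure $\Rightarrow$ good point'', I would start from a $c$-conformal probability measure $\theta$, so that $\int e^{-c(\cdot,t)}\,d\theta\equiv 1$. The standard inequality $\liminf_{t}\tfrac1t\int_0^t \phi(g_sq)\,ds \ge \inf_m \int\phi\,dm$ (valid for every $q$, since empirical measures are $g_t$-invariant), applied with $\phi=c(\cdot,1)$ and with its time-reversal and combined with a compactness argument, shows that $c(p,t)$ cannot grow linearly with a single sign uniformly in $p$; were it to, $\int e^{-c(\cdot,t)}\,d\theta$ would tend to $0$ or $\infty$, contradicting its being $\equiv 1$. Hence there is a $g_t$-invariant probability measure $m$ with $\int c(\cdot,1)\,dm = 0$. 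Taking an ergodic component of $m$ with vanishing $c$-average — or, when there is none, concatenating long orbit segments that equidistribute alternately toward ergodic measures of strictly positive and strictly negative $c$-average, which is possible \emph{precisely because $\hlambda$ is minimal} — and applying Birkhoff's theorem forwards and backwards produces a single point $p$ with $\tfrac1t c(p,t)\to 0$ as $t\to\pm\infty$. (When $\lambda$ is a closed leaf $\gamma$ this degenerates to the assertion that the middle eigenvalue of $\rho(\gamma)$ acting on $(\R^3)^*$ equals $1$, recovering the remark in the introduction.)

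For the converse, given $p$ with both asymptotic slopes $0$, I would build a nonzero $c$-conformal measure as a weak-$*$ subsequential limit of the normalized, $e^{-c}$-weighted empirical measures $\theta_T := Z_T^{-1}\int_0^T e^{-c(p,s)}\,\delta_{g_s p}\,ds$, where $Z_T := \int_0^T e^{-c(p,s)}\,ds$; the cocycle identity $c(p,\sigma-t) = c(p,\sigma) + c(g_\sigma p,-t)$ makes every such limit $c$-conformal. The two limsup hypotheses enter exactly in choosing a sequence $T=T_n\to\infty$ along which $Z_{T_n}\to\infty$ and $e^{-c(p,T_n)} = o(Z_{T_n})$, so that the boundary corrections vanish and the limit is an honest (hence full-support) probability measure rather than zero. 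Feeding the associated affine measured lamination into the bending construction of Section~\ref{sec: coaffine geometry} then yields a convex cocompact coaffine $\eta$ with $L(\eta)=\rho$ whose bending locus is all of $\lambda$.

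I expect the construction of the conformal measure from a single recurrent leaf to be the crux. The flow $(\hlambda,g_t)$ has zero entropy and need not be uniquely ergodic; the cocycle $c$ is in general neither bounded nor a coboundary — indeed, by Ungemach's example in \S\ref{appendix}, $\theta$ may be mutually singular with Lebesgue, which is precisely the statement that $E^\rho_2$ is genuinely non-flat over $\lambda$ — and the hypothesis controls only the \emph{upper} asymptotic slopes, so $c(p,s)$ may oscillate, or drift to $\pm\infty$ sublinearly, along the chosen orbit, making the weights $e^{-c(p,s)}$ spike and $Z_{T_n}$ delicate to control. Managing the growth of $Z_{T_n}$ and the tightness of $\{\theta_{T_n}\}$ (and, dually, extracting a good point from $\theta$ in the forward direction) will, I expect, require the finer structure that the Hitchin hypothesis on $\rho$ provides: H\"older regularity of the Anosov splitting and its parallel transport along the geodesic flow, which tames the fluctuations of $c$, together with the constraints from the companion cocycles of $E^\rho_1$ and $E^\rho_3$, whose sum with $c$ vanishes and which dominate $c$ at a uniform exponential rate. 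The reductions to conformal measures and the ergodic-theoretic arguments above are, by contrast, comparatively soft.
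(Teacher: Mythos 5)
Your overall strategy is the one the paper takes: translate the bending question into the existence of a flow-equivariant (in your language, $c$-conformal) measure on $\hlambda$ via the correspondence of Sections~\ref{sec: coaffine geometry} and~\ref{sec: measures and cocycles}, and then reduce the theorem to a purely dynamical existence statement for such a measure, which is Corollary~\ref{cor: characterization of bendable laminations}. For the sufficiency direction your weighted Krylov--Bogoliubov scheme $\theta_T = Z_T^{-1}\int_0^T e^{-c(p,s)}\delta_{g_sp}\,ds$ is, up to the harmless sign/time-reversal choice, exactly the paper's $\nu_T$ in \eqref{eqn: nuT}; and you correctly identify that the whole content of the limsup hypothesis is controlling the boundary term $\int_{T_n-s}^{T_n}\!/\!\int_0^{T_n}$, which is handled in the paper by the elementary Lemma~\ref{lem: subexponential}. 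You overestimate the delicacy here: no use is made of H\"older regularity of the Anosov splitting, nor of the companion cocycles from $E^\rho_1$ and $E^\rho_3$; continuity of $G$ and the soft boundary-term lemma suffice.

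The genuine gap is in your necessity argument. You aim to produce a single point $p$ with $\tfrac1t c(p,t)\to 0$ as $t\to\pm\infty$, first by an ergodic component of a $g_t$-invariant $m$ with $\int c(\cdot,1)\,dm=0$, and, ``when there is none,'' by concatenating long orbit segments equidistributing alternately toward ergodic measures with $c$-averages of opposite sign. But the concatenation cannot give a full limit: if the orbit spends long stretches near a positive-average measure and then long stretches near a negative one, the running averages $\tfrac1t c(p,t)$ oscillate; one can arrange the stretch lengths so that the limsup (or liminf) is pinned at $0$, but not so that the sequence converges, so your stated conclusion is too strong. The paper avoids this entirely: necessity is proved by a Borel--Cantelli argument (Lemma~\ref{lem: generic subexponential}) that shows $\nu$-a.e. $p$ has $\Lambda^+(p)\le 0$ and $\Lambda^+(\overline p)\le 0$ directly from the relation $\nu(T^1X) = \int G(\cdot,T)\,d\nu$, with no appeal to Birkhoff's theorem, ergodic decomposition, or orbit concatenation. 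Your Baire/concatenation idea does appear in the paper, but only in the proof of Theorem~\ref{thm: minimal laminations support measures}, where it is used to verify the hypothesis of Proposition~\ref{prop: constructing equivariant measures} — i.e., to show $0\in[\Lambda^-(p),\Lambda^+(p)]$ in both directions, which is the correct (one-sided) conclusion of such an argument — not to deliver a full limit, and not as part of the necessity half of Theorem~\ref{thm: dynamical characterize bending}.
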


Theorem \ref{thm: dynamical characterize bending} does not depend on our choice of reference parameterization of the geodesic flow.
The conditions of the theorem are automatically satisfied when $\rho$ is Fuchsian, as, in this case, $E_2^\rho$ is a trivial flat bundle, hence there is no (exponential) growth along any $g_t$-orbit.

The dynamical time reversal symmetry of a non-orientable, minimal lamination always guarantees the existence of a point satisfying the hypotheses of Theorem \ref{thm: dynamical characterize bending}.
    
\begin{theorem}\label{thm: non-orientable minimal intro}
    For every Hitchin representation $\rho$ and every minimal, non-orientable geodesic lamination $\lambda$, there is an irreducible coaffine representation $\eta$ with linear part $\rho$ such that $\lambda$ is the bending locus on a component of $\partial \Xi_\eta$.
\end{theorem}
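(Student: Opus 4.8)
The plan is to combine the dynamical criterion of Theorem~\ref{thm: dynamical characterize bending} with the time-reversing involution $\iota\colon T^1X\to T^1X$, $v\mapsto -v$, which satisfies $g_t\circ\iota=\iota\circ g_{-t}$. Since $\lambda$ consists of \emph{unoriented} geodesics, $\iota$ preserves $\hlambda$ for every $\lambda$; the role of non-orientability is that it forces $\hlambda$ to be \emph{connected}, so that $\iota$ is a fixed-point-free involution of $\hlambda$ rather than an exchange of its two orientation sheets. I would first record that $g_t|_{\hlambda}$ is then \emph{minimal}: if $K\subseteq\hlambda$ is a minimal set for the flow, then $K\cup\iota K$ is closed, flow-invariant and $\iota$-invariant, hence is the full preimage $\hlambda$ of the minimal lamination $\lambda$; as $\hlambda$ is connected, $K\cap\iota K\neq\varnothing$, and this intersection, being closed, flow-invariant and contained in the minimal set $K$, must equal $K$, whence $K=\iota K=\hlambda$. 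In particular every $g_t|_{\hlambda}$-invariant probability has full support.

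Next I would encode the growth of parallel transport as an additive cocycle. Normalizing $e_2$ to unit norm, put $c(p,t):=\log\big\|\left(g_{[0,t]}\right)_*e_2\big\|_{g_tp}$; this is a continuous real-valued cocycle over $g_t|_{\hlambda}$ with $c(p,t)=\int_0^t\phi(g_sp)\,ds$ for a H\"older function $\phi$ on $\hlambda$, and Theorem~\ref{thm: dynamical characterize bending} asks us to produce $p\in\hlambda$ with $\limsup_{t\to+\infty}\tfrac1t c(p,t)=0=\limsup_{t\to-\infty}\tfrac1t c(p,t)$. The key point is the flip symmetry $\phi\circ\iota=-\phi$: under $\iota$ the dynamical splitting satisfies $E^\rho_2|_{-v}=E^\rho_2|_v$ (with $E^\rho_1$ and $E^\rho_3$ interchanged), the norm on $E^\rho$ is $\iota$-invariant because it is pulled back from $X$, and parallel transport of $E^\rho_2$ along the $g_t$-orbit of $\iota p$ over $[0,t]$ is, under this identification, parallel transport over $[-t,0]$ along the orbit of $p$; hence $c(\iota p,t)=c(p,-t)$, which differentiates to $\phi\circ\iota=-\phi$. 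Consequently $\int\phi\,dm=-\int\phi\,d(\iota_*m)$ for every $g_t|_{\hlambda}$-invariant probability $m$, so the set of averages $\{\int\phi\,dm\}$ is symmetric about $0$; writing $\bar\phi:=\sup_m\int\phi\,dm$ we obtain $\inf_m\int\phi\,dm=-\bar\phi\le 0\le\bar\phi$.

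If $\bar\phi=0$, every invariant measure has zero $\phi$-average; since weak-$*$ limits of the empirical measures $\tfrac1t\int_0^t\delta_{g_sp}\,ds$ are invariant, $\tfrac1t c(p,t)\to 0$ as $t\to\pm\infty$ for \emph{every} $p\in\hlambda$, so any point works (this covers Fuchsian $\rho$, where $E^\rho_2$ is flat and $c$ is bounded). When $\bar\phi>0$ I would fix an ergodic $\mu_+$ with $\int\phi\,d\mu_+=\bar\phi$ (the affine functional $m\mapsto\int\phi\,dm$ attains its maximum at an extreme point of the invariant probabilities) and set $\mu_-:=\iota_*\mu_+$, which is ergodic with $\int\phi\,d\mu_-=-\bar\phi$. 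By minimality $\supp\mu_\pm=\hlambda$, so the (two-sided) generic points of $\mu_+$ and of $\mu_-$ are each \emph{dense} in $\hlambda$. I would then build a bi-infinite $g_t$-orbit, i.e.\ a point $p\in\hlambda$, by concatenating with rapidly shrinking gaps orbit segments that alternately shadow a $\mu_-$-generic orbit for an increasing time $\ell_k$ (during which $c$ decreases on average at rate $\bar\phi$) and then a $\mu_+$-generic orbit run only until $c$ returns to a fixed reference level — possible by the intermediate value theorem, since along a $\mu_+$-generic orbit $c$ grows at rate $\bar\phi$. Then $c(p,\cdot)$ stays bounded above in forward time while equalling the reference level at the start of each cycle, which gives $\limsup_{t\to+\infty}\tfrac1t c(p,t)=0$; the mirror construction in backward time gives the other equality. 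A genuine leaf of $\hlambda$ realizing such an itinerary is obtained by a compactness argument, taking limits of approximate itineraries on $[-n,n]$ and using that the geodesic flow restricted to $\hlambda$ has only sub-exponential transverse distortion, so that orbit segments can be shadowed over the relevant time windows. (When the number of ergodic transverse measures of $\hlambda$ is odd, $\iota$ fixes one of them, which then has zero $\phi$-average, and a generic point of it already works; the construction above is needed precisely when $\iota$ pairs up all of the ergodic measures with nonzero averages.)

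The main obstacle is this construction in the case $\bar\phi>0$, and specifically the last step: realizing the concatenated itinerary by an actual leaf of $\hlambda$ without the transition times or the accumulated shadowing error spoiling the $\limsup$ — in essence a quantitative shadowing lemma for the minimal lamination $\hlambda$. Connectedness of $\hlambda$ (i.e.\ non-orientability) is essential here, since it forces $\mu_+$ and $\mu_-=\iota_*\mu_+$ to live on the \emph{same} minimal set with full support, so that both families of generic orbits are dense and the interpolation can be carried out. For an orientable $\lambda$ the set $\hlambda$ splits into two sheets on which $\int\phi\,dm$ can be bounded away from $0$ with a fixed sign, no such $p$ exists, and such laminations are \emph{not} bending loci — consistent with the ``only if'' direction of Theorem~\ref{thm: dynamical characterize bending}.
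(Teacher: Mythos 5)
Your reduction to the additive cocycle $\phi$ and the identity $\phi\circ\iota=-\phi$ are correct and match the paper's computation (the paper phrases it as $\Lambda(\mu)=-\Lambda(\overline{\mu})$ for ergodic $\mu$, but it derives this from exactly the same pointwise symmetry $G(\overline p,t)=G(p,-t)$). Your treatment of the case $\overline{\phi}=0$ is also correct and coincides with the paper's. The trouble is entirely in the case $\overline{\phi}>0$, and the gap you yourself flag there is genuine and not fillable along the lines you propose.

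The ``quantitative shadowing lemma for $\hlambda$'' you appeal to does not exist in the generality you need. Shadowing/specification for flows requires some form of hyperbolicity or expansivity, and the geodesic flow \emph{restricted to a minimal lamination} $\hlambda$ has neither: within $\hlambda$ the transverse structure is a Cantor set along which leaves fellow-travel, the restricted flow has zero entropy and (when $\lambda$ has no closed leaves) no periodic orbits, and there is no expansion available to close up an approximate concatenation of orbit segments into a true leaf. In short, you cannot glue a $\mu_-$-generic segment to a $\mu_+$-generic segment inside $\hlambda$; unlike the ambient Anosov flow on $T^1X$, the set $\hlambda$ is not shadowing-friendly, and the ``only sub-exponential transverse distortion'' heuristic does not produce a closing mechanism.

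The paper replaces this entire construction with a Baire category argument, which is both simpler and avoids shadowing altogether. Fix an ergodic $\mu$ with $L:=\Lambda(\mu)>0$, so $\Lambda(\overline\mu)=-L<0$. For each $N$ set
\[U_N=\bigl\{p:\exists\,n\ge N,\ \tfrac1n\log G(p,n)\ge L/2\bigr\},\qquad V_N=\bigl\{p:\exists\,n\ge N,\ \tfrac1n\log G(p,n)\le -L/2\bigr\}.\]
Each is open (continuity of $\tfrac1n\log G(\cdot,n)$) and each is dense, because $U_N$ contains all $\mu$-generic points and $V_N$ contains all $\overline\mu$-generic points, genericity is a $g_t$-orbit invariant, and $\hlambda$ is a minimal set. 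Then $\bigcap_N U_N\cap\bigcap_N V_N$ (together with its backward-time analogue) is dense $G_\delta$, hence nonempty, yielding a point $p$ with $\Lambda^-(p)\le 0\le\Lambda^+(p)$ and $\Lambda^-(\overline p)\le0\le\Lambda^+(\overline p)$. Crucially, this is the hypothesis of the Krylov--Bogoliubov half of Proposition~\ref{prop: constructing equivariant measures}, which produces an equivariant measure supported on $\hlambda$; one does \emph{not} need the stronger conclusion $\Lambda^+(p)=0$ that your concatenation targets (that is enjoyed by $\nu$-generic points of the \emph{resulting} measure $\nu$, by Lemma~\ref{lem: generic subexponential}, not by the point produced by the category argument). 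So your proposal overshoots the needed estimate and then fails to reach even that overshoot because the closing step is unavailable; the fix is to lower the target to ``$0\in[\Lambda^-(p),\Lambda^+(p)]$'' and get it for free from Baire.
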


Theorems \ref{thm: dynamical characterize bending} and \ref{thm: non-orientable minimal intro}, proved in \S\ref{sec:equivariant measures}, only handle minimal geodesic laminations. The following three examples account for the prototypical behaviors of non-minimal laminations that can appear on $\partial \Xi_\eta$ and are depicted in Figure \ref{fig: prototypes} from left to right:

\begin{itemize}
    \item Ungemach's examples: $\lambda$ is an augmented pants decomposition where the holonomy along each isolated leaf is exponentially decreasing. 
    \item $\lambda$ is a minimal lamination with a leaf accumulating non-positive exponential growth (Theorem \ref{thm: dynamical characterize bending}).
    \item $\lambda$ is an orientable lamination augmented by an isolated leaf along which the holonomy is exponentially decreasing (Lemma \ref{lem: spiraling leaves} and Theorem \ref{thm: minimal laminations support measures}). 
\end{itemize}
\begin{figure}[h]
    \centering
    \includegraphics[width=.9\linewidth]{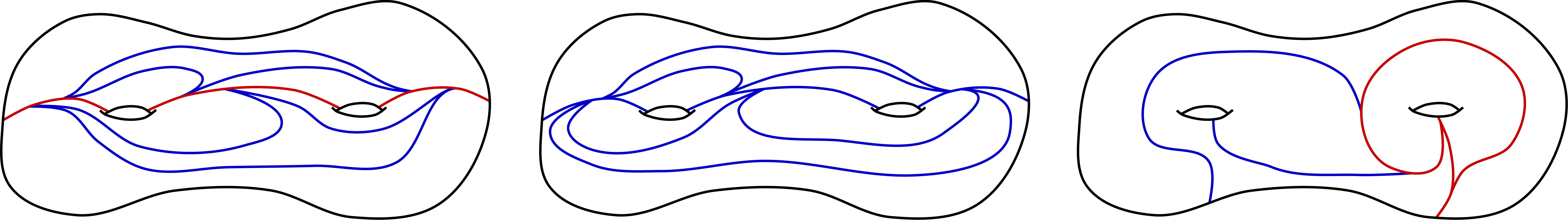}
    \caption{The three prototypical types of bending laminations are depicted by approximating train tracks.  All of the mass of the affine bending measure is concentrated on the blue leaves, while the components of red leaves are orientable and accumulate non-trivial exponential growth/decay in one direction.}
    \label{fig: prototypes}
\end{figure}

In Ungemach's examples, reproduced here in the Appendix for convenience of the reader, a well-chosen isolated simple geodesic $\ell$ spirals onto a closed leaf with non-trivial holonomy. An element in $E^\rho_2|_\ell$ decays exponentially under parallel transport as it spirals.
In particular, these vectors attached to the intersection of $\ell$ with a transverse arc $k$ crossing the simple closed curve are summable (using the flat connection on $E^\rho$).
This is the first example of a flow invariant (Lemma \ref{lemma: homotopy invariance of tensor}) transverse measure valued in $E^\rho_2$ that encodes a non-trivial bending deformation along $\ell$. Locally, this is the tensor product of a \emph{flow equivariant} transverse measure to $\lambda$ and a section of $E^\rho_2$ over the tangents to $\lambda$ (see \S\S\ref{sec:equivariant measures}-\ref{sec: measures and cocycles}). The equivariance property required of the measure keeps track of the growth of vectors in $E^\rho_2$ along leaves of the geodesic foliation.

In order to get our hands on invariant transverse measures valued in $E^\rho_2|_\lambda$ more generally, we first construct flow invariant measures on $T^1X$ that are supported on the tangents to a geodesic lamination.
For a minimal lamination $\lambda$ and point $p \in \hlambda$ satisfying the hypothesis of Theorem \ref{thm: dynamical characterize bending}, 
 invariant measures are obtained as weak-$*$ limits of \emph{almost invariant} measures on the unit tangent bundle supported on $\hlambda$: for $v \in E_2^\rho|_p$, define
\begin{equation}\label{eqn: bundle measure intro}
    \frac{1}{N(t)}{\int_{-t}^t \left(g_{[0,s]}\right)_*v \cdot \delta_{g_sp}~ds},
\end{equation}
where $N(t)$ is a normalizing factor ensuring sub-convergence.

We prove in \S\ref{sec:equivariant measures} that any weak-$*$ accumulation point of \eqref{eqn: bundle measure intro} defines an invariant $E_2^\rho$-valued measure on $T^1X$.  Without the non-positive exponential growth growth hypothesis along the $g_t$-orbit of $p$, sublimiting measures do not exhibit the required invariance.  
Pushing this down to the surface and integrating off the flow direction yields a \emph{transverse}  invariant $E^\rho_2$-valued measure and specifies a non-trivial bending deformation with support $\lambda$.
Conversely, all bending data is of this form (\S\ref{sec: measures and cocycles}).

\begin{remark}
    By choosing a point in the leaf space of the bending lamination and integrating the transverse $E^\rho_2$-valued bending measure, it is possible to obtain a description of one of the two dendrites at infinity in the maximal domain of discontinuity in the dual affine space.
\end{remark}

A priori, a measure valued in a bundle need not admit any reasonable description, e.g., in terms of some finite combinatorial data. Miraculously, the bundles $E^\rho_2\vert_{\hlambda}$ are flat, which in turn allows for a concise and manageable description of the transverse measures valued in $E^\rho_2$. 

The antipodal involution on $T^1X$ induces a quotient $\overline E^\rho_2\to \P T^1 X$ of $E^\rho_2\to T^1X$ (see \S\ref{section: flat bundles over laminations}).
%Using symmetry, there is a bundle $\overline E^\rho_2 \to \P TX$  obtained as a quotient of $E^\rho_2$ by an involution, see \S\ref{section: flat bundles over laminations}.
There is a natural embedding of $\lambda$ into $\P TX$ by taking tangents, so that $\overline E^\rho_2|_\lambda$ is defined by pullback. The transverse measure admits a pushforward to $\overline E^\rho_2|_\lambda$.

\begin{theorem}\label{thm: slithering connection intro}
    There is a H\"older continuous flat connection on $\overline E^\rho_2\vert_\lambda$ called the \textit{slithering connection}.  Its holonomy can be recorded as a representation     \[\chi: \pi_1 \tau\to \R^\times,\]
    where $\tau$ is a snug train track carrying $\lambda$.
\end{theorem}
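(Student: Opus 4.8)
The plan is to build the connection one leaf at a time, adapting Bonahon and Dreyer's \emph{slithering} construction for Hitchin representations. First record what is already available. Since $E^\rho$ is the pullback to $T^1X$ of the flat bundle $F^\rho$, it is itself flat; lifting to the universal cover and trivializing $E^\rho\cong T^1\widetilde X\times(\R^3)^*$, the fibre $E^\rho_2|_v$ is the line of functionals annihilating both points $\xi(v_+),\xi(v_-)\in\RP^2$ cut out by the endpoints of the geodesic through $v$, so it depends only on that geodesic. Consequently each $E^\rho_i$, and its antipodal quotient, carries a canonical flat connection along every leaf of the geodesic foliation, in particular along every leaf of $\hlambda$. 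What must be supplied is the transverse part: a coherent family of isomorphisms $\Sigma_{\ell\ell'}\colon\overline E^\rho_2|_\ell\to\overline E^\rho_2|_{\ell'}$ between fibres over distinct leaves $\ell,\ell'$ of $\lambda$, depending H\"older continuously on the leaves and satisfying a cocycle relation.

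For a leaf $h$ of $\tlambda$, the transverse flags at its endpoints $h_\pm$ split $(\R^3)^*$ into three lines whose middle term is $E^\rho_2|_h$; these splittings are the input data. When $\ell$ and $\ell'$ are \emph{adjacent} --- two sides of a common complementary region, equivalently joined by a transversal meeting $\lambda$ in exactly those two points --- the two splittings determine an elementary transition $R_{\ell\ell'}$ of $(\R^3)^*$ carrying $E^\rho_2|_\ell$ onto $E^\rho_2|_{\ell'}$, equal to the identity when $\ell=\ell'$ and close to it for nearby leaves. For arbitrary $\ell,\ell'$, read off the leaves $\ell=\ell_0,\ell_1,\ell_2,\dots$ of $\lambda$ crossed in order by a transversal from $\ell$ to $\ell'$ and define the \emph{slithering map}
\[\Sigma_{\ell\ell'}\;=\;\lim_{n\to\infty} R_{\ell_{n-1}\ell_n}\cdots R_{\ell_1\ell_2}\,R_{\ell_0\ell_1},\]
a finite product when only finitely many leaves separate $\ell$ from $\ell'$. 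The construction is $\pi_1S$-equivariant and descends to $\lambda\subset\P TX$, where the induced maps on the $E^\rho_2$-lines, read in local trivializations of $\overline E^\rho_2$, become scalars in $\R^\times$.

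The main obstacle is the convergence and the regularity of this (possibly infinite) product. Leaves of $\tlambda$ that remain close along a transversal have endpoints close in $\partial\pi_1S$; since $\xi$ and $\xi^*$ are H\"older, each $R_{\ell_{n-1}\ell_n}$ differs from the identity by an amount governed by how much the flags at $\ell_{n-1}$ and $\ell_n$ differ, and organizing the product along a carrying train track --- with the nesting of transversals controlled by the Anosov dynamics of $\rho$ --- makes the contributions summable along any transversal. The delicate point, absent in the more familiar extremal cases, is that $E^\rho_2$ is the \emph{middle} line of the Anosov splitting, so convergence cannot be deduced from domination by a single exponential rate; one must instead exploit that consecutive leaves share limiting flag data, forcing $R_{\ell_{n-1}\ell_n}\to\mathrm{Id}$, together with a two-sided estimate coming from the uniform gaps between the exponential rates of $E^\rho_1$, $E^\rho_2$, $E^\rho_3$ along the flow. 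The same H\"older control on the inputs passes to $\Sigma_{\ell\ell'}$ as a function of the pair of leaves, giving a H\"older flat structure on $\overline E^\rho_2|_\lambda$.

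Finally one verifies the cocycle identity $\Sigma_{\ell\ell''}=\Sigma_{\ell'\ell''}\circ\Sigma_{\ell\ell'}$ by regrouping the defining product, together with compatibility of the $\Sigma_{\ell\ell'}$ with the leafwise flat connection along leaves of $\hlambda$; these assertions are precisely what it means for the $\Sigma_{\ell\ell'}$ to assemble into a flat connection $\nabla$ on $\overline E^\rho_2|_\lambda$, the slithering connection. Being flat, $\nabla$ has parallel transport depending only on homotopy classes of paths, hence a holonomy homomorphism out of $\pi_1$ of the base; as $\GL(1,\R)=\R^\times$ and, because $\tau$ carries $\lambda$ snugly, the collapse $\lambda\to\tau$ identifies $\pi_1\lambda$ with $\pi_1\tau$, this is a representation $\chi\colon\pi_1\tau\to\R^\times$. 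As a check, when a loop carried by $\tau$ is freely homotopic to a closed leaf $\gamma$ of $\lambda$, no slithering is required --- one stays inside the leaf --- and $\chi$ returns the holonomy of the leafwise flat connection around $\gamma$, namely the middle eigenvalue of $\rho(\gamma)$ acting on $(\R^3)^*$, recovering the mechanism already observed for bending along closed curves.
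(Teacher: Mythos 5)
Your plan is the same as the paper's: build a flat structure transverse to the leaves from the Bonahon--Dreyer slithering maps, package it as a flat connection on $\overline E^\rho_2\vert_\lambda$, and read off its holonomy on the fundamental group of a snug train track. The paper, however, does not reconstruct the slithering map from scratch; it cites \cite{BonDre} for existence, H\"older regularity, and the composition law, and then only has to show that the \emph{slithering sections} over ties fit the formalism of a ``flat connection over a lamination'' (Definition \ref{def: flat connection}), from which the holonomy $\chi:\pi_1\tau\to\R^\times$ falls out of the general combinatorialization (Proposition \ref{prop: combinatorialized lamination bundles}).

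Because you re-derive the slithering map, two steps need tightening. First, the set of leaves met by a transversal $k$ is generically a Cantor set, so the enumeration ``$\ell=\ell_0,\ell_1,\ell_2,\dots$ crossed in order'' and the sequential product $\lim_n R_{\ell_{n-1}\ell_n}\cdots R_{\ell_0\ell_1}$ do not make sense as written: there is no ``next'' leaf. Bonahon--Dreyer index the product by the countably many complementary \emph{gaps} of $k\cap\lambda$ (each bounded, for a maximal or snugly carried lamination, by asymptotic leaves whose shared endpoint makes the elementary unipotent $\Sigma_{d}$ well defined), and the summability uses that there are $O(\log 1/\epsilon)$ gaps of size $\ge\epsilon$ together with the H\"older regularity of the flag curves --- this is Lemma \ref{lem: tt geometry} here, not a ``two-sided estimate on Anosov exponents.'' Relatedly, your observation that the middle line makes convergence delicate is a red herring: the slithering map is first constructed on all of $(\R^3)^*$ and only afterwards restricted to $E^\rho_2$, so its convergence sees no asymmetry between the three lines. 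Second, $\pi_1\lambda$ is not defined since $\lambda$ is not path-connected; the paper defines $\chi$ directly on $\pi_1\tau$ by lifting a loop in $\tau$ to a concatenation of leaf segments and tie segments in the train track neighborhood and transporting along it (Lemma \ref{lem: rectangles trivial hol} guarantees the result is independent of the lift). Your closing consistency check on a closed leaf is correct and matches the paper's remarks.
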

We make sense of a flat connection on a geodesic lamination extending a given connection along the leaves in \S\ref{section: flat bundles over laminations}, which is also where we construct the slithering connection from the \textit{slithering maps} defined by Bonahon--Dreyer \cite{BonDre}. 
These maps provide local transverse trivializations of $\overline E^\rho_2\vert_\lambda$ on collections of fellow-travelling leaves, and the existence of the holonomy follows. 

Because this bundle is flat, the invariant transverse measure valued in $\overline E^\rho_2|_\lambda$ factors nicely. It is a transverse measure, equivariant with respect to the holonomy $\chi$, tensored with a flat section over transversals. This is a more tractable object.

\begin{maincorollary}\label{maincor: intro affine measure}
The bending measure valued in $\overline E^\rho_2|_\lambda$ is \emph{affine} (see \S\ref{sec: affine laminations} for definitions).  Consequently,
\begin{itemize}
    \item This measure may be written as a finite collection of positive real numbers on a \textit{train track with stops} and holonomy $\chi$ given by Theorem \ref{thm: slithering connection intro}, 
    \item If $k$ is a suitable transversal to $\lambda$, then geodesic flow induces a first return map to $\hk \cap \hlambda$, the tangents to $\lambda$ over $k$.
    Integrating the affine measure on $\hk\cap \hlambda$ %with respect to the linear order on $k$ 
    produces an \textit{affine interval exchange transformation} (AIET) with an involutive symmetry (and flips) that is measurably semi-conjugate to this first return system; see \S\ref{subsec: AIET}.% of the geodesic flow to $k$ equipped with the affine measure; .
\end{itemize}
\end{maincorollary}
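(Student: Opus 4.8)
The plan is to derive everything from two ingredients already in hand: the flow-invariance of the bending measure (constructed in \S\ref{sec:equivariant measures}--\S\ref{sec: measures and cocycles}) and the slithering connection of Theorem~\ref{thm: slithering connection intro}. Recall that the bending datum is a flow-invariant transverse measure on $\hlambda\subset T^1X$ valued in $E^\rho_2$; pushing it forward under the antipodal quotient $T^1X\to \P T^1X$ and restricting to the embedded $\lambda\hookrightarrow \P TX$ gives a transverse measure valued in $\overline E^\rho_2\vert_\lambda$. First I would trivialize it: over a short transversal arc $k$ to $\lambda$, the leafwise-flat slithering connection provides a flat trivialization of $\overline E^\rho_2\vert_{\hk\cap\hlambda}$, and in this trivialization the bundle-valued transverse measure becomes a scalar transverse measure $\mu_k$ on $k$ (a positive measure once $\overline E^\rho_2\vert_\lambda$ is coherently oriented along $k$). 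Comparing two trivializations attached to transversals through the same branch of the carrying train track $\tau$, they differ by exactly the holonomy $\chi$ of Theorem~\ref{thm: slithering connection intro}; this is the defining transformation law of an \emph{affine} transverse measure in the sense of \S\ref{sec: affine laminations}, which is the headline claim. Equivalently: flatness of $\overline E^\rho_2\vert_\lambda$ lets the $\overline E^\rho_2$-valued measure factor as a $\chi$-equivariant scalar transverse measure tensored with a leafwise-flat section.

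For the first bullet, fix a snug train track $\tau$ carrying $\lambda$ with its fibered-neighbourhood structure and take the transversals above to be dual to the branches of $\tau$. Leafwise-flatness forces $\mu_k$ on a branch to be determined by its value at one point together with $\chi$, so the measure is recorded by one positive real number per branch; pushing forward the leafwise parallel-transport identities for $\overline E^\rho_2$ across the switches of $\tau$ yields the $\chi$-twisted switch relations, which cut out the finite-dimensional polytope of admissible weights. The non-minimal prototypes of Figure~\ref{fig: prototypes} --- isolated leaves spiralling onto a minimal sublamination along which the $E^\rho_2$-holonomy contracts exponentially (Lemma~\ref{lem: spiraling leaves}, Theorem~\ref{thm: minimal laminations support measures}) --- contribute half-branches terminating at \emph{stops}, whose weights are the convergent sums of the contracting $E^\rho_2$-vectors along the spiral (the summability observed after Lemma~\ref{lemma: homotopy invariance of tensor}). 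Assembling these data is exactly a finite weighting of a train track with stops with holonomy $\chi$.

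For the second bullet, choose a suitable transversal $k$ meeting every leaf (e.g.\ a union of branch-duals cutting $\hlambda$ into flow boxes), so that the geodesic flow has a well-defined first-return map $R:\hk\cap\hlambda\to\hk\cap\hlambda$ off a measure-zero set, with $\hk\cap\hlambda$ partitioned into finitely many sub-transversals on each of which the return trajectory follows a fixed edge-loop of $\tau$. Set $\Phi(x)=\mu_k([x_0,x])$; this monotone map collapses the complementary gaps, is a measurable semi-conjugacy onto its image, and sends $\hk\cap\hlambda$ onto an interval (or finite union of intervals) $I$. Because $\mu_k$ is $\chi$-equivariant rather than invariant, $\Phi\circ R\circ\Phi^{-1}$ is \emph{affine} on each sub-transversal, with multiplier the $\chi$-holonomy of the corresponding return loop --- precisely an AIET. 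The antipodal involution $\iota:v\mapsto -v$ preserves $\hlambda$, may be arranged to preserve $\hk$, and satisfies $\iota\, g_t=g_{-t}\,\iota$, so it descends under $\Phi$ to a piecewise-affine involution of $I$ conjugating the AIET to its inverse; since return loops may reverse the transverse co-orientation of $\lambda$ (as they do whenever $\lambda$ is non-orientable), the AIET carries flips, and $\iota$ furnishes the asserted involutive symmetry.

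The main obstacle is the interplay of this combinatorial picture with the non-minimal cases. One must check that the first-return map is genuinely well-defined and finite-to-one despite isolated leaves whose return times grow without bound as trajectories spiral near the minimal core, and that the ``stop'' weights of the train track with stops match the behaviour of $\Phi$ at the corresponding endpoints of $I$. One also needs a cocycle-matching computation verifying that the slithering holonomy $\chi$, defined via fellow-travelling of leaves in $T^1X$ in \S\ref{section: flat bundles over laminations}, agrees exactly with the multiplier acquired by a return loop integrated against the flat connection on $E^\rho_2$; and finally that the coherent orientation of $\overline E^\rho_2\vert_\lambda$ used to make $\mu_k$ a positive measure is globally consistent with the co-orientation data encoded by $\iota$ and the flips.
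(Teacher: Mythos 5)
Your proposal follows essentially the same route as the paper: flatness of the slithering connection on $\overline E^\rho_2\vert_\lambda$ lets the bundle-valued bending measure factor as (scalar transverse measure)$\,\otimes\,$(flat section), the flow/homotopy invariance of the tensor (Lemma~\ref{lemma: homotopy invariance of tensor}) forces the scalar part to transform by the slithering holonomy $|\chi|$ under first-return (this is Theorem~\ref{thm: nu is affine}), and the two bullets are then the combinatorial packaging (Lemma~\ref{lem: equivariant weight system} plus the Hatcher--Oertel remark) and the integration-to-AIET construction of Proposition~\ref{prop: AIET from measure}, antipodal involution included. The one loose thread you correctly flag --- well-definedness in the non-minimal case and matching stop weights --- is exactly the part the paper also leaves as a remark with details omitted, so no genuine gap.
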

See also \cite{HO:affine} for a discussion of affine measured laminations.
We remark, however, that our flat line bundles are only defined over (a neighborhood of) the bending lamination, and this bundle does not in general extend to a flat line bundle over the whole surface (see equation \eqref{eqn: slithering holonomy}).
\medskip

In \S\ref{sec: affine laminations}, we define a space $\ML^\rho(S)$ of pairs consisting of flat line bundles over laminations with 
slithering connection determined by $\rho$ and affine measures valued in that bundle.
There is a weak-$*$ topology on $\ML^\rho(S)$, described in \S\ref{subsec: continuity of weights}.

An orientation on the surface $S$ and on the model space distinguish (globally) the `top' and `bottom' of the convex core boundary. 
As a result of the work we have outlined so far, there exists a natural map
\[\beta_+: \{\eta: \pi_1 S\rightarrow \Aff^*(3,\R)~\text{irreducible}\mid L(\eta)\in\SL(3,\R).\rho\}/\Aff^*(3,\R) \to \ML^\rho(S) \]
recording the affine bending lamination on the top component of the convex hull $\Xi_\eta$ from a coaffine representation with linear part $\rho$. 

\begin{maintheorem}\label{mainthm: MLrho is a sphere}
    The map $\beta_+$ is a homeomorphism that is homogeneous with respect to positive scale.
    Consequently, the projectivization $\PML^\rho(S)$ is a sphere of dimension $6g-7$.
\end{maintheorem}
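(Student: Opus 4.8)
The plan is to show $\beta_+$ is a continuous bijection that is proper (or, since the target is reasonably tame, directly a homeomorphism via an explicit inverse), and then deduce the topology of the projectivization by identifying $\PML^\rho(S)$ with a PL-sphere using the train-track coordinate charts. First I would establish that $\beta_+$ is well-defined and continuous: given an irreducible coaffine $\eta$ with $L(\eta)=\rho$, the analysis of $\partial\Xi_\eta$ in \S\ref{sec: coaffine geometry} together with Theorem \ref{thm: dynamical characterize bending} produces the bending locus $\lambda$, and the invariant $\overline E^\rho_2|_\lambda$-valued transverse measure from the weak-$*$ limit construction \eqref{eqn: bundle measure intro} gives the affine measure; continuity in the weak-$*$ topology of \S\ref{subsec: continuity of weights} follows because the limit maps $\xi_\eta$, $\xi_\eta^*$ and hence the bending data vary continuously with $\eta$. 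Injectivity is essentially Theorem \ref{mainthm: summary}/Corollary \ref{maincor: intro affine measure}: the affine bending datum on the top component, together with the convex projective structure (which is determined by $\rho$), reconstructs the developing map of the convex core boundary, hence the representation up to $\Aff^*(3,\R)$-conjugacy. Surjectivity requires the converse direction promised in \S\ref{sec: measures and cocycles}: every affine measured lamination in $\ML^\rho(S)$ — i.e. every affine transverse measure valued in a bundle with the $\rho$-determined slithering connection — can be integrated (bent) to produce a genuine convex cocompact coaffine representation realizing it as $\beta_+(\eta)$. Homogeneity under positive scaling is immediate since scaling the affine measure scales the bending cocycle linearly and the construction is equivariant under this.

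The main structural input for the dimension count is the description of $\ML^\rho(S)$ via \textbf{train tracks with stops} and holonomy $\chi$ from Corollary \ref{maincor: intro affine measure}. I would fix a finite collection of maximal snug train tracks (with stops) $\tau_1,\dots,\tau_k$ such that every geodesic lamination appearing as a bending locus is carried by some $\tau_j$, and such that the carrying charts overlap compatibly. On each such chart, an affine measure is recorded by finitely many positive real weights on branches subject to the switch conditions \emph{twisted by} $\chi$; this cuts out an open convex cone $C_j \subset \R^{N_j}$ whose dimension I must compute. The expected count: a maximal geodesic lamination on a closed genus-$g$ surface has $6g-6$ leaves' worth of combinatorial data in the classical (untwisted) case giving $\dim\ML(S)=6g-6$, but here one linear constraint is lost/gained relative to that because the switch conditions are twisted by a nontrivial character $\chi$ on each complementary region — roughly, the affine-holonomy twisting removes one degree of freedom associated to the global scaling that would otherwise be a genuine weighted-lamination direction, or equivalently the orientable "red" sublaminations contribute no mass (Corollary \ref{maincor: intro affine measure}, first bullet and Figure \ref{fig: prototypes}), so the cone has dimension $6g-6$, and after projectivizing one gets $6g-7$. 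I would make this precise by a Euler-characteristic bookkeeping on $\tau_j$: number of branches minus number of (twisted) switch relations, using that a trivalent spine of a maximal lamination on $S_g$ has $v=4g-4$ switches and $e=6g-6$ branches, and checking that exactly one switch relation becomes dependent (or that exactly one branch is forced to zero weight) in the $\chi$-twisted setting.

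Finally, to pass from "$\ML^\rho(S)$ is a cone of dimension $6g-6$" to "$\PML^\rho(S)$ is a sphere $S^{6g-7}$", I would argue as in the classical Thurston picture: the charts $C_j$ glue along transition maps that are PL (the transition between two train-track coordinate systems is a finite composition of splittings, each given by piecewise-linear, indeed piecewise-$\chi$-scaled-linear, formulas), so $\ML^\rho(S)\setminus\{0\}$ is a $(6g-6)$-manifold, and $\beta_+$ together with its inverse exhibits it as homeomorphic to the (contractible, star-shaped by the homogeneity just proved) space of representations; being star-shaped about the Fuchsian-type zero and a manifold of dimension $6g-6$, its projectivization is a closed $(6g-7)$-manifold that is simply connected (for $g\ge 2$, $6g-7\ge 5$, or one handles low genus separately) and a homotopy sphere, hence a sphere — alternatively and more cleanly, the explicit homeomorphism $\beta_+^{-1}$ combined with the cone structure lets one write $\ML^\rho(S)\cong \R^{6g-7}\times[0,\infty)$ directly, whose projectivization is manifestly $S^{6g-7}$. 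The step I expect to be the real obstacle is the \textbf{surjectivity/realization} part: showing that an arbitrary abstract affine measured lamination compatible with $\rho$ actually bends to a \emph{discrete, faithful, convex cocompact} coaffine representation (not merely to an equivariant family of hyperplanes that might fail properness or convexity of the core). Controlling convex cocompactness under an infinite non-summable bending — precisely the new phenomenon flagged via Ungemach's examples — is where the affine (rather than ordinary) measure framework must be leveraged hardest, presumably by a limiting argument approximating the affine lamination by ones supported on curves/minimal pieces where bending is understood, together with the dynamical estimate of Theorem \ref{thm: dynamical characterize bending} to keep the developing map properly embedded in the limit.
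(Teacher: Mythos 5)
Your overall picture — that $\beta_+$ should be a continuous bijection, that injectivity is about reconstructing the representation from the bending datum, and that one then passes to the projectivization — is right in spirit, but you've chosen a substantially harder route than the paper, and two of your sub-steps contain what I'd call genuine gaps rather than just differences of style.

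The main structural divergence is in the dimension count. You propose to compute $\dim \ML^\rho(S) = 6g-6$ by a train-track-with-holonomy bookkeeping argument (branches minus $\chi$-twisted switch relations, with a claimed loss of one degree of freedom from the twisting), and then to deduce that $\PML^\rho(S)$ is a sphere via a PL-manifold / homotopy-sphere argument \`a la Thurston. The paper does something much cheaper: it factors $\beta_+$ through cohomology. Concretely, Lemma \ref{lem: cohomology parameterizes reps up to conj} identifies $\Aff^*(3,\R)$-conjugacy classes of coaffine representations with linear part conjugate to $\rho$ with the vector space $H^1(\pi_1 S, (\R^3)^*_\rho)\cong H^1(S,F^\rho)$, Lemma \ref{lemma: coboundaries and reducible reps} identifies the reducible ones with $0 \in H^1$, and Theorems \ref{thm: equiv measures sphere} and \ref{thm: affine lamination} identify $H^1(S,F^\rho)\setminus\{0\}$ with $\ML(X)^G$ and hence with $\ML^\rho(S)$. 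The dimension then falls out of the Euler-characteristic computation $\dim H^1(S,F^\rho) = \dim\big((\R^3)^*\big)\cdot(2g-2) = 6g-6$ (using irreducibility to kill $H^0$ and $H^2$), and the projectivization is a sphere by inspection. Your $\chi$-twisted switch-condition count is not only unnecessary given this, it is also unconvincing as stated: the weight space of a maximal train track carrying a given lamination is already $6g-6$-dimensional in the untwisted case, and twisting switch conditions by a generic character $\chi$ does not in any obvious way remove exactly one degree of freedom globally (the twisting happens locally, chart by chart, not as a single global constraint), so the arithmetic ``loses one dimension'' claim would need a real argument, which you don't give and which the paper never needs.

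The second gap is your flagged ``surjectivity/realization'' worry — that an abstract affine measured lamination compatible with $\rho$ might fail to bend to a \emph{discrete, faithful, convex cocompact} representation, perhaps through some failure of properness of the developing map under ``infinite non-summable bending.'' In the paper's framework this concern simply dissolves: Lemma \ref{lem: eta is Anosov} and Lemma \ref{lemma: def is cccca} show that \emph{every} coaffine representation $\eta_\varphi$ built from a Hitchin $\rho$ and a cocycle $\varphi \in Z^1(\pi_1 S, (\R^3)^*_\rho)$ is projective Anosov and acts cocompactly on $\Xi \cong S\times[0,1]$, with no additional hypothesis on $\varphi$. So there is no realization problem to solve at the representation-theoretic end; the only content of surjectivity is that every class in $H^1(S,F^\rho)\setminus\{0\}$ arises as a bending cocycle, which is Lemma \ref{lem: bijection measure cohomology}. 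Relatedly, your appeal to Theorem \ref{mainthm: summary} and Corollary \ref{maincor: intro affine measure} for injectivity is circular — those are restatements of what is being proved here; the paper's actual injectivity argument is the elementary observation at the end of Theorem \ref{thm: MLrho is a sphere!} that distinct elements of $\ML(X)^G$ either have different supports (so different bundles $R_2^\rho(\lambda)$) or differ as measures on some transversal. Finally, the homeomorphism at the projective level in the paper is just ``continuous bijection from a compact space to a Hausdorff space,'' using compactness of $\PML(X)^G$ inside probability measures on $T^1X$ (Proposition \ref{prop: disint is equiv}); you don't need the PL-manifold structure, simple-connectivity, or any homotopy-sphere recognition.
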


It would be interesting to understand how our work relates to the work of Seppi and Ni on surfaces of constant affine Gaussian curvature \cite{SN:1}, as well as to the forthcoming work of Antoine Ablondi, who treats exactly the representations we examine here, but from the (dual) perspective of affine geometry.  

We would be very interested to know what the \emph{typical} behaviors for the bending data are.  Via Corollary \ref{maincor: intro affine measure}, there are analogous (open) questions about AIETs and possible connections to half dilation surfaces (e.g., \cite{Selim:tori,Wang:thesis}).
For example, we would like to know when our affine transverse measures are purely atomic (so that the corresponding AIET  has a wandering interval \cite{AIETflips,Cobo:AIET,MMY:AIET}), which seems to be a dense and open condition, but perhaps not full measure.
It would be interesting also to identify which pairs of affine laminations appear on the boundary of the convex core of a convex cocompact coaffine surface group action, extending what's known at the Fuschian locus \cite{Bonahon:bending}.

\subsection{Outline of the paper}

Section \ref{sec: background} establishes notation, background, and the fundamental objects and structures of study. 
Notably, the basics of coaffine geometry are developed in \S\ref{subsec: coaffine geometry}, and  \S\ref{subsec: Anosov property} establishes notation for Anosov representations, including the flat bundles $F^\rho\to S$ and $E^\rho \to T^1S$, and the relationship between them.

Section \ref{sec: coaffine geometry} explores the data which may be extracted from a convex cocompact coaffine representation $\eta$ through purely geometric techniques. %In particular, Lemma \ref{lemma: bending cocycle properties} defines the `macroscopic' bending data for each boundary component of the minimal convex domain of discontinuity $\Xi_\eta$. For each component, we obtain a cocycle $\psi\in Z^1(S,F^\rho)$ describing the bending, which is supported on a lamination $\lambda$, in the sense outlined in Lemma \ref{lemma: bending cocycle properties}.
%The main technical and geometric results in this section are Corollaries \ref{cor: convex iff diff in R_k} and \ref{cor: cocycles localize}, which show that the bending lies close to the line $E_2^\rho|_{\ell}$ when evaluated on arcs that shrink to a leaf $\ell$ of the bending lamination on $\partial \Xi_\eta$. 
Section \ref{sec:equivariant measures} defines spaces  of flow equivariant (transverse) measures supported on a geodesic lamination and gives a dynamical characterization for when a geodesic lamination can support such a measure; % by attempting to build one.
Theorems \ref{thm: dynamical characterize bending} and \ref{thm: non-orientable minimal intro} are proved in \S\ref{subsec: building equivariant measures}. %In \S\ref{subsec: eq transverse measures}, we define the space of equivariant \textit{transverse} measures $\ML(X)^G$ and a \emph{disintegration map} $\ML(T^1X)^G \to \ML(X)^G$ which we then prove is a homeomorphism (Proposition \ref{prop: disint is equiv}).
In Section \ref{sec: measures and cocycles}, we construct bending cocycles from transverse measures and vice versa.

%Section \ref{sec: measures and cocycles} builds on \S\ref{sec:equivariant measures} by constructing cocycles from equivariant measures (\S\ref{subsec: measures to cocycles}) and extracting an equivariant transverse measure from a bending cocycle $\psi$ with support $\lambda$ (\S\ref{subsec: cocycles to measures}). 
%The main result from this section is Theorem \ref{thm: equiv measures sphere}, which explicates the relationship between equivariant measures and cohomology through a homeomorphism, $\Phi: \ML(X)^G \to H^1(S,F^\rho)\setminus \{0\}$.

Section \ref{section: flat bundles over laminations} is independent from the previous sections and contains results that could be of separate utility. In \S\ref{subsec: tt and flat connections} we develop a definition of flat connections over geodesic laminations extending a given connection along the leaves, and in \S\ref{subsection: Hitchin bundles are flat} we show that the bundle $\overline E^\rho_2\vert_\lambda$ is flat by way of constructing the \emph{slithering connection}.
In this section we use $F^\rho_2\to \lambda$ to denote $E^\rho_2|_\lambda$.

In Section \ref{sec: affine laminations}, we prove our Main Theorems \ref{mainthm: summary} and \ref{mainthm: MLrho is a sphere} and Corollary \ref{maincor: intro affine measure}.
%\S\ref{subsection: affine measures from Hitchin world} places the measures in $\ML(X)^G$ in the context of affine measures, using the flat structure on $\overline E^\rho_2|_\lambda$  constructed in \S\ref{section: flat bundles over laminations}.
 %In \S\ref{subsection: affine measures from Hitchin world}, we define the space $\ML^\rho(S)$ and the homeomorphism $\beta$ . 
In \S\ref{subsec: AIET}, we make precise how an affine measure relates to an affine interval exchange transformation (AIET), while \S\ref{subsec: affine laminations} relates our notion of affine measured laminations with the notion studied by Hatcher and Oertel.
%\S\ref{subsec: continuity of weights} proves that $\beta_+$ is a homeomorphism (Theorem \ref{thm: MLrho is a sphere!}).

Finally, the Appendix is a motivating example. It is designed to be maximally independent from the rest of the paper, except the background material, and is intended to be readable by a proficient graduate student familiar with the basic context.

\section*{Acknowledgements}
The authors would like to thank Jeff Danciger, Steve Kerchkoff, Jane Wang, Rick Kenyon, Fanny Kassel, and Weston Ungemach for interesting discussions related to this work. 
We also extend our gratitude to Lena Coleman for her support and the seed out of which this collaboration blossomed.

The first named author would like to thank IHES, Yale University, and Universit\"at Heidelberg for their hospitality while completing portions of this work.

This project was funded by DFG – Project-ID 281071066 – TRR 191.
The first named author received support from NSF Grant DMS Award No.
2002230.

\setcounter{tocdepth}{2}
\tableofcontents{}

\section{Preliminaries                       }\label{sec: background}

\subsection{Vector spaces}\label{subsec: vector spaces}
Throughout we will work with finite dimensional vector spaces and their dual spaces, often simultaneously. Recall that the general (and special) linear groups of a vector space $V$ act on the dual space $V^*$ by
\[A.\alpha = \alpha \circ A\inverse\]
for $A\in \GL(V)$ and $\alpha\in V^*$. This action is natural: it is the unique action preserving the evaluation between $V$ and $V^*$, and required no additional data. For finite dimensional vector spaces, the groups $\GL(V)$ and $\GL(V^*)$ are therefore naturally identified.

Given a representation $r:\Gamma \rightarrow \GL(V)$, there is thus an action of $\Gamma$ on both $V$ and $V^*$. We denote these vector spaces with their $\Gamma$-actions as $V_r$ and $(V^*)_r$ respectively.

A choice of (ordered) basis $\{e_1,\dots,e_d\}$ for a finite-dimensional vector space $V$ gives a standard choice of basis $\{e^1,\dots,e^d\}$ for $V^*$ by the defining property $e^i(e_j)=\delta^i_j$.

\subsection{Coaffine geometry}\label{subsec: coaffine geometry}

Let $\Aff^*(3,\R)<\SL(4,\R)$ denote the group of \textit{coaffine} transformations acting on $\RP^3$. That is, $\Aff^*(3,\R)$ is defined (up to conjugacy in $\SL(4,\R)$) by
\[\Aff^*(3,\R) = \left\{ 
\begin{pmatrix}
    A & \mathbf{0}\\
    \tau & 1
\end{pmatrix}
\mid A\in \SL(3,\R), \tau\in \left(\R^3\right)^*
\right\},\]
where $\mathbf{0}$ denotes the zero vector in $\R^3$. 
The coaffine group is the point stabilizer for the action of $\SL(4,\R)$ on $\R^4$. 
In the parametrization used at present, this point is (any non-zero scalar multiple of) $e_4\in \R^4$. The \textit{coaffine space} $\RP^3\setminus \{[e_4]\}$ is a homogenous space for $\Aff^*(3,\R)$. 

\begin{definition}\label{def: coaffine geometry}
    In the language of $(G,X)$-structures, $\SL(3,\R)$-\textit{coaffine geometry} (in this context, simply \textit{coaffine} geometry), is the pair 
    \[\left(\Aff^*(3,\R), \RP^3 \setminus\{[e_4]\}\right).\]
\end{definition}

There is a homomorphism $\Aff^*(3,\R)\rightarrow \SL(3,\R)$ which takes the matrix $A$ from the definition above; this is the \textit{linear part} of an element of $\Aff^*(3,\R)$. We will say that an element $T\in\Aff^*(3,\R)$ with trivial linear part is a \textit{translation}, and let $\TAff^*(3,\R)$ be the normal subgroup of translations; it is a $3$-dimensional vector space.
There is a short exact sequence
\[1\to \TAff^*(3,\R) \to \Aff^*(3,\R) \to \SL(3,\R) \to 1,\]
which splits, so that 
%The subgroup of translations is normal in $\Aff^*(3,\R)$, the linear part being the quotient to $\SL(3,\R)$. Indeed, the quotient splits, and so $\Aff^*(3,\R)$ is a semi-direct product:
\[\Aff^*(3,\R) \cong \TAff^*(3,\R) \rtimes \SL(3,\R).\]
Then $\SL(3,\R)$ acts naturally on $\TAff^*(3,\R)$ by conjugation.
%we will write $A.\tau = \tau A\inverse$.

The point $[e_4]\in \RP^3$ defines a hyperplane in the dual projective space:
\[[\ker e_4] = \left\{[\alpha] \in \left(\RP^3\right)^* : \alpha(e_4) = 0\right\}.\] 
Denote by 
\[\coaff = \left(\RP^3\right)^* \setminus [\ker e_4],\] 
accompanied by the action by $\Aff^*(3,\R)$, which preserves $\coaff \subset (\RP^3)^*$.
It is isomorphic to the usual $3$-dimensional affine space $\A^3$, i.e., the set of hyperplanes in $\RP^3$ which do not intersect $[e_4]$ form a copy of the $\SL(3,\R)$-affine space under the action of $\Aff^*(3,\R)$. 

Let us choose the vector $e_4 \in \R^4$ as a representative of its projective class, and consider the affine chart for $\coaff$  given by 
\[\left\{\alpha \in \left(\R^4\right)^* : \alpha(e_4) =1\right\}.\]  
Abusing notation, we write $\alpha \in \coaff$ to mean that $\alpha(e_4) =1$ (and therefore $[\alpha]\in (\A^3)^*$).

For $\alpha, \beta\in \coaff$, we have $(\alpha -\beta) \in \ker e_4 \cong (\R^3)^*$.
We may thus view $\ker e_4$ as the (normal) subgroup of translations in $\Aff^*(3,\R)$ by 
\[\tau \in \ker e_4 \mapsto 
\begin{pmatrix}
    I_3 & \mathbf 0\\
    \tau & 1
\end{pmatrix}.\]
For any $\delta \in \R\setminus \{0\}$, the affine chart 
\[\left\{\alpha \in \left(\R^4\right)^* : \alpha(e_4) = \delta\right\}\]
effects our identification of the translation subgroup of $\Aff^*(3,\R)$ via conjugation by \[\begin{pmatrix}
    \delta^{-1/3} I_3 &  \\
     & \delta
\end{pmatrix} \in \SL(4,\R).\]

\begin{remark}\label{rmk: R3 star is TAff}
    Any equivariant identification between $\TAff^*(3,\R)$ and $(\R^3)^*$ requires a choice.  However, we will frequently use `$(\R^3)^*$' as shorthand notation for $\TAff^*(3,\R)$, and `$(\RP^2)^*$' for $\P\TAff^*(3,\R)$. 
\end{remark}

\begin{example}\label{example: coaffine translation}
    To understand the geometric meaning of a translation on the coaffine space, let us use coordinates. For any pair $\alpha\not= \beta\in (\A^3)^*$, the kernels $[\ker \alpha]$ and $[\ker \beta]$ intersect in a projective line $L \subset \RP^3$ which itself does not contain $[e_4]$. See Figure \ref{figure: cone_1}
    
    Choose a pair of distinct vectors $e_1$ and $e_3$ with projective classes in $L$ as two vectors of a basis, and let $e_2$ be any vector so that $[e_2]\in [\ker \alpha] \setminus L$. In these coordinates, the unique translation taking $\alpha$ to $\beta$ is
    \[Z_t = 
    \begin{pmatrix}
        I_3 & \mathbf 0\\
        te^2 &1
    \end{pmatrix} \in \Aff^*(3,\R),\]
    for some particular $t$. Note also that $\beta-\alpha=te^2.$ From a coordinate-free perspective and using the identification of $\P \TAff^*(3,\R)\cong [\ker e_4]$, we have that $[\beta - \alpha] \in [\ker L] \subset [\ker e_4]$. 

    Conversely, every non-trivial translation $\tau\in \Aff^*(3,\R)$ preserves a unique line $L_\tau\subset \RP^3$ which does not intersect $[e_4]$. 

    Consider a positive diagonal element $A\in \Aff^*(3,\R)<\SL(4,\R)$, which in the present coordinates preserves the line $L$. Observe (by computation) that the centralizer of $A$ in $\Aff^*(3,\R)$ contains no translation when the linear part of $A$ has eigenvalues all distinct from $1$ (except, necessarily, the eigenvalue of $e_4$). Suppose that the second eigenvalue of $A$ were equal to $1$. Then the centralizer of $A$ contains the subgroup $\{Z_t\}$. This will be relevant later for the deformation theory of coaffine representations of surface groups.
    
\end{example}

\subsection{Surfaces and geodesic laminations}\label{subsec: surface theory}
The reader looking for an introduction to surface theory and geodesic laminations is advised to consult \cite{Thurston:notes, CB, PH, BZ:GL}.
\subsubsection{Negatively curved metrics and flows}
Let $S$ be a closed oriented surface of genus $g$ at least $2$, and let $X$ denote the data of $S$ equipped with a negatively curved Riemannian metric. 
Let $T^1X$ be the unit tangent bundle and let 
\[g_t : T^1X\to T^1X\]
be the time $t$ map for the geodesic flow.
The Levi-Civita connection on $X$ gives us a way to lift the Riemannian metric on $X$ to one on $T^1X$.

Denote by $\partial \widetilde X$ the visual boundary of the universal cover $\widetilde X$ of $X$.
Given a point $x \in \widetilde X$, the exponential map induces a homeomorphsim $T^1_xX \cong \partial \widetilde X$.  
The angular metric on $T^1_xX$ gives $\partial \widetilde X$ a metric, where the identity map on  $\partial \widetilde X$ is a bi-Lipschitz equivalence for the angular/visual metrics for points $x, y \in X$. 

Negative curvature gives that for any pair $z, w \in \partial \widetilde X$ of distinct points there is a unique geodesic line from $z$ to $w$.
The space of geodesic lines is the quotient of $\partial \widetilde X \times \partial \widetilde X \setminus \Delta$ by the order two symmetry swapping the future for the past, where $\Delta$ denotes the diagonal.
%The space of unoriented geodesic lines is the quotient .
Then the family of angular metrics on $X$ gives the space of geodesics a bi-Lipschitz class of metrics.

Suppose $Y$ is another metric on $S$ such that $\widetilde Y$ is a uniquely geodesic visibility space and that the identity map $X\to Y$ is a bi-Lipschitz equivalence.  For example, $Y$ is another negatively curved metric or the Hilbert metric for a properly convex projective structure (see \S\ref{subsec: convex proj}).  The Gromov product can be used to define a H\"older class of metrics on $\partial \widetilde Y$.

The  identity map lifts to a quasi-isometry of universal covers and extends to a bi-H\"older continuous map between $\partial X\cong \partial Y$.
Thus the spaces of (un)oriented geodesic lines in $X$ and in $Y$ have a well defined bi-H\"older class of metrics.
There is also a bi-H\"older continuous 
\begin{equation}\label{eqn: holder orbit equiv}
    \varphi : T^1X \to T^1Y
\end{equation}
orbit equivalence between the two geodesic flows \cite[\S19.1]{KH:dynamicalsystems}.

\subsubsection{Geodesic laminations}

A \emph{geodesic lamination} $\lambda$ on $X$ is a closed subset of $X$ equipped with a foliation by complete geodesics, called its \emph{leaves}.
A geodesic lamination $\lambda$ is \emph{orientable} if its leaves can be continuously oriented.
Every lamination $\lambda$ has a two-fold orientation cover $\hlambda \to \lambda$, where $\hlambda$ is an orientable lamination.
The set of tangents $T^1\lambda \subset T^1X$ is naturally identified with $\hlambda$.

Examples of geodesic laminations are furnished by disjoint unions of simple, closed geodesics on $X$ and the full preimages of such in $\widetilde X$.
Any family of disjoint complete geodesic lines in $\widetilde X$ that is invariant under the action of $\pi_1S$ by deck transformations descends to a geodesic lamination.
Using the boundary map $\widetilde X \cong \widetilde Y$, we therefore obtain a bijective correspondence between the geodesic laminations on $X$ and the geodesic laminations on $Y$.

A geodesic lamination $\lambda$ is \emph{minimal} if every (half-)leaf is dense in $\lambda$.
If $\lambda$ is minimal and has an infinite leaf, then the transversal space is a cantor set.
Generally, $\lambda\subset X$ decomposes as a union of at most $3g-3$ minimal sublaminations and at most $6g-g$ many infinite isolated leaves that spiral onto the minimal components.
We say that $\lambda$ is \emph{maximal} if it is not contained in any other geodesic lamination on $X$.
Every geodesic lamination is contained in a maximal geodesic lamination, called a \emph{maximal completion}.
\medskip

The projective tangent bundle $\mathbb P TX$ is the quotient of $T^1X$ by the fiberwise antipodal involution and caries a metric making the quotient map a local isometry.
Every geodesic lamination $\lambda\subset X$ can be embedded in $\mathbb P TX$ as its tangent  line field $\mathbb P T\lambda$.
The following lemma can be deduced from \cite[Lemma 1.1]{K:NR}.
\begin{lemma}\label{lem: lamination geometry}
    The map $x\in \lambda \mapsto [T_x\lambda]\in \mathbb PT\lambda$ is a bi-Lipschitz homeomorphism.  
\end{lemma}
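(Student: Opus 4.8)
The statement asserts that the map $x\in\lambda\mapsto[T_x\lambda]\in\mathbb P T\lambda$ is a bi-Lipschitz homeomorphism, where $\lambda$ carries the metric induced from $X$ and $\mathbb P T\lambda\subset\mathbb P TX$ carries the metric making $\mathbb P TX\to X$ smooth and the double cover $T^1X\to\mathbb P TX$ a local isometry. The plan is to verify the two Lipschitz bounds separately and then upgrade to a homeomorphism using compactness and injectivity. Continuity and injectivity of $x\mapsto[T_x\lambda]$ are clear: each point of $\lambda$ lies on a unique leaf, whose tangent direction varies continuously (the leaves are geodesics and $\lambda$ is closed), and the projection $\mathbb P T\lambda\to\lambda$ is the obvious inverse, so we already have a continuous bijection. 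Since $\lambda$ is compact this is automatically a homeomorphism; the content is the bi-Lipschitz estimate.

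First I would establish the easy direction: the basepoint projection $\pi:\mathbb P T\lambda\to\lambda$ is $1$-Lipschitz (it is the restriction of the Riemannian submersion $\mathbb P TX\to X$), so $d_X(x,y)\le d_{\mathbb P TX}([T_x\lambda],[T_y\lambda])$ up to a uniform constant, giving one of the two inequalities for free. The substantive direction is that $x\mapsto[T_x\lambda]$ is Lipschitz, i.e.\ that two nearby points of $\lambda$ have nearly parallel leaf-directions: $d_{\mathbb P TX}([T_x\lambda],[T_y\lambda])\le C\,d_X(x,y)$. This is exactly a bound on how fast the tangent line field of $\lambda$ can turn. I would quote \cite[Lemma 1.1]{K:NR}, which controls precisely this — the angle between leaves of a geodesic lamination through nearby points is bounded linearly in the distance, because two geodesic segments of a lamination cannot cross (disjointness of leaves) and in a negatively curved space two disjoint geodesics that come within distance $\varepsilon$ must be nearly tangent there, with a comparison constant depending only on the curvature bounds of $X$. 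Combining this angular bound with the fact that the metric on $\mathbb P TX$ is, fiberwise and in the horizontal directions, comparable to (angle $+$ basepoint distance), one gets the desired Lipschitz constant.

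Concretely, the steps in order are: (i) note $x\mapsto[T_x\lambda]$ is a continuous bijection with continuous inverse $\pi$, hence a homeomorphism by compactness of $\lambda$; (ii) observe $\pi$ is $1$-Lipschitz (restriction of a Riemannian submersion), giving $d_X(\pi(u),\pi(v))\le d_{\mathbb P TX}(u,v)$; (iii) invoke \cite[Lemma 1.1]{K:NR} to get a uniform constant $C_0$ with $\angle(T_x\lambda,T_y\lambda)\le C_0\,d_X(x,y)$ for $x,y\in\lambda$ within some fixed injectivity-radius scale; (iv) pass from $T^1X$ to $\mathbb P TX$ — the antipodal quotient only improves the angular comparison — and combine the angular bound with the basepoint bound, using that locally $d_{\mathbb P TX}$ is bi-Lipschitz to the sum of horizontal and vertical components, to conclude $d_{\mathbb P TX}([T_x\lambda],[T_y\lambda])\le C\,d_X(x,y)$; (v) handle points at large distance trivially, since both metrics are bounded on the compact sets $\lambda$ and $\mathbb P T\lambda$, so a global bi-Lipschitz constant follows from the local one by a standard compactness/chaining argument.

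The main obstacle is step (iii)/(iv): extracting the linear-in-distance angle bound for the leaf directions. It rests on the non-crossing property of lamination leaves together with a visibility/convexity estimate in negative curvature, and the honest work is checking that the constant is uniform over $\lambda$ (not just locally finite) and transfers correctly through the $T^1X\to\mathbb P TX$ quotient and the identification $T^1\lambda\cong\hlambda$. Since the paper explicitly attributes this to \cite[Lemma 1.1]{K:NR}, I would lean on that citation rather than reprove the curvature estimate, and spend the written proof making the reduction in (iv) precise.
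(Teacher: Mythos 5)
Your proposal is correct and takes the same approach as the paper, which does not spell out a proof but simply observes that the lemma can be deduced from \cite[Lemma 1.1]{K:NR} — the angular control you invoke in steps (iii)--(iv). Your additional remarks (the easy direction via the Riemannian submersion, the homeomorphism via compactness of $\lambda$, and the chaining argument for global constants) are standard and fill in exactly what the paper leaves implicit.
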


There is a \emph{Hausdorff metric} $d_X^H$ on the set of geodesic laminations coming from the restriction of the Hausdorff metric on closed subsets of $X$.
This metric gives the set of  geodesic laminations the structure of a compact metric space.
While the orbit equivalence from \eqref{eqn: holder orbit equiv} need not induce a H\"older map $\P TX \to \P TY$, it does however induce a H\"older map between closed invariant sets of the geodesic foliations of $\P TX$ and $\P TY$ with their Hausdorff metrics.
Thus, Lemma \ref{lem: lamination geometry} together with \eqref{eqn: holder orbit equiv} show that the Hausdorff metric on geodesic laminations depends bi-H\"older continuously on $X$ (see also \cite[Lemma 7]{BZ:GL}).

\medskip

Let $\epsilon>0$ be given, and consider the $\epsilon$-neighborhood $\cN_\epsilon(\lambda)\subset X$.  For $\epsilon$ small enough, this neighborhood can be foliated by contractible $C^1$ arcs called \emph{ties} that meet $\lambda$ transversely, for example using Thurston's \emph{horocycle foliation} \cite{Thurston:notes, Thurston:stretch} or the construction of the \emph{orthogeodesic foliation} studied in \cite{CF, CFII}.\footnote{Although both constructions are given only for hyperbolic surfaces, i.e., only when $X$ has constant negative curvature, they both apply in full generality.}
Such a foliated neighborhood is called a \emph{train track neighborhood}.

If every component of $X\setminus \mathcal \cN$ is a deformation retract of the component of $X\setminus \lambda$ containing it, then we say that $\cN$ is a \emph{snug} for $\lambda$.  If $\epsilon$ is small enough, then $\cN$ is snug for $\lambda$.
A snug neighborhood $\cN$ has the property that if $k$ is a tie, and $J$ is a complementary component of $k\setminus \lambda$ not containing an endpoint of $k$, then the leaves corresponding to the endpoints of $J$ are asymptotic in $\lambda$; this can be proved directly from the definition of snugness.
The orientation covering $\hlambda\to \lambda$ extends to a two-fold covering $\widehat \cN \to \cN$ if $\cN$ is snug.

The leaf space of the foliation of $\cN = \cN_\epsilon(\lambda)$ by ties, denoted by $\tau$, has the structure of a \emph{train track}, i.e., a graph with a $C^1$ structure at its vertices (called \emph{switches}) and edges (called \emph{branches}) satsifying a number of non-degeneracy conditions that can be $C^1$-embedded in $\cN$ transverse to the ties.  See \cite[\S1.1]{PH} or \cite[\S8.9]{Thurston:notes}. 
%A lamination $\lambda'$ which is contained in a train track neighborhood $\mathcal N_\epsilon(\lambda)$ meeting all ties transversely is \emph{carried} by $\tau$.  
%Every lamination that is close enough to $\lambda$ in the Hausdorff topology is carried by $\mathcal N_\epsilon(\lambda)$.
%If $\lambda'$ is carried by $\tau$ (denoted $\lambda\prec \tau$), then there is a $C^1$ map $\lambda' \to \tau$ induced by collapsing the ties.
%The orientation double cover $\widehat \lambda \to \lambda$ induces a double cover $\widehat {\mathcal N} \to \mathcal N$.
%Train tracks are useful for a number of reasons when working with geodesic laminations.

%The following can be used to show that every geodesic lamination on a closed surface has Hausdorff dimension 1.  
For topological reasons, the $2$-dimensional Lebesgue measure of a geodesic lamination on a closed surface is zero.
For a $C^1$ arc $k$ transverse to $\lambda$, Fubini then implies that the $1$-dimensional Lebesgue measure of $k\cap \lambda$ is zero.
A proof of the following lemma can be adapted from \cite[Proposition 4.1]{Farre:Hamiltonian} or from the arguments in \cite[\S1]{Bon:SPB}.
The lemma implies also that the Hausdorff dimension of $\lambda$ is $1$; see also \cite{BS}.
%It uses the fact that the intersection with a transversal to a geodesic laminations on closed hyperbolic surfaces has zero $1$-dimensional Lebesgue measure.  %Hausdorff dimension $1$ \cite{BS}.
\begin{lemma}\label{lem: tt geometry}
    For any given $C^1$ arc $k$ transverse to $\lambda \subset X$ and $\epsilon>0$, the connected components of $k\cap \cN_\epsilon(\lambda)$ each have diameter $O(\epsilon)$, and there are at most $O(\log1/\epsilon)$ such components.
\end{lemma}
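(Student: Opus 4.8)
The plan is to work in the universal cover $\widetilde X$ and exploit two features: (i) the leaves of $\lambda$ form a family of pairwise disjoint complete geodesics, and (ii) in a negatively curved (hence uniquely geodesic, Gromov hyperbolic) space, disjoint geodesics that come within distance $\epsilon$ of a fixed transversal must diverge at a definite exponential rate. First I would fix the $C^1$ arc $k$ and lift it to an arc $\widetilde k$ in $\widetilde X$; since $k$ is transverse to $\lambda$, the slope of $\widetilde k$ relative to the leaves it crosses is bounded below by some $\theta_0 > 0$ depending only on $k$ (this uses Lemma \ref{lem: lamination geometry} or just compactness of $k$ together with transversality). This lets me replace statements about $k \cap \cN_\epsilon(\lambda)$ by statements about how a transverse arc of bounded slope meets an $\epsilon$-neighborhood of a geodesic family.

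For the diameter bound: a connected component $c$ of $\widetilde k \cap \cN_\epsilon(\lambda)$ lies within distance $\epsilon$ of $\lambda$ at every point, so for each $x \in c$ there is a leaf $\ell_x$ with $d(x,\ell_x) \le \epsilon$. I would argue that all these leaves fellow-travel: pick $x, y$ the endpoints of $c$; the geodesics $\ell_x, \ell_y$ each pass within $\epsilon$ of a sub-arc of $\widetilde k$, and because $\widetilde k$ has slope $\ge \theta_0$ to the leaf direction, if $\mathrm{diam}(c)$ were large then $\ell_x$ and $\ell_y$ would have to cross or get $\epsilon$-close to each other at two points that are far apart along the arc but with a large transverse displacement in between — contradicting disjointness of leaves via the standard fact that two disjoint geodesics in a $\delta$-hyperbolic space that are $\epsilon$-close at one point diverge monotonically (convexity of the distance function along geodesics in nonpositive curvature, or a thin-triangles argument). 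Making the constant in $\mathrm{diam}(c) = O(\epsilon)$ explicit just requires tracking the comparison-geometry constants (an upper curvature bound $-a^2 \le K \le -b^2 < 0$ for $X$) and $\theta_0$; I would not grind through this.

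For the count: once each component has diameter $O(\epsilon)$ and consecutive components along $\widetilde k$ are separated (the endpoints of a complementary interval of $\widetilde k \setminus \cN_\epsilon(\lambda)$ see leaves that are genuinely $2\epsilon$-separated), I would show the number of components is $O(\log 1/\epsilon)$ by a divergence/branching argument in the train track: following $\widetilde k$ across successive components corresponds to moving through branches of the train track neighborhood, and the leaves witnessing two components that are order-$1$ apart along $k$ must, going backward along $\lambda$, have come from the same branch and then split — so the number of times this can happen before the leaves are pulled apart to macroscopic ($\Omega(1)$) transverse distance is at most the number of exponential-doubling steps from scale $\epsilon$ to scale $1$, i.e.\ $O(\log 1/\epsilon)$. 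Equivalently, one can cite the proof of \cite[Proposition 4.1]{Farre:Hamiltonian} or \cite[\S1]{Bon:SPB} verbatim: the combinatorics of a snug train track carrying $\lambda$ together with exponential divergence of leaves gives exactly this bound, and the Hausdorff-dimension-$1$ consequence ($\sum \mathrm{diam} = O(\epsilon \log 1/\epsilon) \to 0$, while a covering count gives the matching lower estimate) follows immediately.

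\textbf{Main obstacle.} The delicate point is the diameter estimate $\mathrm{diam}(c) = O(\epsilon)$ with the implied constant \emph{uniform over all components and all small $\epsilon$}: it genuinely uses that the leaves of $\lambda$ are disjoint complete geodesics (not just that $\lambda$ is a closed set) together with the uniform transversality of $k$, and one must be careful that the orthogeodesic/horocyclic ties foliating $\cN_\epsilon(\lambda)$ are close enough to orthogonal to the leaves that ``within the $\epsilon$-neighborhood'' really does force $\epsilon$-fellow-traveling of the nearby leaves. Everything else is bookkeeping with $\delta$-hyperbolicity constants.
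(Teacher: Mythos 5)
The paper does not actually spell out a proof of Lemma~\ref{lem: tt geometry}; it only points the reader to \cite[Proposition 4.1]{Farre:Hamiltonian} and \cite[\S1]{Bon:SPB}, which you also cite, so in that sense your plan identifies the right sources. Your intermediate ingredients --- working in $\widetilde X$, the uniform lower bound $\theta_0$ on the angle between $k$ and the leaves it meets (via compactness of $k\cap\lambda$), and exponential divergence of disjoint geodesics --- are the correct ones.

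However, the argument you sketch for the diameter bound has a genuine gap. You claim that if $\mathrm{diam}(c)$ were large, then the two leaves $\ell_x$ and $\ell_y$ nearest the endpoints of $c$ ``would have to cross or get $\epsilon$-close to each other at two points far apart along the arc.'' This does not follow. Since $k$ makes angle $\ge\theta_0$ with the leaves, the transverse displacement from $x$ to $y$ is comparable to $\mathrm{diam}(c)$, so $\ell_x$ and $\ell_y$ are roughly $\mathrm{diam}(c)$ apart near $c$ and their disjointness is never threatened; each $\ell_t$ fellow-travels only the tiny $O(\epsilon)$ subarc of $c$ near $t$. More to the point, ``disjoint geodesics diverge'' alone cannot give the diameter bound: a union of parallel geodesics spaced at $\approx\epsilon$ is a family of disjoint geodesics in negative curvature for which a transverse arc would stay inside the $\epsilon$-neighborhood for arbitrary length. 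What rules this out for a geodesic lamination on a closed surface is the finiteness of the complementary structure --- finitely many ideal polygons, finitely many spikes, exponential decay of spike width in depth --- which forces a scale-by-scale bound of the form: the number of gaps of $k\cap\lambda$ of width in $[w,2w]$ is $O(1)$, uniformly over dyadic scales $w\le 1$.

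This counting estimate is in fact the engine for \emph{both} assertions, and the logical order in your plan (prove $\mathrm{diam}(c)=O(\epsilon)$ first, then count) should be reversed. Once the per-scale count is in hand, the $O(\log 1/\epsilon)$ total count is the sum over scales $w\ge\epsilon$, while the $O(\epsilon)$ diameter bound follows from the other dyadic tail: a component $c$ of $k\cap\cN_\epsilon(\lambda)$ consists (up to the Lebesgue-null set $c\cap\lambda$) of gaps, each of width $O(\epsilon)$ by the slope bound, and summing $O(1)$ gaps of width $\approx 2^{-n}$ over $2^{-n}\lesssim\epsilon$ gives total length $O(\epsilon)$. That is precisely the Birman--Series-type argument that the two cited references carry out; your ``exponential-doubling steps from scale $\epsilon$ to scale $1$'' heuristic for the count is pointing at it, but the diameter bound is not logically independent of it and cannot be dispatched first by the crossing argument as written.
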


%The geodesic flow on $T^1X$ preserves $\hlambda$, and $\hlambda$ contains the support of a flow invariant finite Borel measure $\mu$ on $T^1X$.
%Denote by $\ML(T^1X)$ the space of finite flow invariant Borel measures on $T^1X$ that are invariant under the antipodal involution of $T^1X$ and whose support is tangent to a geodesic lamination.
%Then $\ML(T^1X)$ is equipped with the weak-$*$ topology. 

A \emph{transversely measured geodesic lamination} $\mu$ with support $\lambda$ is the assignment of, for every arc $k$ transverse to $\lambda$, a finite positive Borel measure $\mu_k$ with support equal to $\lambda\cap k$.  The assignment $k \mapsto \mu_k$ is required to be natural under restriction and to be slide invariant, i.e., if $k'$ is homotopic via $H$ to $k$ through arcs transverse to $\lambda$, then $H_*\mu_k = \mu_{k'}$.
If $\lambda$ has isolated leaves, then it is not the support of a transverse measure.
Thurston's space $\ML(S)$ of measured geodesic laminations is a manifold whose positive projectivization  $\PML(S)$ is a sphere of dimension $6g-7$ \cite{Thurston:bull}.
%Certain train tacks can be used to give coordinate charts for $\ML(S)$.
Thurston constructed a natural measure (called the Thurston measure) on $\ML(S)$ in the class of Lebesgue.
%Via disintegration,  $\PML(S)$ is homeomorphic to the space of probabilty measures in $\ML(T^1X)$; see \cite{} or Proposition \ref{prop: disint is equiv}, below.

A minimal geodesic lamination $\lambda$ is called \emph{uniquely ergodic} if the simplex of measures in $\ML(S)$ whose support is $\lambda$ is a ray.
For the Thurston measure, the support of almost every measured lamination is maximal, minimal, non-orientable, and uniquely ergodic \cite{Ker:ss}.

\subsection{Hitchin representations and convex $\RP^2$-geometry}\label{subsec: convex proj}

We require some general facts about the Hitchin component of surface group representations into $(\mathrm P)\SL(3,\R)$, and the geometry of convex projective structures on the surface $S$. For the uninitiated, there are two helpful surveys on convex projective structures, one by Choi, Lee, and Marquis \cite{CLM} and one by Benoist \cite{Benoist_survey}.

\medskip
A subset of the real projective space $\RP^d$ is \textit{properly convex} if its closure is contained in an affine chart and it is convex in such a chart.
A \textit{(real, properly) convex projective structure} on a surface $S$ is an (incomplete) $(\SL(3,\R), \RP^2)$-structure so that the developing map is a homeomorphism to some open properly convex subset $\Omega\subset \RP^2$. 

Goldman and Choi demonstrated that the set of holonomies (considered up to conjugacy) of convex projective structures on a surface form a component of the $\SL(3,\R)$-character variety. This component is usually called the \textit{Hitchin comonent} and  is denoted $\Hit_3$ \cite{Choi-Goldman, Labourie:Anosov}. 

A hyperbolic structure on a surface is therefore an example of a convex projective structure: the developing map of such a structure is a homeomorphism to $\HH^2$, which may be realized as the projective (Klein) model. This is a properly convex domain in $\RP^2$. Any properly convex domain is a metric space with the \textit{Hilbert} metric; the projective lines are geodesics for this metric.

In general, for a representation $\rho$ with $[\rho]\in \Hit_3$, the domain $\Omega=\Omega(\rho)$ is strictly convex and the boundary $\partial \Omega$ has regularity $C^{1+a}$  for some $a\in (0,1]$ \cite{Benoist_CDI}. It is true in general that if a domain $\Omega$ is $C^1$ and strictly convex, then $\Omega^*$ is as well.

As in the case of hyperbolic geometry, there is a $(\pi_1 S,\rho)$-equivariant H\"older-continuous identification 
\begin{equation}\label{eqn: boundary map}
    \xi=\xi(\rho): \partial \pi_1 S \to \partial \Omega\subset \RP^2.
\end{equation}

In the dual projective space $(\RP^2)^*$, the action of $\rho$ preserves a convex projective domain $\Omega^*$, which is described as
\[\Omega^* = \{[\alpha]\in (\RP^2)^* \mid [\ker \alpha] \cap \overline \Omega = \emptyset\}.\]
It is not difficult to check that $\Omega^*$ is a properly convex projective domain, and so also is furnished with a limit map 
\[\xi^*=\xi^*(\rho): \partial \pi_1 S \to \partial \Omega^*\subset (\RP^2)^*.\]
The maps $\xi$ and $\xi^*$ have the property that for all $z\in\partial\pi_1 S$, $\xi^*(z)(\xi(z))=0$, i.e., the hyperplane $\xi^*(z)$ is the unique supporting hyperplane to $\Omega$ at the point $\xi(z)$.

\subsection{The Anosov property}\label{subsec: Anosov property}

We require some of the theory of Anosov representations from a closed surface group into $\SL(d,\R)$ following Labourie \cite{Labourie:Anosov}.
For further development of the theory of Anosov representations, see \cite{GW12, BPS19, KLP17}.

Let $\rho: \pi_1 S \to \SL(d,\R)$ be a representation.
We construct a flat $(\R^d)^*$-bundle over $S$ by pushing forward the flat connection on the trivial bundle $\widetilde S \times (\R^d)^*$ to the quotient by the diagonal action:
\[F^\rho = \left(\widetilde S \times (\R^d)^*\right) / \left[(x, v) \sim (\gamma.x, \rho(\gamma) v)\right].\]
The flat connection on $F^\rho$ is called $\nabla^\rho$.

Choose for reference a hyperbolic metric $X$ on $S$ and a continuously varying inner product on $F^\rho$, and consider the flat bundle $E^\rho \to T^1X$ obtained by pulling back $F^\rho$ along the projection $\pi: T^1X \to X \cong S$.  By abuse of notation, we denote also the flat connection on $E^\rho$ by $\nabla^\rho$.
To define the Anosov property, we consider the parallel transport along flow lines for the geodesic flow.

\begin{definition}\label{def: Anosov}
    We say that $\rho$ is \emph{Anosov} if there are constants $c_1, c_2>0$ a H\"older continuous splitting $E^\rho_1\oplus ... \oplus E^\rho_k$ of $E^\rho$ such that for $i < j$ we have 
    \[\frac{\|\left(g_{[0,t]}\right)_* v_i \|}{\|\left(g_{[0,t]}\right)_* v_j \|}\le c_2e^{-c_1t}, \]
    for all $t\ge 0$ and unit vectors $v_i\in E_i^\rho$ and $v_j \in E_j^\rho$, where $\left(g_{[0,t]}\right)_*$ is the parallel transport along the segment $g_{[0,t]}$.
    When $k=d$, $\rho$ is \textit{Borel}-Anosov. When $\dim E^\rho_1 =1$, $\rho$ is \textit{projective}-Anosov.
\end{definition}

There is an analogously defined bundle\footnote{The minor annoyance of having an $\R^d$-bundle decorated with a `$*$' is made up for by how often we will work with $E^\rho$.} $(E^\rho)^*$ constructed from the natural action of $\rho$ on $\R^d$. Its Anosov-splitting is dual to the Anosov splitting of $(E^\rho)^*$ for dynamical reasons.

The antipodal involution $p\mapsto \overline p$ satisfies $g_t\overline p = \overline{g_{-t}p}$, and so induces a symmetry 
\[E^\rho_i|_p = E^\rho_{k-i}|_{\overline p},\]
so that the dimensions of the factors in the splitting match up in pairs: $\dim E^\rho_i = \dim E^\rho_{k-i+1}$.
\begin{remark}
    Anticipating what is to come in \S\ref{sec:equivariant measures}, we point out that if $v\in E^\rho$ satisfies
    \[\limsup_{t\to \infty} \frac{1}{t}\log \left\|\left(g_{[0,t]}\right)_*v\right\| =0,\]
    then $k$ is odd, and $v\in E_{(k+1)/2}$.
\end{remark}

Because the splitting $(E^\rho)^* = (E^\rho_1)^*\oplus \dots \oplus (E^\rho_k)^*$ is $g_t$-invariant, it provides a \textit{limit curve} with image in $\RP^{d-1}$ when $\rho$ is projective- (or Borel-) Anosov:
\[\xi: \partial \pi_1 S \rightarrow \RP^{d-1}, \]
by assigning to $z\in \partial \pi_1 S$ the projective class of $(E^\rho)^*_k|_p$ for any $p\in T^1X$ satisfying $\lim_{t\rightarrow\infty}(g_t p) =z$.

The map $\xi$ is $\rho$-equivariant, and is the same curve from Equation \eqref{eqn: boundary map} in the case that $[\rho]\in \Hit_3$. 

We will take a special interest in the bundle $E^\rho_2$ for $[\rho]\in \Hit_3$. The following lemma will be helpful in the constructions of Section \ref{sec:equivariant measures}.

\begin{lemma}\label{lemma: E_2 has sections}
    Let $\rho:\pi_1 S\to \SL(3,\R)$ be so that $[\rho]\in \Hit_3$, and let $E^\rho=E^\rho_1\oplus E^\rho_2 \oplus E^\rho_3$ be the Anosov splitting of the associated flat bundle. Then the line-bundle $E^\rho_2$ admits a global H\"older continuous (non-zero) section.
\end{lemma}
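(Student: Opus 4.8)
The plan is to show that the line bundle $E^\rho_2 \to T^1X$ admits a global nonvanishing H\"older section by building it from the limit maps $\xi$ and $\xi^*$ associated to the Hitchin representation $\rho$. Recall that for $p \in T^1X$ with forward endpoint $z^+ = z^+(p) \in \partial\pi_1S$ and backward endpoint $z^- = z^-(p)$, the Anosov splitting is determined pointwise by the flags at $z^\pm$: the attracting line $E^\rho_1|_p$ corresponds (via the dual picture) to $\xi^*(z^+)$, the repelling line $E^\rho_3|_p$ to $\xi^*(z^-)$, and the middle line $E^\rho_2|_p$ is the remaining invariant complement, which can be described as the intersection of the plane spanned by two of the data at $z^+$ with the plane spanned by two at $z^-$ --- concretely, in the bundle $(E^\rho)^*$ with fiber $\R^3$, the repelling line at $p$ for the flow is $\xi(z^-)$, the attracting plane is $\xi^*(z^+)$ (as a hyperplane in $\R^3$), and $(E^\rho_2)^*|_p = \langle \xi(z^-)\rangle^{\perp\!\!\!\perp}$ intersected appropriately; dualizing gives $E^\rho_2|_p$ as a well-defined line depending continuously on $(z^-, z^+)$.

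\textbf{Key steps.} First I would fix the precise description: working in $(E^\rho)^*$ with fiber $\R^3$, set $\ell^-(p) = \xi(z^-(p)) \in \RP^2$ (the repelling line) and $P^+(p) = \ker\xi^*(z^+(p)) \subset \R^3$ (the attracting plane); transversality of the limit maps (an axiom of the Anosov/Hitchin property, using $\xi^*(z)(\xi(w)) \ne 0$ for $z \ne w$) guarantees $\ell^-(p) \not\subset P^+(p)$, and $(E^\rho_1)^*\oplus(E^\rho_2)^*|_p = P^+(p)$ while $(E^\rho_2)^*\oplus(E^\rho_3)^*|_p$ is the plane spanned by $\ell^-(p)$ and one more invariant direction; the honest statement is that $(E^\rho_2)^*|_p = P^+(p) \cap \big(\ell^-(p) \oplus \ell^?\big)$, so I will instead characterize $E^\rho_2|_p$ directly in the dual flag data as the unique line fixed by the flow that is neither attracting nor repelling --- equivalently, the kernel of the projection of $\R^3$ onto $\ell^-(p)$ along $P^+(p)$, intersected with $\ker$ of the projection onto $P^+(p)$-complement. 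Second, I would observe that each of $z^\pm : T^1X \to \partial\pi_1S$ is H\"older continuous (the endpoint maps of the geodesic flow on a negatively curved surface are H\"older), and $\xi, \xi^*$ are H\"older by \eqref{eqn: boundary map} and the discussion in \S\ref{subsec: convex proj}; hence $p \mapsto (\ell^-(p), P^+(p))$ is H\"older into the relevant flag variety. Third, the operation ``intersect a plane transverse to a line'' is a real-analytic (hence locally Lipschitz) map on the open locus of transverse pairs, so $p \mapsto E^\rho_2|_p$ is a H\"older map $T^1X \to \P((E^\rho)^*)$, i.e., a H\"older section of the projectivized bundle; its image is the line bundle $E^\rho_2$, confirming this is the Anosov factor (it is flow-invariant by equivariance of $\xi,\xi^*$ under the flow's action on endpoints, and has the correct contraction rates by the Anosov inequalities). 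Fourth --- and this is the crux --- I must upgrade the section of $\P E^\rho_2$ to a nonvanishing section of $E^\rho_2$ itself; this requires that the line bundle $E^\rho_2 \to T^1X$ be \emph{orientable} (trivial as an $\R$-line bundle), so that a continuous choice of unit vector exists globally. For this I would use that $T^1X$ for a surface of genus $g \ge 2$ has $H^1(T^1X;\Z/2)$ governed by the Gysin sequence, but more directly: the full bundle $E^\rho$ is trivial (it is pulled back from $S$ via $\pi$ and... no --- $F^\rho$ need not be trivial), so instead I would argue that $w_1(E^\rho_2) = 0$ by noting that $E^\rho_1 \oplus E^\rho_2 \oplus E^\rho_3 = \pi^* F^\rho$ and computing $w_1$ of each factor; the symmetry $E^\rho_i|_p \cong E^\rho_{k-i}|_{\bar p}$ under the antipodal map forces $E^\rho_2$ to be preserved (not swapped) so $w_1(E^\rho_2)$ is antipodal-invariant, and combined with $w_1(E^\rho_1) + w_1(E^\rho_2) + w_1(E^\rho_3) = \pi^* w_1(F^\rho) = 0$ (since an $\R^3$-bundle with a flat $\SL(3,\R)$, hence orientation-preserving, structure has $w_1 = 0$) and $w_1(E^\rho_1) = w_1(E^\rho_3)$ by the antipodal symmetry, I get $w_1(E^\rho_2) = 0$. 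Hence $E^\rho_2$ is orientable and the H\"older section of $\P E^\rho_2$ lifts to a H\"older nonvanishing section of $E^\rho_2$.

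\textbf{Main obstacle.} The genuinely delicate point is the orientability argument in the last step: one must be careful that ``$w_1(E^\rho_1) = w_1(E^\rho_3)$'' really follows from the antipodal symmetry (the symmetry is a bundle isomorphism covering the antipodal involution $\iota$ of $T^1X$, so it gives $E^\rho_3 \cong \iota^* E^\rho_1$, whence $w_1(E^\rho_3) = \iota^* w_1(E^\rho_1)$ --- one then needs that $\iota^*$ acts trivially on the relevant part of $H^1(T^1X;\Z/2)$, or else argue orientability of $E^\rho_2$ more directly, e.g., by exhibiting the section concretely from a global continuous choice of frame for the flag data, which is possible because $\partial\pi_1S$ with its cyclic order lets one orient $\xi, \xi^*$ consistently and pick, say, the normalized vector whose pairing against $\xi^*(z^+)$ has a fixed sign). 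An alternative that sidesteps the cohomology entirely: since $\rho$ is Hitchin, the limit curve $\xi$ together with the osculating flag is a \emph{Frenet} curve (Labourie), and the second osculating line along $\xi$ can be oriented using the boundary orientation of $S$; pulling this back along $p \mapsto z^+(p)$ and comparing with the $z^-$ data gives an explicit unit section without invoking Stiefel--Whitney classes at all. I would present the Frenet/explicit-sign version as the clean proof and relegate the cohomological remark to a sentence.
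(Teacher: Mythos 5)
Your argument is correct, but it takes a genuinely different route from the paper. The paper's proof observes that orientability of $E^\rho_2$ is a topological statement that depends only on the connected component of $[\rho]$, reduces (via the Thurston--Ehresmann principle) to the Fuchsian case $\rho(\pi_1S)<\SO_\circ(2,1)$, and then uses the Lorentzian form $Q$ of signature $(2,1)$: a choice of component $\mathscr C$ of the null cone $Z(Q)\setminus\{0\}$ explicitly orients $E^\rho_1$ and $E^\rho_3$ (the limit curve lies on $[Z(Q)]$), and the ambient orientation of $E^\rho$ then co-orients $E^\rho_2$. You stay in the general Hitchin case and argue instead via Stiefel--Whitney classes, using $w_1(E^\rho_1)+w_1(E^\rho_2)+w_1(E^\rho_3)=\pi^*w_1(F^\rho)=0$ together with the antipodal symmetry $E^\rho_3\cong\iota^*E^\rho_1$ to force $w_1(E^\rho_2)=0$. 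This works, but you leave a loose thread: you flag that one ``needs $\iota^*$ to act trivially on $H^1(T^1X;\Z/2)$.'' That is in fact automatic and worth saying out loud: the antipodal involution $\iota$ of $T^1X$ is fiberwise rotation by $\pi$, hence homotopic to the identity since $\SO(2)$ is connected, so $\iota^*=\mathrm{id}$ on all cohomology and the cancellation is clean. With that observation inserted, your cohomological argument is complete and does not need the Frenet alternative you gesture at.

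Two further remarks. First, the lengthy opening discussion re-deriving $E^\rho_2|_p$ from the flag data $(\xi(z^-),\xi^*(z^+))$ and arguing H\"older continuity is not needed: the H\"older regularity of the splitting is part of the \emph{definition} of the Anosov property (Definition~\ref{def: Anosov}), so the only genuine content of the lemma is orientability of the line bundle $E^\rho_2$, which is where your energy should be concentrated. Second, the trade-off between the two proofs is that yours avoids the deformation to the Fuchsian locus and works intrinsically (which could matter if one wanted the statement for Borel--Anosov representations not known to be deformable to Fuchsian ones), at the cost of invoking characteristic-class machinery where the paper gives a bare-hands geometric orientation from the Lorentzian null cone.
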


\begin{proof}
    Because this is a topological statement, it is sufficient  (by the Thurston-Ehrasmann principle) to complete the proof in the case that $\rho$ is Fuchsian: when $\rho(\pi_1 S)<\SO(2,1)<\SL(3,\R).$

    Since the image of the holonomy is in the special linear group, the bundle $E^\rho$ is orientable (i.e. the associated orientation bundle admits a global section). Every Fuchsian representation in $\Hit_3$ not only lies in $\SO(2,1)$, but also in the identity component: $\SO_\circ(2,1)$. The group $\SO(2,1)$ preserves a quadratic form $Q$ of signature $(2,1)$, and so preserves the set of null vectors for this form: $Z(Q)\subset\R^3$, which is a cone on a circle. The identity component $\SO_\circ(2,1)$ may be defined by the property that it preserves the two components of $Z(Q)\setminus \{0\}$. As a result, there exists a global continuously varying choice of component of this cone in $\mathscr C \subset E^\rho \cong (E^\rho)^*$ where the bundles are identified by $Q$. 
    
    The limit curve $\xi$ in the Fuchsian case has image in the projectivization of $Z(Q)$. Upon making a choice of $\pm \mathscr C$, one has chosen a global orientation for the line-bundles $E^\rho_1$ and $E^\rho_3$: choose the direction that lies in $\mathscr C$. Call these sections $[x]$ and $[z]$ respectively. These orientations, in the presence of an orientation $o$ on $E^\rho$, co-orient $E^\rho_2$, by insisting that an orientation $[y]$ on $E^\rho_2$ satisfy
    \[[x\wedge y \wedge z] =o\]
    at every point.
    The regularity statement follows from the regularity of $\xi$; the lemma is proven.
\end{proof}

\subsection{Cohomology with coefficients}\label{subsec: cohomology}
Our primary reference for this material is \cite{JM:deformations}.  %The modern theory (for the dual) seems to go back at least to Steenrod \cite{Steenrod:homology}.
Let $\Gamma$ be a group and $M$ be a connected CW-complex with fundamental group isomorphic to $\Gamma$, let $V$ be a vector space, and let $\rho: \Gamma \to \GL(V)$ be a representation.
The cohomology of $\Gamma$ with coefficient module $V$ equipped with its $\rho$-action appears, e.g., when studying the space of (infinitesimal) deformations of $G$-conjugacy classes of representations of $\Gamma$ into a Lie group $G$; in this case, $V$ is the Lie algebra $\mathfrak g$ and $\rho$ is the adjoint representation of some $\eta: \Gamma \to G$.

We will only be interested in the $1$-dimensional cohomology defined as follows.
The $1$-cocycles, denoted $Z^1(\Gamma, V_\rho)$ are functions $\varphi: \Gamma \to V$ satisfying the cocycle relation
\[\varphi(\alpha \beta) = \varphi(\alpha) + \rho(\alpha)\varphi (\beta), ~ \forall \alpha, \beta \in \Gamma.\]
The coboundaries $B^1(\Gamma, V_\rho) \le Z^1(\Gamma, V_\rho)$ are cocycles $\varphi$ for which there exist some $v \in V$ 
satisfying \[\varphi(\gamma) = \rho(\gamma) v -v, ~ \forall \gamma \in \Gamma.\]
The cohomology $H^1(\Gamma,V_\rho)$ is the vector space quotient of cocycles by coboundaries.
%The quotient vector space 
%\[H^1(\Gamma, V_\rho) = \frac{Z^1(\Gamma, V_\rho)}{B^1(\Gamma, V_\rho)}\]
%is the cohomology.

Let $\widetilde M$ be the universal cover of $M$.
There is a flat $V$-bundle $F^\rho \to M$ constructed by pushing forward the (trivial) flat connection on $\widetilde M \times V$ to the quotient by the diagonal action:
\[F^\rho = (\widetilde M \times V)/ [(x, v) \sim (\gamma.x, \rho(\gamma)v)].\]
Now we discuss the cohomology of $M$ with coefficients in $F^\rho$.
Denote by $[0,1,2]$ the standard oriented $2$-simplex, so that, algebraically, we have 
\[\partial [0,1,2] = [1,2]-[0,2]+[0,1].\]
The $1$-cochains $\psi$ assign to each singular $1$-simplex $\sigma: [0,1] \to M$ an element $\psi(\sigma)$ in the fiber of $F^\rho$ over $\sigma(0)$.
A $1$-cochain $\psi$ is a cocycle if for every singular $2$-simplex $\tau : [0,1,2] \to M$, we have
\[\psi(\tau|_{[0,2]}) = \psi(\tau|_{[0,1]}) + \overline{\left(\tau|_{[0,1]}\right)}_*\psi(\tau|_{[1,2]}),\]
where $\overline{\left(\tau|_{[0,1]}\right)}_*: F^\rho|_{\tau(1)} \to F^\rho|_{\tau(0)}$ is the parallel transport. 
We say that $\psi$ is a $1$-coboundary if there is a (set theoretic) section $s: M \to F^\rho$ such that 
\[\psi(\sigma) = \overline \sigma_* s(\sigma(1))  -  s(\sigma(0))  \]
for all singular $1$-simplices $\sigma: [0,1] \to M$.
The space of cocycles is denoted by $Z^1(M,F^\rho)$ and the subspace of coboundaries is $B^1(M,F^\rho)$.  The cohomology $H^1(M,F^\rho)$ is the vector space quotient of cocycles by coboundaries.

An identification of $\Gamma$ with $\pi_1(M,p)$ determines an isomorphism
\begin{equation}\label{eqn: cohomology iso}
    H^1(M,F^\rho) \to H^1(\Gamma, V_\rho)
\end{equation}
by evaluating cocycles only on loops based at $p$.

\section{Coaffine geometry and convex cocompact representations}
\label{sec: coaffine geometry}

In this section, we study convex cocompact representations $\eta: \pi_1S \to \Aff^*(3,\R)$ and show that the linear part $L(\eta) = \rho$ is Hitchin (Lemma \ref{lemma: cccca have Hitchin linear part}). The converse, that every coaffine representation with Hitchin linear part acts convex cocompactly, is Lemma \ref{lemma: def is cccca}. The primary geometric object associated to these representations is their minimal convex domain of discontinuity, introduced in \S\ref{subsec: minimal domain coaffine}. 

The main result of this section is that the boundary of this minimal domain hosts a pair of geodesic laminations on the underlying surface $S$ (Lemma \ref{lem: pleated disks}) and that there exist macroscopic bending data which is transverse to these laminations (Lemma \ref{lemma: psi is phi}); the values that the bending cocycle attains are quantitatively close in a dual projective plane to the dual of the lamination (Corollary \ref{cor: cocycles localize}).

\subsection{Coaffine Hitchin representations}\label{subsection: coaffine gp and Hit}

We begin by showing that every convex cocompact coaffine representations acting on $\RP^3$ has Hitchin linear part.
There is a natural projection $Q: \R^4\rightarrow \R^4 / [e_4]$, inducing a map 
\begin{equation}\label{eqn: def [Q]}
    [Q]: \RP^3\setminus \{[e_4]\}\rightarrow \P(\R^4/[e_4])\cong\RP^2.
\end{equation}
The map $Q$ is equivariant with respect to taking the linear part of a transformation $A\in \Aff^*(3,\R)$, i.e.
\[L(A)\circ Q =  Q \circ A.\]
Dual to $Q$, there is an inclusion $Q^*:\TAff^*(3,\R) \cong \ker e_4\hookrightarrow \left(\R^4\right)^*$, inducing \[[Q^*]: \P\TAff^*(3,\R) \cong \left(\RP^2\right)^* \hookrightarrow \left(\RP^3\right)^*.\]
The action of $\Aff^*(3,\R)$ preserves $\P\TAff^*(3,\R)$. 

\begin{lemma}\label{lemma: cccca have Hitchin linear part}
    Suppose $\eta: \pi_1 S \to \Aff^*(3,\R)$ is faithful and acts convex cocompactly on $\RP^3$. Then the linear part $\rho=L(\eta)$ is Hitchin.
\end{lemma}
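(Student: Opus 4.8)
The plan is to show that $\rho = L(\eta)$ is projective Anosov acting on $\RP^2$, and then invoke the characterization of the Hitchin component as the space of convex projective holonomies (Choi--Goldman) together with the fact that convex cocompact subgroups of $\SL(3,\R)$ dividing a strictly convex domain are exactly the Hitchin ones. The starting point is that $\eta$ is convex cocompact on $\RP^3$, hence projective Anosov by the results cited in the introduction (\cite{DGK}, etc.); this gives an $\eta$-equivariant limit curve $\xi_\eta : \partial\pi_1 S \to \RP^3$ avoiding $[e_4]$ (since the convex core lies in $\RP^3 \setminus \{[e_4]\}$ and the limit set is its accumulation in the core), together with a dual limit curve $\xi_\eta^* : \partial\pi_1 S \to (\RP^3)^*$ with transversality.

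First I would push the limit curve forward under $[Q] : \RP^3 \setminus \{[e_4]\} \to \RP^2$ of \eqref{eqn: def [Q]} to obtain a candidate limit curve $\xi := [Q]\circ \xi_\eta : \partial\pi_1 S \to \RP^2$, which is $\rho$-equivariant by the equivariance $L(A)\circ Q = Q\circ A$. The key point to check is that $\xi$ is continuous, injective, and \emph{hyperconvex}/transverse in the sense required for a Hitchin limit curve — i.e. for distinct $x,y,z$, the images $\xi(x),\xi(y),\xi(z)$ span $\R^3$, and there is a compatible curve of flags. The transversality should descend from the transversality of $\xi_\eta$ and $\xi_\eta^*$ in $\RP^3$: a supporting hyperplane $\xi_\eta^*(z)$ to the convex hull avoiding $[e_4]$ lies in $\P\TAff^*(3,\R) \cong (\RP^2)^*$ via $[Q^*]$, so $\xi^*_\rho(z) := ([Q^*])^{-1}\xi^*_\eta(z)$ defines a dual curve in $(\RP^2)^*$, and the pairing $\xi^*_\eta(z)(\xi_\eta(z)) = 0$ together with $\xi_\eta(w) \notin \ker\xi^*_\eta(z)$ for $w \neq z$ transports under $Q$ to the statement that $\xi(w) \notin \ker \xi^*_\rho(z)$. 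Combined with the contraction estimates from the Anosov property of $\eta$ (Definition \ref{def: Anosov}) restricted to the $Q$-image, one gets that $\rho$ is projective Anosov with limit curve $\xi$.

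From projective Anosov plus the existence of the transverse dual curve, $\rho$ preserves the properly convex domain bounded by (the convex hull of) $\xi(\partial\pi_1 S)$ — here one must argue the image is the boundary of a \emph{properly} convex domain, which follows because $\xi_\eta$ avoids $[e_4]$ so its $[Q]$-image stays in an affine chart, and strict convexity of the hull follows from the $C^1$/transversality data. The quotient is compact since $\eta$ (hence $\rho$, which has the same $\partial\pi_1 S$-dynamics) is cocompact on its core. So $\rho$ divides a properly convex domain in $\RP^2$, and by \cite{Choi-Goldman} (see also \cite{Benoist_CDI}) this forces $[\rho] \in \Hit_3$.

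The main obstacle I anticipate is the \emph{proper convexity and non-degeneracy} of the $[Q]$-image: a priori $[Q]\circ \xi_\eta$ could be non-injective or its span could collapse if the limit curve $\xi_\eta$ of $\eta$ were contained in a plane through $[e_4]$ or were tangent to the fibers of $Q$ in a bad way. Ruling this out is where one genuinely uses that $\eta$ is \emph{faithful} and acts \emph{convex cocompactly} with a $3$-dimensional convex core (image homeomorphic to $S\times[0,1]$): the convex core has nonempty interior in $\RP^3\setminus\{[e_4]\}$, which prevents the limit set from lying in a $Q$-degenerate configuration, and irreducibility-type arguments (or the fact that a reducible coaffine $\eta$ cannot be convex cocompact with this core, cf. Example \ref{example: coaffine translation}) exclude the remaining degenerate cases. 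I would also need to confirm that the limit set of $\eta$ genuinely avoids $[e_4]$; this should follow since $[e_4]$ is the unique $\Aff^*(3,\R)$-fixed point and a fixed point in the limit set would contradict minimality of the limit set under the (non-elementary) surface group action, or simply from the convex core lying in the homogeneous space $\RP^3 \setminus \{[e_4]\}$.
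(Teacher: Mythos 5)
Your proposal takes a genuinely different route from the paper's. The paper does not try to show $\rho$ is projective Anosov: instead it projects the entire open convex domain $\Xi = \hull(\im\xi_\eta)\setminus\im\xi_\eta$ under $[Q]$, rather than just the limit curve, observing directly that $\Omega = [Q](\Xi)$ is an open properly convex $\rho$-invariant subset of $\RP^2$ (a projective projection of a properly convex set that avoids the projection center). It then establishes that $\rho$ is \emph{discrete and faithful} by a short, self-contained argument: if $\rho(\gamma_n) \to I$ along an infinite sequence, the translation parts of $\eta(\gamma_n)$ must diverge (by discreteness of $\eta$), which forces any orbit in $\Xi$ toward $[e_4]$ and contradicts $\Xi$ being bounded away from $[e_4]$. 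From there, $\Omega/\rho$ is a continuous image of the compact $\Xi/\eta$, so it is a compact surface carrying a properly convex $\RP^2$-structure, and Choi--Goldman finishes.

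The gaps in your route are precisely at the points you flag but do not resolve. First, you never actually establish that $\rho$ is discrete and faithful; you cannot inherit this from $\eta$ because the kernel of $L$ is the nontrivial translation subgroup, and a faithful $\eta$ can a priori have elements with trivial linear part. You try to route this through "$\rho$ is projective Anosov," but the claim that the contraction estimates for $\eta$ on the $(\R^4)^*$-bundle restrict or descend to give projective Anosov estimates for $\rho$ on the quotient $\R^4/[e_4]$ (equivalently, on the $\eta$-invariant subspace $\ker e_4 \subset (\R^4)^*$) is itself a nontrivial lemma about how the Anosov splitting interacts with the invariant flag, which you only gesture at. Second, even granting projective Anosov, that alone does not give Hitchin in $\SL(3,\R)$ (the Barbot component is also projective Anosov), so you still need the limit curve to bound a properly convex domain with cocompact $\rho$-action, and your argument for cocompactness ("same $\partial\pi_1 S$-dynamics") is not a proof; what is actually needed is that $[Q]\colon \Xi \to \Omega$ is a surjective $(\eta,\rho)$-equivariant map, so $\Omega/\rho$ is a continuous image of the compact $\Xi/\eta$. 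Finally, your worry about $[Q]\circ\xi_\eta$ collapsing is legitimate, but it dissolves if you project $\Xi$ instead of the limit curve: convexity of $[Q](\Xi)$ is automatic and openness follows from the core being either full-dimensional or contained in a hyperplane transverse to the $[Q]$-fibers. The paper's route is shorter because it sidesteps the Anosov-of-$\rho$ argument entirely.
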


\begin{proof}
    Since $\eta$ is faithful, acts convex cocompactly, and $\pi_1S$ is hyperbolic, $\eta$ is projective-Anosov \cite{DGK}. 
    The point $[e_4]$ cannot be in the image of the Anosov limit map $\xi$, as $\xi$ is equivariant and dynamics preserving. Indeed, if $[e_4]$ were in $\im \xi$, the action of $\pi_1S$ on $\partial \pi_1S$ would have a global fixed point, which would be a contradiction.

    By assumption, the action of $\eta$ on the minimal convex domain 
    \[\Xi =\left(\hull \im \xi\right) \setminus \im \xi\]
    is proper, so $\Xi$ also does not contain the point $[e_4]$. As a result, there exist a hyperplane $\alpha\subset \RP^3$ separating $[e_4]$ and $\Xi$.
    
    The action of $\eta$ on $(\R^4/[e_4])$ induced by the quotient $Q$ is $\rho = L(\eta)$. The representation $\rho$ is discrete and faithful for the following reason. If an infinite sequence of group elements $\gamma_n\in\pi_1 S$ existed so that $\lim_{n\to \infty} \rho(\gamma_n)=I$, then the translation part of $\eta(\gamma_n)$ tends to infinity, as $\eta$ is discrete. In this case, the orbit of any point $x\in \Xi$ tends to $[e_4]$ under $\eta(\gamma_n)$, contradicting the fact that $\Xi$ is invariant and bounded away from $[e_4]$. This shows that $\rho$ is discrete, and because $\pi_1S$ has no finite normal subgroups, $\rho$ is faithful by the same argument.
    
    Since $\rho$ is discrete, faithful, and preserves an open proper convex projective domain 
    \[\Omega = [Q](\Xi) \subset \RP^2,\]
    the quotient is homeomorphic to $S$. This is a properly convex projective structure on $S$, and thus $\rho$ is Hitchin by \cite{Choi-Goldman}.
\end{proof}

To reiterate, since $\rho$ is Hitchin, it preserves a properly convex $C^1$ and strictly convex projective domain 
\[\Omega \subset \P(\R^4/[e_4])\] 
and its dual action preserves the dual properly convex $C^1$ and strictly convex domain
\[\Omega^* \subset \P\TAff^*(3,\R).\]

In light of Lemma \ref{lemma: cccca have Hitchin linear part}, we turn our attention to representations of the following prescribed form. Recall from Remark \ref{rmk: R3 star is TAff} that $(\R^3)^*$ means $\TAff^*(3,\R)$.
\begin{definition}\label{def: coaffine rep}
    Let $\rho:\pi_1 S \rightarrow \SL(3,\R)$ be a Hitchin representation, and let $\varphi\in Z^1(\pi_1 S,(\R^3)^*_\rho)$. Define $\eta = \eta_\varphi:\pi_1 S \rightarrow \Aff^*(3,\R)$ by
\begin{equation}\label{eqn: coaffine reps}
    \eta(\gamma) = 
\begin{pmatrix}
    \rho(\gamma) & \mathbf 0\\
    \varphi(\gamma^{-1} ) & 1
\end{pmatrix}.
\end{equation}
    Then $\eta$ is a \emph{coaffine representation with Hitchin linear part}.
\end{definition}

One checks that this is a representation using the cocycle condition; Lemma \ref{lemma: cccca have Hitchin linear part} states that all convex cocompact coaffine representations are of this form.

\medskip
There is a relationship between reducible representations and coboundaries in this setting. 
Recall from \S\ref{subsec: coaffine geometry} that 
\[(\A^3)^*= \{[\alpha]\in (\RP^3)^* \mid \alpha(e_4)=1\}.\]
When $\rho$ is reducible and of the form in \eqref{eqn: coaffine reps}, it preserves a hyperplane $[\ker \alpha ]$ for some $\alpha \in \coaff$. Call such a representation $\eta_\alpha$. Conversely, every hyperplane $[\ker\alpha]$ which does not contain $[e_4]$ defines a linear section of the quotient $\R^4\rightarrow \R^4/[e_4]$, and an accompanying reducible representation $\eta_\alpha$ preserving this hyperplane. Note that the natural projection $\R^4\rightarrow \R^4/[e_4]$ restricted to $[\ker\alpha]$ is a linear isomorphism conjugating $\eta_\alpha$ to $\rho$.

For any pair of reducible representations $\eta_\alpha$ and $\eta_\beta$, the difference $\tau_{\beta\alpha} = \beta-\alpha\in \ker e_4$ is an element of the translation subgroup of $\Aff^*(3,\R)$, and
\[\tau_{\beta\alpha}\eta_\alpha (\tau_{\beta\alpha})^{-1} = \eta_\beta.\]

\begin{lemma}\label{lemma: coboundaries and reducible reps}
    $\varphi\in Z^1(\pi_1 S,(\R^3)^*_\rho)$ is a coboundary, i.e., $\varphi \in B^1(\pi, (\R^3)^*_{\rho})$, exactly when $\eta_\varphi$ is reducible. 
\end{lemma}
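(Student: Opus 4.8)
The plan is to prove both implications directly from the explicit matrix form \eqref{eqn: coaffine reps}, using the relationship already spelled out between reducible coaffine representations $\eta_\alpha$ and hyperplanes $[\ker\alpha]$ with $\alpha\in\coaff$, together with the conjugation formula $\tau_{\beta\alpha}\eta_\alpha(\tau_{\beta\alpha})^{-1} = \eta_\beta$ for translations. The key dictionary is this: a choice of $\alpha \in \coaff$ is the same datum as a linear splitting of $Q : \R^4 \to \R^4/[e_4]$, and conjugating such a splitting by a translation $\tau$ shifts $\alpha$ by $\tau \in \ker e_4 \cong (\R^3)^*$; on the level of cocycles, conjugating $\eta_\varphi$ by a translation adds a coboundary to $\varphi$.

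\textbf{($\Leftarrow$) Reducible implies coboundary.} Suppose $\eta_\varphi$ is reducible. Since the linear part $\rho$ is Hitchin, it is irreducible as an $\SL(3,\R)$-representation, so any $\eta_\varphi$-invariant proper nonzero subspace $W \subset \R^4$ must map under $Q$ either to $0$ or onto all of $\R^4/[e_4]$. If $Q(W) = 0$ then $W \subseteq [e_4]$, but $[e_4]$ is invariant, forcing $W = [e_4]$, which does \emph{not} contradict reducibility but also does not yet give the coboundary — so instead I look for the complementary phenomenon. The correct statement is: reducibility (beyond the always-invariant line $[e_4]$) is equivalent to the existence of an invariant hyperplane $[\ker\alpha]$ \emph{transverse} to $[e_4]$, because invariant subspaces containing $[e_4]$ correspond under $Q$ to $\rho$-invariant subspaces of $\R^4/[e_4]$, of which there are none by irreducibility of $\rho$; dually the invariant subspaces not containing $[e_4]$ are exactly the invariant hyperplanes $[\ker\alpha]$ with $\alpha(e_4)\neq 0$, i.e.\ (after scaling) $\alpha \in \coaff$. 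So reducibility of $\eta_\varphi$ forces the existence of $\alpha \in \coaff$ with $\eta_\varphi$-invariant $[\ker\alpha]$. Now compute: $\eta_\varphi(\gamma)$ preserves $[\ker\alpha]$ iff $\alpha \circ \eta_\varphi(\gamma)^{-1} = \alpha$ (as projective classes of hyperplanes through a fixed transverse point, the scalar is pinned down by the $e_4$-coordinate), and writing $\alpha = (\alpha_0, 1)$ with $\alpha_0 \in (\R^3)^*$ and unwinding $\eta_\varphi(\gamma)^{-1} = \begin{pmatrix} \rho(\gamma)^{-1} & \mathbf 0 \\ -\varphi(\gamma) & 1\end{pmatrix}$ gives precisely $\varphi(\gamma) = \rho(\gamma)\alpha_0 - \alpha_0$ (here using $\varphi(\gamma^{-1})$ versus $\varphi(\gamma)$ and the cocycle relation to flip signs). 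Thus $\varphi$ is the coboundary of $\alpha_0$.

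\textbf{($\Rightarrow$) Coboundary implies reducible.} Conversely if $\varphi(\gamma) = \rho(\gamma)v - v$ for a fixed $v \in (\R^3)^*$, run the same computation backwards: the hyperplane $[\ker\alpha]$ with $\alpha = (v, 1) \in \coaff$ is $\eta_\varphi$-invariant, so $\eta_\varphi = \eta_\alpha$ up to the identification, hence reducible. Equivalently and more conceptually, $\eta_\varphi = \tau_v \, \eta_0 \, \tau_v^{-1}$ where $\eta_0 = \eta_\varphi$ for $\varphi \equiv 0$ is the block-diagonal (manifestly reducible) representation and $\tau_v$ is the translation by $v$; conjugation by a translation preserves reducibility. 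The translation-conjugation bookkeeping, i.e.\ checking that conjugating $\eta_0$ by $\tau_v$ produces exactly the cocycle $\gamma \mapsto \rho(\gamma)v - v$ in the lower-left block, is the one routine matrix computation here.

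\textbf{Main obstacle.} The only real subtlety is the sign/inverse bookkeeping in Definition \ref{def: coaffine rep}, where the cocycle appears as $\varphi(\gamma^{-1})$ rather than $\varphi(\gamma)$: one must use the cocycle identity $\varphi(\gamma^{-1}) = -\rho(\gamma^{-1})\varphi(\gamma)$ to convert between the invariance equation for the hyperplane and the standard coboundary formula, and keep the projective scaling of hyperplanes (normalized by the $e_4$-coordinate) consistent throughout. Everything else is a direct translation between ``invariant hyperplane transverse to $[e_4]$'' and ``splitting of $Q$'' using the facts already established in \S\ref{subsec: coaffine geometry} and the irreducibility of the Hitchin linear part $\rho$.
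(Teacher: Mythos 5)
Your proof is correct in substance and takes essentially the same route as the paper's (both directions are direct matrix computations translating ``invariant hyperplane transverse to $[e_4]$'' into a coboundary condition), so only a small note is needed. Your inverse formula has a sign slip: since $\eta_\varphi(\gamma)^{-1}=\eta_\varphi(\gamma^{-1})$, the lower-left entry is $+\varphi(\gamma)$, not $-\varphi(\gamma)$, and the resulting coboundary identity is $\varphi(\gamma)=\alpha_0-\rho(\gamma).\alpha_0$ (matching the paper's $\varphi(\gamma)=\tau-\rho(\gamma).\tau$) rather than $\rho(\gamma)\alpha_0-\alpha_0$; this does not affect the conclusion since $B^1$ is a linear subspace, but the bookkeeping you flag as the ``main obstacle'' is exactly where the slip occurs. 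One genuinely useful addition beyond the paper's terse version is your explicit observation that, with $\rho$ irreducible on $\R^4/[e_4]$, the only proper invariant subspaces of $\R^4$ are $[e_4]$ and (possibly) a complementary hyperplane $[\ker\alpha]$, $\alpha\in\coaff$ --- which makes precise what ``reducible'' must mean here given that $[e_4]$ is always preserved.
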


\begin{proof}
    Recall from the definition that $\varphi \in B^1(\pi, (\R^3)^*_{\rho})$ when there exists some $\tau\in (\R^3)^* = \ker(e_4)$ such that for all $\gamma\in \pi_1 S$,
    \[\varphi(\gamma)= \tau- \rho(\gamma).\tau \]
    One checks by a computation that in this case, $\eta_\varphi$ preserves the hyperplane $[\ker(\tau+ e^4)]$.

    The converse is a similar computation.
\end{proof}

One perspective on Lemma \ref{lemma: coboundaries and reducible reps} is that the space of coboundaries $B^1(\pi_1 S,(\R^3)^*_\rho)\cong \TAff^*(3,\R)$ is a vector space acting simply transitively (as an abelian group) on the set of reducible representations with linear part $\rho$, by way of the fact that such a reducible representation is uniquely determined by the hyperplane it preserves.

The correspondence given above passes to cohomology.
\begin{lemma}\label{lem: cohomology parameterizes reps up to conj}
    The vector space $H^1(\pi_1 S,\TAff^*(3,\R)_\rho)$ acts simply transitively (as an abelian group) on 
    \[ \{\eta: \pi_1 S\rightarrow \Aff^*(3,\R)\mid L(\eta)\in\SL(3,\R).\rho\}/\Aff^*(3,\R).\]
\end{lemma}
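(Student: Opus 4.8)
The plan is to bootstrap from Lemma \ref{lemma: coboundaries and reducible reps} and the two preceding discussions, where the relevant statements are proved at the level of cocycles and reducible representations. First I would recall the setup: by Definition \ref{def: coaffine rep}, every class $[\eta]$ in the target set with $L(\eta)$ conjugate to $\rho$ has a representative of the form $\eta_\varphi$ for some $\varphi \in Z^1(\pi_1 S, (\R^3)^*_\rho)$; indeed, after conjugating by an element of $\SL(3,\R)$ (viewed inside $\Aff^*(3,\R)$ via the splitting) we may assume $L(\eta) = \rho$ exactly, and then the lower-left block of $\eta$ defines a function $\gamma \mapsto \varphi(\gamma^{-1})$ which the homomorphism property forces to be a cocycle. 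So there is a surjection $Z^1(\pi_1 S, (\R^3)^*_\rho) \to \{\eta \mid L(\eta) \in \SL(3,\R).\rho\}/\Aff^*(3,\R)$, and the content is to identify its fibers.

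Next I would analyze when $\eta_\varphi$ and $\eta_{\varphi'}$ are conjugate in $\Aff^*(3,\R)$. Using the semidirect product decomposition $\Aff^*(3,\R) \cong \TAff^*(3,\R) \rtimes \SL(3,\R)$, write a general conjugating element as $T_\tau \cdot g$ with $\tau \in \TAff^*(3,\R)$ and $g \in \SL(3,\R)$. Conjugating $\eta_\varphi$ by $g$ replaces $\rho$ by $g\rho g^{-1}$ and $\varphi$ by $g_*\varphi$ (the cocycle transported by the coefficient change), so to stay in the fiber over $[\rho]$ one needs $g$ in the centralizer of $\rho$ in $\SL(3,\R)$; since $\rho$ is Hitchin, hence irreducible, this centralizer is trivial (only $\pm I$, which acts trivially), so $g$ may be taken to be the identity. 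Conjugating $\eta_\varphi$ by the translation $T_\tau$ is then a direct matrix computation: it fixes $\rho$ and changes $\varphi$ by the coboundary $\gamma \mapsto \rho(\gamma).\tau - \tau$ (compare the proof of Lemma \ref{lemma: coboundaries and reducible reps}, which identifies exactly this coboundary). Conversely every coboundary arises this way. Hence two cocycles give conjugate coaffine representations precisely when they differ by a coboundary, i.e. the fibers of the surjection above are exactly the cosets of $B^1(\pi_1 S, (\R^3)^*_\rho)$.

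Therefore the surjection descends to a bijection $H^1(\pi_1 S, \TAff^*(3,\R)_\rho) \to \{\eta \mid L(\eta) \in \SL(3,\R).\rho\}/\Aff^*(3,\R)$. To get the \emph{simply transitive action} statement as phrased, I would define the action of a class $[\psi] \in H^1$ on a class $[\eta_\varphi]$ by $[\psi].[\eta_\varphi] := [\eta_{\varphi + \psi}]$; this is well-defined on cohomology classes by the fiber computation just done, it is an action of the additive group $H^1$ because $\eta_{(\varphi+\psi)+\psi'} $ matches $\eta_{\varphi + (\psi + \psi')}$ at the cocycle level, it is transitive because any two cocycles over the same $\rho$ differ by \emph{some} cocycle, and it is free because $[\eta_{\varphi+\psi}] = [\eta_\varphi]$ forces $\psi$ to be a coboundary by the fiber analysis. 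One should also note the compatibility with the isomorphism \eqref{eqn: cohomology iso} so that ``$H^1(\pi_1 S, \TAff^*(3,\R)_\rho)$'' and ``$H^1$ of the surface with coefficients in $F^\rho$'' are interchangeable.

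The main obstacle is the conjugation bookkeeping: one must be careful that conjugating by $g \in \SL(3,\R)$ genuinely only preserves the fiber over $\SL(3,\R).\rho$ (not over $\rho$ itself) and that the induced cocycle transformation is the expected one, and one must handle the inverse/transpose conventions in \eqref{eqn: coaffine reps} (the lower-left block is $\varphi(\gamma^{-1})$, not $\varphi(\gamma)$) so that the coboundary produced by a translation conjugation really is $\gamma \mapsto \rho(\gamma).\tau - \tau$ and not its negative or its inverse-twisted variant. These are routine but error-prone matrix computations; once the signs are pinned down, the simple transitivity is immediate from the definitions of $Z^1$ and $B^1$ in \S\ref{subsec: cohomology}.
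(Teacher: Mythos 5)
Your proof is correct and takes essentially the same approach as the paper: reduce to the case $L(\eta)=\rho$ using triviality of the centralizer of $\rho$ in $\SL(3,\R)$, observe that $Z^1(\pi_1 S,\TAff^*(3,\R)_\rho)$ acts simply transitively by addition on the translation part, and check that conjugation by $\TAff^*(3,\R)$ adds a coboundary. One small slip worth fixing: since the dimension $3$ is odd, $-I\notin\SL(3,\R)$, so the centralizer of the irreducible $\rho$ in $\SL(3,\R)$ is literally $\{I\}$ rather than $\{\pm I\}$ (had $-I$ been there it would send $\varphi\mapsto -\varphi$, which is not trivial, so the distinction matters in principle).
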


\begin{proof}
    The centralizer of $\rho$ in $\SL(3,\R)$ is always trivial. In particular if $\rho' = g \rho g\inverse$ for some unique $g\in\SL(3,\R)$, there is an induced isomorphism between the cohomology groups 
    \[H^1(\pi_1 S,\TAff^*(3,\R)_\rho)\cong_g H^1(\pi_1 S,\TAff^*(3,\R)_{\rho'}).\]
    Therefore it is sufficient to consider the case when $L(\eta)=\rho$.
    
    First, we claim that the space of cocycles $Z^1(\pi_1 S,\TAff^*(3,\R)_\rho)$ acts simply transitively on the set 
    \[ \{\eta: \pi_1 S\rightarrow \Aff^*(3,\R)\mid L(\eta)=\rho\},\]
    the action being by addition of the cocycle to the translation part. It is then a computation to check that $\TAff^*(3,\R)$-conjugacy affects $\eta$ by adding a coboundary to the translation part, so that the action of $H^1(\pi_1 S,\TAff^*(3,\R)_\rho)$ on $\TAff^*(3,\R)$-conjugacy classes is well-defined and transitive. 
\end{proof}

\begin{figure}[h]
\includegraphics[width=.95\textwidth]{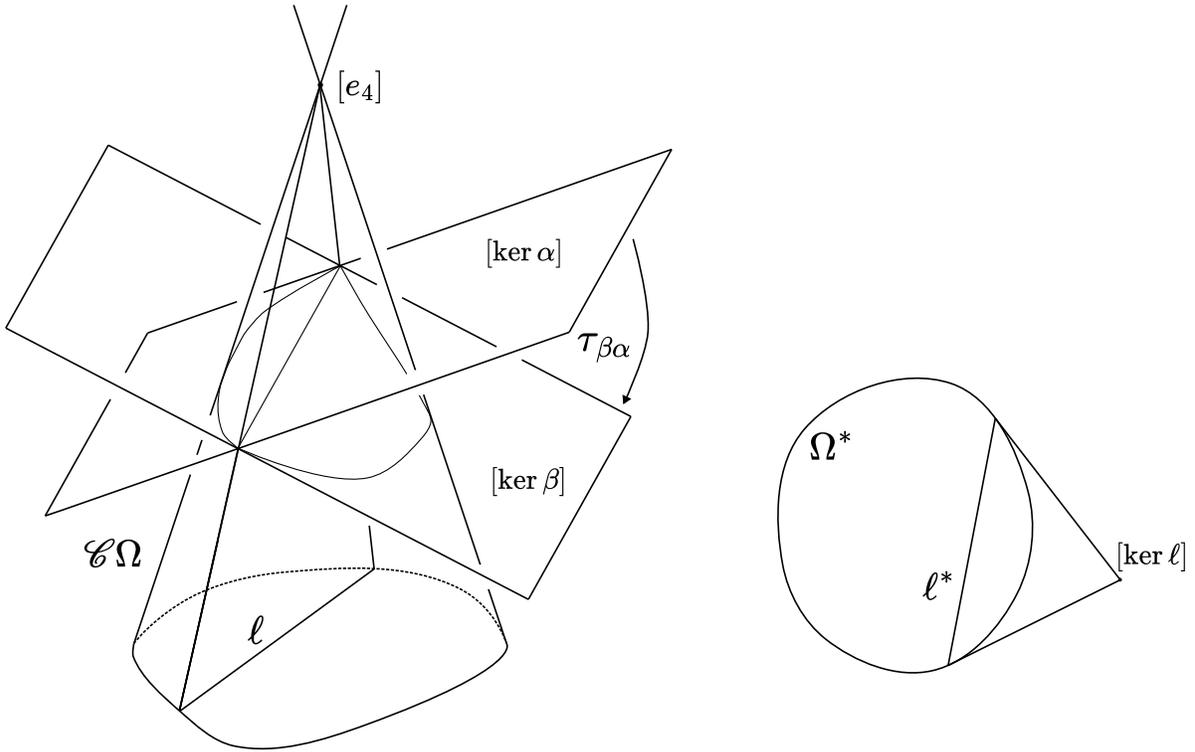}
\centering
\caption{The cone $\mathscr C \Omega\subset \RP^3$ with two hyperplanes intersecting it. The line $\ell$ is shown in $\P(\R^4/[e_4])$, and the dual data in $\Omega^*\subset\P(\TAff^*(3,\R))$.}
\label{figure: cone_1}
\end{figure}

Consider $\mathscr C\Omega = [Q]\inverse(\Omega)$, which is an open cone in $\RP^3\setminus \{[e_4]\}$.
Topologically, the closure of this cone in $\RP^3$ is the one point compactification of the product $\overline{\Omega}\times \R$, the exceptional point $[e_4]$ being the compactifying point. 
See Figure \ref{figure: cone_1}.

Since $\Omega\subset\P(\R^4/[e_4])$, $\Omega$ is actually \textit{equal} to the set of lines which pass through $[e_4]$ and which intersect the cone $\mathscr C \Omega$. Let us call such lines the \textit{vertical} lines in $\mathscr C \Omega$.

Choosing an orientation on a hyperplane $[\ker \alpha]$ where $\alpha \in (\A^3)^*$ amounts to choosing a lift of $[\alpha]$ which evaluates either positively or negatively on $e_4\in [e_4]$. %, our fixed choice of representative for the fixed point of $\Aff^*(3,\R)$. 

We consistently oriented $[\alpha]\in (\A^3)^*$ by insisting that $\alpha(e_4)=+1$. Call these the (positively) \textit{oriented} hyperplanes. Observe that the group of translations $\TAff^*(3,\R)$ acts simply transitively on the positively oriented hyperplanes. The vertical lines are then oriented to point into the positive half-space of every positively oriented hyperplane.

\begin{remark}
Choose an affine chart for $\RP^3$ containing $\mathscr C \Omega$; $[e_4]$ is at infinity in such a chart. The cone $\mathscr C \Omega$ is a cylinder: $\Omega \times \R$ which is more evident in this chart. An orientation on this chart and on the $\R$-factor co-orients the hyperplanes which intersect $\mathscr C \Omega$.
This cylinder is the generalization of Danciger's `half-pipe' model for co-Minkowski space \cite{Danciger:transition} (it is exactly this when $\Omega$ has boundary an ellipse). Ungemach calls this space $\mathbf{HP}(\Omega)$ \cite{Ungemach:thesis}.
\end{remark}

\subsection{The minimal domain of a convex cocompact coaffine representation}\label{subsec: minimal domain coaffine}

This subsection is devoted to describing the geometry of the action of $\eta$ on $\RP^3\setminus \{[e_4]\}$, which (from our perspective) is governed by its minimal convex domain of discontinuity.

%Recall from \S\ref{subsec: convex cocompact} the definition of convex cocompactness for surface groups acting by projective transformations on $\RP^3$.

\begin{lemma}\label{lem: eta is Anosov}
    For any $\varphi \in Z^1(\pi_1 S, (\R^3)^*_{\rho})$, the representation $\eta$ defined in \eqref{eqn: coaffine reps} is projective Anosov and the limit curve $\im \xi$ is contained in $\partial \mathscr C \Omega \setminus \{[e_4]\}$.
    %and acts convex cocompactly on $\RP^3$. 
\end{lemma}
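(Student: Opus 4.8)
The plan is to build the Anosov splitting of the flat bundle of $\eta$ by hand, exploiting that this bundle is an extension of the (Borel-)Anosov bundle of $\rho$ by a \emph{trivial} flat line bundle. Write $\mathcal V\to T^1X$ for the flat $\R^4$-bundle on which $\eta$ acts standardly (i.e.\ $(E^\eta)^*$ in the notation of \S\ref{subsec: Anosov property}) and $\mathcal W\to T^1X$ for the flat $\R^3$-bundle on which $\rho$ acts standardly. Since $\eta(\gamma)e_4=e_4$ for all $\gamma$, the line $\R e_4$ spans a flat, flow-invariant, \emph{trivial} line subbundle $\mathcal L\le\mathcal V$, and the fibrewise quotient $\mathcal V/\mathcal L$ is canonically $\mathcal W$ (this is the bundle incarnation of the equivariance $Q\circ\eta(\gamma)=\rho(\gamma)\circ Q$). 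Thus
\[0\longrightarrow\mathcal L\longrightarrow\mathcal V\longrightarrow\mathcal W\longrightarrow 0\]
is a short exact sequence of flat, flow-equivariant bundles with extension class $[\varphi]\in H^1(\pi_1S,(\R^3)^*_\rho)$. Because $\rho$ is Hitchin it is Borel-Anosov (\S\ref{subsec: Anosov property}), so $\mathcal W$ splits H\"older-continuously and flow-equivariantly as $\mathcal W=\mathcal W^s\oplus\mathcal W^0\oplus\mathcal W^f$ (slow, middle, fast); since $\det\rho\equiv 1$ and the three factors are uniformly transverse, combining the domination of Definition \ref{def: Anosov} with $\|(g_{[0,t]})_*(v^s\wedge v^0\wedge v^f)\|\asymp 1$ shows that $\mathcal W^f$ expands and $\mathcal W^s$ contracts uniformly exponentially; in particular the Lyapunov exponent of $\mathcal W^f$ is uniformly bounded below by some $\beta>0$, while that of $\mathcal L$ is exactly $0$.

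Next I would produce the splitting of $\mathcal V$. Let $\mathcal R\le\mathcal V$ be the preimage of $\mathcal W^f$, a flat flow-invariant plane bundle sitting in $0\to\mathcal L\to\mathcal R\to\mathcal W^f\to 0$. As $\mathcal W^f$ expands uniformly faster than $\mathcal L$, the bundle of complements to $\mathcal L$ in $\mathcal R$ — a torsor over the line bundle $\operatorname{Hom}(\mathcal W^f,\mathcal L)$ of uniformly negative exponent $-\beta$ — carries a graph-transform action of the flow which is a uniform fibrewise contraction, whose unique fixed section is a flow-equivariant, H\"older line subbundle $E\le\mathcal R\le\mathcal V$ mapping isomorphically onto $\mathcal W^f$ (equivalently $E|_p=\lim_{t\to\infty}(g_{[-t,0]})_*\ell$ for any line $\ell\le\mathcal R|_{g_{-t}p}$ with $\ell\ne\mathcal L$). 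Put $\mathcal H\le\mathcal V$ equal to the preimage of $\mathcal W^s\oplus\mathcal W^0$, a flat flow-equivariant H\"older hyperplane bundle. Then $\mathcal V=E\oplus\mathcal H$ (their images span $\mathcal W$, and $E\cap\mathcal H=0$ since $\mathcal W^f\cap(\mathcal W^s\oplus\mathcal W^0)=0$ and $E\cap\mathcal L=0$), and $E$ dominates $\mathcal H$ uniformly: $E$ has exponent $\ge\beta$ while the exponents occurring in $\mathcal H$ (those of $\mathcal W^s$, $\mathcal W^0$, $\mathcal L$) lie uniformly below $\beta$, and the Anosov domination in $\mathcal W$ together with the triviality of $\mathcal L$ propagates to $\|(g_{[0,t]})_*w\|\le c\,e^{-c't}\|(g_{[0,t]})_*v\|$ for unit $w\in\mathcal H$, $v\in E$. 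Dualizing, this is exactly the splitting demanded by Definition \ref{def: Anosov} with $\dim E^\eta_1=1$, so $\eta$ is projective Anosov, with limit curve $\xi_\eta(z)=[E|_p]$ for $g_tp\to z$.

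Finally I would locate $\im\xi_\eta$. By construction $E$ maps onto $\mathcal W^f$ under $\mathcal V\to\mathcal W$, and the projective class of $\mathcal W^f|_p$ as $g_tp\to z$ is $\xi_\rho(z)\in\partial\Omega\subset\P(\R^4/[e_4])$; hence $[Q](\xi_\eta(z))=\xi_\rho(z)\in\partial\Omega$, while $E\cap\mathcal L=0$ gives $\xi_\eta(z)\ne[e_4]$. Since the closure of $\mathscr C\Omega=[Q]^{-1}(\Omega)$ in $\RP^3$ is $[Q]^{-1}(\overline\Omega)\cup\{[e_4]\}$ with interior $[Q]^{-1}(\Omega)$, its boundary is $[Q]^{-1}(\partial\Omega)\cup\{[e_4]\}$, so $\xi_\eta(z)\in[Q]^{-1}(\partial\Omega)=\partial\mathscr C\Omega\setminus\{[e_4]\}$, as claimed.

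The crux is the uniform domination of $E$ over $\mathcal H$: a priori the off-diagonal term $\varphi$ could resonate with the middle exponent of $\mathcal W$, which at the Fuchsian locus equals the exponent $0$ of $\mathcal L$. What rescues the argument is that $\varphi$ enters only through the extension class $[\varphi]\in H^1$ and never affects the Lyapunov exponents — the eigenvalues of $\eta(\gamma)$ are always those of $\rho(\gamma)$ together with $1$ — so $\mathcal L$ persists as a flow-invariant flat subbundle with exponent strictly between those of $\mathcal W^s$ and $\mathcal W^f$, and the graph-transform and domination estimates run just as in the split case $\varphi=0$. (A more computational route verifies the singular-value gap $\sigma_1(\eta(\gamma))/\sigma_2(\eta(\gamma))\ge Ce^{c|\gamma|}$ directly, using that the translation row $\varphi(\gamma^{-1})$ has norm $\lesssim\sigma_1(\rho(\gamma))$ by the cocycle relation and so cannot spoil the gap inherited from $\rho$; the bundle argument seems cleaner and more in keeping with the rest of the paper.)
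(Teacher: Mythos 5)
Your argument is correct, but it takes a genuinely different route from the paper. You construct the Anosov splitting by hand: exhibiting the flat bundle of $\eta$ as the extension $0\to\mathcal L\to\mathcal V\to\mathcal W\to 0$ of the (Borel-Anosov) bundle of $\rho$ by the \emph{trivial} flat line $\mathcal L$, observing that $\mathcal W^f$ expands uniformly while $\mathcal L$ has exponent exactly $0$ (so there is no resonance between them whatever the extension class $[\varphi]$), and running a graph transform inside the preimage $\mathcal R$ of $\mathcal W^f$ to produce the dominating line $E$, whose complement is the preimage $\mathcal H$ of $\mathcal W^s\oplus\mathcal W^0$. This is a local, constructive proof, and as a bonus it hands you the splitting itself, the identity $[Q]\circ\xi_\eta=\xi_\rho$, and the fact that $\xi_\eta$ avoids $[e_4]$. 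The paper's proof is much shorter and global in flavour: it notes that $\eta_0$ is reducible and inherits the Anosov property from $\rho$ with limit curve $\partial\Omega\subset[\ker e^4]$, invokes \emph{openness} of the Anosov condition and continuity of the limit map to handle $\eta_{t\varphi}$ for $t$ small, and then uses the conjugation identity $D_s\,\eta_{t\varphi}\,D_s^{-1}=\eta_{(s^2t)\varphi}$ with $D_s=\diag(s^{-1/2},s^{-1/2},s^{-1/2},s^{3/2})$ to propagate the conclusion from a neighbourhood of $0$ to all of $Z^1(\pi_1S,(\R^3)^*_\rho)$. In other words, the paper exploits that the Anosov locus in $Z^1$ is open, nonempty, and invariant under the positive scaling action, which forces it to be everything; your proof instead shows why there is nothing for $\varphi$ to perturb, because the Lyapunov spectrum of $\eta$ never sees the cocycle. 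One small caution if you write this up carefully: the domination of $E$ over $\mathcal H$ does involve estimating the $e_4$-component that accrues along $\mathcal W^0$, which at the Fuchsian locus grows polynomially (a bona fide resonance between $\mathcal L$ and $\mathcal W^0$); you should say explicitly that this polynomial loss is absorbed by the exponential gap $\mathcal W^f/\mathcal W^0$, since the naive ``exponent strictly between'' phrasing elides it. Your computational remark about singular values is more delicate than you suggest — the bound $\|\varphi(\gamma^{-1})\|\lesssim\sigma_1(\rho(\gamma))$ requires controlling partial products, not just $\rho(\gamma)$ itself — so the bundle argument is indeed the safer route.
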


\begin{proof}
    Consider the path of representations $\eta_{t \varphi}$ for $t\in[0,1]$. When $t=0$, $\eta_0$ is reducible; its Anosov limit curve is contained in the hyperplane $[\ker(e^4)]\subset \RP^3$ and is the boundary of the domain $\Omega$ inside of this embedded $\RP^2$.
    Therefore the Lemma holds in this case.

    Since the property of being projective-Anosov is open, there exists some $\epsilon>0$ so that the representation $\eta_{t\varphi}$ is projective-Anosov for all $t<\epsilon$. Since the limit map $\xi$ varies continuously in the representation, for sufficiently small $\epsilon$, $\im \xi$ is contained in $\partial \mathscr C \Omega \setminus \{[e_4]\}$.
    
    Observe that the conjugate of $\eta_{t\varphi}$ by the element 
    $\textrm{diag}(s^{-1/2},s^{-1/2},s^{-1/2}, s^{3/2})\in \SL(4,\R)$ is equal to $\eta_{(st)\varphi}$. Therefore the Lemma holds for arbitrary $\varphi \in Z^1(\pi_1 S, (\R^3)^*_{\rho})$
\end{proof}

There is a minimal convex domain of discontinuity for the action of $\eta$ on $\RP^3$.
\[\Xi=\hull(\im(\xi))\setminus \im (\xi).\]
%where $\Omega$ is an open domain in $\RP^3\setminus \im\xi^1$. 
Then $\Xi$ can also be realized as the intersection of all convex domains of discontinuity for the action of $\eta$ on $\RP^d$.

\begin{lemma}\label{lemma: def is cccca}
    The action of $\eta(\pi_1 S)$ on $\Xi$ is cocompact with quotient homeomorphic to $S\times [0,1]$.
\end{lemma}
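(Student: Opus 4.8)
The plan is to combine the Anosov/convex-cocompact theory already in hand (Lemma \ref{lem: eta is Anosov}, which gives that $\eta$ is projective Anosov with $\im\xi \subset \partial\mathscr C\Omega\setminus\{[e_4]\}$) with the general fact that the action of a projective Anosov surface group on the complement of its limit set inside its convex hull is properly discontinuous and cocompact whenever that convex hull has nonempty interior and is properly convex. Concretely, one should first verify that $\hull(\im\xi)$ is a properly convex body in $\RP^3$: since $\im\xi$ misses $[e_4]$ and is contained in $\partial\mathscr C\Omega$, and since $\mathscr C\Omega$ sits inside an affine chart of $\RP^3$ (the one in which $[e_4]$ is at infinity), the closed convex hull of $\im\xi$ is a compact convex subset of that affine chart, hence properly convex. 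So $\Xi = \hull(\im\xi)\setminus\im\xi$ is an open properly convex domain minus a boundary circle; in particular $\eta(\pi_1S)$ acts on $\Xi$ by projective transformations preserving a properly convex set.

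Next I would establish proper discontinuity and cocompactness. Here I would invoke the structure theory of convex cocompact projective actions (Danciger--Guéritaud--Kassel, cited as \cite{DGK}): a projective Anosov representation whose limit set bounds a properly convex domain acts convex cocompactly on that domain, meaning the action on $\hull(\im\xi)\setminus\im\xi$ is properly discontinuous with compact quotient. The one thing that needs care is that $\hull(\im\xi)$ genuinely has nonempty interior (otherwise the quotient is lower-dimensional): this is where the cocycle $\varphi$ and the coaffine geometry matter. If $\varphi$ were a coboundary, $\eta$ would be reducible (Lemma \ref{lemma: coboundaries and reducible reps}) and $\im\xi$ would lie in a hyperplane, but even then its hull in that hyperplane is $2$-dimensional; in the irreducible case $\im\xi$ is not contained in any hyperplane, so $\hull(\im\xi)$ is full-dimensional. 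Actually for the statement as written we only need some convex domain of discontinuity, and the interior nonemptiness follows because $\xi$ is the limit curve of a projective Anosov representation of a surface group into $\SL(4,\R)$ whose image is not contained in a proper projective subspace containing $[e_4]$; the continuity-in-$t$ argument of Lemma \ref{lem: eta is Anosov} already shows $\im\xi$ degenerates to the boundary circle of $\Omega$ in the $\RP^2\cong[\ker e^4]$ only at $t=0$.

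Having proper discontinuity and cocompactness, I would identify the quotient topologically. The quotient $\Xi/\eta(\pi_1S)$ is a compact $3$-manifold with boundary; its boundary is $(\partial\hull(\im\xi)\setminus\im\xi)/\eta(\pi_1S)$. The limit curve $\xi$ is a topological circle that divides $\partial\hull(\im\xi)$ (a topological $2$-sphere, being the boundary of a $3$-dimensional convex body) into two open disks, each $\pi_1S$-invariant since the action cannot swap the two sides (an orientation-type argument, or simply because the convex hull construction distinguishes the two supporting-hyperplane families corresponding to $\xi$ and $\xi^*$; compare the ``top'' and ``bottom'' discussion preceding Theorem \ref{mainthm: MLrho is a sphere}). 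Each disk quotients to a closed surface, and by a standard Euler characteristic / classification argument that surface is $S$ itself (the limit map is $\pi_1S$-equivariant and injective, so the action on each boundary disk is a properly discontinuous cocompact action of $\pi_1S$ on a plane, hence the quotient is a $K(\pi_1S,1)$ closed surface, i.e.\ $S$). A compact, connected, orientable $3$-manifold whose boundary is two copies of $S$ and which is aspherical with $\pi_1 = \pi_1S$ must be $S\times[0,1]$ — this is a classical consequence of the fact that an $h$-cobordism (or more elementarily, an $I$-bundle detection via the half-lives-half-dies argument) between surfaces with $\pi_1$ injecting is trivial; alternatively cite that $S\times\R$ is the unique aspherical $3$-manifold with $\pi_1 = \pi_1S$ that is an open manifold, and $\Xi$ is such a manifold.

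The main obstacle I anticipate is not the topology at the end but pinning down \emph{why $\Xi$ has nonempty interior and why the action there is proper}, i.e.\ correctly importing the convex cocompactness machinery of \cite{DGK}: one must check that the hypotheses (proper convexity of $\hull(\im\xi)$, the limit set bounding it) are exactly met, and in particular rule out the degenerate possibility that $\im\xi$ lies in a $2$-plane when $\eta$ is irreducible — this uses that an irreducible $\eta$ cannot preserve a hyperplane together with the fact, from Lemma \ref{lemma: cccca have Hitchin linear part} and its proof, that the limit curve avoids $[e_4]$ so the hull stays in an affine chart. Once that is secured, the rest is bookkeeping: $\Xi/\eta$ is a compact irreducible aspherical $3$-manifold with two boundary surfaces homeomorphic to $S$, hence $\cong S\times[0,1]$.
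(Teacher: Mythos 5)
Your proposal is correct in outline but follows a genuinely different — and substantially heavier — route than the paper. The paper does not invoke \cite{DGK} or any $3$-manifold topology for this lemma; instead it uses the explicit $(\eta,\rho)$-equivariant projection $[Q]:\Xi\to\Omega$ (the restriction of $\R^4\to\R^4/[e_4]$). Since $\rho$ acts properly on $\Omega$, equivariance immediately gives properness of the $\eta$-action on $\Xi$. Convexity of $\Xi$ then makes $[Q]|_\Xi$ a map with closed-interval fibers (each vertical line meets $\Xi$ in a compact segment), and since $\eta$ preserves the fiberwise orientation of the vertical lines, the quotient is \emph{directly} identified as $(\Omega/\rho)\times[0,1]\cong S\times[0,1]$: the product structure comes for free from the fibration, with no need for an $I$-bundle recognition theorem or $h$-cobordism argument.

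Compared to this, your strategy — import convex cocompactness abstractly and then use Waldhausen-type $3$-manifold theory to pin down the homeomorphism type — is workable but overkill, and it has a couple of soft spots. First, the ``alternatively cite that $S\times\R$ is the unique aspherical $3$-manifold with $\pi_1=\pi_1S$ that is an open manifold'' is not correct as stated: there are non-tame open $3$-manifolds homotopy equivalent to a closed surface (e.g.\ in the spirit of Tucker/Whitehead), so this uniqueness requires a tameness hypothesis you would have to supply (e.g.\ via the compact quotient and the $I$-bundle theorem, which is your primary argument anyway). Second, importing \cite{DGK} to conclude cocompactness of the action \emph{on} $\Xi$ specifically (as opposed to on some convex core) requires matching their notion of convex core with $\hull(\im\xi)\setminus\im\xi$, which itself takes a short argument; the paper's projection argument sidesteps this entirely. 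Third, the claim that $\hull(\im\xi)$ is a compact convex body in an affine chart is true but needs a word: $\mathscr C\Omega$ itself is unbounded in the chart with $[e_4]$ at infinity, so boundedness of the hull comes from compactness of $\im\xi$ together with it avoiding $[e_4]$, not from $\mathscr C\Omega$ sitting in a chart. None of these are fatal, but taken together they show that the projection argument in the paper is both shorter and more robust. It is worth noting that your argument does articulate one point the paper leaves implicit: when $\varphi$ is a coboundary the hull is $2$-dimensional and the lemma's statement degenerates.
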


\begin{proof}
    It follows from Lemma \ref{lem: eta is Anosov} that $[e_4]$ is not contained in $\Xi$. Therefore there exist a pair of hyperplanes $H_1$ and $H_2$ bounding $\Xi$ from $[e_4]$. The projection $[Q]$ is $(\eta,\rho)$-equivariant, so the action of $\eta$ on $\Xi$ is proper.
    
    Since $\Xi$ is convex, the pre-image of a point $p\in \Omega$ is a closed interval. Since $\eta$ preserves the orientations on the vertical lines of $\mathscr C \Omega$, this proves that 
    \[\Xi/\eta(\pi_1 S) \cong \left(\Omega/ \rho(\pi_1 S)\right) \times [0,1].\]
    The Lemma is complete.
\end{proof}

This is the converse of Lemma \ref{lemma: cccca have Hitchin linear part}; convex cocompact coaffine representations and coaffine representations with Hitchin linear part are the same objects.

\begin{remark}
    Coaffine representations with Hitchin linear part are also \emph{strongly convex cocompact} in the sense of \cite[Definition 1.1]{DGK}. One should generally be concerned about the notions of `naive' and `strong' projective cocompactness, as discussed in \cite{DGK}. Since surface groups are non-elementary Gromov hyperbolic groups, the two notions coincide.  See Theorem 1.15 of that paper.
\end{remark}

%For the remainder, let $\Xi = \Xi(\eta)$ denote the minimal convex domain of discontinuity for $\eta$ acting on $\RP^3$.
Henceforth we fix $\eta$ and study the structure of $\Xi$.
The following definition describes the sort of object that may be found naturally bounding $\Xi$.

\begin{definition}\label{def: topological pleated disk}
    A \textit{bent domain} is a triple $(D,\lambda, h)$ of a domain $D\subset \RP^2$ with strictly convex boundary, a geodesic lamination (see \S\ref{subsec: surface theory}) $\lambda$ on $D$, and a continuous map $h:D\rightarrow \RP^3$ which satisfies
    \begin{itemize}
        \item For every leaf $\ell \subset \lambda$, $h\vert_\ell$ is a projective isomorphism to its image,
        \item For every connected component $P\subset D\setminus \lambda$, the map $h\vert_P$ is a projective isomorphism to its image,%$\im(h\vert_P)$ is a proper subset of a projective plane in $\RP^d$.
        \item The map $h$ has a continuous extension to $\partial\lambda\subset \partial D$.
    \end{itemize}
\end{definition}

\begin{remark}    
    The image of a bent domain does not have any obvious (intrinsic) global projective structure. In future work we will upgrade this definition to a more delicate notion generalizing pleated surfaces. For now, this definition suffices.
\end{remark}

We will encounter maps which are simultaneously bent domains and sections of the cone $\mathscr C \Omega \rightarrow \Omega$. In particular, we wish to characterize when such maps have convex graphs in $\mathscr C \Omega$.

Because the vertical lines in $\mathscr C \Omega$ are oriented, it is coherent to discuss the positive and negative sides of a continuous section 
\[q:\Omega \rightarrow \mathscr C \Omega.\]
Such a section divides $\mathscr C \Omega$ into two connected components, and any vertical line $L$ is oriented into the positive half of $\mathscr C \Omega \setminus \im q$.

This gives a concise definition of convexity in the cone $\mathscr C \Omega$ that matches the intuitive notion one might expect.

\begin{definition}\label{def: function convex}
    A continuous section $q:\Omega \rightarrow \mathscr C \Omega$ is convex up (resp. down) if for all $x,y\in \Omega$, the segment $[q(x),q(y)]$ is contained in the closure of the positive (resp. negative) side of $\mathscr C \Omega \setminus \im q$. 
\end{definition}

Suppose that $q:\Omega \rightarrow \mathscr C \Omega$ is both a bent domain $(\Omega, \lambda,q)$ and a section (so that $[e_4]\notin 
\overline{\im q}$). For each connected component $P\subset \Omega\setminus \lambda$ let $\alpha(P)=\alpha(q,P)$ be the unique oriented hyperplane containing $q(P)$.

Then the following characterization of convexity holds

\begin{lemma}\label{lemma: two notions of convexity}
    Let $q: \Omega \rightarrow \mathscr C \Omega$ be a bent domain and a section.
    The graph $\im q\subset \mathscr C \Omega$ is convex up if and only if for all $X,Y$ connected components of $\Omega\setminus \lambda$,
    \[\alpha(Y) (q(X)) \geq 0\]
    where the inequality is read to mean that $q(X)$ is on the positive side of $\alpha(Y)$. A symmetric result holds for `convex down'.
\end{lemma}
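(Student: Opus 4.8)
The plan is to argue both directions by reducing convexity in the cone to a family of two-variable comparisons, and then to upgrade those comparisons from points to the complementary regions $P \subset \Omega \setminus \lambda$ using the fact that $q$ is affine on each $P$ and on each leaf. First I would set up coordinates adapted to a vertical line: recall that each vertical line $L$ in $\mathscr C\Omega$ is oriented, and that a positively oriented hyperplane $\alpha$ cuts $L$ into a positive and a negative ray. For a point $z = q(x) \in L$, the condition ``$q(x)$ lies on the positive side of $\alpha$'' is exactly the condition $\alpha(q(x)) \geq 0$ in the affine chart where every positively oriented hyperplane $\beta$ satisfies $\beta(e_4) = 1$ — this is just the reading of the inequality stipulated in the statement. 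The key elementary observation is that for two points $z_1, z_2$ on (possibly different) vertical lines, the segment $[z_1, z_2]$ lies (weakly) on the positive side of a positively oriented hyperplane $\alpha$ if and only if $\alpha(z_1) \geq 0$ and $\alpha(z_2) \geq 0$; this holds because $\alpha$ is affine (linear on the chart) and the segment is the convex hull of its endpoints, so $\alpha$ is nonnegative on $[z_1,z_2]$ iff it is nonnegative at both ends.

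Next I would prove the forward direction. Suppose $\im q$ is convex up. Fix two complementary regions $X, Y$ of $\Omega \setminus \lambda$ and a point $x \in X$. Pick any $y \in Y$. By convexity up, the segment $[q(x), q(y)]$ lies on the positive side of $\im q$, hence on the positive side of the hyperplane $\alpha(Y)$ (which supports $\overline{\im q}$ along $q(\overline Y)$ — here I use that $q|_Y$ is a projective isomorphism onto a piece of $\ker \alpha(Y)$, together with the extension to $\partial\lambda$ to control the closure). By the elementary observation, $\alpha(Y)(q(x)) \geq 0$ and $\alpha(Y)(q(y)) \geq 0$; the first is the desired inequality $\alpha(Y)(q(X)) \geq 0$ (independent of the choice of $x \in X$ because $q|_X$ lies in the single hyperplane $\ker\alpha(X)$ and $\alpha(Y)$ restricted to that hyperplane is affine, so if it is nonnegative somewhere nonnegativity at all points of $q(X)$ follows from convexity of $X$ and the fact that $q(\partial X \cap \partial\lambda)$ is a limit of $q$-images where the same inequality holds by continuity — alternatively one checks the inequality is constant in sign on the connected set $q(X)$ using that $\alpha(Y)$ vanishes nowhere on $q(X)$, which it does not unless $X$ and $Y$ share a leaf, a boundary case handled by continuity).

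For the converse, assume $\alpha(Y)(q(X)) \geq 0$ for all pairs $X, Y$ of complementary regions. I want to show that for arbitrary $x, y \in \Omega$ the segment $[q(x), q(y)]$ lies on the closed positive side of $\im q$, equivalently, for every complementary region $Z$ and every point $w \in \overline Z$, the segment $[q(x),q(y)]$ does not cross strictly to the negative side of the local supporting hyperplane $\alpha(Z)$ at $q(w)$. Since $q|_{\overline Z}$ develops into $\ker\alpha(Z)$, it suffices to show $\alpha(Z)(q(x)) \geq 0$ and $\alpha(Z)(q(y)) \geq 0$; by the elementary observation this gives $\alpha(Z)$ nonnegative on $[q(x),q(y)]$, and ranging over all $Z$ (and passing to limits over leaves and boundary points of $\lambda$, using the continuity built into the definition of bent domain) shows $[q(x),q(y)]$ lies in the intersection of all the closed positive half-spaces, which is precisely the closure of the positive side of $\mathscr C\Omega \setminus \im q$ — here I use that $\im q$, being a graph of a continuous section, separates $\mathscr C\Omega$ and its closed positive side equals $\bigcap_Z \{\alpha(Z) \geq 0\}$. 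The inequalities $\alpha(Z)(q(x)), \alpha(Z)(q(y)) \geq 0$ follow from the hypothesis: take $X$ the region (or a limiting leaf/boundary datum) containing $x$ and $Y = Z$, giving $\alpha(Z)(q(x)) = \alpha(Y)(q(X)) \geq 0$, and similarly for $y$. The ``convex down'' statement is identical after reversing the orientation of the vertical lines (equivalently negating every $\alpha$).

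\textbf{Main obstacle.} The genuinely delicate point is not the two-variable convexity comparison, which is elementary linear algebra, but rather controlling the behavior along the lamination $\lambda$ itself and at $\partial\lambda$: the hyperplanes $\alpha(P)$ are only defined for complementary regions $P$, yet convexity up/down is a statement about \emph{all} points of $\im q$ including those on leaves. I expect to handle this by a limiting argument — a leaf $\ell$ is a boundary leaf of (possibly two) complementary regions, and $q|_\ell$, being a projective isomorphism, lies in the closure of $\ker\alpha(P)$ for the adjacent regions $P$, so the inequalities extend by continuity — but making this rigorous requires invoking the third bullet of Definition \ref{def: topological pleated disk} (continuous extension to $\partial\lambda$) and Lemma \ref{lem: lamination geometry}-type control on how leaves limit onto one another, so that the ``local supporting hyperplane'' at a point of a leaf is genuinely a limit of the $\alpha(P)$'s. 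This is where care is needed to avoid a circular or hand-wavy argument.
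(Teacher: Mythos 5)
Your proposal is correct and takes essentially the same approach as the paper, whose one-line proof cites the standard fact that a continuous function on a convex domain is convex iff its graph lies in the boundary of its own convex hull, then reads off the condition as ``each $\alpha(Y)$ is a supporting hyperplane of the convex hull of $\im q$.'' You unpack that fact into the two-directional argument via half-spaces, and the extra care you devote to the limiting behavior along leaves and $\partial\lambda$ fills in material the paper's terse proof silently elides but which does have to be checked.
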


\begin{proof}
    This amounts to a fact about functions from $\Omega'\subset \R^d\rightarrow \R$ where $\Omega'$ is strictly convex: such a function is convex if and only if its graph is contained in the boundary of its own convex hull. The condition is then the requirement that $\alpha(Y)$ be a supporting hyperplane to the convex hull of $\im q$.
\end{proof}

We can relate this condition for convexity to the geometry of $\Omega$.
Given a geodesic lamination 
\[\tlambda\subset \Omega,\]
the boundary pairs defining its leaves in the space of geodesics identify a geodesic lamination 
\[\tlambda^*\subset \Omega^*.\]
More explicitly, the composition of the limit curves $\left(\xi^*\circ \xi\inverse\right)$ defines a bijection between the set of geodesics in $\Omega$ and the set of geodesics in $\Omega^*$: a geodesic $\ell\subset\Omega$ with endpoints $\ell_\pm$ is mapped to the leaf $\ell^*\subset\Omega^*$ with endpoints $\ell^*_\pm$ which are the unique supporting hyperplanes to $\ell_\pm$. The image of a geodesic lamination under this map is a geodesic lamination.

\begin{definition}
    Let $\ell_1$ and $\ell_2$ be a pair of non-intersecting geodesics in a strictly convex and $C^1$ domain $\Omega\subset \RP^2$, and $\ell^*_1, \ell^*_2$ the dual geodesics in $\Omega^*$. Define the closed set 
    \[R(\ell_1,\ell_2)=\{[v]\in (\RP^2)\mid [\ker v]\cap\Omega^* ~\textrm{between}~ \ell^*_1 ~\textrm{and}~ \ell^*_2.\}\]
    Similarly define $R(\ell^*_1,\ell^*_2)$ symmetrically as a subset of $(\RP^2)^*$.
\end{definition}

Let $[v_1]$ be the intersection of the two tangent lines to $\Omega$ at the endpoints of $\ell_1$ so that $\ell^*_1\subset [\ker v_1]\subset (\RP^2)^*$.
Define $[v_2]$ similarly.

\begin{lemma}
    Suppose $\ell_1$ and $\ell_2$ do not share an endpoint.
    The subset $R(\ell_1,\ell_2)$ is equal to the closure of th connected component of 
    \[\RP^2\setminus \left([\ker(\ell^*_1)_+]\cup [\ker(\ell^*_1)_-] \cup [\ker(\ell^*_2)_+] \cup [\ker(\ell^*_2)_-]\right)\]
    which is characterized by being a $4$-gon, not containing $\Omega$, and having $[v_1]$ and $[v_2]$ in its closure (Figure \ref{figure: intersecting cones}).

    When $\ell_1$ and $\ell_2$ share an endpoint, then $R(\ell_1, \ell_2)$ is the projective segment $[[v_1],[v_2]] \subset \RP^2\setminus \overline \Omega$.
\end{lemma}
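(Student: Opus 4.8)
The plan is to unwind the definition of $R(\ell_1,\ell_2)$ directly in terms of the dual geodesics $\ell^*_1, \ell^*_2 \subset \Omega^*$ and the four supporting hyperplanes $(\ell^*_1)_\pm, (\ell^*_2)_\pm$. Recall that a point $[v]\in \RP^2$ lies in $R(\ell_1,\ell_2)$ precisely when the projective line $[\ker v]\subset (\RP^2)^*$ meets $\Omega^*$ in a chord that separates $\ell^*_1$ from $\ell^*_2$ (in the linear ordering of disjoint chords of the strictly convex domain $\Omega^*$). First I would set up the strict-convexity bookkeeping in $\Omega^*$: since $\ell_1, \ell_2$ are disjoint, so are $\ell^*_1,\ell^*_2$ (the duality $\xi^*\circ\xi\inverse$ preserves non-crossing), and they cut $\partial\Omega^*$ into arcs. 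The condition ``$[\ker v]\cap\Omega^*$ lies between $\ell^*_1$ and $\ell^*_2$'' says that $[\ker v]$ separates the two complementary arcs of $\partial\Omega^*$ lying outside the ``middle'' region; equivalently, $[\ker v]$ has one endpoint on each of the two arcs of $\partial\Omega^* \setminus (\partial\ell^*_1\cup\partial\ell^*_2)$ that bound the middle strip. This is exactly the statement that the line $[\ker v]$ crosses neither $\ell^*_1$ nor $\ell^*_2$ but lies in the component of $\Omega^*\setminus(\ell^*_1\cup\ell^*_2)$ between them.

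Next I would dualize this crossing condition. A line $[\ker v]$ fails to cross the chord $\ell^*_1$ of $\Omega^*$ iff $[v]$ does not lie in the ``double-wedge'' region of $(\RP^2)$ cut out by the two lines $[\ker (\ell^*_1)_+]$ and $[\ker (\ell^*_1)_-]$ on the side away from $\Omega^*$'s interior — more precisely, $[\ker v]\cap \overline{\Omega^*}$ avoids $\ell^*_1$ iff $[v]$ lies on one definite side determined by the two points $(\ell^*_1)_\pm\in\partial\Omega^*$, which in $(\RP^2)$ are the two lines $[\ker(\ell^*_1)_+], [\ker(\ell^*_1)_-]$. Doing this for both $\ell^*_1$ and $\ell^*_2$ simultaneously, the locus of valid $[v]$ is a component of the complement of the four lines $[\ker(\ell^*_1)_\pm]\cup[\ker(\ell^*_2)_\pm]$. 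To pin down \emph{which} component, I would use that the four lines in $(\RP^2)$ are the tangent lines to $\Omega^*$ at the four boundary points $(\ell^*_1)_\pm,(\ell^*_2)_\pm$; their pairwise intersections, by the biduality $\Omega^{**}=\Omega$ and $C^1$-strict convexity, recover tangent-line data of $\Omega$. In particular $[v_i]$, defined as the intersection of the two tangent lines to $\Omega$ at the endpoints of $\ell_i$, is dual to the line through $(\ell^*_i)_+$ and $(\ell^*_i)_-$, i.e. $\ell^*_i\subset[\ker v_i]$ as stated; and $[v_i]$ is the common point of two of the four lines. The correct component is characterized as the unique one that is a (projective) quadrilateral, does not meet $\overline\Omega$ (since each bounding line is a supporting line of $\Omega$ on the far side), and has both $[v_1]$ and $[v_2]$ as vertices — these three properties single it out among the several complementary components because the other components are either unbounded-looking, contain $\Omega$, or are triangles/have the wrong vertex set. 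Finally, the degenerate case: when $\ell_1,\ell_2$ share an endpoint $z$, then $(\ell^*_1)$ and $(\ell^*_2)$ share the supporting hyperplane $\xi^*(z)$, so two of the four lines coincide, the quadrilateral degenerates, and the region $R(\ell_1,\ell_2)$ collapses to the projective segment joining the two surviving vertices $[v_1]$ and $[v_2]$; I would check this segment lies in $\RP^2\setminus\overline\Omega$ using that the shared tangent line at $z$ meets $\Omega$ only at $\xi(z)$.

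The main obstacle I anticipate is the ``which component'' bookkeeping: four lines in $\RP^2$ in general position cut it into several cells, and one must argue carefully — using strict convexity and $C^1$ regularity of $\Omega$ and $\Omega^*$, together with the fact that $\ell_1,\ell_2$ are disjoint and non-boundary-sharing — that exactly one cell is a quadrilateral disjoint from $\overline\Omega$ containing both $[v_1]$ and $[v_2]$ in its closure, and that this cell is precisely the dual of the ``between'' condition. I expect to handle this by choosing an affine chart in which $\Omega$ is bounded and the four tangent lines are drawn explicitly, reducing the claim to an elementary (if slightly tedious) planar-convexity picture; the non-crossing hypotheses on $\ell_1,\ell_2$ are exactly what guarantee the combinatorial type of the line arrangement is the generic one. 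The rest — the duality dictionary between ``$[\ker v]$ crosses a chord'' and ``$[v]$ lies in a wedge,'' and the identification $\ell^*_i\subset[\ker v_i]$ — is routine projective duality once biduality $\Omega^{**}\cong\Omega$ is invoked.
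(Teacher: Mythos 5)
Your proposal is correct and takes essentially the same route as the paper's own proof, which is a one-line deferral to the definitions, projective duality, and Figure~\ref{figure: intersecting cones}. What you've written is precisely the unwinding that one-liner asks the reader to do: dualize the "chord of $\Omega^*$ separating $\ell^*_1$ from $\ell^*_2$" condition into wedges bounded by the four tangent lines $[\ker(\ell^*_i)_\pm]$, then pin down the correct cell of the line arrangement by the quadrilateral/disjoint-from-$\overline\Omega$/vertex conditions, and handle the shared-endpoint degeneration by collapsing two of the four lines.
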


\begin{proof}
    The proof follows from the definitions and some projective geometry. See Figure \ref{figure: intersecting cones}.
\end{proof}

%The representation $\rho$ acts on $\TAff^*(3,\R)$ by its usual action on row vectors, so there is a copy of $ \Omega^*\subset \P\TAff^*(3,\R)$. 

For $P\subset \Omega\setminus \lambda$, let $\mathscr E P \subset \TAff^*(3,\R)$ be the set of translations $\tau$ so that $\tau.\alpha(P)$ evaluates positively on $P$. The set $\mathscr E P$ is a proper convex positive cone. 
Let $P_1$ and $P_2$ be a pair of distinct connected components in $\Omega\setminus \tlambda$, and let $\ell_1$ and $\ell_2$ be the extremal elements in the ordered set of leaves of $\tlambda$ between $P_1$ from $P_2$. 

The following corollary is a restatement of Lemma \ref{lemma: two notions of convexity}:

\begin{corollary}\label{cor: convex iff diff in R_k}
    The graph $\im q \subset \mathscr C \Omega$ is convex if and only if for all pairs of distinct connected components $P_1, P_2$ in $\Omega\setminus \tlambda$, the difference
    \[\alpha(P_2) - \alpha(P_1) \in \mathscr E P_2 \cap (-\mathscr E P_1).\]
    Also, the projectivization of $\mathscr E P_2 \cap (-\mathscr E P_1)$ is equal to $R(\ell^*_1,\ell^*_2).$
\end{corollary}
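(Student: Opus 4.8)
The plan is to read off both assertions of Corollary~\ref{cor: convex iff diff in R_k} from Lemma~\ref{lemma: two notions of convexity} and from the structural lemma above describing $R(\ell_1,\ell_2)$, once the defining inequalities of the cones $\mathscr E P$ have been translated into statements about supporting hyperplanes. The starting point is that, as $\alpha(P_1)$ and $\alpha(P_2)$ are oriented hyperplanes (both pair to $1$ against $e_4$), their difference $\alpha(P_2)-\alpha(P_1)$ genuinely lies in $\ker e_4\cong\TAff^*(3,\R)$, i.e.\ it is a translation; and since the translation subgroup acts simply transitively on the oriented hyperplanes, it is the translation relating $\alpha(P_1)$ and $\alpha(P_2)$.

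I would then unwind the definition of $\mathscr E P_2$: a translation $\tau$ lies in $\mathscr E P_2$ exactly when the oriented hyperplane $\tau.\alpha(P_2)$ is positive along the canonical lift $q(P_2)\subset\mathscr C\Omega$ of $P_2$. Taking $\tau=\alpha(P_2)-\alpha(P_1)$ and using both that $\alpha(P_2)$ vanishes identically on $q(P_2)$ and simple transitivity of the translation group, the condition $\alpha(P_2)-\alpha(P_1)\in\mathscr E P_2$ collapses to a single inequality, asserting that $q(P_2)$ lies on the appropriate side of the hyperplane $\alpha(P_1)$ --- that is, one of the inequalities $\alpha(Y)\bigl(q(X)\bigr)\ge 0$ of Lemma~\ref{lemma: two notions of convexity}, with $\{X,Y\}=\{P_1,P_2\}$. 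The symmetric computation identifies $\alpha(P_2)-\alpha(P_1)\in -\mathscr E P_1$ with the companion inequality obtained by swapping the two components. Hence $\alpha(P_2)-\alpha(P_1)\in\mathscr E P_2\cap(-\mathscr E P_1)$ is exactly the pair of inequalities Lemma~\ref{lemma: two notions of convexity} attaches to $\{P_1,P_2\}$; quantifying over all distinct pairs of components of $\Omega\setminus\tlambda$ reproduces the hypothesis of that lemma verbatim, which is the convexity criterion (spelled out for ``convex up''; ``convex down'' is identical after reversing the orientation of the vertical lines).

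For the identification of projectivizations I would reinterpret $\mathscr E P_2\cap(-\mathscr E P_1)$ as a set of projective lines. A non-trivial translation $\tau$ projectivizes, via $[Q^*]:\P\TAff^*(3,\R)\hookrightarrow(\RP^3)^*$, to a hyperplane of $\RP^3$ through $[e_4]$, which under $[Q]$ corresponds to a projective line in $\P(\R^4/[e_4])\cong\RP^2$, the plane containing $\Omega$. Unwinding as above, $\tau\in\mathscr E P_2\cap(-\mathscr E P_1)$ says precisely that $q(P_2)$ and $q(P_1)$ lie strictly on opposite sides of the hyperplane $[\tau]\subset\RP^3$, i.e.\ that the corresponding line in $\RP^2$ separates the components $P_1$ and $P_2$ of $\Omega\setminus\tlambda$. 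Now $\ell_1$ and $\ell_2$ are, by definition, the extremal leaves of $\tlambda$ between $P_1$ and $P_2$, so every leaf of $\tlambda$ that separates $P_1$ from $P_2$ lies in the closed band bounded by $\ell_1$ and $\ell_2$, with $P_1$ and $P_2$ on its two far sides; consequently a projective line separates $P_1$ from $P_2$ if and only if it meets $\Omega$ in a chord lying in that band. By the definition of $R(\ell_1^*,\ell_2^*)$ --- and the structural lemma above, which realizes it concretely as a $4$-gon or a segment --- the totality of such lines is exactly $R(\ell_1^*,\ell_2^*)$. Transporting this back along $\P\TAff^*(3,\R)\cong(\RP^2)^*$ identifies it with the projectivization of $\mathscr E P_2\cap(-\mathscr E P_1)$, which completes the proof.

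The only genuine difficulty I anticipate is sign and orientation bookkeeping rather than anything conceptual. One must fix the precise effect of the translation action on an oriented hyperplane; verify that the lifts $q(P_1)$ and $q(P_2)$ sit on a coherent side of $\mathscr C\Omega$, so that ``positive on $P_i$'' is unambiguous and the two half-space conditions are simultaneously realizable by a single translation; and confirm that the proper convex cone $\mathscr E P_2\cap(-\mathscr E P_1)$ projectivizes onto exactly the $4$-gon (or segment) singled out in the definition of $R(\ell_1^*,\ell_2^*)$, rather than onto a complementary region. Once these conventions are pinned down, the three inputs --- simple transitivity of $\TAff^*(3,\R)$ on the oriented hyperplanes, Lemma~\ref{lemma: two notions of convexity}, and the structural lemma for $R$ --- assemble immediately.
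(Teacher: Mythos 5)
Your proposal is correct and follows essentially the same approach as the paper's (very terse) proof, which simply notes that $\mathscr E P_i$ is the dual domain of the stratum $P_i$, points to the figure, and declares the identification with $R(\ell_1^*,\ell_2^*)$ ``an exercise following from the definitions.'' For the first claim you correctly recognize that the preamble already advertises the corollary as a restatement of Lemma~\ref{lemma: two notions of convexity}, and your translation of $\alpha(P_2)-\alpha(P_1)\in\mathscr E P_2$ into $\alpha(P_1)\bigl(q(P_2)\bigr)\ge 0$ (and its companion from $-\mathscr E P_1$) via simple transitivity is exactly what makes that restatement precise. For the second claim your reformulation --- $[\tau]\in\P\bigl(\mathscr E P_2\cap(-\mathscr E P_1)\bigr)$ if and only if $[\ker\tau]$ separates $P_1$ from $P_2$, which by the extremality of $\ell_1,\ell_2$ is the same as $[\ker\tau]\cap\Omega$ lying between them, i.e.\ $[\tau]\in R(\ell_1^*,\ell_2^*)$ --- is the same duality argument the paper gestures at, just written out. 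You correctly flag sign and orientation bookkeeping as the only real pitfall; indeed the paper's own conventions are slightly delicate here (compare Example~\ref{example: coaffine translation} against the dual action $A.\alpha=\alpha\circ A^{-1}$), but this is a bookkeeping matter in the source, not a gap in your argument, and you've correctly identified the collapse to a single inequality regardless of which sign convention is in force.
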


\begin{figure}[h]
\includegraphics[width=.8\textwidth]{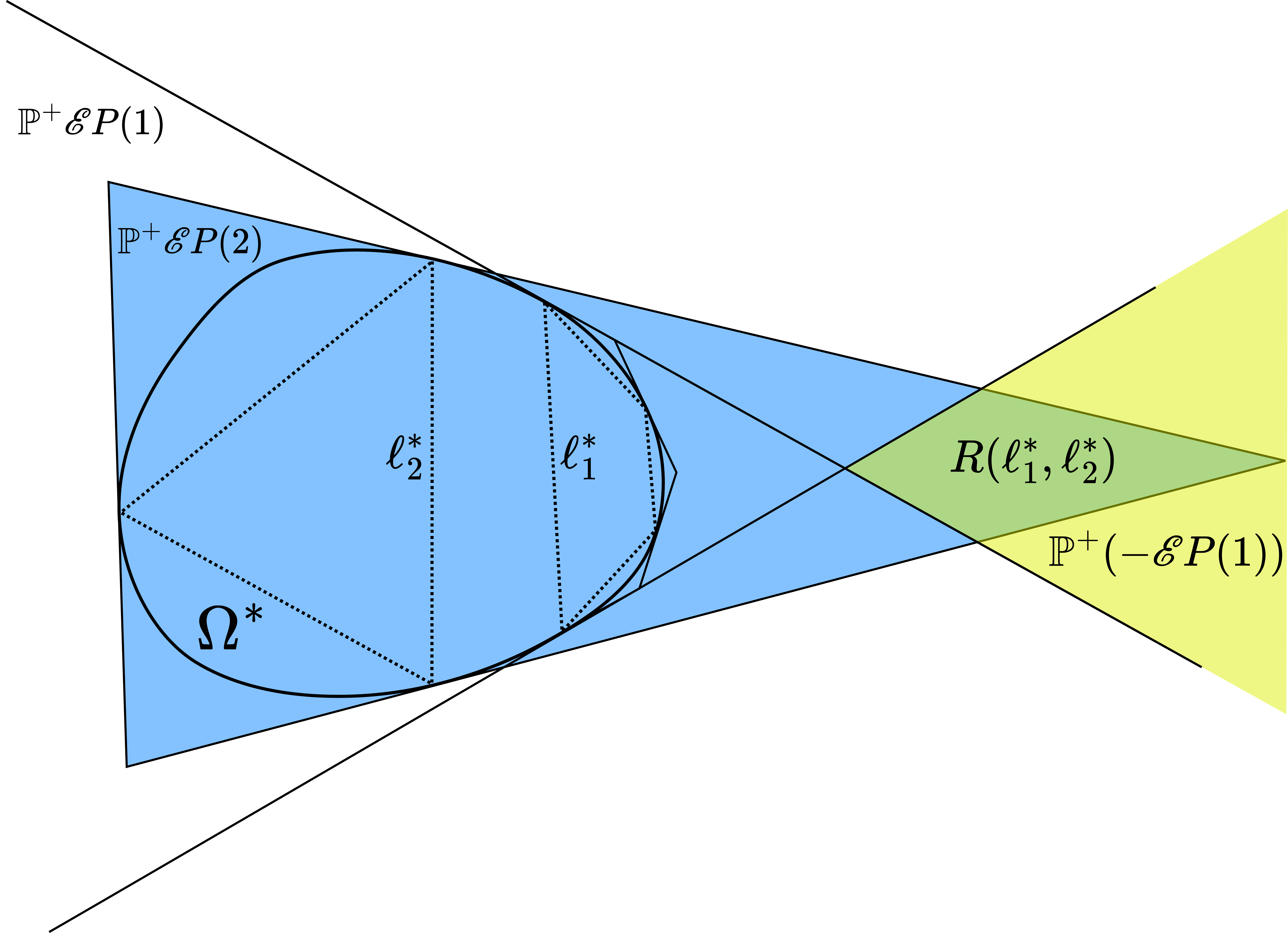}
\centering
\caption{The relationship between $\mathscr E P(1)$, $\mathscr E P(2)$, and $R(\ell^*_1,\ell^*_2)$ in the \textit{positive} projectivization of $\TAff^*(3,\R)$.}
\label{figure: intersecting cones}
\end{figure}

\begin{proof}
    Figure \ref{figure: intersecting cones} shows an affine chart for the \textit{positive} projectivization of $\TAff^*(3,\R)$, with the convex domain $\Omega^*$. The shaded region $\mathscr E P(1)$ is the dual domain of the stratum $P(1)$. It is necessary to work in the positive projectivization to preserve the meaning of signs, which give the notion of convexity. It is an exercise following from the definitions to show that the intersection of the two cones is $R(\ell^*_1,\ell^*_2)$ as claimed.
\end{proof}

The following lemma, also found in \cite{Ungemach:thesis}, relates this conversation to coaffine convex cocompact representations.  We supply a proof for completeness.

\begin{lemma}\label{lem: pleated disks}
    When $\varphi  \notin B^1(\pi_1 S, (\R^3)^*_{\rho})$, the boundary $\partial \Xi \setminus \im (\xi)$ is a union of two disjoint (open) disks: $\partial\Xi \setminus \im (\xi^1)= \partial\Xi^+\sqcup \partial \Xi^-.$ There exist natural maps
    \[q^\pm :\Omega\rightarrow \partial \Xi^\pm\]
    and laminations $\tlambda^\pm$ so that the $(\Omega, \tlambda^\pm,q^\pm)$ are bent domains. Furthermore, the maps $q^\pm$ are convex $(\rho, \eta)$-equivariant sections; i.e. for all $\gamma\in \pi_1 S$, for all $x\in \Omega$
    \[q^\pm \circ\rho(\gamma) (x) = \eta(\gamma)\circ q^\pm (x).\]
\end{lemma}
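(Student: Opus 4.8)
The plan is to build the maps $q^\pm$ fiberwise over $\Omega$ using the convex hull structure, then verify bentness and equivariance. First I would record that, since $[e_4]\notin\Xi$ (Lemma \ref{lem: eta is Anosov}), the projection $[Q]$ restricts to a proper map $\Xi \to \Omega$ whose fiber over each $p\in\Omega$ is the intersection of $\overline\Xi$ with the vertical line $L_p$ through $[e_4]$; convexity of $\Xi$ makes this fiber a compact segment $[q^-(p), q^+(p)]$ (possibly a point on $\im\xi$, but not over interior points of $\Omega$ when $\varphi\notin B^1$). Define $q^\pm(p)$ to be the top/bottom endpoint, using the coherent orientation of the vertical lines established in \S\ref{subsection: coaffine gp and Hit}. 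The hypothesis $\varphi\notin B^1(\pi_1S,(\R^3)^*_\rho)$ enters via Lemma \ref{lemma: coboundaries and reducible reps}: if the two sections agreed (i.e. $\Xi$ had empty interior over $\Omega$), then $\eta$ would preserve a hyperplane, hence be reducible, contradicting $\varphi\notin B^1$; this is what forces $\partial\Xi\setminus\im\xi$ to split into two disjoint open disks $\partial\Xi^\pm$.

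Next I would establish equivariance. Since $[Q]\circ\eta(\gamma) = \rho(\gamma)\circ[Q]$, the element $\eta(\gamma)$ carries the vertical line $L_p$ to $L_{\rho(\gamma)p}$, and because $\eta$ preserves $\Xi$ and the orientation on vertical lines (as it lies in $\Aff^*(3,\R)$ and acts preserving the co-orientation of positively oriented hyperplanes), it must send the top endpoint of the fiber over $p$ to the top endpoint of the fiber over $\rho(\gamma)p$, and likewise for the bottom. This gives $q^\pm\circ\rho(\gamma) = \eta(\gamma)\circ q^\pm$ immediately. Continuity of $q^\pm$ follows from continuity of the boundary of a convex set together with properness of $[Q]|_\Xi$.

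To see that $(\Omega,\tlambda^\pm,q^\pm)$ is a bent domain, I would argue that $\partial\Xi^\pm$ is the frontier of a convex body, hence is ruled by projective segments (it is the graph of a convex, resp. concave, section in the sense of Definition \ref{def: function convex}, by Lemma \ref{lemma: two notions of convexity} applied to the supporting hyperplanes of $\overline\Xi$). Stratify $\partial\Xi^\pm$ by the dimension of the maximal projective face through each point: the top-dimensional strata are the interiors of flat pieces, on which $q^\pm$ restricts to a projective isomorphism onto an open subset of an oriented hyperplane $\alpha(P)$; pulling these flat pieces back via $q^\pm$ gives the complementary regions of a geodesic lamination $\tlambda^\pm$ on $\Omega$ (one must check the $1$-dimensional strata pull back to a closed, geodesically-foliated set — this uses that $q^\pm$ is a homeomorphism onto $\partial\Xi^\pm$, that $\im\xi$ is the common ideal boundary, and that the preimages of projective segments in a strictly convex $C^1$ domain are geodesics for the Hilbert metric). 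The continuous extension to $\partial\lambda^\pm\subset\partial\Omega$ is inherited from the continuous extension of $q^\pm$ to $\overline\Omega$, landing on $\im\xi$. Convexity of the sections $q^\pm$ is then exactly Corollary \ref{cor: convex iff diff in R_k} / Lemma \ref{lemma: two notions of convexity}, since the relevant hyperplanes $\alpha(P)$ are by construction supporting hyperplanes of the convex set $\overline\Xi$.

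\textbf{Main obstacle.} The delicate point is showing that the non-flat part of $\partial\Xi^\pm$ pulls back to an honest \emph{geodesic} lamination on $\Omega$ rather than merely a closed ruled set — i.e. that the rulings of the boundary of the convex hull project to complete geodesics that are pairwise disjoint and closed in $\Omega$. This requires combining strict convexity and $C^1$ regularity of $\partial\Omega$ (from $\rho$ Hitchin) with the structure of $\hull(\im\xi)$: two distinct rulings cannot cross transversally in $\partial\Xi^\pm$ (else the boundary would fail to be the graph of a convex section), their closures limit to points of $\im\xi$ hence to ideal points of $\Omega$, and the limit set being a $C^{1+a}$ curve with no segments prevents a ruling from having an endpoint in the interior. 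Packaging this carefully — and ruling out, say, a ruling that spirals without closing up — is where the real work lies; everything else is bookkeeping with the orientations and the equivariance of $[Q]$.
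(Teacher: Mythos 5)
Your overall strategy coincides with the paper's: $q^\pm$ is the inverse of $[Q]$ restricted to the two components of $\partial\Xi\setminus\im\xi$, equivariance falls out of the equivariance of $[Q]$, and convexity of the sections comes from Lemma \ref{lemma: two notions of convexity} together with convexity of $\Xi$. The splitting into two disks (via $\varphi\notin B^1 \Rightarrow \eta$ irreducible $\Rightarrow \Xi$ has nonempty interior, so $\partial\Xi$ is a $2$-sphere containing the circle $\im\xi$) and the equivariance argument are both sound.

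The gap is exactly where you flag it, and your proposal does not close it. To see that $[Q]$ carries the $1$-dimensional strata of $\partial\Xi^\pm$ to a geodesic lamination, you list the required properties (rulings have ideal endpoints on $\im\xi$, are pairwise disjoint, form a closed family, do not spiral) but supply no mechanism, and the $C^{1+a}$-regularity heuristic you invoke is not what does the job. The paper resolves this with two standard facts about the face stratification of the convex body $\overline\Xi=\hull(\im\xi)$ (cf.\ \cite{Rockafellar}): (i) every face of $\hull Y$ is the convex hull of a subset of $Y$, so every $0$-face of $\overline\Xi$ lies on $\im\xi$; this forces $\partial\Xi^\pm$ to consist only of open $1$- and $2$-faces, with each $1$-face the segment spanned by two points of $\im\xi$; and (ii) a Hausdorff limit of closed $n$-dimensional faces of a convex body is a closed face of dimension at most $n$, which makes the union of $1$-faces a closed, geodesically-foliated set, i.e.\ a lamination on $\partial\Xi^\pm$. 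Since $[Q]$ is a projective isomorphism on each face, the $1$-faces push forward to disjoint, complete Hilbert geodesics in $\Omega$, giving $\tlambda^\pm$. This at once discharges every item on your ``main obstacle'' list, including a hypothetical spiraling ruling; with (i) and (ii) in hand the rest of your argument goes through.
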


\begin{proof}
    The condition that $\varphi$ is not a coboundary is equivalent to requiring that $\eta$ preserves no hyperplane in $\RP^3$ (Lemma \ref{lemma: coboundaries and reducible reps}). Therefore the limit curve $\im(\xi)$ is contained in no hyperplane, its convex hull $\overline \Xi$ is topologically a $3$-disk, and its boundary is topologically a $2$-sphere. The sets $\partial \Xi^\pm$ are then the complements in this $2$-sphere of the topological circle $\im(\xi)$. 

    The set $\Xi$ is a convex body properly contained in an affine chart, and so its boundary is stratified into a poset of faces (see \cite{Rockafellar} for general background on convex geometry). Because $\Xi \subset \RP^3$, the faces of $\Xi$ are of dimension $0$, $1$, or $2$. It is a fact (which is not difficult to prove) that every face $K$ of a proper convex set $X = \hull Y$ is the convex hull of some $Y'\subset Y$. Applying this to the present situation, we conclude that every $0$-face of $\Xi$ must be in $\im (\xi)$. In particular, $\partial \Xi^\pm$ is a union of (open) $1$- and $2$-faces. Define $\tlambda^\pm$ to be the set of $1$-faces in $\partial \Xi^\pm$. The definition of a face of a convex body, together with the previous observation regarding $0$-faces is sufficient to conclude that $\tlambda^\pm$ are laminations on the disks $\partial \Xi^\pm$. 

    Recall from Equation \eqref{eqn: def [Q]} the natural projection $Q: \R^4 \rightarrow \R^4 /[e_4]$, and the induced projection 
    \[[Q]: \RP^3\setminus \{[e_4]\} \rightarrow \P(\R^4/[e_4]) \cong \RP^2.\]
    %The space $\P(\R^4/[e_4])$ is canonically the space of lines in $\RP^3$ containing $[e_4]$. Thus when we consider $\Omega\subset \P(\R^4 /[e_4])$, we are considering $\Omega$ as a subset of the lines in $\RP^3$ through $[e_4]$: namely those that are contained in the cone $\mathscr C \Omega$ (by definition of this cone). 

    It is an exercise in projective geometry to see that the lines through $[e_4]$ contained in $\mathscr C \Omega$ intersect $\Xi$ in a proper segment. 
    Define $q^\pm$ to be the inverse of the restriction of $[Q]$ to $\partial \Xi^\pm$. We may already conclude that $q^\pm$ are bijections. The fact that they are homeomorphisms to their images follows from the fact that their inverses are restrictions of the continuous map $[Q]$ to the graph of a convex section of $\mathscr C \Omega$.

    To check that $[Q](\lambda^\pm)$ are laminations on $\Omega$, one need only note that $[Q]$ is a projective map on each stratum, and that the set of leaves is closed. The closedness of the set of leaves follows from the fact that a Hausdorff limit of closed $n$-dimensional faces of a convex set is a closed face whose dimension is at most $n$. 

    The fact that $q^\pm$ is convex (up or down) in the sense of Definition \ref{def: function convex} follows from Lemma \ref{lemma: two notions of convexity} and the convexity of $\Xi$.
    %To see this: $P,Q$ are plaques of $\Omega\setminus \tlambda^+$. You know that for $P$ every other plaque $R$ is either above it or below it, and the same goes for $R$. You need that it is the same (above or below) for both $P$ and $Q$. But you just compare the two of them, qed.
    
    This completes the Lemma.
    \end{proof}

The difference between the `positive' and `negative' data suggested by the `$\pm$' decorations of the objects defined above is cosmetic, depending only on an orientation of the $\R$-factor of $\mathscr C \Omega$. There is no reason to favor either one over the other. 

As it is a vector space,  $(-I)$ acts on $Z^1(\pi_1 S, (\R^3)^*_\rho)$ as an involution (which descends to $H^1(\pi, (\R^3)^*_\rho)$). This induces a nontrivial symmetry of the geometric data which we have been exploring. In particular, it exchanges $\partial \Xi^+$ and $\partial \Xi^-$. Explicitly, this is effected by conjugation by the $\GL(4,\R)$ element $\textrm{diag}(1,1,1,-1)$.

\subsection{Macroscopic bending data}\label{section: geometric transverse data}

In this subsection, we extract the `macroscopic bending data' from $\partial \Xi$. 
We work with the same set-up and notation from the previous subsection: assume that $\varphi\in Z^1(\pi_1 S,(\R^3)^*_\rho)$ is not a coboundary, and construct the corresponding coaffine convex cocompact representation $\eta=\eta_\varphi$. Let $\tlambda = [Q](\tlambda^+) \subset \Omega$.

\begin{remark}
    Suppose that $\tlambda$ has an isolated leaf $\ell$ with a pair of adjacent $2$-faces supported by hyperplanes $\alpha$ and $\beta$ (such a leaf does not always exist). The discussion in \S\ref{sec: coaffine geometry} shows that the difference $[\beta-\alpha]\in [\ker\ell]\subset \P\TAff^*(3,\R)$. See Figure \ref{figure: cone_1} for the geometric meaning of $[\ker \ell]$. This section generalizes this localization property of the bending data to the case of non-isolated leaves of the bending lamination on $\partial \Xi$.
\end{remark}

From the discussion in the previous subsection, it is evident that for any $x\in \Omega\setminus \tlambda$, there exists a unique supporting hyperplane $[\ker \alpha(x)]$ to $\Xi$ at $q^+(x)\in \partial \Xi$, where $\alpha(x) \in (\A^3)^*$. 
Thus the following is well-defined: 

\begin{definition}\label{def: bending cocycle}
    Let $[\eta]$ be the $\TAff^*(3,\R)$ conjugacy class of a coaffine representation with Hitchin linear part $L(\eta)=\rho$. The \textit{bending cocycle} $\psi=\psi([\eta])$ is defined as follows.
    
    To an arc $k: [0,1]\rightarrow \Omega$ with endpoints not in $\tlambda$, let
    \[\psi(k) = \alpha(k(1))-\alpha(k(0)) \in \TAff^*(3,\R).\]
\end{definition}

%The map $\psi$ has image in the translation subgroup of $\Aff^*(3,\R)$, which by our previous choice is identified with $\ker(e_4) \subset \R^4$. 
Some elementary properties of $\psi$ are evident from the definitions.

\begin{lemma}\label{lemma: bending cocycle properties}
    The map $\psi$ satisfies the following properties
    \begin{itemize}
    \item Equivariance: $\psi(\gamma.k) = \rho(\gamma).\psi(k)$ for all  arcs transverse to $\tlambda$ and $\gamma\in \pi_1 S$.
    \item Transverse isotopy invariance: $\psi$ only depends on the two-strata containing $\partial k$.
    \item Support: $\psi(k) = 0$ if and only if $\partial k$ is contained in some component of $\Omega \setminus \tlambda$.
    \item Flip equivariance: For an oriented arc $k$, if $\bar k$ denotes $k$ with the opposite orientation, then $\psi(\bar k) = - \psi (k)$.
    \item Additivity: if $k= k_1\cdot k_2$, then $\psi(k) = \psi(k_1) + \psi(k_2)$.
\end{itemize}
\end{lemma}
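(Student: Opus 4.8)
The plan is to verify each of the five listed properties directly from Definition \ref{def: bending cocycle}, unwinding the definitions from \S\ref{subsec: minimal domain coaffine}; none of the items should require heavy machinery, but I would be careful about the equivariance and the well-definedness (transverse isotopy) statements, since those are where the geometry actually enters. Throughout I work with the representation $\eta = \eta_\varphi$ fixed (up to $\TAff^*(3,\R)$-conjugacy), its minimal convex domain $\Xi$, the disk $\partial\Xi^+$, and the map $q^+:\Omega\to\partial\Xi^+$ together with the lamination $\tlambda = [Q](\tlambda^+)$ provided by Lemma \ref{lem: pleated disks}. For $x\in\Omega\setminus\tlambda$, recall that $q^+(x)$ lies in the relative interior of a unique $2$-face $P$ of $\Xi$, and $\alpha(x) = \alpha(P) \in (\A^3)^*$ is the unique (positively oriented) supporting hyperplane along that face; this is the content of the paragraph preceding Definition \ref{def: bending cocycle}.

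First, for \textbf{equivariance}: by the $(\rho,\eta)$-equivariance of $q^+$ (Lemma \ref{lem: pleated disks}), the point $q^+(\rho(\gamma)x) = \eta(\gamma)q^+(x)$ lies in the face $\eta(\gamma)P$, whose supporting hyperplane is $\eta(\gamma).\alpha(P)$. Since $\eta(\gamma)$ acts on $(\A^3)^*$ through its linear part $\rho(\gamma)$ on $\TAff^*(3,\R)\cong\ker e_4$ when we take differences (this is exactly the identification discussed in \S\ref{subsec: coaffine geometry}: for $\alpha,\beta\in(\A^3)^*$, $\beta-\alpha\in\ker e_4$ and $\eta(\gamma).(\beta-\alpha) = \rho(\gamma).(\beta-\alpha)$), we get
\[
\psi(\gamma.k) = \alpha(\rho(\gamma)k(1)) - \alpha(\rho(\gamma)k(0)) = \eta(\gamma).\alpha(k(1)) - \eta(\gamma).\alpha(k(0)) = \rho(\gamma).\psi(k).
\]
For \textbf{transverse isotopy invariance}: $\psi(k)$ manifestly depends only on $\alpha(k(0))$ and $\alpha(k(1))$, hence only on the $2$-strata of $\Omega\setminus\tlambda$ containing $k(0)$ and $k(1)$; as $x$ ranges over a single component $P$ of $\Omega\setminus\tlambda$, $q^+(x)$ stays in the relative interior of one $2$-face and $\alpha(x)$ is constant. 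For \textbf{support}: if $k(0),k(1)$ lie in the same component $P$ then $\alpha(k(1)) = \alpha(k(0))$ so $\psi(k)=0$; conversely if they lie in distinct components $P_1\neq P_2$, then $q^+(P_1)$ and $q^+(P_2)$ are distinct $2$-faces of the strictly convex (and, since $\varphi$ is not a coboundary, non-degenerate) body $\Xi$, so their supporting hyperplanes are distinct, giving $\psi(k)\neq 0$; here I should invoke strict convexity of $\Omega$ (hence of the corresponding faces of $\Xi$) to rule out two distinct $2$-faces sharing a supporting hyperplane. Finally \textbf{flip equivariance} is immediate since reversing $k$ swaps $k(0)\leftrightarrow k(1)$ and hence negates the difference, and \textbf{additivity} for $k = k_1\cdot k_2$ (with $k_1(1)=k_2(0)$) telescopes: $\psi(k_1)+\psi(k_2) = (\alpha(k_1(1))-\alpha(k_1(0))) + (\alpha(k_2(1))-\alpha(k_2(0))) = \alpha(k(1))-\alpha(k(0)) = \psi(k)$.

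The main obstacle — really the only subtle point — is making the support property airtight: I need to know that two \emph{distinct} $2$-strata of $\Omega\setminus\tlambda$ are carried by $q^+$ to faces of $\Xi$ with genuinely distinct supporting hyperplanes, which uses that $\Xi$ is a full-dimensional strictly convex body (guaranteed by $\varphi\notin B^1$ via Lemma \ref{lem: pleated disks}) and that the strict convexity of $\Omega$ prevents a single hyperplane from supporting $\Xi$ along two different $2$-faces. Everything else is bookkeeping with the affine structure on $(\A^3)^*$ and the conjugation identifications set up in \S\ref{subsec: coaffine geometry}. I would phrase the proof as five short paragraphs, one per bullet, with the equivariance paragraph carrying the reference to the $\TAff^*(3,\R)$-action and the support paragraph carrying the strict-convexity remark.
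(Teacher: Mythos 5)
The paper itself offers no proof of Lemma~\ref{lemma: bending cocycle properties}, dismissing it with "evident from the definitions," so your proposal supplies genuinely new content; the structure (one paragraph per bullet, with the real work in equivariance and support) is the right one, and all five verifications go through essentially as you say. Equivariance is handled correctly: the point is that $\eta(\gamma)$ preserves the affine subspace $(\A^3)^*\subset(\R^4)^*$, so $\eta(\gamma)\beta - \eta(\gamma)\alpha = (\beta-\alpha)\circ\eta(\gamma)^{-1}$ lands back in $\ker e_4\cong\TAff^*(3,\R)$ where $\eta$ acts through its linear part $\rho$; your computation matches this. Transverse isotopy invariance, flip equivariance, and additivity are indeed trivial telescoping.

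The one place to tighten is the support argument, where your write-up contains a misstatement: $\Xi$ is \emph{not} strictly convex --- it has open $2$-dimensional faces, which is precisely what makes the bending picture nontrivial --- so "strictly convex body $\Xi$" should be dropped. The correct (and purely convex-geometric) reason that two distinct open $2$-faces cannot share a supporting hyperplane is the following: if a supporting hyperplane $H$ contained $q^+(P_1)$ and $q^+(P_2)$, then $H\cap\overline\Xi$ would be a face of $\overline\Xi$ of dimension at most $2$ (it cannot be $3$-dimensional since $\Xi$ is full-dimensional, which is where $\varphi\notin B^1$ enters via Lemma~\ref{lem: pleated disks}). Each $\overline{q^+(P_i)}$ is then a $2$-dimensional face of the at-most-$2$-dimensional $H\cap\overline\Xi$, forcing $\overline{q^+(P_1)}=H\cap\overline\Xi=\overline{q^+(P_2)}$, and hence $P_1 = P_2$ by injectivity of $q^+$ --- a contradiction. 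Strict convexity of $\Omega$ is not actually needed here. With that correction, the proof is sound.
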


As one might hope, $\psi$ is related to the cocycle $\varphi$ (defining $\eta$) in the following precise sense.

\begin{lemma}\label{lemma: psi is phi}
    Let $p\in \Omega\setminus \tlambda$, and for all $\gamma\in \pi_1 S$ let $\tgamma$ be the unique lift of $\gamma$ to $\Omega$ based at $p$ (considered up to isotopy relative to its boundary). Then
    \[[\gamma\mapsto \psi(\tgamma\inverse)] = \varphi \in H^1(\pi_1 S,\TAff^*(3,\R)_\rho).\]
\end{lemma}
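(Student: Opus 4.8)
The statement asserts that the cohomology class of the cocycle $\gamma\mapsto \psi(\tilde\gamma^{-1})$ equals the class of $\varphi$ in $H^1(\pi_1 S,\TAff^*(3,\R)_\rho)$, where $\tilde\gamma$ is the lift based at $p$ of a loop representing $\gamma$. The plan is to first verify that $\gamma\mapsto\psi(\tilde\gamma^{-1})$ is genuinely a cocycle, then identify it with $\varphi$ up to a coboundary by comparing the geometry of the section $q^+$ with the definition of $\eta=\eta_\varphi$. Since by Lemma \ref{lem: cohomology parameterizes reps up to conj} the class of $\varphi$ is a complete invariant of the $\TAff^*(3,\R)$-conjugacy class of $\eta$, and $\psi$ as defined in Definition \ref{def: bending cocycle} already depends only on that conjugacy class, it is enough to pin the class down on the nose rather than track all constants.

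\textbf{Step 1: cocycle property.} Fix a basepoint two-stratum and let $x_0\in\Omega\setminus\tlambda$ lie in it; write $\alpha_0=\alpha(x_0)$. For $\gamma\in\pi_1 S$, the lift $\tilde\gamma$ (an arc from $x_0$ to $\rho(\gamma)^{-1}x_0$, or from $x_0$ to $\rho(\gamma)x_0$ depending on convention — I will fix the convention so that the inverse appears as in the statement) has $\psi(\tilde\gamma) = \alpha(\rho(\gamma)^{\mp 1}x_0) - \alpha_0$. Using additivity of $\psi$ (Lemma \ref{lemma: bending cocycle properties}) together with $\rho$-equivariance $\psi(\gamma.k)=\rho(\gamma).\psi(k)$ and the concatenation $\widetilde{\alpha\beta} \simeq \tilde\alpha \cdot (\alpha.\tilde\beta)$ of lifts, a direct computation gives the cochain relation $\psi(\widetilde{\alpha\beta}^{-1}) = \psi(\tilde\alpha^{-1}) + \rho(\alpha)\psi(\tilde\beta^{-1})$, i.e. the $1$-cocycle identity of \S\ref{subsec: cohomology}. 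One must be a little careful that $\psi$ is well defined on homotopy classes of arcs transverse to $\tlambda$ rel endpoints and that the lift is unique up to such homotopy (this is exactly the transverse isotopy invariance bullet of Lemma \ref{lemma: bending cocycle properties} plus the fact that $q^+$ is a homeomorphism, so $\Omega$ carries the same $\pi_1$ data).

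\textbf{Step 2: comparison with $\varphi$.} Recall the reducible representation $\eta_0 = \eta_{0\cdot\varphi}$ preserves the hyperplane $[\ker e^4]$, whose associated section is the ``flat'' section $q_0$ of $\mathscr C\Omega$ at height $0$; for it, every $\alpha(x)$ equals the fixed covector $e^4$, so the bending cocycle of $\eta_0$ is identically $0$ — consistent with $0=\varphi_0$. For general $\eta=\eta_\varphi$, choose a lift $\widehat{q^+}:\widetilde\Omega\to\coaff$ of the section; then $\widehat{q^+}$ is $(\rho,\eta)$-equivariant by Lemma \ref{lem: pleated disks}, and $\alpha(x)$ is the supporting hyperplane at $\widehat{q^+}(x)$. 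The key computation: in the coaffine chart $\{\alpha(e_4)=1\}$, the $\eta$-action on covectors is $\eta(\gamma).\alpha = \rho(\gamma).\alpha + \varphi(\gamma^{-1})$ read off from \eqref{eqn: coaffine reps} (the bottom row is $\varphi(\gamma^{-1})$, and one checks how $\begin{pmatrix}\rho(\gamma)&\mathbf 0\\\varphi(\gamma^{-1})&1\end{pmatrix}$ acts on the right on row covectors / equivalently on hyperplanes). Applying equivariance of $q^+$ to the stratum through $x_0$, $\alpha(\rho(\gamma)x_0) = \eta(\gamma).\alpha(x_0) = \rho(\gamma).\alpha_0 + \varphi(\gamma^{-1})$, hence (choosing the orientation of lifts so the inverse lands correctly) $\psi(\tilde\gamma^{-1}) = \alpha(\rho(\gamma)x_0)-\alpha_0 = \varphi(\gamma^{-1}) + (\rho(\gamma).\alpha_0-\alpha_0)$. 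The second term is precisely a coboundary in $B^1(\pi_1 S,\TAff^*(3,\R)_\rho)$ (with $\tau = \alpha_0$, viewing the affine point $\alpha_0$ via the chart as an element of $\TAff^*(3,\R)$ after subtracting a reference), so in $H^1$ we get $[\psi(\tilde\gamma^{-1})] = [\varphi(\gamma^{-1})]$; reindexing $\gamma\mapsto\gamma^{-1}$ (or absorbing into the convention for $\tilde\gamma$) yields the claimed equality.

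\textbf{Main obstacle.} The real care is bookkeeping: (i) getting the inverses/orientations of lifts to match the statement's $\psi(\tilde\gamma^{-1})$ and the paper's convention $\eta(\gamma) = \begin{pmatrix}\rho(\gamma)&\mathbf 0\\\varphi(\gamma^{-1})&1\end{pmatrix}$; (ii) correctly translating ``supporting hyperplane to $\Xi$ at $q^+(x)$'' through the identification of $\coaff$ with the affine space of covectors $\{\alpha(e_4)=1\}$, so that the action on hyperplanes is the dual-affine action and the translation part reads as an element of $\TAff^*(3,\R)$ exactly as in \S\ref{subsec: coaffine geometry}; and (iii) checking that the extra term is a coboundary, i.e. that the ambiguity in choosing the basepoint stratum $x_0$ (equivalently, the lift of $q^+$, equivalently the overall translation normalization) changes the cocycle by an element of $B^1$ — this is where Lemma \ref{lemma: coboundaries and reducible reps} and the simply transitive action of $\TAff^*(3,\R)$ on reducible representations is the conceptual reason it all works. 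Modulo these conventions the argument is a short equivariance computation.
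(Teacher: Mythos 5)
Your proposal is correct and follows essentially the same approach as the paper: use the $(\rho,\eta)$-equivariance of the section $q^+$ together with the explicit matrix form of $\eta_\varphi$ from Definition \ref{def: coaffine rep} to identify $\psi(\tgamma\inverse)$ with $\varphi$ modulo a coboundary. The only cosmetic difference is that the paper normalizes $\alpha(P)=[\ker e^4]$ at the outset (observing that conjugation adds a coboundary, so this costs nothing), making the final matrix computation give the equality on the nose, whereas you carry along a general $\alpha_0$ and recognize the extra term $\rho(\gamma).\alpha_0-\alpha_0$ as an element of $B^1$ at the end — same argument, different order of normalization.
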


\begin{proof}
    Let $P\subset\Omega\setminus \tlambda$ be the connected component containing $p$. Conjugacy affects the addition of a coboundary, so it is sufficient to prove the lemma when $\alpha(P)=[\ker e^4]$. In this case, a matrix computation completes the proof:
    \[\eta(\gamma).e^4 = e^4 + \varphi(\gamma\inverse).\]
\end{proof}

Choose any auxiliary Riemannian metric $d_{\mathbb P}$ on $\RP^2$. Having now defined $\psi$, 
revisiting Corollary \ref{cor: convex iff diff in R_k} implies the following useful property.

\begin{corollary}\label{cor: cocycles localize}
    There is an $a \in (0,1)$ such that when $k$ is an arc transverse to $\tlambda$ and $\ell_x$ is a leaf of $\tlambda$ intersecting $k$ non-trivially, then $d_{\mathbb P}(\psi(k), \ell_x) = O(\diam k^a)$.

    Additionally, there exists a proper positive convex cone $\mathscr E\subset \TAff^*(3,\R)$ so that for all subarcs $k'$ of $k$, $\psi(k')\in \mathscr E$.
\end{corollary}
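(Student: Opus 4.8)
\textbf{Proof plan for Corollary \ref{cor: cocycles localize}.}
The plan is to deduce both assertions from Corollary \ref{cor: convex iff diff in R_k} together with the geometric estimates of Lemma \ref{lem: tt geometry} and Lemma \ref{lem: lamination geometry}. First I would fix a small train track neighborhood $\cN = \cN_\epsilon(\tlambda)$ of $\tlambda$ in $\Omega$ (carried over from $X$ via the bi-Hölder identification of geodesic laminations), and recall that by transverse isotopy invariance (Lemma \ref{lemma: bending cocycle properties}) we may compute $\psi(k)$ using any representative of $k$ with the same endpoint strata. Given a leaf $\ell_x$ meeting $k$, the two endpoints of $k$ lie in two complementary components $P_1, P_2$ of $\Omega \setminus \tlambda$ separated by $\ell_x$; let $\ell_1, \ell_2$ be the extremal leaves between $P_1$ and $P_2$, so that $\ell_x$ lies in the closed region of $\Omega$ bounded by $\ell_1$ and $\ell_2$. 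By Corollary \ref{cor: convex iff diff in R_k}, $[\psi(k)] = [\alpha(P_2) - \alpha(P_1)] \in R(\ell_1^*, \ell_2^*)$, which is the $4$-gon (or segment) dual to the region between $\ell_1$ and $\ell_2$, and in particular contains $[\ker \ell_x]$ in the appropriate incidence sense.

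Next I would convert this combinatorial containment into a metric estimate. The key point is that $\ell_1, \ell_2$ both pass through the component of $k \cap \cN_\epsilon(\tlambda)$ meeting $\ell_x$, which by Lemma \ref{lem: tt geometry} has diameter $O(\epsilon)$; choosing $\epsilon$ comparable to $\diam k$ (more precisely, working with the actual subarc of $k$ spanned near $\ell_x$, whose diameter is at most $\diam k$), the leaves $\ell_1, \ell_2, \ell_x$ are pairwise $O(\diam k)$-close in the Hausdorff metric on geodesics in $\Omega$. By the $C^{1+a}$ regularity of $\partial \Omega$ (hence of $\partial \Omega^*$) and Hölder continuity of the duality map $\xi^* \circ \xi^{-1}$, the dual leaves $\ell_1^*, \ell_2^*$ and the hyperplane $[\ker \ell_x]$ are $O(\diam k^a)$-close in $(\RP^2)^*$ for a suitable $a \in (0,1)$. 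Since $R(\ell_1^*, \ell_2^*)$ is the bounded region cut out by (the kernels of) the four endpoints of $\ell_1^*, \ell_2^*$, its diameter in any fixed metric $d_{\P}$ on $\RP^2$ is controlled by how close these four lines are to each other, again $O(\diam k^a)$; as $[\psi(k)]$ and $\ell_x$ (viewed as a point of $\RP^2$ via $[\ker \ell_x]$, i.e.\ the appropriate dual point) both lie in this region, $d_{\P}(\psi(k), \ell_x) = O(\diam k^a)$.

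For the second assertion, I would observe that for every subarc $k'$ of $k$ the same Corollary \ref{cor: convex iff diff in R_k} gives $[\psi(k')] \in R((\ell'_1)^*, (\ell'_2)^*)$ where $\ell'_1, \ell'_2$ are extremal leaves between the endpoint strata of $k'$; all of these leaves lie between the extremal leaves $\ell_1, \ell_2$ associated to $k$ itself, so all the regions $R((\ell'_1)^*,(\ell'_2)^*)$ are contained in $R(\ell_1^*,\ell_2^*)$. Thus it suffices to take $\mathscr E$ to be (a slight closed convex thickening of) the positive cone over $R(\ell_1^*, \ell_2^*)$ inside $\TAff^*(3,\R)$; by Corollary \ref{cor: convex iff diff in R_k} this is $\mathscr E P_2 \cap (-\mathscr E P_1)$, a proper positive convex cone since $\Omega^*$ is properly convex and $\ell_1, \ell_2$ do not degenerate. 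I expect the main obstacle to be the bookkeeping in the metric step: carefully matching the diameter of the relevant \emph{subarc} of $k$ near $\ell_x$ with the Hausdorff distance between $\ell_1$, $\ell_2$, $\ell_x$, and propagating this through the (Hölder) duality to bound $\diam R(\ell_1^*, \ell_2^*)$ uniformly; the regularity input from Benoist ($\partial \Omega \in C^{1+a}$) is exactly what makes the exponent $a$ appear and must be invoked with care.
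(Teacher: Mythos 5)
Your proposal is correct and follows essentially the same route as the paper: localize $[\psi(k)]$ and $\ell_x$ to the region $R(\ell_1^*,\ell_2^*)$ via Corollary~\ref{cor: convex iff diff in R_k}, bound the diameter of that region using H\"older continuity of the boundary map, and take $\mathscr E$ to be (one component of) the cone over $R(\ell_1^*,\ell_2^*)$ for the second part. The paper's diameter estimate is a bit more streamlined than yours: it goes directly from the fact that the endpoints of $\ell_1,\ell_2$ in $\partial\pi_1 S$ move Lipschitz in $\diam k$ (in the visual metric), and then applies the $a$-H\"older continuity of $\xi$, rather than passing through a train-track neighborhood, Lemma~\ref{lem: tt geometry}, and the $C^{1+a}$ regularity of $\partial\Omega$; you are ultimately invoking the same H\"older regularity (the exponent $a$ has the same source) but with extra scaffolding that is not needed. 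Also, the closed convex thickening of the cone over $R(\ell_1^*,\ell_2^*)$ that you mention is unnecessary; the cone component itself is already proper, positive, and convex since $\Omega^*$ is properly convex.
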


\begin{proof}
    Let $\ell_1$ and $\ell_2$ be the first and last leaf of $\tlambda$ which $k$ intersects non-trivially. First observe (from the definition) that $\ell_x \in R(\ell_1^*,\ell_2^*)$. So we must only understand how the parallelogram $R(\ell^*_1,\ell^*_2)$ varies in the arc $k$.

    Recall that $\partial \pi_1 S$ has a visual metric after choosing a hyperbolic structure $S\cong X$, which is unique up to bi-H\"older equivalence. Then in the model geometry of  $T^1X$, the map from (oriented) points in a geodesic lamination to their endpoints in $\partial \pi_1 S$ is Lipschitz. The limit curve $\xi:\partial \pi_1 S\rightarrow \RP^2$ is $a$-H\"older, for some $a\in(0,1)$. So $R(\ell^*_1,\ell^*_2)$ is a convex hull of four points in $\RP^2$, the distance between any two of which varies H\"older in $\diam k$. The result then follows as the diameter of $R(\ell^*_1,\ell^*_2)$ varies H\"older in $\diam k$.

    The necessary cone $\mathscr E$ is one component of the cone over $R(\ell_1^*,\ell_2^*)\subset \P \TAff^*(3,\R)$. Corollary \ref{cor: convex iff diff in R_k} and a short geometric argument complete the proof.  
\end{proof}

\begin{remark}
    We may treat $\psi$ as in Definition \ref{def: bending cocycle} as an element of $Z^1(S,F^\rho)$. Let $k$ be an arc transverse to $\lambda$, and define
    \[\psi(k)= \overline k_* \alpha(k(1))- \alpha(k(0))\]
    where $\overline k_*$ is $\nabla^\rho$-parallel transport along $k$.
\end{remark}

\section{Equivariant transverse measures}\label{sec:equivariant measures}

When the linear part $\rho: \pi_1S \to \SL(3,\R)$ 
of $\eta: \pi_1S \to \Aff^*(3,\R)$ is Fuchsian,
there is a global $\nabla^\rho$-flat section of $E_2^\rho\to T^1S$.
In this classical setting, the bending data on a component of the convex core boundary takes the form of a transverse measure whose support is the bending lamination.
For a fixed hyperbolic metric, measured laminations embed into flow invariant measures on the unit tangent bundle.

In general, there is no global flat section of $E^\rho_2\to T^1S$.
Instead, choose (for the remainder of this section) two auxiliary structures: a smooth norm $\|\cdot \|$ on $E^\rho$ and a hyperbolic metric $X$ on $S$ with geodesic flow $g_t: T^1X \to T^1X$.
Let $\pi: T^1X \to X$ denote the tangent projection and $p\mapsto \overline p$ the antipodal involution of $T^1X$, which satisfies $g_t\overline p = \overline {g_{-t}p}$.

Consider the smooth function $G : T^1X \times \R \to \R_{>0}$ defined by 
\begin{equation}\label{eqn: G defined}
    G(p,t) = \frac{\|\left(g_{[0,t]}\right)_*v\|_{g_tp}}{\|v\|_p}, ~ v\in E_2^\rho|_p,
\end{equation}
which measures the distortion of $\|\cdot \|$ along $g_t$-orbits of the  $\nabla^\rho$-parallel transport.
A measure on the unit tangent bundle of $X$ is $(g,G)$-equivariant if it scales by $G$ under the flow $g$ (see Equation \eqref{eqn: G-equivariant}). 
The set of $(g,G)$-equivariant measures with support tangent to a lamination (Definition \ref{def: flow equivariant measure}) parameterizes conjugacy classes of coaffine representations with linear part $\rho$ (Theorem \ref{thm: equiv measures sphere}).

Note that the function $G=G(\rho)$ varies continuously in the connection $\nabla^\rho$.
\medskip

In \S\ref{subsec: building equivariant measures}, we characterize which geodesic laminations support equivariant measures for a given $\rho$ in terms of the exponential growth of $G$ along its leaves (Corollary \ref{cor: characterization of bendable laminations}).
Theorem \ref{thm: minimal laminations support measures} then demonstrates that \emph{every} non-orientable geodesic lamination is the support of an equivariant measure.

In \S\ref{subsec: eq transverse measures}, we disintegrate a flow equivariant measure on a transversal to obtain an \emph{equivariant transverse measure} (Definition \ref{def: transverse measure}), and show that this assignment is a homeomorphism (Proposition \ref{prop: disint is equiv}).
Ultimately, we are interested in these  transverse measures, as the interesting part of a flow equivariant measure lives in the transverse direction to the flow.

%Our goal is to relate bending cocycles discussed in Lemma \ref{lemma: bending cocycle properties} with certain Borel measures on $T^1X$ that satisfying a $(g, G)$-equivariance property.

%One of our main results in this section is Proposition \ref{prop: disint is equiv}, which states that a certain natural map $\ML(T^1X)^{G} \to \ML(X)^{G}$ obtained via disintegration is a homeomorphism.
%Another main result is Corollary \ref{cor: characterization of bendable laminations}, which gives  necessary and sufficient conditions for a minimal lamination $\lambda$ to support a $(g, G)$-equivariant transverse measure.

%There are very interesting questions left unanswered: for example, what does the `typical' element of $\ML(X)^{G}$ look like?  It should not be difficult to verify that there are dense open subsets made of atomic measures for most $\rho$.  

\subsection{Building flow equivariant measures}\label{subsec: building equivariant measures}

Let $\mathcal M (T^1X)$ be the space of finite Borel measures on $T^1X$ with its weak-$*$ topology.
We say that $\nu \in \cM(T^1X)$ is $(g, G)$-equivariant if
\begin{equation}\label{eqn: G-equivariant}
    d\left(g_t\right)_*\nu = G(\cdot , -t)d\nu
\end{equation} 
holds for all $t\in \R$.
In other words, $\nu$ and $\left(g_t\right)_*\nu$ are mutually absolutely continuous with Radon-Nykodym derivative given by $G(\cdot, -t)$.
Thus, when $\rho$ is Fuchsian, there is a bilinear form on $E^\rho$ and hyperbolic metric making $G \equiv 1$.

\begin{definition}\label{def: flow equivariant measure}
    Denote by $\ML(T^1X)^{G} \subset \cM(T^1X)$ the set of $(g, G)$-equivariant finite Borel measures $\nu$ that are invariant under the antipodal involution and that have support $\hlambda$, where $\lambda\subset X$ is a geodesic lamination and $\hlambda = T^1\lambda$ is its set of tangents. %; $\hlambda\to \lambda$ is also the orientation cover.
\end{definition}
\noindent Note that  $\ML(T^1X)^{G}$ is equipped with the topology of weak-$*$ convergence as a subspace of $\cM(T^1X)$.

The following basic properties of $G$ are easily verified.
\begin{lemma}\label{lem: G cocycle}
    For all $p \in T^1S$ and $s, t \in \R$, we have 
\[G(p,s+t) = G(p,s)G(g_sp,t),\]
and 
 \[G(\overline p , -t) = G(p,t).\]
        Furthermore, for all $T$, we have 
        \[\int_{-T}^T G(p,t)~dt = \int_{-T}^T G(\overline p,t)~dt. \]
\end{lemma}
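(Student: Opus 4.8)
The plan is to verify the three assertions of Lemma \ref{lem: G cocycle} directly from the definition \eqref{eqn: G defined} of $G$, treating them in the order they are stated since the third is an easy consequence of the second.

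First I would prove the cocycle identity $G(p,s+t) = G(p,s)G(g_sp,t)$. Fix a nonzero $v \in E_2^\rho|_p$. The key point is that parallel transport composes: $\left(g_{[0,s+t]}\right)_* = \left(g_{[s,s+t]}\right)_* \circ \left(g_{[0,s]}\right)_*$, and $\left(g_{[s,s+t]}\right)_*$ acting at $g_sp$ is the same as $\left(g_{[0,t]}\right)_*$ with basepoint $g_sp$ because the geodesic flow and the flat connection $\nabla^\rho$ along flowlines only depend on the orbit segment. Writing $w = \left(g_{[0,s]}\right)_*v \in E_2^\rho|_{g_sp}$, I would compute
\[
G(p,s+t) = \frac{\|\left(g_{[0,t]}\right)_*w\|_{g_{s+t}p}}{\|v\|_p} = \frac{\|\left(g_{[0,t]}\right)_*w\|_{g_t(g_sp)}}{\|w\|_{g_sp}} \cdot \frac{\|w\|_{g_sp}}{\|v\|_p} = G(g_sp,t)\,G(p,s),
\]
where I also use that the ratio defining $G$ is independent of the choice of nonzero vector in the (one-dimensional) fiber, which is what makes $G$ well defined in the first place. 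Taking $s = -t$ together with $G(p,0) = 1$ gives the useful corollary $G(p,-t) = G(g_{-t}p,t)^{-1}$, though it is not strictly needed here.

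Next I would prove $G(\overline p, -t) = G(p, t)$. This is where the symmetry property of the Anosov splitting is used: by the remark following Definition \ref{def: Anosov}, the antipodal involution induces $E^\rho_2|_p = E^\rho_2|_{\overline p}$ (as $k=3$ and $2 = k-2+1$), and it intertwines parallel transport along the $g_t$-orbit of $p$ with parallel transport along the $g_{-t}$-orbit of $\overline p$, since $g_t\overline p = \overline{g_{-t}p}$ and the antipodal map preserves $\nabla^\rho$ (it is induced by a bundle automorphism covering an isometry of $T^1X$, being the pullback structure from $X$). Concretely, for $v \in E_2^\rho|_p = E_2^\rho|_{\overline p}$ the vector $\left(g_{[0,-t]}\right)_* v$ computed with basepoint $\overline p$ equals $\left(g_{[0,t]}\right)_* v$ computed with basepoint $p$, transported to $g_t p = \overline{g_{-t}\overline p}$, and the norm on $E^\rho$ is the pullback of a norm on $F^\rho$ hence invariant under the antipodal map. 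Therefore $G(\overline p,-t) = \|\left(g_{[0,-t]}\right)_*v\|_{g_{-t}\overline p} / \|v\|_{\overline p} = \|\left(g_{[0,t]}\right)_*v\|_{g_t p}/\|v\|_p = G(p,t)$. Finally, the integral identity follows immediately: substituting $t \mapsto -t$ in $\int_{-T}^T G(p,t)\,dt$ and then applying the second identity gives $\int_{-T}^T G(p,-t)\,dt = \int_{-T}^T G(\overline p, t)\,dt$, and the first integral equals $\int_{-T}^T G(p,t)\,dt$ since the domain $[-T,T]$ is symmetric.

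The main obstacle, such as it is, is being careful about the bookkeeping of basepoints in the parallel-transport notation $\left(g_{[0,t]}\right)_*$ — making explicit that it denotes transport along a specified orbit segment and is compatible with the antipodal symmetry of the splitting — rather than any substantive difficulty; once the conventions of \S\ref{subsec: Anosov property} are unwound, all three claims are formal.
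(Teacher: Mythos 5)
Your proof is correct, and it is the straightforward verification the paper has in mind (the paper offers no proof, stating only that these properties are "easily verified"). The one detail worth flagging is that the second and third identities genuinely require the norm on $E^\rho$ to be the pullback of a norm on $F^\rho$, hence invariant under the antipodal involution — you correctly identify and use this, though the paper's wording in \S\ref{sec:equivariant measures} ("a smooth norm on $E^\rho$") leaves it slightly implicit, with the precise setup appearing in \S\ref{subsec: Anosov property}.
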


The key to building $(g, G)$-equivariant measured laminations is understanding the accumulation of holonomy along $g_t$-orbits.
Define 
\[\Lambda^+(p) = \limsup_{t\to \infty} \frac1t \log G(p, t), \]
and 
\[\Lambda^-(p) = \liminf_{t\to \infty} \frac1t \log G(p,t),\]
which are Borel measurable functions on $T^1X$.
Note that the \emph{signs} of $\Lambda^\pm$ depend neither on our choice of negatively curved metric (hence parameterization of the geodesic flow) nor on our choice of norm $\| \cdot \|$.
Continuity of $G$ ensures that $-\infty<\Lambda^\pm <+\infty$.

\begin{remark}
The functions $\Lambda^\pm$ measure the exponential growth rate of the middle singular value in matrix products corresponding to an infinite word $\gamma_1\cdot \gamma_2 \cdot ...$ encoding the geodesic trajectory $\{g_t p: t\ge 0\}$ under $\rho$.
We note that if $p$ is generic for a \emph{$g_t$-invariant} ergodic probability measure supported on $\hlambda$, then continuity of $G$ ensures that $\Lambda^+(p) = \Lambda^-(p)$,  and this quantity can be computed by integrating the derivative of $G$; see the proof of Theorem \ref{thm: minimal laminations support measures}.
One can view this in the framework of Lyapunov exponents and the Ergodic Theorem. 
\end{remark}

The following lemma supplies a necessary condition for a geodesic lamination to support of an equivariant measure.
\begin{lemma}\label{lem: generic subexponential}
    For a given $\nu\in \ML(T^1X)^{G}$, $\nu$-a.e. point $p$ has $\Lambda^+(p)\le 0$ and $\Lambda^+(\overline p) \le 0$.
\end{lemma}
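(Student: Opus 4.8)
The plan is to exploit the finiteness of the total mass $\nu(T^1X)<\infty$ against the equivariance relation \eqref{eqn: G-equivariant}, which forces the density factor $G(\cdot,-t)$ to be summable in an integrated sense and hence rules out positive exponential growth on a positive-measure set. Concretely, suppose for contradiction that the set $A=\{p:\Lambda^+(p)>0\}$ has $\nu(A)>0$. By countable additivity, there is some $\epsilon>0$ and a subset $A_\epsilon\subset A$ of positive measure on which $\Lambda^+(p)\ge\epsilon$, i.e., $G(p,t_n)\ge e^{\epsilon t_n/2}$ for a sequence $t_n\to\infty$ (possibly depending on $p$). Applying the equivariance \eqref{eqn: G-equivariant}, for each $t$ we have $\nu(T^1X)=\bigl(g_t\bigr)_*\nu(T^1X)=\int_{T^1X} G(p,-t)\,d\nu(p)$; equivalently $\int_{T^1X} G(g_{-t}p,t)^{-1}\,d\nu(p)$ stays bounded, which (after a change of variables using that $\nu$ is equivalent to its pushforwards) says that $\int_{T^1X} G(p,t)\,d\nu(p)=\nu(T^1X)$ is \emph{constant} in $t$ — here I use the cocycle identity $G(p,s)G(g_sp,t)=G(p,s+t)$ from Lemma \ref{lem: G cocycle}, together with $dg_{t*}\nu=G(\cdot,-t)d\nu$, to compute $\int G(p,t)d\nu(p)=\int G(g_{-t}q,t)\,G(q,-(-t))^{?}\cdots$ carefully. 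The cleaner route: integrate in $t$ as well. Set $H(p)=\int_{-T}^{T}G(p,t)\,dt$; by Lemma \ref{lem: G cocycle}, $H$ is finite and continuous, and $\int_{T^1X}H(p)\,d\nu(p)=\int_{-T}^T \nu(T^1X)\,dt=2T\,\nu(T^1X)$. On the other hand, on $A_\epsilon$ the integrand $G(p,t)$ grows at least like $e^{\epsilon t/2}$ along a subsequence $t_n\to\infty$, so $H(p)\ge c(p)e^{\epsilon T_m/2}$ along a sequence $T_m\to\infty$, forcing $\int_{A_\epsilon}H\,d\nu\to\infty$ faster than linearly in $T$ — contradicting $\int H\,d\nu=2T\,\nu(T^1X)$, which is linear in $T$. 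Thus $\nu(A)=0$, i.e., $\Lambda^+(p)\le 0$ for $\nu$-a.e.\ $p$.

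The statement for $\overline p$ then follows immediately from the antipodal invariance of $\nu$ together with the identity $G(\overline p,-t)=G(p,t)$ from Lemma \ref{lem: G cocycle}: since $\nu$ is invariant under $p\mapsto\overline p$, the set $\{p:\Lambda^+(\overline p)>0\}$ is the image under the antipodal map of $\{p:\Lambda^+(p)>0\}$, hence also $\nu$-null. Alternatively one notes directly that $\Lambda^+(\overline p)=\limsup_{t\to\infty}\frac1t\log G(\overline p,t)$, and applying the same finiteness argument with the pushforward by $g_{-t}$ (whose Radon–Nikodym derivative is $G(\cdot,t)=G(\overline{(\cdot)},-t)$) gives the bound on the reversed orbit.

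The main subtlety — and the step that needs the most care — is interchanging the order of integration and handling the fact that in the contradiction argument the sequence $t_n$ (along which $G(p,t_n)$ is large) depends measurably on $p$. To make the ``$\int_{A_\epsilon}H\,d\nu\to\infty$ superlinearly'' step rigorous, I would instead argue via a Borel–Cantelli / Fatou-type bound: for fixed $T$, Fatou's lemma gives $\liminf_{T\to\infty}\frac{1}{e^{\epsilon T/3}}\int_{T^1X}H_T(p)\,d\nu(p)\ge \int_{A_\epsilon}\liminf_{T\to\infty}\frac{H_T(p)}{e^{\epsilon T/3}}\,d\nu(p)$, and on $A_\epsilon$ the inner $\liminf$ is $+\infty$ because $H_T(p)\ge\int_{t_n-1}^{t_n}G(p,t)\,dt$ with $t_n\le T$ chosen so the integrand exceeds $e^{\epsilon t_n/2}$ (using continuity of $G$ to pass from the value at $t_n$ to an integral over a unit interval, and Lemma \ref{lem: G cocycle} to control $G$ on $[t_n-1,t_n]$ by a bounded factor). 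Meanwhile the left-hand side is $\lim_{T}\,e^{-\epsilon T/3}\cdot 2T\,\nu(T^1X)=0$, a contradiction. The only technical points to verify are the measurability of $A_\epsilon$ (immediate, as $\Lambda^+$ is Borel) and the uniform bound $G(p,t)\ge c_0\,G(p,t_n)$ for $t\in[t_n-1,t_n]$, which follows from $G(p,t_n)=G(p,t)G(g_tp,t_n-t)$ and continuity/compactness giving $G(g_tp,s)\le C_0$ for $|s|\le 1$.
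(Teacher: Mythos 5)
Your starting point is the same as the paper's: from the equivariance $d(g_t)_*\nu = G(\cdot,-t)\,d\nu$ one gets $\nu(T^1X)=\int G(\cdot,T)\,d\nu$ for every $T$, and the statement for $\overline p$ is handled by the identity $G(\overline p,-t)=G(p,t)$ together with flip-invariance of $\nu$. That part is fine. However, the route you then take, via $H_T(p)=\int_{-T}^{T}G(p,t)\,dt$ and Fatou, has a genuine gap: the inner $\liminf_{T\to\infty}H_T(p)/e^{\epsilon T/3}$ is \emph{not} forced to be $+\infty$ (or even positive) for $p\in A_\epsilon$. The hypothesis $\Lambda^+(p)\ge\epsilon$ only gives a $\limsup$, i.e.\ $G(p,t_n)\ge e^{\epsilon t_n/2}$ along some $p$-dependent sequence $t_n\to\infty$; it says nothing about $G(p,t)$ for $t$ between consecutive $t_n$'s. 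Since $H_T$ is monotone in $T$, the lower bound $H_T(p)\ge c\,e^{\epsilon t_{n}/2}$ is constant for $T\in[t_n,t_{n+1})$, while the denominator $e^{\epsilon T/3}$ keeps growing; if the gaps $t_{n+1}/t_n$ are unbounded (e.g.\ $t_n\sim n!$), the ratio tends to $0$ along $T\nearrow t_{n+1}$, so the $\liminf$ over all $T$ can vanish. Fatou therefore does not give a contradiction, and replacing the exponential normalization by a slower one does not cure the problem either, because there is still no lower bound on $H_T(p)$ that holds for \emph{all} large $T$, only along a $p$-dependent subsequence.

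The paper resolves exactly this $p$-dependence by not averaging in $T$: it keeps the identity $\nu(T^1X)=\int G(\cdot,T)\,d\nu$ at each fixed $T$, applies Markov's inequality to get $\nu(B_T^\epsilon)\le\nu(T^1X)e^{-\epsilon|T|}$ where $B_T^\epsilon=\{G(\cdot,T)\ge e^{\epsilon|T|}\}$, sums over integer times to get $\sum_n\nu(B_n^\epsilon)<\infty$, and then invokes Borel--Cantelli to conclude $\nu(\limsup_n B_n^\epsilon)=0$. The Borel--Cantelli step is precisely what handles the fact that the large times $t_n$ depend on $p$: it says that $\nu$-a.e.\ $p$ is in only finitely many $B_n^\epsilon$, so $\frac{1}{n}\log G(p,n)<\epsilon$ eventually, which bounds $\Lambda^+(p)$ after using the cocycle relation to pass from integer to real times. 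I would recommend rewriting your proof to run Markov then Borel--Cantelli rather than Fatou; the Fatou step cannot be salvaged as stated.
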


\begin{proof}
    %Suppose now that $\hlambda\subset T^1X$ is the support of some  $\nu \in \ML(T^1X)^{G}$.
    By equivariance, for all continuous functions $f: T^1X\to \R$, we have
    \[\int f ~d\nu  = \int f\circ g_T \cdot G(\cdot, T)~d\nu\]
    for all $T\in \R$.
    In particular, 
    \begin{equation}\label{eqn: integrate RN}
        \nu(T^1X) = \int G(\cdot ,T) ~d\nu
    \end{equation}
    for all $T\in \R$.
    Consider the set 
    \[B_T^\epsilon = \{p : G(p, T) \ge e^{\epsilon |T|}\}.\]
    Then \eqref{eqn: integrate RN} implies that  \[\nu(B_T^\epsilon) \le \frac{\nu(T^1X)}{e^{\epsilon |T|}},\]
    and so \[\sum_{n = 1}^\infty \nu(B_n^\epsilon) <\infty\]
    for all $\epsilon>0$.
    By the Borel-Cantelli Lemma, 
    \[\nu (\limsup_{n\to \infty}B_n^\epsilon) =0,\]
    which in particular implies that $\Lambda^+(p) < \epsilon$ for all positive $\epsilon$ and $\nu$-a.e. $p$.
    Using the fact that $G(p,-T) = G(\overline p, T)$, a symmetric argument proves also that $ \Lambda^+(\overline p) \le 0$ for $\nu$-a.e. $p$.
    The intersection of two $\nu$-full measure sets has full measure, proving the lemma.
\end{proof}

Now we give sufficient conditions for a minimal geodesic lamination to be the support of an equivariant measure.

\begin{proposition}\label{prop: constructing equivariant measures}
    Let $\lambda \subset X$ be a minimal geodesic lamination and $p \in \hlambda$.
    
    If 
    $\int_{-\infty}^\infty G(p,t)~dt<\infty$, then $\hlambda$ is the support of a measure $\nu \in \ML(T^1X)^{G}$ whose mass is concentrated on exactly two $g_t$-orbits that are exchanged by $p\mapsto \overline p$.
    
    If $\int_{0}^\infty G(p,t)~dt=\infty$ and $ 0  \in [\Lambda^-(p), \Lambda^+(p)]$
    then $\hlambda$ is the support of a measure $\nu \in \ML(T^1X)^{G}$.
\end{proposition}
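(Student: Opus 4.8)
The plan is to build $\nu$ by an explicit averaging construction in each case and then check $\nu\in\ML(T^1X)^G$ directly from Lemmas~\ref{lem: G cocycle} and~\ref{lem: lamination geometry}. \emph{The summable case.} I would push the measure $G(p,t)\,dt$ on $\R$ forward along the orbit map $t\mapsto g_tp$ to obtain a Borel measure $\nu_0$ on $T^1X$ of total mass $\int_{-\infty}^{\infty}G(p,t)\,dt<\infty$, supported on the orbit closure of $p$. A one-line change of variables using the cocycle identity $G(p,s+t)=G(p,s)G(g_sp,t)$ shows $d(g_s)_*\nu_0=G(\cdot,-s)\,d\nu_0$, so $\nu_0$ is $(g,G)$-equivariant; by the same computation together with $G(\overline q,t)=G(q,-t)$, the antipodal pushforward $\overline{\nu_0}$ is $(g,G)$-equivariant with the same Radon--Nikodym factor and the same total mass. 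Hence $\nu:=\nu_0+\overline{\nu_0}$ is finite, $(g,G)$-equivariant, antipodal-invariant, and carries its mass on the two orbits through $p$ and $\overline p$ (which are distinct, since the geodesic flow never reverses a tangent vector). Finally $\supp\nu$ is the union of the closures of those two orbits, a closed flow- and antipodal-invariant subset of $\hlambda$; by minimality of $\lambda$ and Lemma~\ref{lem: lamination geometry}, $\P T\lambda$ is a minimal set for the geodesic flow on $\P TX$, and a short covering-space argument then shows that the only nonempty closed flow- and antipodal-invariant subset of $\hlambda$ is $\hlambda$ itself, so $\supp\nu=\hlambda$.

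\emph{The non-summable case.} I would run a one-sided version of the averaging scheme~\eqref{eqn: bundle measure intro}. Set $M(T)=\int_0^T G(p,s)\,ds$, which tends to $+\infty$ by hypothesis, and let $\mu_T$ be the probability measure $M(T)^{-1}\int_0^T G(p,s)\,\delta_{g_sp}\,ds$ on the compact space $T^1X$; let $\nu$ be a weak-$*$ subsequential limit, necessarily a probability measure supported in the closed set $\hlambda$. The change of variables from the first case shows that, for $r>0$, $(g_r)_*\mu_T-G(\cdot,-r)\mu_T$ is a difference of two boundary corrections: one bounded by a constant (depending on $r$) times $M(T)^{-1}$, hence $\to 0$, and the other bounded by a constant times $G(p,T)/M(T)$, using $G(p,s)=G(p,T)G(g_Tp,s-T)$ and boundedness of $G$ on $T^1X\times[-r,r]$. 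To kill the second correction I would invoke the elementary fact that for a continuous function $h$ with $\liminf_{t\to\infty}h(t)/t\le 0$ one has $e^{h(T_n)}=o\!\left(\int_0^{T_n}e^{h(s)}\,ds\right)$ along some $T_n\to\infty$ --- otherwise $M(T):=\int_0^T e^{h(s)}\,ds$ would satisfy $M'(T)=e^{h(T)}\ge cM(T)$ eventually, forcing $h(T)\ge cT+O(1)$ and hence $\liminf h(t)/t\ge c>0$, a contradiction. Applying this with $h=\log G(p,\cdot)$, and noting $\liminf_{t\to\infty}\tfrac1t\log G(p,t)=\Lambda^-(p)\le 0$ as part of the hypothesis, gives $G(p,T_n)/M(T_n)\to 0$, so $(g_r)_*\nu=G(\cdot,-r)\nu$ for every $r>0$, hence for every $r$ after composing with $g_{-r}$. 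Replacing $\nu$ by $\tfrac12(\nu+\overline\nu)$ makes it antipodal-invariant while preserving $(g,G)$-equivariance (via $G(\overline q,t)=G(q,-t)$); its support is a nonempty closed flow- and antipodal-invariant subset of $\hlambda$, so by the minimality argument from the first case $\supp\nu=\hlambda$, and $\nu\in\ML(T^1X)^G$.

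\emph{The main obstacle} is the equivariance step in the non-summable case: the normalized averages $\mu_T$ are only approximately $(g,G)$-equivariant, and the error can be killed only at carefully chosen times $T_n$, which is exactly where the hypothesis on $\Lambda^\pm(p)$ enters, through its interaction with the divergence of $M(T)$. (Given non-summability, $0\in[\Lambda^-(p),\Lambda^+(p)]$ is in fact equivalent to $\Lambda^-(p)\le 0$, since $\int_0^\infty G(p,t)\,dt=\infty$ already forces $\Lambda^+(p)\ge 0$.) Everything else is routine: the change-of-variables identities come from Lemma~\ref{lem: G cocycle}, finiteness in the summable case is immediate from the hypothesis, and the translation between $\lambda$, $\hlambda$, and $\P T\lambda$ is Lemma~\ref{lem: lamination geometry}.
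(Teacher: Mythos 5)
Your proof is correct, and it follows the same overall architecture as the paper's: an explicit pushforward of $G(p,t)\,dt$ in the summable case, and a Krylov--Bogoliubov one-sided average with a weak-$*$ subsequential limit in the non-summable case, followed in both cases by symmetrizing with the antipodal involution and a minimality argument for the support.

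The one place where you diverge from the paper is in how you kill the far boundary term in the non-summable case. The paper isolates this in Lemma~\ref{lem: subexponential}: a discretization of $[0,T]$ into $O(s)$-length subintervals, a binomial-theorem induction on the resulting sequence of integrals, and a Harnack estimate on $G$ to pass back from the integral $\int_{T_i-s}^{T_i}G(p,t)\,dt$ to the pointwise value $G(p,T_i)$. You instead use the Harnack bound earlier (to dominate the boundary integral directly by $C(r)\,G(p,T)$) and then prove the pointwise statement $G(p,T_n)/M(T_n)\to 0$ by a one-line differential inequality: if $M'(T)=G(p,T)\ge c\,M(T)$ eventually, then $M$ (and hence $G(p,\cdot)$) grows exponentially, contradicting $\Lambda^-(p)\le 0$. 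This sidesteps Lemma~\ref{lem: subexponential} entirely and is a bit more streamlined; both routes use exactly the same ingredients (the cocycle identity, continuity/compactness giving the Harnack bound, and the sign of $\Lambda^-(p)$), just in a different order. Your closing observation --- that under non-summability the hypothesis $0\in[\Lambda^-(p),\Lambda^+(p)]$ reduces to $\Lambda^-(p)\le 0$, because $\int_0^\infty G(p,t)\,dt=\infty$ already forces $\Lambda^+(p)\ge 0$ --- is correct and is a nice clarification the paper does not spell out.
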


Before proving the proposition, we prove a technical fact.
\begin{lemma}\label{lem: subexponential}
    Suppose $f: \R \to \R$ is continuous and positive, and assume that for every $s>0$, there exists $D(s)>0$ so that when $\abs{u-u'}<s$,  the inequality $D(s)<f(u)/f(u')< 1/D(s)$ holds. 
    
    Then if there is some $s_0$ and sequence $T_i\to \infty$ satisfying 
    \[\inf_{T_i}\frac{\int_{T_i-s_0}^{T_i}f(t)~dt}{\int_{0}^{T_i}f(t)~dt}>0,\]
    it is true that $\liminf_{i\to \infty} \frac{1}{T_i} \log f(T_i)>0$.
\end{lemma}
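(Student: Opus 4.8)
The plan is to argue by contradiction: suppose $\liminf_{i\to\infty}\frac1{T_i}\log f(T_i)\le 0$, so (passing to a subsequence, still called $T_i$) we may assume $\frac1{T_i}\log f(T_i)\to c\le 0$, i.e. $f(T_i) = e^{(c+o(1))T_i}$. The hypothesis on $D(s)$ is a ``bounded local oscillation'' condition, so we first record its two immediate consequences: (i) for $|u-u'|\le s_0$ we have $f(u)\ge D(s_0)f(u')$, hence $\int_{T_i-s_0}^{T_i}f(t)\,dt \le s_0\, D(s_0)^{-1} f(T_i)$; and (ii) the same oscillation control will let us bound $f$ from below on an interval of definite length once we know a lower bound at one point. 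The engine of the proof is the numerator/denominator ratio being bounded below by a constant $\kappa>0$ along the $T_i$, which with (i) forces
\[
\int_0^{T_i} f(t)\,dt \;\le\; \frac{1}{\kappa}\int_{T_i-s_0}^{T_i} f(t)\,dt \;\le\; \frac{s_0\,D(s_0)^{-1}}{\kappa}\, f(T_i) \;=\; C\, f(T_i),
\]
with $C = s_0 D(s_0)^{-1}\kappa^{-1}$ independent of $i$.

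Next I would iterate this estimate backwards to control $f$ on all of $[0,T_i]$. Since $\int_0^{T_i}f \le C f(T_i)$, and $f>0$, the same inequality applied with upper limit $T_i - k s_0$ in place of $T_i$ — together with the oscillation bound linking $f(T_i - k s_0)$ to the average of $f$ over $[T_i - (k+1)s_0,\, T_i - k s_0]$ — yields a recursion showing $\int_{T_i - (k+1)s_0}^{T_i - k s_0} f \le C' \int_{T_i - k s_0}^{T_i}f$ is \emph{not} quite what we want; rather, the cleaner route is: from $\int_0^{T_i}f \le C f(T_i)$ and $f(T_i)=e^{(c+o(1))T_i}$ we get $\int_0^{T_i} f(t)\,dt \le C e^{(c+o(1))T_i}$, which when $c\le 0$ says the integral grows at most subexponentially (indeed stays bounded if $c<0$, or grows no faster than $e^{o(T_i)}$ if $c=0$). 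I would then derive a \emph{lower} bound on $\int_0^{T_i}f$ that contradicts this. Fix any point, say $t=0$: oscillation control gives $f(t)\ge D(T)^{-1}f(0)$ for $t\in[0,T]$... but $D(T)$ may decay, so this is too crude globally. The right move is to use that $f$ is positive and continuous on the \emph{fixed} compact interval $[0, s_0]$, where it has a positive minimum $m = \min_{[0,s_0]} f > 0$; then for each $i$,
\[
\int_0^{T_i} f(t)\,dt \;\ge\; \int_0^{s_0} f(t)\,dt \;\ge\; s_0\, m,
\]
a positive \emph{constant}. Combined with $\int_0^{T_i} f \le C f(T_i)$ this gives $f(T_i)\ge s_0 m / C > 0$ for all $i$ — but this only bounds $f(T_i)$ below by a constant, consistent with $c=0$, so it does not yet close the argument when $c=0$.

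To handle the critical case $c=0$, I would exploit the numerator bound more carefully, chaining it along a \emph{growing} portion of $[0,T_i]$. Using the hypothesis ratio at $T_i$ and re-localizing: the condition $\inf_i \frac{\int_{T_i-s_0}^{T_i}f}{\int_0^{T_i}f}>0$ says the ``last window'' carries a definite fraction of the total. Sliding this: apply the oscillation estimate to compare $\int_{T_i - (j+1)s_0}^{T_i - j s_0} f$ with $\int_{T_i - j s_0}^{T_i - (j-1)s_0} f$ — each consecutive window differs by at most a multiplicative factor bounded in terms of $D(2s_0)$ — to conclude that if $\int_0^{T_i}f$ were large (superpolynomially, say $\gg f(T_i)$ fails we already have; what we actually need is that $c=0$ together with the window bound forces a geometric \emph{decay} of the window integrals as $j$ increases, hence $f(T_i - j s_0)$ decays geometrically in $j$, hence $f(T_i)\ge (\text{const})^{-T_i/s_0}\cdot(\text{const}) $, i.e. $\frac1{T_i}\log f(T_i)$ is bounded \emph{below by a positive constant}), contradicting $c=0$. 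The main obstacle, and the step requiring genuine care, is exactly this last chaining: making the ``definite fraction in the last window $\Rightarrow$ geometric decay going backwards $\Rightarrow$ exponential lower bound on $f(T_i)$'' argument precise, controlling the accumulated oscillation constants $D(\cdot)$ so they do not overwhelm the geometric gain — this is where the uniform-in-$s$ (for fixed $s$) nature of the hypothesis is essential and must be invoked correctly. The remaining steps (the contradiction setup, the elementary integral estimates, extracting subsequences) are routine.
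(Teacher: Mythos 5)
Your argument in the case $c<0$ is clean (and correct): the oscillation bound gives $\int_{T_i-s_0}^{T_i}f \le s_0 D(s_0)^{-1}f(T_i)$, the hypothesis turns this into $\int_0^{T_i}f \le C f(T_i)$, and the constant lower bound $\int_0^{T_i}f\ge s_0\min_{[0,s_0]}f$ then forces $f(T_i)$ to be bounded below, contradicting $f(T_i)\to 0$. But the proposal has a genuine gap exactly at the critical case $c=0$, which is where all the content of the lemma lies. The chain of implications you sketch there — ``the last window carries a definite fraction of the total'' plus ``consecutive windows differ by a bounded factor'' implies ``the window integrals decay geometrically going backwards from $T_i$'' — is simply false. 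For instance, take windows $a_1=\cdots=a_m=1$ and $a_{m+j}=2^j$ for $j=1,\dots,m$: the last window is roughly half of the cumulative total and consecutive ratios are at most $2$, yet the first $m$ windows are all equal, with no backward geometric decay at all. So the ``sliding / chaining $\Rightarrow$ geometric decay'' step cannot be made to work as stated, and the argument is incomplete.

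The paper's proof attacks the problem differently, and the difference is essential: it works with \emph{running partial sums} rather than trying to compare each window to its neighbor. Letting $a_j$ be the integral of $f$ over the $j$-th window of a partition of $[0,T_i]$ and $S_n=\sum_{j\le n}a_j$, the hypothesis gives $a_n\ge\epsilon S_n$ whenever $n$ indexes the window $[T_i-s_0,T_i]$. The crucial point is that this is a recursive constraint on the cumulative sum: $S_{n-1}=S_n-a_n\le(1-\epsilon)S_n$, so $S_n\ge S_{n-1}/(1-\epsilon)$, and iterating this shows $S_n$ (and hence the last-window value $a_n\ge\epsilon S_n$) grows geometrically. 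The oscillation bound then converts the window integral back into a pointwise lower bound on $f(T_i)$. Your sketch never exploits this: the hypothesis says nothing about how an individual window compares to its immediate predecessor — it is the cumulative sum whose geometric growth is forced, and that is the missing idea.
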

\begin{proof}
    Consider the discrete analogue: suppose that $a_n$ is a sequence of positive real numbers so that
    \[\inf_n \frac{a_n}{\sum_{i=0}^{n}a_i}>0.\]
    Then there is some $\epsilon$ so that $0<\epsilon<1$ so that for all $n>0$, $a_n > \varepsilon \sum_{i=0}^{n}a_i$. By an inductive argument using the binomial theorem, one may deduce that for every $n$, 
    \[a_n>\frac{\epsilon}{(1-\epsilon)^n} a_0.\]
    In other words, $\liminf_{n\rightarrow \infty} \frac 1n\log(a_n) >0$.

    To generalize to the continuous case, firstly assume without loss of generality that the sequence $T_i$ is increasing and that $\abs{T_{i+1}-T_i}>2s$. It is then possible to partition each subinterval $[T_{i},T_{i+1}]$ into subintervals $J_j$ of diameter in $[0,s]$ so that for every $i$ there is some $j_i$ with $[T_i-s, T_i]=J_{j_i}$, and so that  $j_i/T_i>1/(2s)$.

    Then consider the sequence 
    $a_j = \int_{J_j}  f~dt$
    and the subsequence $a_{j_i}$, which satisfies the assumption of the discrete case. Therefore there is a positive $\varepsilon$ such that
    \[ \frac{1}{T_i}\log \int_{T_i-s}^{T_i} f~dt>\frac{1}{2s}\frac{1}{j_i} \log a_{j_i} >\varepsilon.\]
    To pass from the statement about the integral over $[T_i-s,T_i]$ to the evaluation $f(T_i)$ requires the assumption about the variation of $f$. By the intermediate value theorem, there exists some $t_i^* \in [T_i-s,T_i]$ so that $s f(t_i^*)=\int_{T_i-s}^{T_i} f(t) ~dt$.  Then for $i$ large enough, $\frac {1}{T_i} \log f(T_i)>\epsilon$ holds, proving the Lemma.  
\end{proof}

\begin{proof}[Proof of the Proposition]
    If $G(p, \cdot)$ is integrable then so is $G(\overline p, \cdot)$ by Lemma \ref{lem: G cocycle}.  Then
    \[\nu = \int_{-\infty}^{\infty} G(p,t) \delta_{g_tp}~dt + \int_{-\infty}^{\infty} G(\overline p,t) \delta_{g_t\overline p}~dt \]
    is a finite Borel measure on $T^1X$.  The support of $\nu$ is $\hlambda$, because $\{g_tp\}_{t>0}$ is dense in the connected component of $\hlambda$ containing $p$ (see \S\ref{subsec: surface theory}).
    The $(g, G)$-equivariance of $\nu$ follows essentially from the cocycle property satisfied by $G$.
    Namely, for any continuous function $f: T^1X \to \R$, we have 
    \[\int f~ d(g_t)_*\nu = \int_{-\infty}^\infty G(p,s) f(g_{t+s}p)~ds + \int_{-\infty}^\infty G(\overline p, s) f(g_{t+s}\overline p) ~ds.\]
    Changing variables $u = t+s$ and applying Lemma \ref{lem: G cocycle}, the right hand side becomes
    \[ \int_{-\infty}^\infty G(p,u) G(g_up,-t)f(g_up)~du + \int_{-\infty}^\infty G(\overline p,u) G(g_u\overline p,-t)f(g_u\overline p)~du = \int f G( \cdot , -t) ~d\nu, \]
    which proves equivariance. 
    
    For the other case, we give a variant of a standard argument of Krylov-Bogoliubov.  
    Suppose now that $\int_0^\infty G(p,t)~dt = \infty$ and $0\in [\Lambda^-(p), \Lambda^+(p)]$.  %The other case is completely symmetric (since $\int_{-\infty}^0G(p,t)~dt = \int_{-\infty}^0G(p,t)~dt$
    For $T>0$, define 
    \begin{equation}\label{eqn: nuT}
        \nu_T =\frac{1}{\int_0^TG(p,t)~dt}\int_0^TG(p,t)\delta_{g_tp}~dt.
    \end{equation} 
    Later, we will symmetrize with respect to the antipodal involution.
    Since $G(p, \cdot)$ is continuous, and $0\in [\Lambda^-(p), \Lambda^+(p)]$, there is a sequence $T_i$ tending to infinity with \[\lim_{i\to \infty} \frac{1}{T_i}\log G(p,T_i) =0.\]
    The cocyle property of $G$ and uniform continuity implies that  
    \[\frac{G(\cdot ,t)}{G(\cdot,t+u)}\]
    is bounded uniformly (in terms of $s$) above and below for all $u \in [0,s]$.  Then
    Lemma \ref{lem: subexponential} implies then that for any $s\in \R$
    \begin{equation}\label{eqn: integral ratio}
        \frac{\int_{T_i-s}^{T_i}G(p,t)~dt}{\int_{0}^{T_i}G(p,t)~dt}\to 0, ~ i\to \infty.
    \end{equation}
    
    We claim that any weak-$*$ accumulation of $\{\nu_{T_i}\}$ is $(g, G)$-equivariant.
    Suppose $\nu$ is such an accumulation point (which exists by compactness of $\cM(P^1X)$), and let $f: T^1X\to \R$ be continuous and $\epsilon>0$.
    Then 
    \begin{align*}
        \left|\int f\circ g_s ~d\nu - \int f G(\cdot , -s) d\nu\right| \le & \left| \int  f\circ g_s ~d\nu - \int f\circ g_s~d\nu_{T_i} \right| +\\
        &  \left| \int f\circ g_s ~d\nu_{T_i} - \int fG(\cdot, -s)~d\nu_{T_i} \right| + \left| \int fG(\cdot,-s) ~d\nu_{T_i} - \int f G(\cdot, -s) ~d\nu \right|
    \end{align*} 
    The first and last terms are smaller than $\epsilon$ for $i$ large engouh by weak-$*$ convergence of $\nu_{T_i} \to \nu$ (after passing to a subsequence) and because $f\circ g_s$ and $fG(\cdot, -s)$ are both continuous.
    Consider the middle term, which is equal to 
    \[ \frac{1}{\int_0^{T_i} G(p, t) ~ dt}\left| \int_0^{T_i} G(p, t)f(g_{t+s}p)~dt - \int_0^{T_i}G(p,t)G(g_tp,-s)f(g_tp)~dt \right| \]
    After changing variables and applying the cocycle property of $G$, this becomes
    \[ \frac{1}{\int_0^{T_i} G(p, t) ~ dt}\left| \int_{-s}^0 G(p,t)G(g_tp,-s)f(g_tp)~dt - \int_{T_i-s}^{T_i} G(p,t)G(g_tp,-s)f(g_tp)~dt \right|, \]
    which is bounded by the sum of the ratios.
    Then the numerator in \[ \frac{\int_{-s}^0 |G(p,t)G(g_tp,-s)f(g_tp)|~dt}{\int_0^{T_i} G(p, t) ~ dt}\]
    is bounded because of continuity and compactness, while the denominator goes to infinity.
    For the second term, we have 
    \[ \frac{\int_{T_i-s}^{T_i} |G(p,t)G(g_tp,-s)f(g_tp)|~dt}{\int_0^{T_i} G(p, t) ~ dt} \le \|G (\cdot , -s)\|_\infty \|f\|_\infty \frac{\int_{T_i-s}^{T_i}G(p,t)~dt}{\int_{0}^{T_i}G(p,t)~dt},  \]
    which tends to zero by \eqref{eqn: integral ratio}; in particular, both terms are smaller than $\epsilon$ for $i$ large enough.
    Since $\epsilon>0$ was arbitrary, this proves that $\nu$ is equivariant.
    
    Using the property $G(\overline p, -t) = G(p, t)$, one can see that $\overline \nu$, i.e., the pushforward of $\nu$ via the antipodal involution, is $(g, G)$-equivariant as well.
    Thus $\nu + \overline\nu$ is symmetric, equivariant, and its support is $\hlambda$.
    This completes the proof.
\end{proof}

Thus we obtain a characterization of which minimal laminations support equivariant measures on their orientation covers.

\begin{corollary}\label{cor: characterization of bendable laminations}
    Let $\lambda\subset X$ be a minimal geodesic lamination.  Then $\hlambda$ is the support of some $\nu \in \ML(T^1X)^{G}$ if and only if there exists $p\in \hlambda$ satisfying $\Lambda^+(p) \le 0$ and $\Lambda^+(\overline p) \le 0$.
\end{corollary}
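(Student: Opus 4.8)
The plan is to obtain both directions by combining Lemma~\ref{lem: generic subexponential} and Proposition~\ref{prop: constructing equivariant measures} with one elementary observation, via a short case analysis. For the ``only if'' direction, suppose $\nu \in \ML(T^1X)^{G}$ has support $\hlambda$. Since $\lambda \neq \emptyset$, $\nu$ is a nonzero finite measure, so by Lemma~\ref{lem: generic subexponential} the set of points $p$ with $\Lambda^+(p) \le 0$ and $\Lambda^+(\overline p) \le 0$ has full $\nu$-measure; as the complement of $\supp\nu = \hlambda$ is $\nu$-null, this set meets $\hlambda$, and any $p$ in the intersection witnesses the claim.

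For the ``if'' direction I would fix $p \in \hlambda$ with $\Lambda^+(p) \le 0$ and $\Lambda^+(\overline p) \le 0$ and route it into Proposition~\ref{prop: constructing equivariant measures}. The elementary observation is the following: \emph{if $\int_0^\infty G(q,t)\,dt = \infty$ then $\Lambda^+(q) \ge 0$}, since otherwise $\Lambda^+(q) = -c < 0$ forces $G(q,t) \le e^{-ct/2}$ for all large $t$ and the integral converges. Now, by Lemma~\ref{lem: G cocycle} we have $\int_{-\infty}^0 G(p,t)\,dt = \int_0^\infty G(\overline p,t)\,dt$, so $\int_{-\infty}^\infty G(p,t)\,dt < \infty$ precisely when both $\int_0^\infty G(p,t)\,dt < \infty$ and $\int_0^\infty G(\overline p,t)\,dt < \infty$. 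In that case the first assertion of Proposition~\ref{prop: constructing equivariant measures} produces $\nu \in \ML(T^1X)^{G}$ with support $\hlambda$, and we are done.

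Otherwise at least one of those two half-line integrals diverges. Since the hypotheses $\Lambda^+(p)\le 0$, $\Lambda^+(\overline p)\le 0$ are symmetric in $p$ and $\overline p$, and $\overline p \in \hlambda$ as well, I may relabel so that $\int_0^\infty G(p,t)\,dt = \infty$. The elementary observation then gives $\Lambda^+(p) \ge 0$, hence $\Lambda^+(p) = 0$ by hypothesis; since $\Lambda^-(p) \le \Lambda^+(p)$ always, we obtain $0 \in [\Lambda^-(p), \Lambda^+(p)]$, and the second assertion of Proposition~\ref{prop: constructing equivariant measures} supplies the required $\nu$. This exhausts all cases.

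I expect the only real care-point to be the bookkeeping around the antipodal involution: checking that the hypothesis point may legitimately be swapped for its antipode before invoking the second case of the Proposition, and that the two one-sided integrability conditions combine correctly into the two-sided condition appearing in the first case. Beyond that there is no new content, the Corollary being a repackaging of the necessary condition of Lemma~\ref{lem: generic subexponential} and the sufficient conditions of Proposition~\ref{prop: constructing equivariant measures}.
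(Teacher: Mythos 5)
Your proof is correct and follows essentially the same route as the paper's: necessity from Lemma~\ref{lem: generic subexponential}, sufficiency from Proposition~\ref{prop: constructing equivariant measures} via a case split on whether $\int_{-\infty}^\infty G(p,t)\,dt$ converges. Your ``elementary observation'' (divergence of the forward integral forces $\Lambda^+(p)\ge 0$, hence $\Lambda^+(p)=0$ and $0\in[\Lambda^-(p),\Lambda^+(p)]$) spells out the step the paper leaves implicit when invoking the second part of the Proposition, and the relabeling $p\leftrightarrow\overline p$ is exactly what the paper compresses into ``after symmetrizing'' --- legitimate, as you note, because $\hlambda$ is antipode-invariant and the hypotheses are symmetric.
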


\begin{proof}
    Sufficiency is proved using Proposition \ref{prop: constructing equivariant measures}: If there exists such a $p$, then either $\int_{-\infty}^\infty G(p,t)~dt<\infty$  or the same integral is divergent and any weak-$*$ accumulation point of $\nu_T$ defined as in \eqref{eqn: nuT} is $(g, G)$-equivariant.  After symmetrizing with respect to $p\mapsto \overline p$, this produces a measure $\nu \in \ML(T^1X)^{G}$ with support equal to $\hlambda$. 

    Necessity is the content of Lemma \ref{lem: generic subexponential}.
\end{proof}

\begin{remark}
    It is possible also to deduce from  Hurewicz's Ergodic Theorem \cite[Theorem 2.2.1]{Aaronson:ergodic_book} and the Ergodic Decomposition Theorem for finite quasi-invariant Borel measures with a given Radon-Nikodym derivative (\cite{GS:ergodic} or \cite{Aaronson:ergodic_book}) that every ergodic $(g, G)$-equivariant measure $\nu$ supported in $\hlambda$ has a generic point.
    That is, there is a point $p$ such that both 
     \[\frac{\int_0^T  G(p,t) \delta_{g_tp}~dt}{\int_0^T G(p,t)~dt} \to \nu \text{ and } \frac{\int_0^T  G(p,-t)\delta_{g_{-t}p}~dt}{\int_0^T G(p,-t)~dt} \to \nu\]
    weak-$*$ as $T\to \infty$.
    The argument can be found in a first course on ergodic theory, and relies on separability of space of continuous functions on a compact metric space.
    See, e.g., \cite[Corollary 4.12]{EW:ergodic}.  
    That Hurewicz's Ergodic theorem for $\Z$-actions can be extended to $\R$-actions can be deduced from the proof of \cite[Corollary 8.15]{EW:ergodic}.
    \end{remark}

Here is another way to build an equivariant measure from a geodesic that spirals onto minimal laminations where $G$ decays exponentially in both directions along $\ell$.  Such a measure will have all of its mass concentrated along a single $g_t$-orbit and its time reversal.

\begin{lemma}\label{lem: spiraling leaves}
Suppose $\lambda$ and $\lambda'$ are minimal geodesic laminations on $X$, and suppose $\ell$ is isolated leaf spiraling onto $\lambda$ and $\lambda'$ in such a way that $\lambda\cup \ell \cup \lambda'$ is a geodesic lamination.
Let $p \in \widehat \ell$ such that $\{g_t p\}_{t\ge 0}$ is asymptotic to $\{g_tq\}_{t\ge 0}\subset \hlambda$ and $\{g_t p\}_{t\le 0}$ is asymptotic to $\{g_tq'\}_{t\le 0}\subset \hlambda'$.

If $\Lambda^+(q)<0$ and $\Lambda^+(\overline q')<0$, then there is a $\nu \in \ML(T^1X)^{G}$ giving full measure to $\widehat\ell$ whose support is equal to $\hlambda\cup\widehat \ell \cup \hlambda'$.
\end{lemma}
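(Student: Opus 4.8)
\emph{Strategy.} The plan is to reduce to the integrable case of Proposition \ref{prop: constructing equivariant measures}: I claim $G(p,\cdot)$ is integrable on all of $\R$, and then the measure
\[\nu = \int_{-\infty}^{\infty} G(p,t)\,\delta_{g_tp}\,dt + \int_{-\infty}^{\infty} G(\overline p,t)\,\delta_{g_t\overline p}\,dt\]
is exactly the one built there; its $(g,G)$-equivariance, finiteness, and antipodal symmetry are then verbatim that argument. So the two things that really need proof are (i) integrability of $G(p,\cdot)$, and (ii) the identification of $\operatorname{supp}(\nu)$ and the fact that $\nu$ is concentrated on $\widehat\ell$.

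\emph{Step 1: $\Lambda^+$ is an invariant of forward-asymptotic orbits.} The key lemma I would establish is: if $r,r'\in T^1X$ and $d(g_tr,g_tr')\to 0$ as $t\to+\infty$, then $\Lambda^+(r)=\Lambda^+(r')$. First, by the cocycle property in Lemma \ref{lem: G cocycle} the quantity $\Lambda^+$ is unchanged if we replace a base point by any other point on its orbit, so we may assume the asymptotics hold with the given parametrizations (in particular a bounded time reparametrization of $q$ or $q'$, which is all ``asymptotic'' might a priori entail, is harmless). Now set $h:=G(\cdot,1)$, a positive continuous, hence uniformly continuous, function on the compact space $T^1X$. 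The cocycle property gives $\log G(r,n)=\sum_{j=0}^{n-1}\log h(g_jr)$ for $n\in\N$, and $|\log G(r,t)-\log G(r,\lfloor t\rfloor)|\le \sup_{T^1X\times[0,1]}|\log G|<\infty$, so $\Lambda^+(r)=\limsup_{n\to\infty}\tfrac1n\sum_{j=0}^{n-1}\log h(g_jr)$. Since $d(g_jr,g_jr')\to 0$ and $\log h$ is uniformly continuous, $\log h(g_jr)-\log h(g_jr')\to 0$, so the Cesàro averages of the difference vanish and $\Lambda^+(r)=\Lambda^+(r')$.

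\emph{Step 2: integrability.} Applying Step 1 to $(r,r')=(p,q)$ gives $\Lambda^+(p)=\Lambda^+(q)<0$. Because $g_t\overline p=\overline{g_{-t}p}$ and the antipodal involution is an isometry of $T^1X$, the forward orbit of $\overline p$ is asymptotic to the forward orbit of $\overline{q'}$, so likewise $\Lambda^+(\overline p)=\Lambda^+(\overline{q'})<0$. A negative value of $\limsup_{t\to\infty}\tfrac1t\log G(\cdot,t)$ forces $G(\cdot,t)\le e^{-ct}$ for all large $t$ and some $c>0$; combined with continuity of $G$ this yields $\int_0^\infty G(p,t)\,dt<\infty$ and $\int_0^\infty G(\overline p,t)\,dt<\infty$, the latter equal to $\int_{-\infty}^0 G(p,t)\,dt$ by Lemma \ref{lem: G cocycle}. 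Hence $G(p,\cdot)\in L^1(\R)$, and $\nu$ above is a finite, $(g,G)$-equivariant, antipodally symmetric Borel measure on $T^1X$ by the computation in the first part of the proof of Proposition \ref{prop: constructing equivariant measures}.

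\emph{Step 3: support and mass.} Since $G>0$, the measure $\nu_0:=\int_\R G(p,t)\,\delta_{g_tp}\,dt$ is quasi-invariant with $\operatorname{supp}(\nu_0)$ equal to the closed $g_t$-invariant set $\overline{\{g_tp:t\in\R\}}$. The orbit $\{g_tp\}$ is one of the two components $\widehat\ell_+$ of $T^1\ell=\widehat\ell$, while its $\omega$- and $\alpha$-limit sets are nonempty closed invariant subsets $A\subseteq\hlambda$, $A'\subseteq\hlambda'$ (here we use that $\ell$ spirals onto the minimal laminations $\lambda$ and $\lambda'$, so that the forward, resp. backward, ray of $\ell$ is dense in $\lambda$, resp. $\lambda'$, and meets no other leaf). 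Thus $\operatorname{supp}(\nu_0)=\widehat\ell_+\cup A\cup A'$; applying the antipodal involution $\iota$, $\operatorname{supp}(\iota_*\nu_0)=\widehat\ell_-\cup\iota(A)\cup\iota(A')$. Minimality of $\lambda$ forces $B\cup\iota(B)=\hlambda$ for any nonempty closed invariant $B\subseteq\hlambda$ (it is all of $\hlambda$ when $\lambda$ is non-orientable, and one of the two antipodal components otherwise), and similarly for $\hlambda'$, so $\operatorname{supp}(\nu)=\operatorname{supp}(\nu_0)\cup\operatorname{supp}(\iota_*\nu_0)=\widehat\ell\cup\hlambda\cup\hlambda'$. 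Finally, $\ell$ is disjoint from every leaf of $\lambda$ and of $\lambda'$, so the orbits $\{g_tp\}$ and $\{g_t\overline p\}$ never meet $\hlambda\cup\hlambda'$; hence both integral terms in $\nu$ assign zero mass to $\hlambda\cup\hlambda'$, i.e.\ $\nu$ gives full measure to $\widehat\ell$, as claimed.

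\emph{Main obstacle.} The one genuinely new ingredient is Step 1, the transfer of the exponential growth rate along asymptotic orbits; everything else is bookkeeping around Proposition \ref{prop: constructing equivariant measures}. The only places to be careful are the (harmless) possibility that ``asymptotic'' hides a bounded time shift, and the use of minimality of $\lambda$ and $\lambda'$ in the identity $B\cup\iota(B)=\hlambda$ — this is what makes the support come out to be exactly $\widehat\ell\cup\hlambda\cup\hlambda'$.
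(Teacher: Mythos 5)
Your proposal is correct and follows essentially the same route as the paper: the paper's proof also transfers the exponential rate along asymptotic orbits via uniform continuity of $\log G(\cdot, T)$, concludes $\Lambda^+(p)=\Lambda^+(q)<0$ and $\Lambda^+(\overline p)=\Lambda^+(\overline q')<0$, deduces integrability of $G(p,\cdot)$, and invokes the integrable case of Proposition~\ref{prop: constructing equivariant measures}. Your Step~3 spells out the support computation and the $B\cup\iota(B)=\hlambda$ observation, which the paper leaves implicit in the phrase ``applying Proposition~\ref{prop: constructing equivariant measures} gives the result'' — a worthwhile clarification since that proposition's hypotheses technically assume $\lambda$ is minimal, whereas here the lamination $\lambda\cup\ell\cup\lambda'$ is not.
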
 

\begin{proof}
    Since $\log G (\cdot ,T)$ is uniformly continuous (for any $T$) and the distance between $g_tp$ and $g_{t+s}q$ goes to zero in $t$ for some $s\in \R$, $\Lambda^+(p)=\Lambda^+(q)<0$. Similarly, $\Lambda^+(\overline p)=\Lambda^+(\overline q')<0$.
    Together, this implies that \[\int_{-\infty}^\infty G(p,t)~dt<\infty,\]
    so applying Proposition \ref{prop: constructing equivariant measures} gives the result.
\end{proof}

With help from the previous Lemma, we can now conclude one of the main results from this section.
For context, there is a Thurston measure on $\ML(S)$ in the class of Lebesgue, and almost every measured lamination is minimal, maximal (hence non-orientable), and uniquely ergodic (see \S\ref{subsec: surface theory}).

The following states, in particular, that \emph{every} non-orientable minimal geodesic lamination is the support of an equivariant measure, independent of the linear part $\rho$.

\begin{theorem}\label{thm: minimal laminations support measures}
    Every non-orientable minimal geodesic lamination $\lambda$ has that $\hlambda$ is the support of some $\nu \in \ML(T^1X)^{G}$.

    Every uniquely ergodic orientable geodesic lamination $\lambda$ is contained in a geodesic lamination $\lambda'$ satisfying that $\hlambda'$ is the support of some $\nu \in \ML(T^1X)^{G}$.
\end{theorem}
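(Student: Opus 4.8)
The plan is to deduce both statements from the constructions of \S\ref{subsec: building equivariant measures}, the common tool being the time-reversal symmetry $G(\overline p,-t)=G(p,t)$ of Lemma \ref{lem: G cocycle}. Write $\phi$ for the continuous generator of the additive cocycle $\log G(p,t)=\int_0^t\phi(g_sp)\,ds$; then $\phi\circ\sigma=-\phi$ for the antipodal involution $\sigma:p\mapsto\overline p$, so the \emph{middle Lyapunov exponent} $\lambda_2(m):=\int\phi\,dm$ of a $g_t$-invariant probability $m$ satisfies $\lambda_2(\sigma_*m)=-\lambda_2(m)$; in particular, for $g_t$-ergodic $m$ and $p$ both forward- and backward-generic for $m$ one has $\Lambda^+(p)=\Lambda^-(p)=\lambda_2(m)$ and $\Lambda^\pm(\overline p)=-\lambda_2(m)$.

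For the first statement, $\sigma$ restricts on $\hlambda$ to the nontrivial deck transformation of the \emph{connected} orientation cover, which is itself a minimal lamination, so $g_t$ is minimal on $\hlambda$. Suppose first that some $g_t$-ergodic $m$ on $\hlambda$ has $\lambda_2(m)=0$; this holds automatically when $\lambda$ is uniquely ergodic, since then the ergodic $\sigma_*m$ must equal $m$. Choosing $p$ forward- and backward-generic for $m$ gives $\Lambda^+(p)=\Lambda^+(\overline p)=0$, so $\hlambda$ supports a measure $\nu\in\ML(T^1X)^{G}$ by Corollary \ref{cor: characterization of bendable laminations}. In the remaining sub-case no ergodic measure has vanishing exponent, so by symmetry of $\{\lambda_2(m)\}$ about $0$ there are ergodic $m^{\pm}$ with $\pm\lambda_2(m^\pm)>0$; using topological transitivity of $g_t$ on $\hlambda$, I would build a point $p\in\hlambda$ whose forward orbit alternately follows, for longer and longer time windows, the orbit of a generic point of $m^-$ and then of $m^+$, each window made long enough---after the previous window and the intervening wandering have already occurred---to pull the forward average of $\phi$ back near $\lambda_2(m^-)<0$, respectively near $\lambda_2(m^+)>0$, so that $\Lambda^-(p)<0<\Lambda^+(p)$. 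Since $\Lambda^+(p)>0$ forces $\int_0^\infty G(p,t)\,dt=\infty$ (one super-exponential excursion of $G$ already makes the integral diverge, via the Lipschitz bound on the cocycle $\log G$), the second case of Proposition \ref{prop: constructing equivariant measures} applies to $p$, and symmetrizing produces $\nu\in\ML(T^1X)^{G}$ supported on $\hlambda$.

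For the second statement, $\hlambda=\hlambda_+\sqcup\hlambda_-$ with $\sigma$ exchanging the two components, and unique ergodicity gives a single ergodic measure on each, of opposite Lyapunov exponents $\pm c$. If $c=0$, the sum of these two measures lies in $\ML(T^1X)^{G}$ with support $\hlambda$ and we take $\lambda'=\lambda$. If $c>0$, then every point of $\hlambda$ has $\lvert\Lambda^+\rvert=c\ne 0$, so $\hlambda$ carries no equivariant measure and $\lambda$ must be enlarged. An orientable lamination is never maximal, so $\lambda$ has a complementary region which is not an ideal triangle, leaving room for an isolated complete geodesic $\ell$ disjoint from $\lambda$ and spiraling onto $\lambda$ at both ends; because the two spiraling directions may be prescribed independently, arrange that the forward end of $\ell$ limits into the component of $\hlambda$ with exponent $-c$ and the backward end into the one with exponent $+c$. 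By unique ergodicity the forward and backward limit points $q,q'\in\hlambda$ are generic, so $\Lambda^+(q)=-c<0$ and $\Lambda^+(\overline{q'})=-c<0$, and Lemma \ref{lem: spiraling leaves} (with both ambient minimal laminations equal to $\lambda$) yields $\nu\in\ML(T^1X)^{G}$ with support $\hlambda\cup\widehat\ell$, all of whose mass lies on $\widehat\ell$; taking $\lambda'=\lambda\cup\ell\supseteq\lambda$ finishes this case.

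The main obstacle is the non-uniquely-ergodic sub-case of the first statement: carrying out the oscillating-orbit construction purely from minimality (topological transitivity) and controlling the a priori unbounded lengths of the ``wandering'' segments so that they never spoil the oscillation of the Birkhoff averages---a minimal geodesic flow on a lamination need not enjoy any specification property. The remaining ingredients---the integrability dichotomy underlying Corollary \ref{cor: characterization of bendable laminations} and Proposition \ref{prop: constructing equivariant measures}, and the existence of a spiraling leaf with prescribed spiraling sides---are routine.
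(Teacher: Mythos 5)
Your plan mirrors the paper's at a high level: both reduce to Proposition~\ref{prop: constructing equivariant measures}/Corollary~\ref{cor: characterization of bendable laminations} by producing a point $p\in\hlambda$ with the right growth behavior, and both exploit the same time-reversal symmetry (your $\phi\circ\sigma=-\phi$ is the paper's $\Lambda(p)=-\Lambda(\overline p)$ computed from $G(\overline p,-t)=G(p,t)$ via $G'$). Your treatment of the orientable, uniquely-ergodic case via Lemma~\ref{lem: spiraling leaves} is also essentially the paper's argument, modulo the topological case analysis the paper performs to actually produce the spiraling leaf (ideal polygon, essential annulus, or non-abelian free complementary region, using orientability to ensure every crown has an even number of spikes). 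Your single sentence ``the two spiraling directions may be prescribed independently'' is precisely the point the paper spends Figure~\ref{fig: isolated leaves} and a paragraph justifying; it is not automatic, but the idea is the same.

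The genuine gap is where you flag it: the non-orientable sub-case in which no ergodic measure on $\hlambda$ has vanishing exponent. Your proposed orbit surgery --- alternately shadowing generic orbits of $m^-$ and $m^+$ for longer and longer windows --- is a specification-type argument, and, as you correctly observe, minimality of $g_t$ on $\hlambda$ does not provide any specification or shadowing property, so there is no control on the wandering segments between shadowing windows and the construction cannot be completed as sketched. The paper replaces this with a soft Baire-category argument that avoids any orbit surgery. Fix ergodic $\mu$ and a $\mu$-generic $p$ with $L=\Lambda(p)>0$. For each $N$, the sets
\[
U_N=\Bigl\{p:\ \exists\, n\ge N,\ \tfrac1n\log G(p,n)\ge L/2\Bigr\},
\qquad
V_N=\Bigl\{p:\ \exists\, n\ge N,\ \tfrac1n\log G(p,n)\le -L/2\Bigr\}
\]
are open (continuity of $G(\cdot,n)$) and dense: $U_N$ contains the $\mu$-generic points, $V_N$ the $\overline\mu$-generic points, and each set of generic points is a nonempty $g_t$-invariant subset of the minimal system $\hlambda$, hence dense. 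The intersection $\bigcap_N U_N\cap\bigcap_N V_N$ is a dense $G_\delta$, and any point in it has $0\in[\Lambda^-(p),\Lambda^+(p)]$; repeating in backward time and intersecting again produces the required $p$ with $0\in[\Lambda^-(p),\Lambda^+(p)]$ and $0\in[\Lambda^-(\overline p),\Lambda^+(\overline p)]$. This buys the conclusion without any shadowing hypothesis and is the idea missing from your sketch.

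A minor inaccuracy worth dropping: your parenthetical claim that unique ergodicity of a non-orientable $\lambda$ forces $\sigma_*m=m$ is not justified. Unique ergodicity of $\lambda$ as a \emph{measured lamination} concerns flip-invariant transverse measures and does not imply unique ergodicity of the flow $g_t$ on $\hlambda$ --- the paper's footnote citing Smith makes exactly this point, and a non-flip-invariant ergodic $m$ on $\hlambda$ can exist even when $\lambda$ is uniquely ergodic. Your argument does not actually use the claim, since you proceed to the other sub-case anyway, but it should be removed. Finally, the phrasing ``one super-exponential excursion already makes the integral diverge'' is imprecise (a single excursion makes the integral large, not infinite); what you want is that $\Lambda^+(p)>0$ produces infinitely many such excursions whose contributions sum to infinity, which is what the Lipschitz bound on $\log G(p,\cdot)$ actually delivers.
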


%Recall that Theorem \ref{thm: minimal laminations support measures} claims that for all orientable minimal geodesic laminations $\lambda$, $\hlambda$ is the support of some equivariant measure and that if $\lambda$ is orientable and uniquely ergodic, then there is a $\lambda'\supset \lambda$ such that $\hlambda'$ is the support of an equivariant measure.

\begin{proof}[Proof of Theorem \ref{thm: minimal laminations support measures}]
    Suppose $\lambda\subset X$ is minimal, and consider a $\{g_t\}$-invariant ergodic probability measure $\mu$ with support contained in $\hlambda$.
    The antipodal involution $p\mapsto \overline p \in T^1X$ induces simplicial involution of the space of $\{g_t\}$-invariant probability measures on $\hlambda$; denote by $\overline\mu$ the pushforward of $\mu$ under the involution.\footnote{That the space of $g_t$-invariant probability measures on $\hlambda$ can be strictly larger than the $g_t$ and flip invariant probability measures on $\hlambda$ when $\lambda$ is non-orientable can be deduced from work of Smith \cite{Smith:NUE}.}
    Then $\overline \mu$ is ergodic and if $p$ is generic for $\mu$ then $\overline p$ is generic for $\overline \mu$.
    By generic, we mean that both 
    \[ \frac{1}{T}\int_0^T\delta_{g_tp}~dt \text{ and } \frac1T\int_0^T\delta_{g_{-t}p}~dt\]
    converge weak-$*$ to $\mu$ as $T\to \infty$.
    Moreover, $\mu$-a.e. $p$ is generic.    

    By the choice of a smooth norm $\|\cdot \|$ on $E^\rho$ and because the flow $g_t$ and flat connection $\nabla^\rho$ are all smooth, $G(p,t)$ is continuously differentiable in $t$.
    Denote by 
    \[G'(p,t) = \left.\frac{d}{ds}\right|_{s = t}G(p,s).\]
    Using the cocycle property of $G$, one verifies
    \[G'(p,t) = G(p,t)G'(g_tp,0)~\text{ for all $t\in \R$}.\]
    The Fundamental Theorem of Calculus provides
    \[\frac1T \log G(p,T) = \frac1T\int_0^TG'(g_t p,0)~dt. \]
    Since $p$ is generic for $\mu$, we obtain 
    \[\Lambda(p) := \lim_{T\to \infty} \frac1T\log G(p,t) = \int G'( \cdot, 0) ~d\mu,  \]
    so that $\Lambda^+(p) = \Lambda(p) = \Lambda^-(p)$ holds.

    Genericity of $p$ for $\mu$ implies also that its backward $g_t$ orbit equidistributes to $\mu$, so additionally:
    \[\lim_{T\to \infty} \frac1T\int_0^TG'(g_{-t}p,0)~dt = \int G'(\cdot ,0) ~d\mu = \Lambda(p).\]
    On the other hand, 
    \[\frac1T \int_0^T G'(g_{-t}p, 0)~dt = \frac1T \int_{-T}^0 G'(g_tp, 0)~dt =-\frac 1T \log G(p,-T)=-\frac1T\log G(\overline p,T), \]
    where we used the property $G(p,t) = G(\overline p, -t)$.  
    Taking the limit as $T\to \infty$ yields
    \[\int G'(\cdot, 0) ~d \mu = \Lambda(p) = -\Lambda(\overline p) = -\int G'(\cdot, 0) ~d\overline \mu.\]
    It follows that if $\mu = \overline \mu$, then $\Lambda(p) = 0$ for every $\mu$-generic point $p$.
    Otherwise, $\Lambda(p) = -\Lambda(\overline p)$ holds for all $\mu$-generic points $p$.
    \medskip

    \noindent\textbf{Case: $\lambda$ is non-orientable.}
    That $\lambda$ is non-orientable implies that $\hlambda$ is connected and $g_t$-minimal.
    If $\Lambda(p) = 0 = \Lambda(\overline p)$ we can apply Proposition \ref{prop: constructing equivariant measures}.  
    Indeed, either $\int_{-\infty}^\infty G(p,t)~dt<\infty$ or otherwise one of 
    \[\int_0^\infty G(p,t)~dt \text{ and } \int_{-\infty}^0 G(p,t) ~dt = \int_0^\infty G(\overline p,t)~dt \]
    is infinite.

    Otherwise, $\Lambda(p) \not = \Lambda(\overline p)$ and so $\mu \not= \overline \mu$.
    \begin{claim}
        There is some $p \in \hlambda$ with $0\in [\Lambda^-(p), \Lambda^+(p)]$ and $0\in [\Lambda^-(\overline p), \Lambda^+(\overline p)] $. 
    \end{claim}
    
    \begin{proof}[Proof of the claim]
        Without loss of generality, we assume that $L = \Lambda(p) >0$, so that $-L = \Lambda(\overline p) <0$.
        Since $\frac 1T \log G (\cdot, T)$ is continuous for every $T$, the sets
        \[U_N = \{p : \exists n\ge N  \text { s.t. }\frac 1n \log G (p,n) \ge L/2\} \text { and } V_N = \{p: \exists n\ge N \text{ s.t. }  \frac 1n \log G (p,n) \le -L/2 \} \]
        are open.
        Moreover, $U_N$ and $V_N$ are dense for all $N$.  Indeed, $U_N$ contains the $\mu$-generic points and $V_N$ contains the $\overline \mu$-generic points; genericity is an invariant of the $g_t$-orbit and $\hlambda$ is minimal.  

        Since $\hlambda$ is a Baire-space, the set \[\left( \bigcap_N U_N \right) \bigcap \left( \bigcap_NV_N\right) \] 
        is dense $G_\delta$.  This means that there is a point $p$ satisfying that $\frac 1n \log G(p, n)$ is at least $L/2$ for infinitely many positive values of $n$ and at most $-L/2$ for infinitely many positive values of $n$.  

        Running the same argument in backwards time provides a dense $G_\delta$ set of points satisfying $0\in [\Lambda^-(\overline p), \Lambda^+(\overline p)]$.
        The intersection of dense $G_\delta$ sets is dense $G_\delta$, which proves the claim.
    \end{proof}

    As we did earlier, we can now appeal to Proposition \ref{prop: constructing equivariant measures} to finish the proof in the case that $\lambda$ is non-orientable.

    \noindent \textbf{Case: $\lambda$ is orientable and uniquely ergodic.}
    As before, if  $\Lambda(p) = 0= \Lambda(\overline p)$ for all $\mu$-generic points $p$, then we just appeal to Proposition \ref{prop: constructing equivariant measures} to construct a measure $\nu \in \ML(T^1X)^{G}$ with $\supp \nu = \hlambda$.
    %Since $\lambda$ is orientable,  $\hlambda$ is not connected, so the Claim does not provide us with a point $p$ with 

    Since $\lambda$ is uniquely ergodic and isomorphic to each component of $\hlambda$, each component of $\hlambda$ is uniquely ergodic.
    It follows that $\Lambda(p) >0$ and $\Lambda(\overline p)<0$ holds for every $p$ in one of the components of $\hlambda$.
    
    We claim that there is a geodesic $\ell$ satisfying $\lambda\cup \ell$ is a geodesic lamination, and $\ell$ has negative exponential $G$-growth in both directions along $\ell$, so applying Lemma \ref{lem: spiraling leaves} gives the result in this case.
    The key is just to find such an isolated leaf in the complement of $\lambda$ on $X$ that spirals onto $\lambda$ in its future and past along boundary half-leaves of $\lambda$ that each accumulate negative exponential growth.
    
    To see that such an isolated leaf exists, consider the metric completion $\Sigma$ of a component of $X\setminus \lambda$.  Then $\Sigma$ is a finite area hyperbolic surface with (possibly non-compact) totally geodesic boundary; it is homeomorphic to $S_{g,b}\setminus\{c_1, ..., c_n\}$, where $S_{g,b}$ is a a compact surface of genus $g$ and $b$ boundary components, and $\{c_i\}\subset \partial S_{g,b}$ is a finite collection of ideal points.  Let $A$ be a tubular neighborhood of a boundary component of $S_{g,b}$, and let $A'$ be the image of $ A\setminus \{c_1, ..., c_n\}$ in $\Sigma$; $A'$ is called a \emph{crown}.  The ends of $A'$ are in bijective correspondence with the points $c_i$ contained in $A$, which are called \emph{spikes}.  
    Since $\lambda$ is orientable, the number of spikes on any given crown is even; see Figure \ref{fig: isolated leaves}.

    There are three cases to consider: 
    \begin{enumerate}
        \item $\Sigma$ is contractible, i.e., $\Sigma$ is an ideal polygon.
        \item The interior of $\Sigma$ is an annulus.
        \item The fundamental group of $\Sigma$ contains a non-abelian free group.
    \end{enumerate}

\begin{figure}[h]
    \centering
    \includegraphics[width=.9\linewidth]{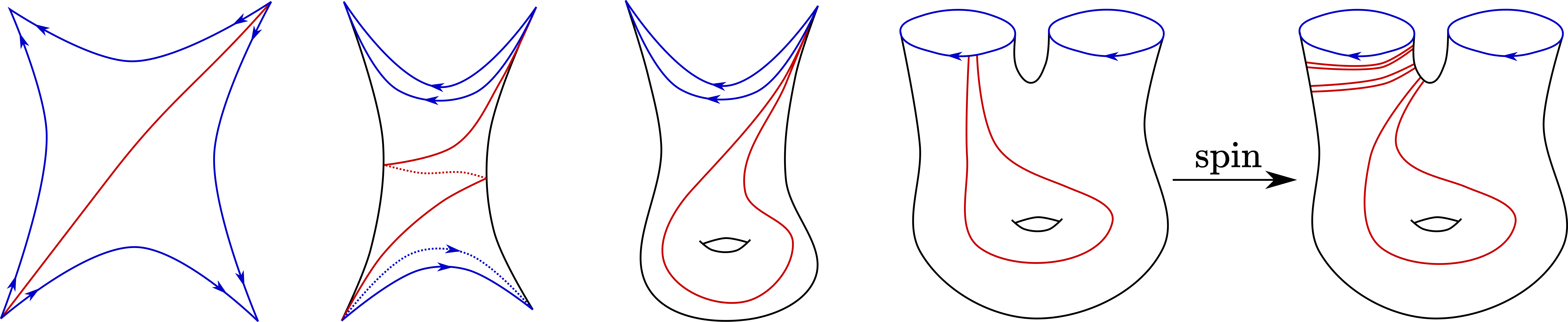}
    \caption{The orientations on the boundary of $\partial\Sigma$ indicates the direction of positive exponential growth.  So the ends of the red isolated leaf are required to travel in the negative direction against these arrows. }
    \label{fig: isolated leaves}
\end{figure}

    The reader should consult Figure \ref{fig: isolated leaves} throughout the rest of the proof.
    In case (1), $\Sigma$ is a $2m$-gon, and it is easy to find an isolated leaf $\ell$ with the desired properties.
    In the annular case (2), note that since $\lambda$ is minimal, both crowns must have a positive, even number of spikes. In this case, it is similarly straight forward to find such an isolated leaf.

    In case (3), if there is a non-compact crown of $\Sigma$, then we can find a properly embedded oriented arc $\alpha$ in $\Sigma$ with endpoints exiting a spike on the same crown with that is not properly homotopic into $\partial \Sigma$.  The geodesic realization of $\alpha$ in $\Sigma$ is the geodesic that we are after.
    Otherwise, we can similarly find a simple, oriented arc joining a boundary component to itself that is not homotopic rel $\partial \Sigma$ into $\partial\Sigma$.
    A ``spinning'' construction then produces the desired isolated leaf.  More precisely, the  Hausdorff limit of geodesic realizations of segments joining $\partial \Sigma$ to itself in the same homotopy class that wrap more and more around that component of $\partial \Sigma$ is the geodesic we are after; see \cite{Thurston:notes} for details about the spinning construction.

    This completes the proof of the Theorem.
\end{proof}

\subsection{Equivariant transverse measures} \label{subsec: eq transverse measures}

From a flow equivariant measure as in Definition \ref{def: flow equivariant measure}, we would like to extract a \emph{transverse measure} satisfying a certain equivariance condition.
To a $C^1$ embedded arc $k \subset X$ transverse to a geodesic lamination $\lambda$, let $\hk$ be the full preimage in $T^1X$ of $k$ minus those directions tangent to $k$ (thus $\hk$ is two rectangles whose closure is an annulus). 
%In the sequel,

We will often consider oriented arcs. 
An orientation on $k$ determines a local co-orientation on the leaves of $\lambda$ that it crosses.  
If $k$ is oriented, we denote by $\hk^+$ the component of $\hk$ whose geodesic trajectories make positive co-orientation with $k$ and similarly define $\hk^-$.

\begin{definition}\label{def: frak o}
    Let $\lambda$ be a lamination on $X$ and $k$ an oriented arc transverse to $\lambda$. For $x\in k\cap \lambda$, let $\fo(x)\in T^1X\vert_x$ be the positively oriented pre-image of $x$ in $\hlambda\cap\hk^+$. 
\end{definition}

By Lemma \ref{lem: lamination geometry}, $\fo$ is bi-Lipschitz onto its image in $\hk^+$.
%Throughout, $\pi: T^1X \to X$ denotes the projection.

\begin{definition}\label{def: transverse measure}
A $(g, G)$-equivariant transverse measure $\mu$ with support a geodesic lamination $\lambda\subset X$ is an assignment to each unoriented arc $k$ transverse to $\lambda$ a finite positive Borel measure $\mu_k$ with support equal to $k\cap \lambda$ satisfying the following $(g, G)$-equivariance and compatibility conditions:
\begin{itemize}
    \item If $k' \subset k$, then the pushforward of $\mu_{k'}$ under inclusion $k'\to k$ is equal to the restriction of $\mu_k$ to $k'$.
    \item Give $k$ an orientation. 
    %and let $t: k\cap \lambda \to \R$ be a continuous function and let $x\in k\cap \lambda \mapsto \widehat x \in \hk^+\cap \hlambda$ be the positively oriented section.
    Suppose $k'$ is homotopic to $k$ by a homotopy $H$ transverse to $\lambda$ and \[k'\cap \lambda = \{\pi(g_{t(x)} \fo(x)) :  x \in k\cap \lambda\}.\]
    Then  \[dH\inverse_*\mu_{k'} = G(\fo( \cdot), t(\cdot))d\mu_k,\] where $H : k \to k'$ is induced by the homotopy. 
\end{itemize}

The collection of equivariant transverse measures is denoted by $ \ML (X)^{G}$; it is equipped with the following topology: 
$\mu_n \to \mu$ if for all arcs $k$ transverse to $\supp \mu$, $\left(\mu_{n}\right)_k\to \mu_k$ weak-$*$ on $k$.
\end{definition}

\begin{remark}
    If $G$ were constant, as in the case of a Fuchsian representation, Definition \ref{def: transverse measure} recovers the usual definition of an \textit{invariant} transverse measure. %An assignment to every arc which is compatible with the inclusion of subarcs but without any assumption on equivariance by homotopies is simply a \textit{transverse measure}.
\end{remark}

We will now describe a continuous map 
\[ \ML(T^1X)^{G}\to \ML(X)^{G}.\]
The procedure will essentially be by local disintegration.  

Let $\nu \in \ML(T^1X)^{G}$ and $k$ be an oriented arc transverse to $\lambda = \pi(\supp \nu)$.
Consider the measurable functions 
$r_+: \hk^+ \to \R_{>0}\cup \{+\infty\}$
and $r_-: \hk^+ \to \R_{<0}\cup \{-\infty\}$ 
recording the first return to time to $\hk^+$, in $+$-time and $-$-time, i.e.,
if $r_+(p)<+\infty$, then $g_{r_+(p)}p \in \hk^+$, but $g_tp\not\in \hk^+$ for all $t \in (0, r_+(p))$ and similarly $g_{r_-(p)}p\in \hk^+$ but $g_tp\not\in \hk^+$ for all $t \in ( r_-(p),0)$.

There is a $\nu$-a.e. defined measurable projection \[\pi_{\hk^+}: T^1X \to \hk^+\]
 by the rule $\pi_{\hk^+}(g_tp)= p$ for all $t \in (r_-(p)/2, r_+(p)/2)$.

For (small) $t>0$ consider the flow box \[B(t,\hk^+) : = \bigcup_{s\in [-t,t]}g_s\hk^+\subset T^1X,\]
and measures
\[\mu_{\hk^+}^t:=\frac{1}{2t}\left(\pi_{\hk^+}\right)_*\nu|_{B(t,\hk^+)}\]
on $\hk^+$.

\begin{lemma} \label{lem: disint}
    For each small enough $t$, $\mu_k^t = \pi_*\mu_{\hk^+}^t$ is a finite Borel measure on $k$.  If $k$ meets $\lambda$ non-trivially, then $\mu_{k}^t$ converge weak-$*$ to a non-zero measure $\mu_{k}$  as $t\to 0$.
    Moreover, $\mu_k$ does not depend on the choice of orientation on $k$.
\end{lemma}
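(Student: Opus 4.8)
The plan is to analyze the measures $\mu_{\hk^+}^t$ directly using the $(g,G)$-equivariance of $\nu$ and a Fubini-type argument on the flow box $B(t,\hk^+)$. First I would fix the oriented arc $k$ and $t>0$ small enough that the flow box $B(t,\hk^+)$ embeds in $T^1X$ (i.e., for $p\in\hk^+$ the return times satisfy $r_+(p)>2t$ and $r_-(p)<-2t$ away from a $\nu$-null set; this uses that $\hk^+$ is a genuine transversal and that $\nu$ has support $\hlambda$, together with the fact that the set of points with arbitrarily short return time is $g_t$-invariant and $\nu$-null by Poincaré recurrence/equivariance). On this flow box the map $(p,s)\mapsto g_sp$ from $\hk^+\times[-t,t]$ is a homeomorphism onto $B(t,\hk^+)$, so $\pi_{\hk^+}$ is genuinely defined there and $\mu_{\hk^+}^t = \frac1{2t}(\pi_{\hk^+})_*(\nu|_{B(t,\hk^+)})$ is a finite Borel measure on $\hk^+$ of total mass $\le \nu(T^1X)/(2t)<\infty$. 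Pushing forward by the bi-Lipschitz homeomorphism $\pi$ (here using Lemma \ref{lem: lamination geometry} and the fact that $\pi\circ\fo = \mathrm{id}$ on $k\cap\lambda$) gives that $\mu_k^t$ is a finite Borel measure on $k$.

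Next I would establish weak-$*$ convergence as $t\to0$. The idea is that for a continuous function $f$ on $k$, extended (via $\pi_{\hk^+}$ and $\fo$) to a function on the flow box, one has
\[
\int_k f\,d\mu_k^t = \frac{1}{2t}\int_{B(t,\hk^+)} f(\pi(\pi_{\hk^+}(\cdot)))\,d\nu.
\]
Writing the flow box as $\bigcup_{s\in[-t,t]} g_s\hk^+$ and using the $(g,G)$-equivariance \eqref{eqn: G-equivariant} to transport $\nu$ back to $\hk^+$, the right-hand side becomes an average over $s\in[-t,t]$ of integrals against $(\pi_{\hk^+})_*\nu$ weighted by $G(\cdot,s)$-type factors; since $G$ is continuous with $G(\cdot,0)\equiv 1$, the factor $\frac1{2t}\int_{-t}^t G(\,\cdot\,,s)\,ds$ tends to $1$ uniformly on the (compact) support, and one concludes that $\mu_k^t$ converges weak-$*$ to the measure $\mu_k := \fo^*\big((\pi_{\hk^+})_*\nu|_{\text{one }\hk^+\text{-cross-section}}\big)$ suitably interpreted — i.e., the "disintegrated" transverse mass. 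A cleaner way to package this: show the net $\{\mu_k^t\}$ is Cauchy by comparing flow boxes $B(t,\hk^+)$ and $B(t',\hk^+)$ and bounding the difference by $\sup_{|s|\le\max(t,t')}|G(\cdot,s)-1|\cdot\nu(T^1X)$, again via equivariance. That $\mu_k$ is nonzero when $k$ meets $\lambda$ nontrivially follows because $\supp\nu=\hlambda$ meets $\hk^+$, so the flow box has positive $\nu$-mass bounded below independent of small $t$, while the normalization is exactly $\frac1{2t}$ times the $\nu$-mass of a box of "height" $2t$, which stays bounded below by the local product structure of $\nu$ transverse to the flow.

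Finally, orientation-independence: reversing the orientation of $k$ swaps $\hk^+$ and $\hk^-$, and the antipodal involution $p\mapsto\overline p$ carries $\hk^+$ to $\hk^-$ while satisfying $g_t\overline p=\overline{g_{-t}p}$ and, by Lemma \ref{lem: G cocycle}, $G(\overline p,-t)=G(p,t)$ and $\int_{-t}^tG(\overline p,s)\,ds=\int_{-t}^tG(p,s)\,ds$. Since $\nu$ is assumed invariant under the antipodal involution (Definition \ref{def: flow equivariant measure}), the whole disintegration construction is intertwined with this involution, and $\pi\circ(\text{antipode})=\pi$, so the resulting measure on $k$ is the same. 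The main obstacle I anticipate is the first step — making the $\nu$-a.e. defined projection $\pi_{\hk^+}$ rigorous, i.e., controlling the set of points with small return time to $\hk^+$ and verifying the local product structure of an equivariant (not invariant) measure transverse to the flow; once the flow box is a genuine product chart, the convergence and symmetry claims are soft consequences of continuity of $G$ and Lemma \ref{lem: G cocycle}.
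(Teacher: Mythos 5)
Your proposal follows the same basic strategy as the paper, but it glosses over the one step that the paper treats as the crux and that you yourself flag as the ``main obstacle'' at the end: establishing the local product structure of the equivariant measure $\nu$ in the flow box. The paper does this explicitly by invoking the Disintegration Theorem to get a Borel family of conditional measures $\nu_p$ on the $\pi_{\hk^+}$-fibers and a measure $\eta_k$ on $\hk^+$ with $\nu = \int \nu_p\,d\eta_k(p)$, and then using $(g,G)$-equivariance to pin down each $\nu_p$ as a constant multiple of $\int_{r_-(p)/2}^{r_+(p)/2} G(p,t)\delta_{g_tp}\,dt$. Your phrases ``transport $\nu$ back to $\hk^+$ via equivariance'' and ``local product structure of $\nu$ transverse to the flow'' are shorthand for precisely this fact, but you never actually produce a transverse measure $\eta_k$; without it, neither your Cauchy-net comparison of $B(t,\hk^+)$ and $B(t',\hk^+)$ nor your identification of the limit has anything to converge to. Once $\eta_k$ is in hand, the paper's argument is exactly your second sketch: $d\mu^t_{\hk^+}(p) = \bigl(\tfrac{1}{2t}\int_{-t}^t G(p,s)\,ds\bigr) d\eta_k(p)$, the bracket tends uniformly to $G(p,0)=1$, so $\mu^t_{\hk^+}\to\eta_k$ and hence $\mu_k^t \to \pi_*\eta_k =: \mu_k$.

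Two smaller points. First, the lower bound on return times does not come from Poincar\'e recurrence (which only gives a.e.\ finiteness, not a positive lower bound); it is a uniform geometric fact: $k$ is a fixed $C^1$ arc meeting $\lambda$ transversely, so leaves of $\hlambda$ crossing $\hk^+$ must traverse a definite length before they can re-enter $\hk^+$, and $\hk^+\cap\hlambda$ is compact. That is what makes the flow box over $\hk^+\cap\hlambda$ a genuine product for small $t$. Second, your argument that $\mu_k$ is nonzero is off as stated: $\nu(B(t,\hk^+))$ is \emph{not} bounded below independently of $t$ (it tends to $0$); what stays bounded below is the normalized quantity $\tfrac{1}{2t}\nu(B(t,\hk^+))$, and seeing this again requires the disintegration, since it converges to $\eta_k(\hk^+)>0$. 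The orientation-independence argument via the antipodal involution and Lemma~\ref{lem: G cocycle} is the same one the paper sketches.
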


\begin{proof}
    We invoke the Disintegration Theorem which gives us a measure $\eta_k$ on $\hk^+$ 
    satisfying
    \begin{equation}\label{eqn: disint}
        \nu = \int_{p \in \hk^+} \left( \int_{r_-(p)/2}^{r_+(p)/2} G(p,t) \delta_{g_tp}~dt\right)d\eta_k(p).
    \end{equation}
    Formally, disintegration gives us a Borel family of  measures $\nu_p \in \cM(T^1X)$ supported on $\pi_{\hk^+}\inverse (p)$, each defined up to scale.
    But the $(g, G)$-equivariance of $\nu$ can be used to show that $\nu_p$ is a constant multiple of $\int_{r_-(p)/2}^{r_+(p)/2} G(p,t)\delta_{g_tp}~dt$, and we choose the scale factor on each fiber making \eqref{eqn: disint} hold true for some Borel measure $\eta_k$ on $\hk^+$ supported on $\hk^+\cap \hlambda$.

    Let $ f$ be a continuous function on $k$ and let  $\bar f$ be the continuous function on $\hk^+$ obtained by pulling back via $\pi$.
    Then 
    \[\left| \int f~d\mu_{k}^t \right| = \left| \int \bar f~d\mu_{\hk^+}^t \right| \le \frac{1}{2t} \int_{p\in \hk^+}\left(\int_{-t}^t |G(p,s) \bar f(p)|~ds \right) d\eta_k(p) \le \|f\|_\infty \left(\sup_{s\in [-t,t]}\|G(\cdot,s)\|_\infty \eta_k(\hk^+)\right), \]
    which proves that $\mu_{k}^t$ is a bounded linear operator on $C(k)$, hence is a Borel measure.
    
    Since $G(\cdot, t)$ tends uniformly to $1$ as $t\to 0$, a similar computation shows that $\lim_{t\to 0} \mu_{\hk}^t = \eta_k$.
    Setting $\mu_{\hk} = \eta_k$ completes the proof of convergence of $\mu_k^t \to \mu_k$.
    That $\mu_k$ is independent of the chosen orientation of $k$ is a tedious computation but follows from the symmetries of $\nu$, $G$, and of the construction of $\mu_{\hk^+}^t$ under the antipodal involution of $T^1X$.
\end{proof}

    Now that we have extracted a measure $\mu_{k}$ on $k$, we would like to know that it is $(g, G)$-equivariant.

\begin{lemma}\label{lem: flowmotopy}
    Let $k$ be an arc transverse to $\lambda = \pi (\supp \nu)$, and let $t\in \R$. Give $k$ an orientation so that $\fo:k\rightarrow \hk^+$ is $\nu$-almost everywhere defined
    %and consider the positively co-oriented section $x\in k\cap \lambda \mapsto \widehat x \in \hk^+$
    and suppose $k'$ differs from $k$ by a homotopy $H: I \times k \to X$ through transverse arcs satisfying $H(s,x) = \pi(g_{st}\fo(x))$ for all $x \in k\cap \lambda$ and $s\in [0,1]$.
    Then \[dH\inverse_*\mu_{k'} = G(\fo( \cdot), t)d\mu_k.\]
\end{lemma}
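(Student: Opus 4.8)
The plan is to push the flow-box construction of Lemma~\ref{lem: disint} forward by time $t$ and read off the Radon--Nikodym factor from the $(g,G)$-equivariance \eqref{eqn: G-equivariant} of $\nu$. Throughout, write (renaming the flow-time parameter of Lemma~\ref{lem: disint} to $\epsilon$) $\mu_{\hk^+} = \lim_{\epsilon\to 0}\tfrac{1}{2\epsilon}(\pi_{\hk^+})_*\nu|_{B(\epsilon,\hk^+)}$, so that $\mu_k = \pi_*\mu_{\hk^+}$, and let $\fo'$ denote the map of Definition~\ref{def: frak o} for the arc $k'$. First I would record two elementary facts. Because $H$ is a homotopy through arcs transverse to $\lambda$ with $H(s,x) = \pi(g_{st}\fo(x))$ on $k\cap\lambda$, the local co-orientation cannot degenerate along the family, so continuity in $s$ forces $g_{st}\fo(x)$ to be the positively oriented lift of $H(s,x)$; at $s=1$ this gives $\fo'\circ H = g_t\circ\fo$ on $k\cap\lambda$, and in particular $\widehat{k'}^{\,+}$ and $g_t\hk^+$ are transversals meeting $\hlambda$ in the common set $g_t(\hk^+\cap\hlambda)$. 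Second, for $q=g_tp$ with $p\in\hk^+$, the identity $g_s(g_tp)=g_{s+t}p$ shows that the first-return times of $q$ to $g_t\hk^+$ equal those of $p$ to $\hk^+$; hence $g_t$ conjugates the $\nu$-a.e.\ defined projection $\pi_{\hk^+}$ to $\pi_{g_t\hk^+}$ and carries $B(\epsilon,\hk^+)$ to $B(\epsilon,g_t\hk^+)$, compatibly with these projections.

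With this in hand the computation is short. For a Borel set $A\subseteq k\cap\lambda$ put $B=\fo(A)\subseteq\hk^+\cap\hlambda$. Using the second fact, $\pi_{g_t\hk^+}^{-1}(g_tB)\cap B(\epsilon,g_t\hk^+)=g_t\big(\pi_{\hk^+}^{-1}(B)\cap B(\epsilon,\hk^+)\big)$, and since $d(g_{-t})_*\nu = G(\cdot,t)\,d\nu$ by \eqref{eqn: G-equivariant},
\[ \mu_{g_t\hk^+}(g_tB)\;=\;\lim_{\epsilon\to 0}\frac{1}{2\epsilon}\int_{\pi_{\hk^+}^{-1}(B)\cap B(\epsilon,\hk^+)} G(q,t)\,d\nu(q). \]
On the shrinking flow box the cocycle identity $G(g_sp,t)=G(p,s+t)/G(p,s)$ of Lemma~\ref{lem: G cocycle} and uniform continuity of $G$ give $G(q,t)=G(\pi_{\hk^+}(q),t)\,(1+o(1))$, so combining with the weak-$*$ convergence $\tfrac1{2\epsilon}(\pi_{\hk^+})_*\nu|_{B(\epsilon,\hk^+)}\to\mu_{\hk^+}$ we get $\mu_{g_t\hk^+}(g_tB)=\int_B G(p,t)\,d\mu_{\hk^+}(p)$. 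Finally, since the measure produced by Lemma~\ref{lem: disint} depends on the transversal only through its intersection with $\hlambda$ (two transversals to the geodesic flow meeting $\hlambda$ in the same set have flow boxes with the same germ along $\hlambda$, hence the same $\epsilon\to 0$ limit), $\mu_{\widehat{k'}^{\,+}}=\mu_{g_t\hk^+}$ on $g_t(\hk^+\cap\hlambda)$. Pushing down by $\pi$ and using $\fo'\circ H=g_t\circ\fo$, for every Borel $A\subseteq k\cap\lambda$,
\[ (dH^{-1}_*\mu_{k'})(A)\;=\;\mu_{k'}(H(A))\;=\;\mu_{\widehat{k'}^{\,+}}\big(g_t\fo(A)\big)\;=\;\int_{\fo(A)} G(p,t)\,d\mu_{\hk^+}(p)\;=\;\int_A G(\fo(x),t)\,d\mu_k(x), \]
which is exactly $dH^{-1}_*\mu_{k'} = G(\fo(\cdot),t)\,d\mu_k$, since $k\cap\lambda=\supp\mu_k$.

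I expect the main obstacle to be the last identification: carefully justifying that the local-disintegration measure of Lemma~\ref{lem: disint} is insensitive to the behaviour of the transversal away from $\hlambda$, so that $\mu_{\widehat{k'}^{\,+}}=\mu_{g_t\hk^+}$ even though $k'$ and $\pi(g_t\hk^+)$ need not coincide off $\lambda$. This requires a uniform lower bound on first-return times to $\hk^+$ along the relevant compact part of $\hlambda$ (so that small-height flow boxes genuinely separate distinct strands of the lamination), together with a comparison of the $\tfrac1{2\epsilon}$ normalizations for the two transversals; both are routine but should be spelled out. A secondary point, easily handled via the cocycle relation for $G$ and its uniform continuity, is the $o(1)$-control of $G(q,t)$ over the shrinking flow boxes used in the displayed limit.
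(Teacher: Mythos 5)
Your proof is correct and takes essentially the same route as the paper's: both hinge on the $\nu$-a.e.\ conjugation identity $\pi_{\widehat{k'}^+} = g_t\circ\pi_{\hk^+}\circ g_{-t}$, the $(g,G)$-equivariance of $\nu$, and the flow-box definition of the disintegration. If anything you are slightly more careful, explicitly tracking the $o(1)$ error in replacing $G(q,t)$ by $G(\pi_{\hk^+}(q),t)$ over the shrinking flow box, where the paper writes the corresponding chain of equalities as if exact for each $s$.
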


\begin{proof}
    One checks from the definitions that 
    \[\pi_{\hk'^+} = g_{t} \circ \pi_{\hk^+} \circ g_{-t},  ~ \nu\text{-a.e.}\]
    Then 
    \[\mu_{\hk'^+}^s = \frac{1}{2s} (g_t \circ\pi_{\hk^+} \circ g_{-t})_*\nu|_{B(s,\hk'^+)} = \frac{1}{2s} H_* (\pi_{\hk^+})_* G(\cdot, t) \nu|_{B(s,\hk^+)} = H_* G(\fo(\cdot), t)\mu_{\hk^+}^s.\]
    As this equality holds for each $s$ sufficiently small, it is also true in the limit as $s\to 0$ and for the pushforward measures to $k$ and $k'$. This completes the proof.
\end{proof}

Checking that the second condition in Definition \ref{def: transverse measure} holds follows from an approximation argument similar to the previous lemma.
That the first condition holds is immediate from the construction.
\begin{proposition}\label{prop: disint is equiv}
    The disintegration map $\nu\in \ML(T^1X)^{G} \mapsto \{\mu_k\}_k \in \ML(X)^{G}$ described above is a homeomorphism.
\end{proposition}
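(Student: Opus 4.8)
The plan is to exhibit a two-sided inverse to the disintegration map and check that both directions are continuous. First I would construct the inverse map: given an equivariant transverse measure $\mu = \{\mu_k\}_k \in \ML(X)^{G}$ with support $\lambda$, I build a measure $\nu$ on $T^1X$ supported on $\hlambda$ as follows. Cover a neighborhood of $\lambda$ by finitely many flow boxes $B(t_0, \hk_i^+)$ coming from short transverse arcs $k_i$; on each box, declare $\nu|_{B(t_0,\hk_i^+)}$ to be the measure whose disintegration over $\hk_i^+$ with base measure $(\fo_i)_*\mu_{k_i}$ has conditional measures $\int_{r_-(p)/2}^{r_+(p)/2} G(p,t)\,\delta_{g_tp}\,dt$, exactly as in \eqref{eqn: disint}. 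The key point is that this is well-defined independent of the chosen flow box: on an overlap $B(t_0,\hk_i^+)\cap B(t_0,\hk_j^+)$, the first-return/holonomy map between $\hk_i^+$ and $\hk_j^+$ is precisely a map $H$ of the type appearing in Lemma~\ref{lem: flowmotopy}, and the $(g,G)$-equivariance condition in Definition~\ref{def: transverse measure} says exactly that the base measures $(\fo_i)_*\mu_{k_i}$ and $(\fo_j)_*\mu_{k_j}$ transform by the factor $G(\fo(\cdot), t(\cdot))$, which is exactly the factor needed so that the two formulas for $\nu$ agree (the $G$-weight in the conditionals absorbs the discrepancy via the cocycle identity of Lemma~\ref{lem: G cocycle}). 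A partition-of-unity argument then glues these local pieces into a single finite Borel measure $\nu$ on $T^1X$.

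Next I would verify that $\nu$ so constructed lies in $\ML(T^1X)^{G}$: its support is $\hlambda$ because each local piece has support $\hk_i^+ \cap \hlambda$ pushed along the flow; it is $(g,G)$-equivariant because on each flow box the conditional measures are $G$-weighted Lebesgue along orbits, and the cocycle property $G(p,s+t) = G(p,s)G(g_sp,t)$ makes $(g_t)_*\nu$ scale by $G(\cdot,-t)$ (this is the same computation as in the proof of Proposition~\ref{prop: constructing equivariant measures}); and it is antipodally symmetric because $\mu_k$ is orientation-independent by Lemma~\ref{lem: disint}'s analogue and $G(\overline p,-t)=G(p,t)$. Then I check the two composites. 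Starting from $\nu \in \ML(T^1X)^{G}$, disintegrating to get $\{\mu_k\}$ and reconstructing gives back $\nu$: on each flow box the Disintegration Theorem identity \eqref{eqn: disint} together with $\mu_{\hk^+} = \eta_k$ (established in Lemma~\ref{lem: disint}) is literally the reconstruction formula. Starting from $\{\mu_k\} \in \ML(X)^{G}$, building $\nu$ and disintegrating recovers $\mu_k$ for short arcs (immediate from the construction) and hence for all arcs by the restriction/compatibility axiom together with a subdivision of a general transverse arc into short pieces.

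Finally, continuity in both directions. Forward continuity of $\nu \mapsto \{\mu_k\}$ is essentially contained in Lemma~\ref{lem: disint}: if $\nu_n \to \nu$ weak-$*$ then for a fixed transverse arc $k$ and fixed small $t$ the measures $\mu_{k,n}^t \to \mu_k^t$ weak-$*$ (continuity of $\pi_{\hk^+}$-pushforward on the flow box, which is a continuity point set of full measure), and the uniform-in-$n$ estimate $|\int f\,d\mu_{k,n}^t - \int f\,d\mu_{k,n}| \le \|f\|_\infty\,\omega(t)\,\sup_n\nu_n(T^1X)$ with $\omega(t)\to 0$ lets one exchange the limits $n\to\infty$ and $t\to 0$; here $\sup_n \nu_n(T^1X) < \infty$ since weak-$*$ convergent sequences of finite measures are bounded in mass. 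For the reverse direction, if $(\mu_n)_k \to \mu_k$ weak-$*$ on every transverse $k$, then on each flow box the base measures converge, and since the conditional measures $\int G(p,t)\delta_{g_tp}\,dt$ depend continuously (in the weak-$*$ sense, fiberwise) on nothing — they are fixed — the reconstructed $\nu_n$ converge weak-$*$ to $\nu$ against functions supported in a single flow box, and a partition of unity finishes it. The main obstacle I anticipate is the well-definedness and gluing of the inverse map: making rigorous that the local flow-box measures patch consistently requires carefully matching the holonomy maps between overlapping transversals to the homotopies $H$ allowed in Definition~\ref{def: transverse measure}, tracking the $G$-weights through the cocycle identity, and handling the measure-zero sets where first-return times blow up — everything else is either an application of the Disintegration Theorem or a routine approximation argument of the kind already carried out in Lemmas~\ref{lem: disint} and~\ref{lem: flowmotopy}.
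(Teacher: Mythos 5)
Your proposal is correct and takes essentially the same approach as the paper: the inverse map is reconstruction via the disintegration identity \eqref{eqn: disint}, well-definedness comes from matching holonomy homotopies to the equivariance axiom of Definition~\ref{def: transverse measure} together with the $G$-cocycle identity of Lemma~\ref{lem: G cocycle}, antipodal averaging symmetrizes, and continuity in both directions is a weak-$*$ limit-exchange argument. The one place you genuinely deviate is in how the inverse is patched together: the paper takes a \emph{single} oriented transversal $k$, pushes $\mu_k$ forward to $\hk^+$, and extends globally by \eqref{eqn: disint}, whereas you cover $\hlambda$ by finitely many flow boxes and glue the local reconstructions with a partition of unity. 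Your version is arguably the more careful one --- a single arc $k$ need not meet every leaf of $\lambda$ when $\lambda$ has several minimal components or isolated leaves (exactly the situation that motivates the paper, cf.\ Ungemach's examples and Lemma~\ref{lem: spiraling leaves}), so the single-transversal formula silently requires $k$ to be chosen so that $\pi_{\hk^+}$ is defined $\nu$-a.e.\ on all of $\hlambda$; the flow-box/partition-of-unity gluing sidesteps this issue uniformly, at the cost of having to verify the overlap consistency, which you correctly reduce to Lemma~\ref{lem: flowmotopy}. Your forward-continuity argument also supplies a uniform-in-$n$ estimate for the exchange of the limits $t\to 0$ and $n\to\infty$, which the paper only gestures at; one small point worth stating explicitly there is that $\pi_{\hk^+}$ is only Borel, so pushforward continuity under weak-$*$ convergence uses that its discontinuity set (a flow-time ``cut locus'' for the transversal) is $\nu$-null, which holds for the same reason the disintegration in Lemma~\ref{lem: disint} is well-defined.
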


\begin{proof}
    That the map is well defined is the content of Lemmas \ref{lem: disint} and \ref{lem: flowmotopy}.
    Continuity follows along the lines of the proof of Lemma \ref{lem: disint}: $\nu_n \to \nu$ if and only if the disintegration over $k$ converges.
    
    Now we construct an inverse mapping.  Given $\mu \in \ML(X)^{G}$ with support $\lambda$, consider a transverse arc $k$ equipped with an orientation.
    The section $x\in k\cap \lambda \to \widehat x \in \hk^+$ is bi-Lipschitz onto its image, so the pushforward of $\mu_k$ defines a Borel measure $\mu_{\hk^+}$ on $\hk^+$.
    Extending $\mu_{\hk^+}$ to a $(g,G)$-equivariant measure with support contained in $\hlambda$ on $T^1X$ is straightforward using \eqref{eqn: disint}.
    Averaging with respect to the antipodal involution produces a flip-invariant and $(g, G)$-equivariant measure $\nu \in \ML(T^1X)^{G}$ with support equal to $\hlambda$.
    That this measure did not depend on the choice of $k$ or its orientation can be deduced from the second bullet point (equivariance) in Definition \ref{def: transverse measure}.
    
    To see that this inverse mapping is continuous, consider a convergent sequence $\mu_n \to \mu \in \ML(X)^{G}$ with supports $\lambda_n$ and $\lambda$.
    Call the extended measures $\nu_n$ and $\nu \in \ML(T^1X)^{G}$ constructed in the previous paragraph.  By construction, their supports are $\hlambda_n$ and $\hlambda \subset T^1X$.

    A sequence $z_n \to z$ if and only if every subsequence has a further subsequence converging to $z$.
    A basic fact from measure theory is that if a sequence of Borel measures $\nu_n \to \nu$ weak-$*$ on a compact metric space, then every Hausdorff accumulation point of $\{\supp \nu_n\}$ contains $\supp \nu$.
    So it suffices to consider a subsequence, which we do not rename, with the property that $\lambda_n \to \lambda'\supset \lambda$ in the Hausdorff topology on closed subsets of $X$.
    Then also $\hlambda_n \to \hlambda'$ in the Hausdorff topology on closed subsets of $T^1X$.
    
    Suppose $p_n \in \hlambda_n \cap \hk^+$ converge to $p \in \lambda\cap \hk^+$.
    Since $G$ is continuous on $T^1X\times \R$ and the flow $g_t$ is continuous, the measures $\int G(p_n, t) \delta_{g_tp_n}~dt$ supported on compact segments of leaves of $\hlambda_n$ converge weak-$*$ on $T^1X$ to the corresponding $\int G(p, t) \delta_{g_t p }~dt$ supported on a compact segment of a leaf of $\hlambda$. 
    Since the distributions of the fibers converge (i.e., $(\mu_n)_k \to \mu_k$), the extended measures converge weak-$*$ on $T^1X$, i.e.,  $\nu_n \to \nu$.  This completes the proof.   
\end{proof}

\section{From measures to cocycles and back}
\label{sec: measures and cocycles}
In this section, we will make precise the relationship between transverse equivariant measures on laminations defined in Section \ref{sec:equivariant measures} and the cocycles discussed in Section \ref{section: geometric transverse data}.

To define the notion of an equivariant transverse measure, we used the auxiliary data of a norm on the flat bundle $E^\rho$ and a parameterization of the geodesic flow on $T^1S$ afforded by a hyperbolic metric $X$. This produced a function $G$ describing the expansion of the norm under the geodesic flow (in the subbundle $E^\rho_2$). Now, Definition \ref{def: bundle-valued transverse measure} associates to every element of $\ML(X)^G$ an $E^\rho$-valued cocycle: an object which is independent of the auxiliary data. 

Conversely, in Section \ref{subsec: cocycles to measures} we extract a transverse $G$-equivariant measure from a cocycle (by way of the bending cocycle of Definition \ref{def: bending cocycle}).

Finally, we establish that these constructions are continuously inverse 
to one another:

\begin{theorem}\label{thm: equiv measures sphere}
    The space $\ML(X)^{G}$ is homeomorphic to $H^1(S, F^\rho)\setminus \{0\})$ via a map which is homogeneous with respect to positive scale.
    Consequently, the quotient by positive scale $\PML(X)^{G}$ is a sphere of dimension $6g-7$.
\end{theorem}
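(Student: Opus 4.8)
The plan is to build the homeomorphism $\ML(X)^G \to H^1(S,F^\rho)\setminus\{0\}$ by factoring it through the geometry already developed: the isomorphism $H^1(\pi_1 S,\TAff^*(3,\R)_\rho) \cong H^1(S,F^\rho)$ of \eqref{eqn: cohomology iso}, the identification of cohomology classes with conjugacy classes of coaffine representations (Lemma \ref{lem: cohomology parameterizes reps up to conj}), the passage from a (non-reducible) coaffine representation to its convex core boundary and bending cocycle $\psi$ (Lemmas \ref{lem: pleated disks}, \ref{lemma: psi is phi}), and the disintegration homeomorphism $\ML(T^1X)^G \cong \ML(X)^G$ of Proposition \ref{prop: disint is equiv}. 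Concretely, one direction $\ML(X)^G \to H^1(S,F^\rho)\setminus\{0\}$ is to be given by Definition \ref{def: bundle-valued transverse measure}: integrate the bundle-valued transverse measure against transversals to produce an $F^\rho$-valued $1$-cocycle, check the cocycle relation, and check it is not a coboundary (this is where one uses that the measure has nonempty support, and Corollary \ref{cor: cocycles localize} pins down where the values lie). The reverse direction $H^1(S,F^\rho)\setminus\{0\} \to \ML(X)^G$ is the content of Section \ref{subsec: cocycles to measures}: given a nonzero class, realize it by an irreducible coaffine $\eta$, form $\partial\Xi^+$, read off $\tlambda^+$ and the bending cocycle $\psi$, and show $\psi$ disintegrates to an honest $(g,G)$-equivariant transverse measure supported on the bending lamination.

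The steps, in order: (1) Recall from Lemma \ref{lem: cohomology parameterizes reps up to conj} that $H^1(S,F^\rho)$ acts simply transitively on $\Aff^*(3,\R)$-conjugacy classes of coaffine representations with linear part conjugate to $\rho$, with $0$ the class of the reducible ones; so $H^1(S,F^\rho)\setminus\{0\}$ is exactly the irreducible classes. (2) For such a class, invoke Lemma \ref{lem: pleated disks}: $\partial\Xi\setminus\im\xi = \partial\Xi^+ \sqcup \partial\Xi^-$, with $q^+:\Omega \to \partial\Xi^+$ a convex equivariant bent-domain section and bending lamination $\tlambda^+$; push $\tlambda^+$ to a geodesic lamination $\lambda$ on $X$ via the bi-Hölder dictionary between geodesic laminations on $\Omega$ (Hilbert metric) and on $X$ from \S\ref{subsec: surface theory}. (3) Show the bending cocycle $\psi$ of Definition \ref{def: bending cocycle}, viewed as an element of $Z^1(S,F^\rho)$ as in the remark following Corollary \ref{cor: cocycles localize}, disintegrates over any transversal $k$ to a finite positive Borel measure with support $k\cap\lambda$: here one uses the localization of $\psi(k')$ near the dual leaf (Corollary \ref{cor: cocycles localize}) plus the positivity coming from the cone $\mathscr E$, so that subdividing $k$ into pieces of diameter $\to 0$ and summing the (one-dimensional images of) the increments gives a countably additive measure; the projective direction of these increments is asymptotically $E^\rho_2|_\ell$, and comparing with the norm $\|\cdot\|$ and parameterization used to define $G$ shows the resulting scalar measure is $(g,G)$-equivariant in the sense of Definition \ref{def: transverse measure}. (4) Check the two maps are mutually inverse: starting from $\mu\in\ML(X)^G$, extend to $\nu\in\ML(T^1X)^G$ (Proposition \ref{prop: disint is equiv}), read off a cohomology class, realize the corresponding $\eta$, and verify via Lemma \ref{lemma: psi is phi} that the bending cocycle recovers the class and that its disintegration recovers $\mu$ (this uses uniqueness of the convex core and of $q^\pm$). (5) Continuity and positive-homogeneity: homogeneity is immediate because scaling $\varphi$ by $s>0$ is conjugation by $\mathrm{diag}(s^{-1/2},s^{-1/2},s^{-1/2},s^{3/2})$ (as in the proof of Lemma \ref{lem: eta is Anosov}), which scales $\psi$ and hence every $\mu_k$ by $s$; continuity in both directions follows from continuity of the limit maps $\xi,\xi^*$ in the representation, continuity of $G=G(\rho)$ in $\nabla^\rho$, Hausdorff-continuity of the bending lamination (which in turn needs an argument that nearby representations have nearby convex cores — this is where projective Anosov stability and the continuity of supporting hyperplanes enter), and the continuity half of Proposition \ref{prop: disint is equiv}. (6) For the dimension count, note $H^1(S,F^\rho) = H^1(\pi_1 S,(\R^3)^*_\rho)$ has dimension $-\chi(S)\cdot 3 = 6g-6$ by Euler characteristic of the twisted chain complex (using $H^0 = H^2 = 0$ since $(\R^3)^*_\rho$ has no invariants/coinvariants for $\rho$ irreducible, e.g. Poincaré duality pairs it with $H^0(\R^3_\rho)=0$); removing $0$ and projectivizing gives a sphere $S^{6g-7}$.

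The main obstacle will be Step (3) together with the continuity of the lamination in Step (5). Producing the transverse measure from $\psi$ requires controlling how the increments $\psi(k')$ over a fine subdivision of a transversal converge — one must show the total "mass" (the length of the projection of $\psi$ onto the line $E^\rho_2$ over the relevant leaf, measured in the chosen norm) is finite and additive, and that the limiting measure has full support on $\lambda\cap k$ and no atoms except where the lamination genuinely has isolated leaves carrying exponentially-summable holonomy (the Ungemach phenomenon). This is delicate because $\psi$ a priori only records macroscopic differences of supporting hyperplanes, and one must upgrade Corollary \ref{cor: cocycles localize}'s Hölder localization estimate to an actual disintegration; I expect this to be the technical heart of Section \ref{sec: measures and cocycles}. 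Establishing that the bending lamination varies Hausdorff-continuously with $\eta$ (needed so that $\mu_n \to \mu$ in the weak-$*$ sense of Definition \ref{def: transverse measure}) is the other subtle point, since the combinatorial type of the lamination jumps, and one must argue at the level of the convex cores $\Xi_{\eta_n} \to \Xi_\eta$ in the Hausdorff topology using stability of the Anosov limit curve.
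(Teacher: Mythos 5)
Your proposal identifies the correct maps and the key technical content — the bending cocycle $\psi$ (Definition \ref{def: bending cocycle} and Lemma \ref{lemma: psi is phi}), the integration $\Phi:\mu\mapsto[\mu\otimes s^1]$ of Definition \ref{def: bundle-valued transverse measure}, the reverse construction of Lemmas \ref{lem: bending cocycle is integration} and \ref{lemma: equivariance of measure from cocycle}, and the cohomological parameterization of Lemma \ref{lem: cohomology parameterizes reps up to conj}; your dimension count in step (6) also correctly supplies a detail left implicit in the text. Where you diverge from the paper is in the proof that $\Phi$ is a homeomorphism rather than merely a continuous bijection. You propose to prove continuity of the inverse directly, which — as you yourself flag — requires Hausdorff-continuity of the bending lamination $\tlambda^+$ and weak-$*$ continuity of the bending data as functions of the representation $\eta$. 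The paper never proves this. Having established bijectivity (Lemma \ref{lem: bijection measure cohomology}) together with continuity and positive homogeneity of $\Phi$ alone (Proposition \ref{prop: cocycle continuous injective homogeneous}), it descends to projectivizations: $\PML(X)^{G}$ is compact, being identified via Proposition \ref{prop: disint is equiv} with a closed subset of probability measures on the compact space $T^1X$, and $\mathbb P_{>0}(H^1(S,F^\rho)\setminus\{0\})\cong\mathbb S^{6g-7}$ is Hausdorff, so the induced continuous bijection between them is automatically a homeomorphism; homogeneity then lifts this to a homeomorphism $\Phi$ on the unprojectivized spaces. The compactness argument buys exactly what your step (5) would cost: one never needs to control how the convex core boundary, its pleating locus, and its bending data vary with $\eta$, which would amount to a nontrivial stability theorem in its own right. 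If you wish to keep your structure, I would replace the two-sided continuity in step (5) with this soft compact-to-Hausdorff argument.
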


\subsection{From measures to cocycles}\label{subsec: measures to cocycles}

Let $\eta$ be a coaffine representation with Hitchin linear part, $\rho$.
 Recall from \S\ref{section: geometric transverse data} that the top component $\partial \Xi^+$ of the convex core boundary is pleated along a $\pi_1 S$-invariant lamination $\tlambda^+$. We choose to work with one component of $\partial\Xi$, though there is a symmetric construction for $\partial \Xi^-$.
 
\medskip
 In the previous Section \ref{sec:equivariant measures}, we made the choice of a norm $\| \cdot \|$ on $E^\rho$ and of a hyperbolic metric $X$ on $S$.
 By $\lambda$, we denote the geodesic straightening of $\lambda^+$ on $X$.
 Lemma \ref{lemma: bending cocycle properties} give us a cocycle $\psi \in Z^1(S, E^\rho)$ satisfying some compatibility properties with respect to $\lambda$.
 Recall that for an oriented arc $k$ transverse to $\lambda$ on $X$, the function $\fo: k \cap \lambda \to T^1X$ from Definition \ref{def: frak o} is a bi-Lipschitz homeomorphism onto its image and satisfies $\pi \circ \fo (x) = x$.

Using the chosen norm on $E^\rho$ and Lemma \ref{lemma: E_2 has sections}, denote by 
\[s^1:T^1X \to E^\rho_2 \]
the unique unit norm positively co-oriented vector in each fiber. This is a H\"older continuous map.

\medskip
Recall from \S\ref{subsec: Anosov property} that the bundle $E^\rho$ is a pullback of $F^\rho\to X$ by $T^1X \to X$, so the $\nabla^\rho$-parallel transport around the fibers of $T^1X \to X$ is trivial. Therefore the parallel transport along $k$ in the integrand below is unambiguous.

\begin{definition}\label{def: bundle-valued transverse measure}
    Suppose $\mu\in \ML(X)^G$ has support $\lambda$.
    For every oriented arc $k$ transverse to $\lambda$, define a Borel measure on $k$ valued in the fiber $F^\rho\vert_{k(0)}$ by 
    \[\mu\otimes s^1(k) = \mu_k\otimes (s^1\circ \fo)|_k.\] 
    More precisely,
    \begin{equation}\label{eqn: measure to cocycle integral}
        \int\mu\otimes s^1(k) =\int s^1 \circ\fo~d\mu_k = \int_k \left(\overline{k_x}\right)_* s^1\circ \fo (x)~ d\mu_k(x) \in F^\rho\vert_{k(0)},
    \end{equation}
    where $k_x$ is the parameterized subarc of $k$ begininng at $k(0)$ and ending at $x$ and $\left(\overline {k_x} \right)_*$  is the $\nabla^\rho$-parallel transport. 
    
    Then $\mu\otimes s^1$ defines an $F^\rho$ valued co-chain: $\left(k\mapsto \int \mu\otimes s^1(k)\right)$.
\end{definition}

\begin{remark}\label{rmk: indicators dense}
    Indicator functions on $k' \cap \lambda$ are dense in the space of continuous functions on $k\cap \lambda$, where $k'$ is a subarc of $k$. 
    Using the restriction property of $\mu \in \ML(X)^G$ (Definition \ref{def: transverse measure}), equation \eqref{eqn: measure to cocycle integral} unambiguously defines a bounded linear map from  continuous functions on $k$ to $F^\rho|_{k(0)}$. 
    %This is what a Borel measure valued in a vector space is.
\end{remark}

We assert that the cochain described in Definition \ref{def: bundle-valued transverse measure} is a cocycle.
%asserts that $\psi = \mu \otimes s^1$ for some rough transverse measure $\mu$ with support $\lambda$.

\begin{lemma}\label{lemma: homotopy invariance of tensor}
    Let $\mu \in \ML(X)^G$ with support $\lambda$, and let $k$ and $k'$ oriented arcs transverse to $\lambda$ related by a homotopy $H$ through transverse arcs. % transverse to $\lambda$.
    Then
    \[\mu\otimes s^1(k') = H_*(\mu\otimes s^1(k)).\]

    In particular, $[\mu\otimes s^1] \in H^1(S,F^\rho)$.
\end{lemma}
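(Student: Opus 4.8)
The plan is to verify the cocycle relation $\mu\otimes s^1(k') = H_*\big(\mu\otimes s^1(k)\big)$ by first checking the case when $k$ and $k'$ are \emph{disjoint} (so that $H$ is an honest transverse homotopy sliding along leaves of $\lambda$ without crossing itself), and then reducing the general situation to that case. The key is to track how the three ingredients of the integrand in \eqref{eqn: measure to cocycle integral} transform under the sliding homotopy: the measure $\mu_k$, the section $s^1\circ\fo$, and the parallel transport $(\overline{k_x})_*$. The $(g,G)$-equivariance of $\mu$ (second bullet of Definition \ref{def: transverse measure}) tells us precisely that $dH^{-1}_*\mu_{k'} = G(\fo(\cdot), t(\cdot))\,d\mu_k$, where $t(x)$ is the flow time from $\fo(x)\in\hk^+$ to the point over $k'\cap\lambda$. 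On the other hand, the section $s^1$ has \emph{unit} norm, so the flat parallel transport $\big(g_{[0,t(x)]}\big)_*$ of $s^1(\fo(x))$ along the flow is exactly $G(\fo(x), t(x))\cdot s^1(g_{t(x)}\fo(x))$ by the very definition of $G$ in \eqref{eqn: G defined}. These two $G$-factors are reciprocal to one another: when we change variables in the integral from $k$ to $k'$ via $H$, the Radon--Nikodym factor $G(\fo(\cdot), t(\cdot))$ from the measure cancels against the $G^{-1}$ coming from re-expressing the unit section over $k'$ in terms of the parallel transport of the unit section over $k$.

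More concretely, the first step is to establish the identity
\[
\big(\overline{k'_{H(x)}}\big)_* s^1\circ\fo\big(H(x)\big)
\;=\; \frac{1}{G(\fo(x), t(x))}\,\big(\overline{k_x}\big)_* \big(g_{[0,t(x)]}\big)_* s^1\circ\fo(x),
\]
as vectors in $F^\rho|_{k(0)}$, valid for $\mu_k$-a.e.\ $x\in k\cap\lambda$. This reduces to a statement about flatness of $\nabla^\rho$ on a contractible region of $X$ bounded by (subarcs of) $k$, $k'$, and the two leaf-segments of $\lambda$ joining their endpoints over $x$: parallel transport around this loop is trivial, so transporting $s^1(\fo(H(x)))$ back along $k'_{H(x)}$ equals transporting it first down the leaf (which is $(g_{[0,t(x)]})_*$ up to the scalar $G$, since $E^\rho_2$ is flat along flow lines) and then back along $k_x$. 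The slight subtlety is that the homotopy $H$ need not follow a single leaf at each $x$; but $H$ may be chosen (as in the setup preceding Lemma \ref{lem: flowmotopy}) to be a flow-homotopy $H(s,x)=\pi(g_{s\,t(x)}\fo(x))$, and a general transverse homotopy is homotopic rel endpoints to such a flow-homotopy, so no generality is lost. Then I substitute this identity into \eqref{eqn: measure to cocycle integral} for $k'$, change variables $y = H(x)$ using the equivariance formula $dH^{-1}_*\mu_{k'} = G(\fo(\cdot), t(\cdot))\,d\mu_k$, and watch the two $G$-factors annihilate, leaving exactly $H_*\!\int s^1\circ\fo\,d\mu_k = H_*(\mu\otimes s^1(k))$.

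Once equality holds for homotopies through \emph{embedded} transverse arcs with fixed endpoints, the passage to the cocycle class in $H^1(S,F^\rho)$ is formal: the cochain $k\mapsto\int\mu\otimes s^1(k)$ is additive under concatenation (Remark \ref{rmk: indicators dense} plus the restriction property), flip-equivariant (the section $s^1$ reverses sign and $\fo$ swaps co-orientation), and the homotopy invariance just proved is exactly the statement that its coboundary vanishes on transverse $2$-simplices; a standard density/approximation argument upgrades this from transverse simplices to all singular $2$-simplices (any smooth $2$-simplex can be perturbed to be transverse to $\lambda$ on its $1$-skeleton, and $\lambda$ has measure zero), giving a well-defined class $[\mu\otimes s^1]\in H^1(S,F^\rho)$. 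I expect the main obstacle to be bookkeeping the co-orientations and the precise relationship between the flow-time function $t(x)$ and the homotopy $H$ — i.e.\ making sure the sign conventions for $\hk^+$, $\fo$, and the direction of parallel transport all line up so that the two $G$-factors are genuinely reciprocal rather than equal; the analytic content (dominated convergence, density of indicators) is routine given Lemmas \ref{lem: disint} and \ref{lem: flowmotopy}.
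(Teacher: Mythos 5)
Your proof is correct and rests on exactly the same mechanism as the paper's: the Radon–Nikodym factor $G(\fo(\cdot),t(\cdot))$ from the $(g,G)$-equivariance of $\mu$ cancels reciprocally against the factor $G(\fo(\cdot),t(\cdot))$ by which the unit section $s^1$ scales under flow-parallel transport, so that $\mu\otimes s^1$ is homotopy invariant. The paper states this cancellation in a single displayed line and leaves the cocycle verification as an exercise; you simply fill in the scaffolding (reduction to flow-homotopies via flatness of $\nabla^\rho$, the explicit parallel-transport identity, and the additivity and density arguments), with only a couple of benign bookkeeping slips in the fiber of your displayed identity (the left side naturally lands in $F^\rho|_{k'(0)}$ rather than $F^\rho|_{k(0)}$, and the composition $(\overline{k_x})_*\circ(g_{[0,t(x)]})_*$ should be reordered or accompanied by transport along $H(\cdot,0)$) which do not affect the argument.
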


\begin{proof}
        One follows the definitions, in particular see equation \eqref{eqn: G defined} and Definition \ref{def: transverse measure}. 
        %By $\mu_k\otimes s^1|_k$, we mean the integral 
        
        The $G$-equivariance of the sections and the measures exactly cancel; we suppress the arguments of $G$ for readability:

        \[H_*(\mu_k\otimes s^1\circ\fo \vert_k) = \left(\frac{1}{G\circ H\inverse} \mu_{k'}\right) \otimes \left(G\circ H\inverse  (s^1\circ\fo)\vert_{k'}\right). \]   
        It is now straightforward to verify that integrating $\mu\otimes s^1$ on oriented transverse arcs is a cocycle.
\end{proof}

\subsection{From cocycles to measures}\label{subsec: cocycles to measures}

From the previous subsection, every equivariant transverse measure defines a cohomology class valued in $F^\rho$. 
Suppose $\eta: \pi_1S\to \Aff^*(3,\R)$ has linear part $\rho$ and bending cocycle $\psi$ with support $\lambda$, as in Definition \ref{def: bending cocycle}.
Now we show that $\psi$  is actually \emph{equal} to integrating some $\mu\otimes s^1$ along oriented arcs.
The first step is to extract the transverse measure.

\begin{remark}\label{rmk: suppress transport}
    In this subsection and the next, we have suppressed the parallel transport along $k$ in the integrals for readability (as in equation \eqref{eqn: measure to cocycle integral}).  Since this parallel transport operator has bounded operator norm that tends to $1$ on small enough segments, this does not affect our estimates in any meaningful way.
\end{remark}

\begin{lemma}\label{lem: bending cocycle is integration}
    For every oriented arc $k$ transverse to $\lambda$ on $X$, there is a unique Borel measure $\mu_k$ with support equal to $k \cap \lambda$ satisfying 
    \[\psi(k) = \int_k s^1\circ \fo ~d\mu_k \in F^\rho|_{k(0)}\]
\end{lemma}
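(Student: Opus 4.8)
The plan is to construct the measure $\mu_k$ on a transverse arc $k$ by differentiating the bending cocycle $\psi$ against the reference section $s^1$, and then to verify that the family $\{\mu_k\}$ has the right support and satisfies the uniqueness claim. First I would fix an oriented arc $k$ and, using Corollary \ref{cor: cocycles localize}, observe that for any subarc $k'$ of $k$ meeting the leaf $\ell_x$ through a point $x\in k'\cap\lambda$, the value $\psi(k')$ lies $O(\diam(k')^a)$-close to the dual line $\ell_x^*$, hence (after trivializing $F^\rho$ over $k$ by $\nabla^\rho$-parallel transport, cf. Remark \ref{rmk: suppress transport}) $\psi(k')$ is nearly parallel to the vector $s^1\circ\fo(x)\in E^\rho_2$, which is the direction of $\overline E^\rho_2$ sitting over $x$. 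Concretely, I would define, for each subarc $k_{[a,b]}\subseteq k$ with endpoints off $\lambda$, the scalar
\[
    \mu_k\big(k_{[a,b]}\big) := \big\langle \psi(k_{[a,b]}),\, \theta \big\rangle,
\]
where $\theta$ is a suitable dual covector (e.g. a parallel section of $(F^\rho_2)^*$ pairing to $1$ with $s^1$) used to read off the $E^\rho_2$-component; additivity of $\psi$ (Lemma \ref{lemma: bending cocycle properties}) makes this a finitely additive set function on the semiring of subarcs, and positivity follows from the cone statement in Corollary \ref{cor: cocycles localize}, since $\psi(k')\in\mathscr E$ for every subarc. Countable additivity comes from the regularity of $\psi$ together with Lemma \ref{lem: tt geometry}, which controls how components of $k\cap\cN_\epsilon(\lambda)$ shrink; so Carath\'eodory extension produces a genuine finite positive Borel measure $\mu_k$ on $k$.

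Next I would check that $\supp\mu_k = k\cap\lambda$. The inclusion $\supp\mu_k\subseteq k\cap\lambda$ follows from the support property of $\psi$ in Lemma \ref{lemma: bending cocycle properties} (if a subarc misses $\lambda$ then $\psi$ vanishes on it), and the reverse inclusion follows because near any point of $k\cap\lambda$ there are complementary strata whose supporting hyperplanes genuinely differ — this is where strict convexity of $\Xi$ and the fact that $\partial\Xi^+$ is honestly bent along $\tlambda^+$ (Lemma \ref{lem: pleated disks}) enter, guaranteeing $\psi$ does not vanish on arbitrarily small arcs crossing a leaf. Then the identity $\psi(k)=\int_k s^1\circ\fo\,d\mu_k$ holds by construction on the dense collection of arcs with endpoints off $\lambda$ and extends by continuity (Remark \ref{rmk: indicators dense}); here I use that $x\mapsto s^1\circ\fo(x)$ is H\"older continuous (Lemma \ref{lemma: E_2 has sections}) and that, by Corollary \ref{cor: cocycles localize}, the ``error direction'' of $\psi$ relative to $s^1\circ\fo$ on a subarc is $O(\diam^a)$, so Riemann-sum approximations of the integral converge to $\psi(k)$. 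Uniqueness is then immediate: two measures with the same integral against the fixed H\"older section $s^1\circ\fo$ over every subarc must agree, since that determines the integral of every continuous function (again Remark \ref{rmk: indicators dense}).

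The main obstacle I anticipate is the passage from ``$\psi(k')$ is $O(\diam^a)$-close to the line spanned by $s^1\circ\fo(x)$'' to ``the scalarized set function is countably additive and its integral against $s^1\circ\fo$ genuinely reproduces $\psi$.'' The subtlety is that $\psi$ is vector-valued and the direction of $\psi(k')$ varies with the subarc, so one cannot simply scalarize by a single covector globally; one must either work fiberwise with $\nabla^\rho$-parallel transport (absorbing the small twisting of $E^\rho_2$ along $k$, which is controlled since transport operators tend to $1$ on short arcs) or iterate a partition-refinement argument, estimating the discrepancy $\big\|\psi(k')-\mu_k(k')\,s^1\circ\fo(x)\big\|=O(\diam(k')^{1+a})$ summed over a fine subdivision, and invoking Lemma \ref{lem: tt geometry} to bound the number of relevant components. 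Making these estimates summable — rather than merely individually small — is the delicate point, and is exactly where the H\"older regularity exponent $a$ and the logarithmic bound on the number of train-track components do the work.
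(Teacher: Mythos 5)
Your primary construction has a genuine gap. You propose to set $\mu_k(k_{[a,b]}) := \langle \psi(k_{[a,b]}), \theta\rangle$ for a fixed ($\nabla^\rho$-parallel) covector $\theta$ over $k$, and then verify $\psi(k)=\int s^1\circ\fo\,d\mu_k$. But this identity will not hold with that $\mu_k$. By Corollary \ref{cor: cocycles localize}, for a short subarc $k'$ crossing $\lambda$ near $x$ one has $\psi(k')\approx \|\psi(k')\|\,s^1\circ\fo(x)$, so $\mu_k(k')\approx \|\psi(k')\|\,\langle s^1\circ\fo(x),\theta\rangle$. The section $s^1\circ\fo$ is unit-norm and H\"older but \emph{not} $\nabla^\rho$-parallel, so $\langle s^1\circ\fo(x),\theta\rangle$ is a non-constant positive function of $x$. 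Integrating $s^1\circ\fo$ against your $\mu_k$ therefore produces $\sum_i \|\psi(k_i)\|\langle s^1\circ\fo(x_i),\theta\rangle s^1\circ\fo(x_i)$, which differs from $\psi(k)\approx\sum_i\|\psi(k_i)\|s^1\circ\fo(x_i)$ by the variable density $\langle s^1\circ\fo(\cdot),\theta\rangle$. To salvage your approach you would need to define $\mu_k$ as the reweighting of $\langle\psi(\cdot),\theta\rangle$ by $1/\langle s^1\circ\fo(\cdot),\theta\rangle$, and then re-derive finite/countable additivity for the reweighted set function; none of this appears in the plan. (Conversely, if you instead take a \emph{varying} $\theta$ that pairs to $1$ with $s^1\circ\fo$ at every point, the evaluation $\langle\psi(k'),\theta\rangle$ ceases to be additive, which you correctly flag as ``the subtlety.'')

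The paper's proof sidesteps the scalarization entirely. It builds explicit atomic approximations
\[\vartheta_\epsilon \;=\; \sum_{i=1}^{m(\epsilon)} \bigl\|\psi(k_\epsilon^i)\bigr\|\,\delta_{x_\epsilon^i}\]
supported on initial points of the components of $k\cap\mathcal N_\epsilon(\lambda)$, and then proves a convergence criterion: if $\int_{k'} s^1\circ\fo\,d\vartheta_\epsilon\to\psi(k')$ for all subarcs $k'$, then $\vartheta_\epsilon$ has a unique weak-$*$ limit $\mu_k$. The criterion is established by a contradiction argument that uses Corollary \ref{cor: cocycles localize} to show that $s^1\circ\fo$ is, on small enough subarcs, within $O(\diam^\alpha)$ of a fixed unit vector, so its integral against two distinct limiting measures must disagree on some small subarc. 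The hypothesis of the criterion then follows from additivity of $\psi$, the localization $\|\psi(k_\epsilon^i)-\|\psi(k_\epsilon^i)\|\,s^1\circ\fo(x_\epsilon^i)\|=O(\epsilon^\alpha)$, and Lemma \ref{lem: tt geometry}'s bound $m(\epsilon)=O(\log1/\epsilon)$, giving total error $O(\epsilon^\alpha\log1/\epsilon)\to 0$. Note that this error rate, not your claimed $O(\diam^{1+a})$, is what Corollary \ref{cor: cocycles localize} actually delivers: the angular estimate is $O(\diam^a)$ and $\|\psi(k_\epsilon^i)\|$ is only uniformly bounded, so the per-piece discrepancy is $O(\epsilon^\alpha)$, and it is the logarithmic component count that rescues summability, not a higher H\"older exponent. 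Your uniqueness paragraph also compresses the real content: two measures agreeing against the single vector-valued section $s^1\circ\fo$ on all subarcs need not agree \emph{a priori}; the paper's argument uses the $E_2^\rho$-component estimate to extract a scalar contradiction, and that step is not free.

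\end{document}
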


\begin{proof}
    We will construct a family of finite Borel measures $\{\vartheta_\epsilon\}$ on $k$ with support contained in $k\cap \lambda$ indexed by a positive $\epsilon$ and show that they converge as $\epsilon\to 0$.
    %The limit will be the $\mu_k$ from the statement of the lemma.
    
    Consider the $\epsilon$-neighborhood $\cN_\epsilon(\lambda)$ on $X$. 
    Then $k\cap \cN_{\epsilon}(\lambda)$ is the union of its connected components listed in order along $k$:
    \[k\cap \cN_\epsilon(\lambda) = k_\epsilon^1\cup ... \cup k_\epsilon^{m(\epsilon)}.\] 
    Lemma \ref{lem: tt geometry} asserts that $\diam (k_\epsilon^i) = O(\epsilon)$ and $m(\epsilon) = O(\log1/\epsilon)$.
    Let $x_\epsilon^i$ denote the initial point of $k_\epsilon^i$, and define 
    \[\vartheta_\epsilon : = \sum_{i = 1}^{m(\epsilon)} \|\psi (k_\epsilon^i)\|_{x_\epsilon^i} \delta_{x_\epsilon^i}. \]
    %We would like to show that $\vartheta_\epsilon$ converge as $\epsilon\to 0$.

    \begin{claim}\label{clm: convergence criterion}
        If for all oriented subarcs $k' \subset k$, it holds that 
        \[\int_{k'}  s^1\circ \fo~ d\vartheta_\epsilon \to \psi(k'),\]
        then $\vartheta_\epsilon$ converges to a finite Borel measure $\mu_k$ as $\epsilon\to 0$.
    \end{claim}

    \begin{proof}[Proof of Claim \ref{clm: convergence criterion}]
        Suppose that $\{\vartheta_\epsilon\}_{\epsilon>0}$ has two distinct accumulation points $\nu$ and $\nu'$.  We will show that 
        \[ \int_{ k'}  s^1\circ \fo ~d\nu  \not = \int_{ k'} s^1\circ \fo ~d\nu'\]
        for some ark $k'$, hence at least one of them is not $\psi(k')$.
        
        The linear span of indicator functions is dense in $C(k\cap \lambda)$ and $\nu \not= \nu'$, so there is a $k'$ such that $\nu(k') \not= \nu'(k')$, i.e., that the $\nu$ and $\nu'$ integrals of the indicator function on $k'$ disagree. 
        Furthermore, we may find such a $k'$ with arbitrarily small diameter.

        Consider the integrand \[s^1\circ \fo: k'\cap \lambda \to F^\rho|_{k'(0)}.\]
        With respect to a $\|\cdot \|_{k'(0)}$-orthonormal basis $e_1, e_2, e_3$ with $\langle e_2 \rangle  = E_2^\rho|_{k'(0)}$, we have 
        \[ s^1\circ \fo = f_1 e_1 + f_2 e_2 + f_3 e_3,  \]
        where $f_i : k\cap \lambda \to \R$ are $\alpha$-H\"older continuous (see Lemma \ref{lemma: E_2 has sections}).
        Thus
        \[
            \left\|\int_{k'} s^1\circ \fo ~ d(\nu-\nu')\right\|^2 = \sum_{i = 1}^3\left| \int_{k'}f_i~d(\nu-\nu') \right|^2.
        \]
        Corollary \ref{cor: cocycles localize} implies that 
        \[ |f_2 - 1| = O(\diam(k')^\alpha) \text{ and } |f_i|= O(\diam(k')^\alpha) \text{ for $i = 1,3$}.\]
        Hence that 
        \[\left\|\int_{k'}s^1\circ \fo ~ d(\nu-\nu')\right\|^2\ge \left|\int_{k'} f_2 ~d(\nu-\nu') \right|^2 - (\nu(k') - \nu'(k'))^2O(\diam (k')^{2\alpha}).\]
        Furthermore, we obtain 
        \[\left| \int_{k'}f_2 d(\nu-\nu')  \right|^2  \ge [(1-O(\diam(k')^\alpha)( \nu(k') - \nu'(k'))]^2.\]
        All together this gives
        \[\left\|\int_{k'} s^1\circ \fo ~ d(\nu-\nu')\right\|^2\ge (\nu(k')-\nu'(k'))^2 (1- O(\diam(k')^{2\alpha})).\]
        Thus, if $\diam(k')$ is small enough, then this quantity is positive because $\nu(k')\not=\nu'(k')$, which completes the proof of the claim.
        %Then we have 
        %\[\left \| \int \left( \overline {k'_x}\right)_* s^1\circ \fo - f_2 e_2 ~ d\nu   \right\| = O(\diam(k')^\alpha),\]
        %and similarly for $\nu'$.
        %Thus to show that \[\int_{x\in k'} \left( \overline {k'_x}\right)_* s^1\circ \fo (x) ~d\nu \not= \int_{x\in k'} \left( \overline {k'_x}\right)_* s^1\circ \fo (x) ~d\nu',\]
        %it suffices to prove that 
        %\[\left| \int_{k'} f_2 ~d\nu -\int_{k'} f_2 ~d\nu'\right|\]
        %is much larger than $O(\diam(k')^\alpha)$.
    \end{proof}

    It remains to prove that the hypothesis of Claim \ref{clm: convergence criterion} holds.
    We will prove that \[\int_{k}  s^1\circ \fo ~d\vartheta_\epsilon\to \psi(k).\]
    The proof that the analogous claim also holds for all subarcs $k' \subset k$ is identical.

    Repeated application of the cocycle property of $\psi$ and that its support is $\lambda$ (in the sense of Lemma \ref{lemma: bending cocycle properties}) gives 
    \[\psi(k) = \sum_{i = 1}^{m(\epsilon)} \psi(k_\epsilon^i), \]
    while 
    \[\int_{k}s^1\circ \fo ~d\vartheta_\epsilon = \sum_{i =1}^{m(\epsilon)} s^1\circ \fo (x_\epsilon^i) \|\psi(k_\epsilon^i)\|. \]
    %The operator norms of $\left(\overline {k_{x_\epsilon^i}}\right)_*$ is uniformly bounded by some constant $K$ for all $\epsilon$ and all $i$.
    Corollary \ref{cor: cocycles localize} implies that there is a uniform bound on $\left\|\psi(k_\epsilon^i)\right\|$ and that 
    \[\left\|\psi(k_\epsilon^i) - \|\psi(k_\epsilon^i)\| s^1\circ \fo (x_\epsilon^i) \right\| = O(\diam(k_{\epsilon}^i)^\alpha) = O(\epsilon^\alpha)\] 
    for all $i$. 
    
    Therefore, 
    \begin{align*}
        \left\| \psi(k) -\int_{k}  s^1\circ \fo ~d\vartheta_\epsilon \right\| &\le \sum_{i = 1}^{m(\epsilon)}\left\|\psi(k_\epsilon^i) - \|\psi(k_\epsilon^i)\| s^1\circ \fo (x_\epsilon^i) \right\| \\
        & \le m(\epsilon) O(\epsilon^\alpha) = O(\epsilon^\alpha \log 1/\epsilon), 
    \end{align*}
    which tends to $0$ with $\epsilon$. 
    This completes the proof of the Lemma.
\end{proof}

\begin{lemma}\label{lemma: equivariance of measure from cocycle}
    The assignment $\left(k \mapsto \mu_k\right)$ from Lemma \ref{lem: bending cocycle is integration} defines an element of $\ML(X)^G$ with support $\lambda$.
\end{lemma}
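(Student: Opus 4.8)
The plan is to verify the two defining conditions of Definition~\ref{def: transverse measure} for the family $\{\mu_k\}$ produced in Lemma~\ref{lem: bending cocycle is integration}: the restriction/compatibility condition, and the $(g,G)$-equivariance under transverse homotopy. The support condition (that $\supp\mu_k = k\cap\lambda$) is already part of the statement of Lemma~\ref{lem: bending cocycle is integration}, and positivity and finiteness are built into the construction of the $\vartheta_\epsilon$, so those require only a remark. The real content is the two bullet points.

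First I would treat the restriction condition. Given $k'\subset k$, the $\epsilon$-approximations $\vartheta_\epsilon$ of Lemma~\ref{lem: bending cocycle is integration} restrict along $k'$ in a way compatible with the approximations intrinsic to $k'$ up to the boundary contributions from the (at most two) components of $k\cap\mathcal N_\epsilon(\lambda)$ straddling an endpoint of $k'$; by Lemma~\ref{lem: tt geometry} those components have diameter $O(\epsilon)$ and carry mass $\|\psi(\cdot)\| = O(\epsilon^a)$ by Corollary~\ref{cor: cocycles localize}, hence vanish in the limit. Since $\mu_k$ is characterized \emph{uniquely} by the integration identity $\psi(k'') = \int_{k''} s^1\circ\fo\, d\mu_{k''}$ for all subarcs $k''$ (this uniqueness is exactly what Claim~\ref{clm: convergence criterion} establishes), and the restriction of $\mu_k$ to $k'$ satisfies this same identity for subarcs of $k'$, uniqueness forces the pushforward of $\mu_{k'}$ under $k'\hookrightarrow k$ to equal $\mu_k|_{k'}$.

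For the equivariance condition, suppose $k'$ is obtained from $k$ by a transverse homotopy $H$ with $k'\cap\lambda = \{\pi(g_{t(x)}\fo(x)) : x\in k\cap\lambda\}$ and $H:k\to k'$ the induced map. The key point is that the \emph{cocycle} $\psi$ is insensitive to transverse isotopy: by Lemma~\ref{lemma: bending cocycle properties}, $\psi(k'') $ depends only on the two-strata containing $\partial k''$, so $\psi(H(k'')) = \psi(k'')$ for any subarc. Applying the uniqueness from Lemma~\ref{lem: bending cocycle is integration} on $k'$, we get $\psi(H(k'')) = \int_{H(k'')} s^1\circ\fo\, d\mu_{k'}$. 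Pulling this back via $H$ and comparing with $\psi(k'') = \int_{k''} s^1\circ\fo\, d\mu_k$, and using that $s^1$ is unit-norm while $\fo$ on $k'$ picks out $g_{t(x)}\fo(x)$ whereas $\fo$ on $k$ picks out $\fo(x)$ — so the two integrands differ by the parallel transport $(g_{[0,t(x)]})_*$ acting on $E_2^\rho$, which has norm exactly $G(\fo(x),t(x))$ — we obtain $dH^{-1}_*\mu_{k'} = G(\fo(\cdot),t(\cdot))\,d\mu_k$ after matching the scalar coefficients against the (nowhere-zero) section $s^1$. Here one uses that $s^1\circ g_{t(x)}\fo(x)$ and the parallel transport of $s^1\circ\fo(x)$ are positive multiples of the same vector in the line $E_2^\rho$, the ratio being $G(\fo(x),t(x))$ by Equation~\eqref{eqn: G defined}; this is why the whole $s^1$ and not just its $E_2^\rho$-component matters only up to the $O(\diam^a)$ errors already controlled.

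The main obstacle I anticipate is bookkeeping the interplay between the \emph{parallel transport} $\overline{k_x}_*$ appearing in the integrals and the flow-direction transport $(g_{[0,t]})_*$ defining $G$: these live on the same flat bundle $E^\rho = \pi^* F^\rho$ but are transport along different paths, and one must be careful that the transverse homotopy $H$ and the flow segments fit together so that the composite transport is the one computed by $G$. As flagged in Remark~\ref{rmk: suppress transport}, the transverse parallel transport has operator norm tending to $1$ on small arcs, so at the level of the $\epsilon$-approximations the discrepancy is $O(\epsilon^a)$ and disappears in the limit; making this rigorous amounts to an approximation argument of the same flavor as the proof of Lemma~\ref{lem: bending cocycle is integration}, subdividing $k$ into pieces of diameter $O(\epsilon)$ and bounding the accumulated error by $m(\epsilon)\,O(\epsilon^a) = O(\epsilon^a\log 1/\epsilon)$. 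Once that estimate is in place, the two bullet points follow, and combined with the already-established support and positivity we conclude $\{\mu_k\}\in\ML(X)^G$ with support $\lambda$.
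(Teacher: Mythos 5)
Your proposal is correct and takes essentially the same route as the paper's proof: compare the integral representations of $\psi$ on $k$ and $k'$ via the cocycle relation, extract the $G$-scaling from the flow-transport of the unit section $s^1$, and conclude the measure identity by uniqueness / density of indicators from Lemma~\ref{lem: bending cocycle is integration}; you additionally make the restriction bullet explicit, which the paper leaves implicit. The parallel-transport bookkeeping you flag as the main obstacle is in fact exact rather than an error to be controlled: flatness of $F^\rho$ makes the cocycle relation $\psi(k') = H_*\psi(k)$ hold on the nose over the disk swept by the transverse homotopy $H$ (equivalently, the transports along $k$, the flow segments, and $k'$ compose coherently because the region is simply connected), so no $O(\epsilon^a\log 1/\epsilon)$ argument is needed at this stage — the approximation machinery is only used to \emph{construct} $\mu_k$ from $\psi$, not to relate $\mu_k$ to $\mu_{k'}$.
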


\begin{proof}
    Let $k$ and $k'$ be transverse arcs related by a homotopy $H$.
    Lemma \ref{lem: bending cocycle is integration} gives 
    \[\psi(k') = \int s^1~d\mu_{k'} \text{ and } \psi(k) = \int s^1~d\mu_k.\]
    Since $\psi$ is a cocycle, we have 
    \[H_*(\psi(k)) = \psi(k').\]

    We compute \[H_*(\psi(k)) = \int_{k'} (H_*s^1)~ dH_*\mu_k = \int_{k'} s^1 ~(G\circ H\inverse)dH_* \mu_k.\]
    Thus 
    \[\int_{k'} s^1  ~d \mu_{k'}= \int_{k'} s^1 ~(G\circ H\inverse)dH_* \mu_k\]
    A similar computation holds on every pair of subarcs (which are homotopic by $H$). Therefore the two measures agree on the indicator functions of subarcs, which are dense in the space of continuous functions on $\lambda \cap k$. So as Borel measures 
    \[dH\inverse_*\mu_{k'} = G(\fo(\cdot), t(\cdot))d\mu_k\]
    as required.
\end{proof}

Lemmas \ref{lem: bending cocycle is integration} and \ref{lemma: equivariance of measure from cocycle} show that there is a well defined map 
\[\Psi: \{\eta: \pi_1 S\to \Aff^*(3,\R) \text{ with linear part $\rho$}\}/\TAff^*(3,\R) \to \ML(X)^G.\]

\subsection{$\Phi$ is a homeomorphism}

Lemma \ref{lemma: homotopy invariance of tensor} shows that there is a well-defined map
\begin{align}
     \Phi:\ML(X)^{G} &\to H^1(S, F^\rho)\setminus \{0\}\\
      \mu & \mapsto [\mu\otimes s^1] 
\end{align}

\begin{lemma}\label{lem: bijection measure cohomology}
    The map $\Phi$ is a bijection.
\end{lemma}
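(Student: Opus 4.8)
The plan is to prove bijectivity of $\Phi$ by exhibiting a two-sided inverse built out of the machinery already developed. The candidate inverse is the composition
\[
H^1(S,F^\rho)\setminus\{0\} \xrightarrow{\ \text{(Lemma \ref{lem: cohomology parameterizes reps up to conj})}\ } \{\eta \text{ with linear part }\rho\}/\TAff^*(3,\R) \xrightarrow{\ \Psi\ } \ML(X)^G,
\]
where the first arrow identifies a nonzero cohomology class $\varphi$ (which is not a coboundary, so $\eta_\varphi$ is irreducible by Lemma \ref{lemma: coboundaries and reducible reps}) with the $\TAff^*(3,\R)$-conjugacy class of the coaffine representation $\eta_\varphi$ of Definition \ref{def: coaffine rep}, and the second arrow $\Psi$ is the map constructed in Lemmas \ref{lem: bending cocycle is integration} and \ref{lemma: equivariance of measure from cocycle}, sending $\eta$ to the equivariant transverse measure $\mu$ whose integral against $s^1\circ\fo$ recovers the bending cocycle $\psi([\eta])$.

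First I would check that $\Phi\circ \Psi$ is the identity on (the relevant part of) $\{\eta\}/\TAff^*(3,\R)$, equivalently that $\Phi(\Psi(\eta))$ is the cohomology class of $\eta$. Unwinding definitions: $\Psi(\eta)$ is the measure $\mu$ with $\int_k s^1\circ\fo\,d\mu_k = \psi(k)$ for every transverse arc $k$ (Lemma \ref{lem: bending cocycle is integration}), and $\Phi(\mu) = [\mu\otimes s^1]$ is represented by the cochain $k\mapsto \int \mu\otimes s^1(k) = \int_k s^1\circ\fo\,d\mu_k = \psi(k)$. So $\Phi(\Psi(\eta))$ is literally the class of the bending cocycle $\psi$, which by Lemma \ref{lemma: psi is phi} equals $\varphi = [\eta] \in H^1(\pi_1 S, \TAff^*(3,\R)_\rho) \cong H^1(S,F^\rho)$ under the identification \eqref{eqn: cohomology iso}. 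This direction is essentially bookkeeping once the cited lemmas are in hand.

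For the other composition $\Psi\circ\Phi = \mathrm{id}_{\ML(X)^G}$, I would start from $\mu\in\ML(X)^G$ with support $\lambda$, form the cohomology class $[\mu\otimes s^1]$, pick the associated coaffine representation $\eta$ (with linear part $\rho$) via Lemma \ref{lem: cohomology parameterizes reps up to conj}, and then must show the equivariant transverse measure $\Psi(\eta)$ recovered from the bending cocycle $\psi([\eta])$ equals $\mu$. The point is that $\psi([\eta])$ and $\mu\otimes s^1$ are cohomologous $F^\rho$-valued cocycles — but I need them to be \emph{equal} as cocycles (not merely cohomologous) to conclude the transverse measures agree, since Lemma \ref{lem: bending cocycle is integration} extracts $\mu_k$ from $\psi(k)$ on the nose. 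Here is where I would use the uniqueness clause of Lemma \ref{lem: bending cocycle is integration}: given a transverse arc $k$, the measure $\mu_k$ is the \emph{unique} Borel measure with support $k\cap\lambda$ satisfying $\int_k s^1\circ\fo\,d\mu_k = \psi(k)$. So it suffices to know that $\psi([\eta])(k) = \int_k s^1\circ\fo\,d\mu_k$ for every transverse arc $k$, i.e. that the bending cochain equals the $\mu\otimes s^1$ cochain on arcs, not just on based loops. This should follow because both cochains are $F^\rho$-valued cocycles, the $1$-cochain on arcs is determined by its values on arcs up to coboundary by a section, and any two cochains representing the same class differing by a coboundary $\delta s$ would have to differ by a continuous section — but since $\psi$ and $\mu\otimes s^1$ both vanish on arcs with endpoints in a common complementary region of $\lambda$ (the "support" property, Lemma \ref{lemma: bending cocycle properties} and the construction of $\mu\otimes s^1$), that section must be locally constant on complementary regions, and a $\nabla^\rho$-parallel locally constant section defined on all complementary regions and compatible across $\lambda$ is forced to be an honest global flat section, hence (as $\rho$ has no invariant vectors, being Zariski-dense or at least irreducible on $(\R^3)^*$) zero. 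I would phrase this last argument carefully using the description of $H^1(S,F^\rho)$ via cochains on a cell structure adapted to a train track carrying $\lambda$.

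The main obstacle I anticipate is precisely this last "cohomologous implies equal" step: passing from an identity in $H^1$ to an identity of cochains on transverse arcs requires controlling the coboundary ambiguity, and doing so rigorously means choosing a good combinatorial model (a snug train track neighborhood, cf. \S\ref{subsec: surface theory}) and checking that the normalization forced by the support/vanishing properties of both $\psi$ and $\mu\otimes s^1$ kills the ambiguity. Everything else — well-definedness, the identity $\Phi\circ\Psi = \mathrm{id}$, and continuity, which will be handled separately after bijectivity — is routine given the lemmas already proved; in fact one could alternatively sidestep the cochain subtlety entirely by noting that $\Psi$ was constructed directly from the geometric bending cocycle and invoking that $\mu \mapsto \mu\otimes s^1 \mapsto$ (its class) $\mapsto \eta \mapsto \psi([\eta]) \mapsto \mu$ closes up because the uniqueness in Lemma \ref{lem: bending cocycle is integration} pins down each $\mu_k$ from the arc-values, which are themselves pinned down by $\mu$ via \eqref{eqn: measure to cocycle integral} once one verifies the bending cocycle of $\eta_{\mu\otimes s^1}$ agrees with $\mu\otimes s^1$ as a cochain — and that agreement is the content of the geometric Lemma \ref{lemma: psi is phi} applied in reverse, using that the cocycle $\varphi$ defining $\eta_\varphi$ and the bending cocycle of $\eta_\varphi$ represent the same class and are both adapted to the same lamination.
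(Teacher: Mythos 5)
Your candidate inverse is the same one the paper names, and your check that $\Phi\circ\Theta = \mathrm{id}$ via Lemmas \ref{lem: bending cocycle is integration} and \ref{lemma: psi is phi} is exactly the bookkeeping the paper's one-sentence proof relies on. You also correctly flag that $\Theta\circ\Phi = \mathrm{id}$ is the substantive direction: being cohomologous does not immediately force $\psi(\eta_{[\mu\otimes s^1]})$ and $\mu\otimes s^1$ to agree as cochains, which is what the uniqueness clause of Lemma \ref{lem: bending cocycle is integration} needs in order to recover $\mu$.

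Your proposed fix, however, has a circularity. You assert that $\psi$ and $\mu\otimes s^1$ ``both vanish on arcs with endpoints in a common complementary region of $\lambda$,'' but $\psi(\eta_{[\mu\otimes s^1]})$ vanishes on arcs missing the \emph{bending lamination} of $\eta_{[\mu\otimes s^1]}$, while $\mu\otimes s^1$ vanishes on arcs missing $\supp\mu$, and the equality of these two laminations is part of what you must prove. Without it, the coboundary section need only be parallel off the \emph{union} of the two laminations, and it can a priori jump across the symmetric difference, so the ``parallel on complementary regions, hence global flat section, hence zero by irreducibility'' chain does not close. The non-circular way to finish is geometric rather than cohomological: from $\mu$ one builds the $\eta_{[\mu\otimes s^1]}$-equivariant convex section $q^+(\mu)$ of $\mathscr{C}\Omega$ directly (Lemma \ref{lemma: two notions of convexity} and Corollary \ref{cor: convex iff diff in R_k} supply the convexity criterion, and positivity of $\mu$ is what makes the differences land in the right cones), and then one argues that its convex hull has the same extreme points as $\overline{\Xi}$, so that $q^+(\mu) = \partial\Xi^+$ and the bending cocycle is $\mu\otimes s^1$ on the nose. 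The appendix carries out a special case of this. So the approach matches the paper, and the gap you found in the $\Theta\circ\Phi$ direction is real — the paper's proof does not address it either — but the repair lives in the geometry of Section \ref{sec: coaffine geometry} rather than in manipulating the coboundary section.
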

\begin{proof}
    The inverse map is given as the composition of $\Psi$ with the map from Lemma \ref{lem: cohomology parameterizes reps up to conj} sending a cohomology class $\varphi$ to the bending cocycle $\psi(\eta_\varphi)$ (Definition \ref{def: bending cocycle}).\footnote{We have used the isomorphism $H^1(S,F^\rho) \cong H^1(\pi_1 S, (\R^3)^*_\rho)$ from \S\ref{subsec: cohomology}.} %determined by the coaffine representation with linear part $\rho$ and .
\end{proof}

   We now demonstrate that the map $\mu\mapsto  [\mu\otimes s^1]$ respects the topology of the two spaces.

\begin{proposition}\label{prop: cocycle continuous injective homogeneous}
%    The assignment $\mu\in \ML(X)^{G} \mapsto \Phi(\mu) \in H^1(S,E^\rho)$ 
The map $\Phi$ is continuous and homogeneous with respect to positive scale.
\end{proposition}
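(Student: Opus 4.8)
The two assertions split cleanly, and continuity is the only substantive one.

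\emph{Homogeneity.} For $c>0$ and $\mu\in\ML(X)^G$ with support $\lambda$, the assignment $k\mapsto c\,\mu_k$ still satisfies the restriction and $(g,G)$-equivariance conditions of Definition \ref{def: transverse measure} — both are homogeneous of degree one in the transverse measure — so $c\mu\in\ML(X)^G$ with the same support $\lambda$. Since the right-hand side of \eqref{eqn: measure to cocycle integral} is linear in $\mu_k$, one gets $(c\mu)\otimes s^1(k)=c\,\big(\mu\otimes s^1(k)\big)$ for every transverse arc $k$, hence $\Phi(c\mu)=c\,\Phi(\mu)$ in $H^1(S,F^\rho)$.

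\emph{Continuity, reduction to finitely many arcs.} Let $\mu_n\to\mu$ in $\ML(X)^G$, write $\lambda_n=\supp\mu_n$ and $\lambda=\supp\mu$. Under the isomorphism \eqref{eqn: cohomology iso}, $H^1(S,F^\rho)\cong H^1(\pi_1S,(\R^3)^*_\rho)$, and evaluation of a cocycle on a fixed generating set $\gamma_1,\dots,\gamma_{2g}$ of $\pi_1S$ embeds $Z^1(\pi_1S,(\R^3)^*_\rho)$ linearly onto a subspace of $\bigoplus_{i=1}^{2g}(\R^3)^*$, hence is a homeomorphism onto its image, while the quotient $Z^1\to H^1$ is continuous. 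Choose $\tilde p\in\widetilde X$ lying on no leaf of any $\lambda_n$ nor of $\lambda$ (a co-null condition on $\tilde p$), give the geodesic segment $k_i$ from $\tilde p$ to $\gamma_i\tilde p$ an orientation, and note that the complete geodesic carrying $k_i$ is not a leaf of any of these laminations, so it meets each of them in a discrete transverse set; thus $k_i$ is a transverse arc for all $\lambda_n$ and for $\lambda$. Working in the trivialized pullback bundle over $\widetilde X$, where $\nabla^\rho$-transport is the identity, the cocycle representing $\Phi(\mu_n)=[\mu_n\otimes s^1]$ sends $\gamma_i$ to $\mu_n\otimes s^1(k_i)=\int_{k_i}s^1\circ\fo_n\,d(\mu_n)_{k_i}\in(\R^3)^*$, where $\fo_n$ is the tangent-direction map of $\widetilde\lambda_n$ for the chosen orientation (Definition \ref{def: frak o}), and similarly for $\mu$. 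So it suffices to fix one transverse geodesic arc $k$ and prove $\mu_n\otimes s^1(k)\to\mu\otimes s^1(k)$.

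\emph{Continuity, convergence of the integrals.} By Proposition \ref{prop: disint is equiv}, convergence in $\ML(X)^G$ is detected through the weak-$*$ topology on $\cM(T^1X)$, so we may argue with sequences. Given any subsequence, pass (compactness of the space of geodesic laminations) to a further subsequence along which $\lambda_n\to\lambda'$ in the Hausdorff metric; then $\lambda'\supseteq\lambda$ since $\supp\mu$ lies in every Hausdorff accumulation of $\{\lambda_n\}$, and $\hlambda_n\to\hlambda'$ in $T^1X$. As $k$ is transverse to $\lambda$, the hypothesis $\mu_n\to\mu$ gives $(\mu_n)_k\to\mu_k$ weak-$*$ on $k$. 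Now run the argument of the final paragraph of the proof of Proposition \ref{prop: disint is equiv}: using Lemma \ref{lem: lamination geometry}, the Hausdorff convergence of the tangent sets, and the fact that $\mu_k$ charges only $k\cap\lambda$, the disintegrated fiber measures $(\mu_n)_{\hk^+}=(\fo_n)_*(\mu_n)_k$ — which live on the precompact rectangle $\hk^+\subset T^1X$ and are related to $\mu_n$ via \eqref{eqn: disint} — converge weak-$*$ to $\mu_{\hk^+}=(\fo)_*\mu_k$, with no mass escaping to $\partial\hk^+$ because $k$ meets the relevant laminations only in its interior and transversally. Since $s^1:T^1X\to E^\rho_2$ is continuous on the compact space $T^1X$, a change of variables gives
\[
\mu_n\otimes s^1(k)=\int_{\hk^+}s^1\,d(\mu_n)_{\hk^+}\ \longrightarrow\ \int_{\hk^+}s^1\,d\mu_{\hk^+}=\mu\otimes s^1(k).
\]
As this holds along a further subsequence of an arbitrary subsequence, $\mu_n\otimes s^1(k)\to\mu\otimes s^1(k)$, and by the reduction above $\Phi(\mu_n)\to\Phi(\mu)$.

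\emph{Main obstacle.} The one delicate point is the weak-$*$ convergence of the disintegrated fiber measures when the supporting laminations $\lambda_n$ move — and possibly converge to a strictly larger lamination $\lambda'$ — so that the integrand $s^1\circ\fo_n$ is carried on a varying set; but this is precisely the situation already analyzed in the proof of Proposition \ref{prop: disint is equiv}, and the continuity of $s^1$ together with Lemma \ref{lem: lamination geometry} is exactly what pushes the convergence through the integral.
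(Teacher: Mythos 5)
Your overall strategy is genuinely different from the paper's and, I think, sound, but the step you single out as the ``main obstacle'' is asserted rather than argued, and the reference you give does not actually supply it.

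Your homogeneity argument is correct. Your reduction to finitely many arcs via evaluation of cocycles on a generating set for $\pi_1 S$ is clean (and arguably tidier than the paper's ``since $k$ was arbitrary\ldots'' step). Where you diverge from the paper is the final estimate. The paper's proof covers $k\cap\cN_\delta(\lambda')$ with $O(\log 1/\delta)$ intervals $k^i$ (Lemma~\ref{lem: tt geometry}), forms Riemann sums $R_n$, $R$ by sampling $s^1\circ\fo_n$ and $s^1\circ\fo$ at a point in each $k^i$, uses the uniform H\"older regularity of $s^1\circ\fo_n$ to bound $\|I_n - R_n\|$ and $\|I - R\|$ by $O(\delta^\alpha)$, and then uses Hausdorff convergence of tangent sets plus weak-$*$ convergence of the $\vartheta_n$ to get $R_n\to R$ for fixed $\delta$. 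Your route replaces all of that by a soft claim: that the pushforward measures $(\fo_n)_*(\mu_n)_k$ on $\hk^+$ converge weak-$*$ to $\fo_*\mu_k$, after which continuity of $s^1$ alone finishes.

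That soft claim is true, and in fact avoids any appeal to H\"older regularity, which is a genuine simplification. But it is not what the final paragraph of the proof of Proposition~\ref{prop: disint is equiv} proves. That paragraph addresses the \emph{vertical} fiber measures $\int G(p_n,t)\,\delta_{g_t p_n}\,dt$ along flow lines and their dependence on the base point; it does not establish weak-$*$ convergence of $(\fo_n)_*(\mu_n)_k$, which is a statement about pushforwards under maps $\fo_n$ whose \emph{domains} $k\cap\lambda_n$ vary with $n$. You need and should state the following lemma: if $K, L$ are compact metric spaces, $A_n, A'\subset K$ are closed with $A_n\to A'$ in the Hausdorff metric, $f_n\colon A_n\to L$ and $f\colon A'\to L$ are continuous with $f_n(x_n)\to f(x)$ whenever $x_n\in A_n$ converges to $x\in A'$, and $\vartheta_n\to\vartheta$ weak-$*$ with $\supp\vartheta_n\subset A_n$ and $\supp\vartheta\subset A'$, then $(f_n)_*\vartheta_n\to f_*\vartheta$ weak-$*$. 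The proof is a short compactness/contradiction argument: extend $\phi\circ f$ continuously to $K$ by Tietze, then show $\sup_{A_n}|\phi\circ f_n - g|\to 0$ by extracting a convergent subsequence from a putative counterexample. Applied with $A_n = k\cap\lambda_n$, $A' = k\cap\lambda'$, $f_n = \fo_n$, $f = \fo'$ (the tangent lift along the Hausdorff limit $\lambda'$, for which you correctly observe $\fo'_*\mu_k = \fo_*\mu_k$ because $\mu_k$ charges only $\lambda\subset\lambda'$), the hypothesis that $\fo_n(x_n)\to\fo'(x)$ follows from the Hausdorff convergence $\hlambda_n\to\hlambda'$ and the uniqueness of the tangent direction of $\lambda'$ at $x$. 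Once this lemma is stated and proved, your argument closes, and it is indeed more elementary than the paper's quantitative one. As written, however, this lemma is the missing piece, and citing Proposition~\ref{prop: disint is equiv} does not fill it.
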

\begin{proof}
    Consider a convergent sequence $\mu_n \to \mu \in \ML(X)^{G}$ with supports denoted by $\lambda_n = \supp \mu_n$ and $\lambda = \supp \mu$.
    As in the proof of Proposition \ref{prop: disint is equiv},  every Hausdorff accumulation of $\lambda_n$ contains $\lambda$.  Passing to a subsequence, we assume that $\lambda_n$ converges Hausdorff to some $\lambda'\supset \lambda$.
    %A basic fact from measure theory is that if a sequence of Borel measures $\nu_n \to \nu$ weak-* on a compact metric space, then every Hausdorff accumulation point of $\{\supp \nu_n\}$ contains $\supp \nu$.
    %Any accumulation point for $\{\lambda_n\}$ in the Hausdorff topology on closed subsets of $X$ is again a geodesic lamination.
    
    Every small enough oriented arc $k$ meeting $\lambda'$ transversely meets $\lambda_n$ transversely for $n$-large enough.
    For each $n$, let $\fo_n:k\cap \lambda_n\rightarrow \hk$ be the positively co-orientation lift, and $\fo$ similarly defined for $\lambda$. 
    Denote by $\vartheta_n = \left(\mu_n\right)_k$, and $\vartheta = \mu_k$.
    By definition of the topology on $\ML(X)^{G}$, $\vartheta_n \to \vartheta$ weak-$*$ as $n\to \infty$. 
    
    Defining 
    \[I_n = \int_k s^1\circ\fo_n~d\vartheta_n \text{ and } I = \int_k s^1\circ\fo~d\vartheta,\]
    our goal is to show that $I_n \to I$ as $n\to\infty$.
    \medskip
    
    For $\delta>0$, we have $\lambda\subset\cN_\delta(\lambda')$ and $\lambda_n \subset \cN_\delta(\lambda')$ for all $n$ large enough.
    In particular, $\cN_\delta(\lambda') \cap k \supset \lambda\cap k$ and $\cN_\delta(\lambda')\cap k \supset \lambda_n\cap k$ for all $n$ large enough.
    By Lemma \ref{lem: tt geometry},  $\cN_\delta(\lambda') \cap k$ consists of intervals $k^1, ..., k^{m(\delta)}$, each of diameter $O(\delta)$.
    
    \medskip

    Choose points $x^i \in \lambda'\cap k^i$ and define 
    \[R=\sum_{i = 1}^{m(\delta)} s^1(\fo(x^i)) \vartheta(k^i).\]
    Choose also $x_n^i \in \lambda_n\cap k^i$ converging to $x^i$ as $n\to \infty$, and define 
    \[R_n = \sum_{i = 1}^{m(\delta)} s^1(\fo_n(x_n^i)) \vartheta_n(k^i).\]
    Each map %\mb{These E's are confusing. They want to be $F_2$ but it isn'd defined yet}
    \[x \in \lambda_n \cap k \mapsto s^1(\fo_n(x)) \in E_2^\rho \text{ and } x\in \lambda \cap k \mapsto s^1(\fo(x)) \in E_2^\rho\]
    is $\alpha$-H\"older with uniformly bounded $\alpha$-H\"older norm.  Using this, we estimate

    \[\left\| I_n - R_n \right\| \le  \sum_{i =1}^{m(\delta)} \int_{k^i}\left\|  s^1(\fo_n(x)) - s^1(\fo_n(x_n^i))  \right\| ~d\vartheta_n(x) = O(\delta^\alpha) \vartheta_n(k), \]
    and similarly
    \[\left\| I - R \right\| = O(\delta^\alpha) \vartheta(k). \]
    The triangle inequality and $\vartheta_n(k) \to \vartheta(k)$ give that
    \[\|I - I_n\|\le O(\delta^\alpha) (\vartheta_n(k) +\vartheta(k)) + \|R_n - R\| = O(\delta^\alpha) + \|R_n - R\|.\]
    Given $\epsilon>0$, we can choose $\delta$ such that $O(\delta^\alpha) <\epsilon/2$.
    \medskip

    Now that $\delta$ is fixed, we bound
    \begin{align}%\label{eqn: difference riemann sums}
        \|R_n-R\| &\le \sum_{i =1}^{m(\delta)} \left\|  s^1(\fo_n(x_n^i)) \vartheta_n(k^i) - s^1(\fo(x^i))\vartheta(k^i) \right\|
    \end{align}
    using the general fact that $|xy -zw| \le |y| \cdot |x-z| + |z|\cdot |w-y|$:
    
    By Hausdorff convergence of $\hlambda_n \to \hlambda'$, by our choice of $x_n^i\to x^i$, and by continuity of $s^1$, for any $\epsilon'>0$,
    \[\left\| s^1(\fo_n(x_n^i)) -  s^1(\fo(x^i))  \right\|  <\epsilon'\]
    for large enough $n$.
    By weak-$*$ convergence of $\vartheta_n \to \vartheta$,
    and because $k^i$ is a continuity set for $\vartheta$ (since $\partial k^i$ does not meet $\lambda'$) we have \[|\vartheta_n(k^i) - \vartheta(k^i)| <\epsilon'\]
    for $n$ large enough.

    Since $\vartheta(k^i) \le \vartheta(k)$, we obtain
    \[ \|R_n -R\| \le m(\delta) \left( \vartheta(k) \epsilon' + \|s^1\circ \fo\|_\infty \epsilon' \right) \]
    for all $n$ large enough.
    In particular, since $\delta$ is already fixed, we can take $\epsilon'$ small enough and then $n$ suitably large so that $\|R_n -R\|<\epsilon/2$ holds.
    
    This proves that if $n$ is large enough then $\|I_n - I\|$ is smaller than $\epsilon$.  Since $\epsilon$ was arbitrary, we have proved that $I_n \to I$.  Since $k$ was arbitrary, continuity of $\Phi$ follows.

    That $\Phi$ is homogeneous follows from the definitions.  This completes the proof.
    \end{proof}

    Although the function $G$ depended on the choices of a parametrization of the flow $g_t$ and a norm $\|\cdot\|$ on $ E^\rho$, Lemma \ref{lem: bijection measure cohomology} demonstrates that the resulting space of equivariant measures does not, in the sense that $\Phi$ is canonical; it identifies $\ML(X)^G$ with a cohomology theory defined independently of any choices.

\begin{proof}[Proof of Theorem \ref{thm: equiv measures sphere}]
    Lemma \ref{lem: bijection measure cohomology} gives that $\Phi$ is bijective, while Proposition \ref{prop: cocycle continuous injective homogeneous} states that $\Phi$ is continuous and homogeneous with respect to positive scale.
    We obtain an induced bijective continuous mapping
    \[\mathcal {PML}(X)^{G} \to \mathbb P_{>0}(H^1(S,F^\rho)\setminus \{0\}) \cong \mathbb S^{6g-7}. \]
    Because $\mathcal {PML}(X)^{G}$ can be identified with a closed subset of probability measures on $T^1X$ (Proposition \ref{prop: disint is equiv}) which is compact in the weak-$*$ topology, and $\mathbb S^{6g-7}$ is Hausdorff, this mapping is a homeomorphism. This completes the Theorem.
\end{proof}

\section{Flat vector bundles over geodesic laminations}\label{section: flat bundles over laminations}

Let $\rho: \pi_1S \to \SL(3,\R)$ be a Hitchin representation. Recall the flow-invariant H\"older continuous Anosov splitting $E^\rho = E_1\oplus E_2 \oplus E_3$ over $T^1X$ from \S\ref{subsec: Anosov property}. The involution $p\mapsto \overline p$ preserves the $E_2$ subbundle, while permuting $E_1$ and $E_3$.  Since $E^\rho$ was constructed as a pullback of $F^\rho$ by the tangent projection, $E_2|_p$ is naturally identified with $E_2|_{\overline p}$.
%\[E_2|_{\overline p} = E_2|_p\] 
%(while $E_1|_{\overline p } =E_3|_{p}$).
Therefore, $E_2$ descends to a line bundle $\overline {E_2} $ over $\mathbb PTX$.

Let $\lambda \subset S$ be a geodesic lamination. 

\begin{definition}\label{def: F2 over lamination}
    Let $F_2^\rho(\lambda)=F_2\to \lambda$ be the pullback of $\overline {E_2}$ by the inclusion $\lambda  \cong \mathbb PT\lambda \hookrightarrow \mathbb PTX$.  The flat connection $\nabla^\rho$ restricts to a flat connection along the leaves of $\lambda$.
\end{definition}

Although $\overline {E_2}$ need not be flat, we will nevertheless produce a flat connection on $F_2^\rho(\lambda)$.
Indeed, the goal of this section is to show that there is a natural H\"older extension of the connection $\nabla^\rho$ to a H\"older flat connection on $F_2\to \lambda$.
First, we formalize the notion of a flat connection on a vector bundle over a lamination compatible with a given connection along the leaves (Definition \ref{def: flat connection}), which gives a way to extend parallel transport in directions transverse to the leaves.
%Such an object defines parallel transport of a vector over a point in the lamination to a vector over nearby points, even on different leaves. 

The slithering map studied in \cite{BonDre} furnishes the bundle $F_2$ with a flat connection in this sense (Lemma \ref{lemma: E_2 is flat}), and we obtain the main theorem of the section as a corollary.

\begin{theorem}\label{thm:flat connection over lamination}
    For any Hitchin representation $\rho: \pi_1S \to \SL(3, \R)$ and any geodesic lamination $\lambda\subset S$ the bundle $F_2\to \lambda$ has a H\"older continuous flat connection $\nabla$, called the \emph{slithering connection}, which is compatible with $\nabla^\rho$ along the leaves of $\lambda$.
\end{theorem}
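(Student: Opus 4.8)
The plan is to establish the theorem as a corollary of the construction carried out in \S\ref{subsection: Hitchin bundles are flat} (Lemma \ref{lemma: E_2 is flat}). Fix a maximal completion $\widehat\lambda$ of $\lambda$ (every geodesic lamination lies in a maximal one, \S\ref{subsec: surface theory}). I would build a H\"older flat connection on $F_2^\rho(\widehat\lambda)\to\widehat\lambda$ in the sense of Definition \ref{def: flat connection} and then restrict it along the inclusion of $\lambda$ as a sublamination of $\widehat\lambda$; restricting a flat connection over a lamination to a sublamination plainly yields a flat connection, and H\"older continuity is inherited, so this suffices. (One can check the restriction is independent of the chosen completion, but the theorem only asserts existence.)

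First, recall the relevant structure. Passing to the universal cover and trivializing $\widetilde{E^\rho}\cong\widetilde{T^1X}\times(\R^3)^*$ so that $\nabla^\rho$ becomes the trivial connection, the discussion in \S\ref{subsec: Anosov property} shows that for $p$ tangent to a leaf $g$ of $\widetilde{\widehat\lambda}$ the line $E_2^\rho|_p\subset(\R^3)^*$ depends only on the (unoriented) endpoints of $g$; write it $V_2(g)$. Thus $F_2^\rho(\widehat\lambda)$ is, over each leaf, the constant line $V_2(g)$, and the restriction of $\nabla^\rho$ to that leaf is the identity on $V_2(g)$, so compatibility with $\nabla^\rho$ along leaves will be automatic. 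For two leaves $g,h$ of $\widetilde{\widehat\lambda}$, Bonahon--Dreyer \cite{BonDre} construct the \emph{slithering map} $\Sigma_{gh}\in\SL(3,\R)$ (acting on $(\R^3)^*$ through the natural dual action) as a limit of finite products of elementary shears associated to the leaves of $\widehat\lambda$ separating $g$ from $h$; positivity of the Hitchin flag curve $(\xi,\xi^*)$ makes the product converge and yields the properties $\Sigma_{gg}=\mathrm{id}$, the cocycle relation $\Sigma_{gk}=\Sigma_{gh}\Sigma_{hk}$ for any triple of leaves, the equivariance $\Sigma_{\gamma g,\gamma h}=\rho(\gamma)\Sigma_{gh}\rho(\gamma)^{-1}$, and --- crucially --- the fact that $\Sigma_{gh}$ carries the osculating flag at $h$ onto the one at $g$; in particular it restricts to a linear isomorphism $V_2(h)\to V_2(g)$, i.e.\ to an isomorphism $F_2^\rho(\widehat\lambda)|_h\to F_2^\rho(\widehat\lambda)|_g$.

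Next, assemble and verify. In a snug train-track neighborhood, any path in $\widehat\lambda$ decomposes into subarcs lying in single leaves and jumps across ties; declare parallel transport along a leaf-subarc to be $\nabla^\rho$ (the identity in the trivialization above) and transport across a tie from a leaf $\ell'$ to a leaf $\ell$ to be $\Sigma_{\ell\ell'}|_{V_2(\ell')}$. In the trivialization, leaf transport is trivial and transversal jumps compose by the cocycle relation, so the total transport depends only on the endpoints of the path inside any simply connected piece of the neighborhood; this is precisely flatness in the sense of Definition \ref{def: flat connection}, and $\rho$-equivariance makes the assignment descend to $\widehat\lambda\subset S$. Compatibility with $\nabla^\rho$ along leaves holds by construction. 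For H\"older regularity: Bonahon--Dreyer show $\Sigma_{gh}$ depends H\"older-continuously on the pair of leaves $(g,h)$ in the visual metric on the space of geodesics; two leaves of $\widehat\lambda$ crossing a transversal of length $O(\epsilon)$ are that close, with $O(\log 1/\epsilon)$ leaves between them (Lemma \ref{lem: tt geometry}), and Lemma \ref{lem: lamination geometry} identifies $\lambda$ (and $\widehat\lambda$) bi-Lipschitzly with its tangent line field, so the transverse transport varies H\"older-continuously along transversals; along leaves $\nabla^\rho$ is smooth. Hence the slithering connection is H\"older, proving Theorem \ref{thm:flat connection over lamination}.

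\textbf{Main obstacle.} The substantive work is the H\"older estimate together with the translation between Bonahon--Dreyer's framework --- stated for a maximal lamination, bookkept by the combinatorics of its complementary ideal triangles --- and the metric notion of transverse distance used here (the Hausdorff metric $d_X^H$, respectively arc length along a tie). One must control the infinite shear product defining $\Sigma_{gh}$ uniformly as $h\to g$, which is where Hitchin positivity enters quantitatively, and one must check that the flag-preservation property restricts cleanly to the middle line $V_2$ --- this is built into the Bonahon--Dreyer construction but is the point requiring care. Everything else (decomposition-independence of the transport, equivariance, compatibility along leaves) is formal once the flat trivialization of $\widetilde{E^\rho}$ is in place.
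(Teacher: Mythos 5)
Your proof is correct in outline, but it takes a genuinely different route from the paper. The paper does \emph{not} pass to a maximal completion to prove Theorem \ref{thm:flat connection over lamination}: it instead observes (in a remark just after recalling the Bonahon--Dreyer slithering construction) that the slithering map $\Sigma_{hg}$ is defined directly for \emph{any} geodesic lamination $\lambda$ whenever every gap between $g$ and $h$ is bounded by asymptotic leaves, which snugness of the train track neighborhood guarantees for leaves joined by a tie. The flat connection is then built directly from slithering sections over ties of a snug neighborhood of $\lambda$ (Definition \ref{def: slithering section}, Lemma \ref{lemma: E_2 is flat}). Your detour through a maximal completion $\widehat\lambda$ buys the convenience of quoting Bonahon--Dreyer verbatim (no need to note that their construction generalizes), but it loses canonicality --- the restricted connection a priori depends on the choice of $\widehat\lambda$ --- and, as you note, this is fine for the existence claim (and in fact \S\ref{subsection: affine measures from Hitchin world} of the paper itself makes this choice later).

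One step you gloss over as ``plain'' deserves a sentence: restricting a flat connection in the sense of Definition \ref{def: flat connection} from $\widehat\lambda$ to $\lambda$ is not purely formal, because a snug train track neighborhood $\cN$ of $\lambda$ is generally not snug for $\widehat\lambda$, so a tie $k$ of $\cN$ is not a priori in the domain where the $\widehat\lambda$-connection is defined. What saves the argument is that within any gap of $k\cap\lambda$, snugness of $\cN$ for $\lambda$ forces the bounding leaves of $\lambda$ to be asymptotic, and therefore every leaf of $\widehat\lambda\setminus\lambda$ met by $k$ in that gap lies in a spike and is asymptotic to those bounding leaves; hence the slithering maps across $k$ are still given by convergent products and the slithering section over $k\cap\widehat\lambda$ (hence over $k\cap\lambda$) is well-defined. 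With that observation made explicit, your restriction argument, the compatibility with $\nabla^\rho$ along leaves (automatic, since the slithering map fixes the flags at the shared endpoint and $E_2^\rho$ is flat along leaves), and the H\"older estimate (which you correctly attribute to the H\"older continuity of $(g,h)\mapsto\Sigma_{gh}$ in Bonahon--Dreyer, combined with Lemmas \ref{lem: lamination geometry} and \ref{lem: tt geometry}) all go through, and match the content of the paper's argument.
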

    
From the formalism developed in Section \S\ref{subsec: tt and flat connections}, we extract a combinatorial description of flat bundles over a connected lamination in terms of a snug train track that carries $\lambda$.
In particular, suppose that $\mathcal N$ is a snug train track neighborhood for $\lambda$ equipped with a foliation by ties with quotient $q: \mathcal N \to \tau$ (see \S\ref{subsec: surface theory}). 

\begin{proposition}\label{prop: Hitchin train track bundle}
    There is a character $\chi: \pi_1 \tau\to \R^\times$
    defining a flat bundle $F_\chi\to\tau$
    \[F_{\chi} = ( \widetilde \tau \times \R) / \left[(x, v) \sim (\gamma.x, \chi(\gamma) v)\right]\]
    such that  $F_2 \to \lambda$ is isomorphic to the pullback of $F_\chi$ by $q$.
\end{proposition}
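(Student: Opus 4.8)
The plan is to combine Theorem~\ref{thm:flat connection over lamination} with the formalism of \S\ref{subsec: tt and flat connections} and the classification of flat line bundles over a graph. By Theorem~\ref{thm:flat connection over lamination}, $F_2 \to \lambda$ carries a H\"older continuous flat connection $\nabla$ compatible with $\nabla^\rho$ along the leaves. The key structural input, from \S\ref{subsec: tt and flat connections}, is that a flat connection over $\lambda$ in the sense of Definition~\ref{def: flat connection} is equivalent data to a flat line bundle $\widehat F \to \mathcal N$ over the snug train track neighborhood, together with an isomorphism of its restriction to $\lambda \subset \mathcal N$ with $F_2$: the slithering maps of \cite{BonDre} furnish transverse trivializations of $\widehat F$ along each tie, and the leafwise $\nabla^\rho$-parallel transport glues these trivializations along the branches of $\tau$.

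Granting this, I would push $\widehat F$ down to $\tau$ as follows. Since $\tau$ is a finite graph, $\pi_1\tau$ is free and $\widetilde\tau$ is a tree; hence every flat line bundle over $\tau$ is isomorphic to the bundle $(\widetilde\tau \times \R)/[(x,v)\sim(\gamma.x,\chi(\gamma)v)]$ associated to its holonomy character $\chi \colon \pi_1\tau \to \GL(1,\R) = \R^\times$, and since $\R^\times$ is abelian this character is independent of the basepoint and of the chosen path to it. Let $r \colon \mathcal N \to \tau$ be the deformation retraction, so that $r\circ(\tau \hookrightarrow \mathcal N) = \mathrm{id}_\tau$; set $\chi$ to be the holonomy of $\widehat F$ transported to $\tau$ via $r$, and let $F_\chi$ be the associated flat bundle, so that $r^*F_\chi \cong \widehat F$. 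The tie projection $q \colon \mathcal N \to \tau$ also satisfies $q\circ(\tau\hookrightarrow\mathcal N) = \mathrm{id}_\tau$ and is therefore homotopic to $r$, whence $q^*F_\chi \cong r^*F_\chi \cong \widehat F$. Restricting to $\lambda$ and using the identification $\widehat F|_\lambda \cong F_2$ yields $(q|_\lambda)^*F_\chi \cong F_2$, which is the asserted isomorphism.

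The only genuinely non-formal step is the equivalence invoked in the first paragraph; the rest (classification of flat line bundles over a graph by characters of its free fundamental group, the homotopy $q\simeq r$, and the induced bundle isomorphisms) is routine bookkeeping. I expect the main obstacle to be the consistency of the transverse (slithering) parallel transport across the Cantor set of complementary intervals of $\lambda$ inside a single tie: the transport across such an interval $J$ must coincide with the leafwise $\nabla^\rho$-transport along the two boundary leaves of $J$, which are asymptotic in $\lambda$ by snugness (\S\ref{subsec: surface theory}), and flatness of $\nabla$ together with the H\"older control are precisely what reconcile these two prescriptions and make $\widehat F$ well-defined and flat over all of $\mathcal N$. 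Finally, connectedness of $\lambda$ is what allows the holonomy to be recorded as a single character $\chi$; for a disconnected lamination one would argue on each component, replacing $\pi_1\tau$ by the fundamental groupoid of $\tau$.
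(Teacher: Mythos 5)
Your proposal is correct and follows essentially the same route as the paper: it rests on the slithering connection (Lemma~\ref{lemma: E_2 is flat}, yielding Theorem~\ref{thm:flat connection over lamination}) and the combinatorialization of a flat bundle over a lamination via branch charts and Lemma~\ref{lem: rectangles trivial hol} (Proposition~\ref{prop: combinatorialized lamination bundles}), which the paper explicitly cites as the two ingredients. The only cosmetic difference is that you interpose an explicit flat bundle $\widehat F\to\cN$ over the $2$-dimensional train track neighborhood before pushing down to $\tau$, whereas the paper defines the holonomy $\hol\colon\pi_1\tau\to\R^\times$ directly by parallel transport along piecewise leaf-and-tie lifts of loops and asserts $q^*E^{\hol}\cong F_2$; this is the same bookkeeping, just packaged differently, and both routes invoke the classification of flat bundles over the homotopy type of $\tau$ in the same way.
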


Thus over each  connected component of a lamination $\lambda$, the bundle $F_2$ is determined by a finite collection of data which can be encoded as a train track with holonomy. 

\begin{remark}
    The results in this section have analogues for $E_i\vert_{\hlambda}$, $i=1,\dots 3$ as well as for Borel Anosov representations into $\SL(d,\R)$. The proofs require no significant changes.
\end{remark}

\subsection{Train tracks and flat connections}\label{subsec: tt and flat connections}

In the rest of this section, we will be interested in connected geodesic lamination $\mu \subset \mathcal M$, where $\mathcal M$ is a train track neighborhood of $\mu$ equipped with a foliation by ties.  The leaf space of the foliation of $\cM$ by its ties $\theta = \cM/\sim$ is a train track carrying $\mu$. See \S\ref{subsec: surface theory} for terminology.

\begin{remark}
We treat real vector bundles of arbitrary finite dimension over a lamination, though we will only use the case when the dimension is one. The one-dimensional case is slightly simpler, as the difference between the vector bundle and its frame bundle essentially disappears.
\end{remark}

\begin{definition}[Flat vector bundle over a lamination]
    A (finite-dimensional real) \textit{flat} vector bundle $\zeta: E \to \mu$ is a finite-dimensional vector bundle equipped with an atlas 
    \[\mathcal A = \{\psi_\alpha : \zeta\inverse (U_\alpha) \to U_\alpha \times \R^d\}\]
    of vector bundle charts where the linear part of the transition maps is constant, i.e., $\psi_\beta \circ \psi_\alpha\inverse (p, v) = (\phi_{\beta\alpha} (p), A_{\beta\alpha}v)$, where $\phi_{\beta\alpha}$ is a homeomorphism (where defined) and $A_{\beta\alpha} \in \GL(\R^d)$ is independent of $p$.

    Associated to such a vector bundle is the \textit{frame bundle} 
    \[\zeta_{\mathcal B}: \mathcal B(E) \to \mu,\]
    the fiber of which at $p$ is the set of ordered bases of $E\vert_p$. The atlas of $E$ gives an atlas of charts for $\mathcal B(E)$ with image in $U_\alpha\times \mathcal B(\R^d)$.
    
    The local sections of $\zeta_{\mathcal B}$ that are constant in charts are called the \emph{local flat frames} of $E$. A local section $s_1$ of $E$ is flat if there exist local sections $s_2, \dots, s_d$ on the same neighborhood in $\mu$ so that $(s_1,\dots, s_d)$ is a local flat frame (i.e. $s_1$ is flat when it can be extended to a flat frame).
\end{definition}

It is an exercise to check that given a point $(p,v)\in E$, there exists exactly one local flat section $s$ of $E$ so that $s(p)=v$ (up to extending and restricting the neighborhood on which $s$ is defined).

\medskip
Suppose $E$ is a vector bundle over a space with a H\"older class of metrics. A H\"older structure on $E$ is an atlas of charts such that the transition maps $\phi_{\beta\alpha}$ are H\"older functions into $\GL(d,\R)$; a flat structure on $E$ is compatible with a H\"older structure if its flat sections are H\"older.

\medskip

We may also approach flatness from the perspective of flat connections. 
%We will show that this is ultimately no different.
Let $\zeta: E \to \mu$ be an $\R^d$-bundle over $\mu$, which is not necessarily flat. Let $\nabla^0$ be a flat connection along the leaves of $\mu$: for every leaf $l$ of $\mu$, $\nabla^0$ is a flat connection on the bundle $E\vert_l = \zeta\inverse(l)$, the restriction of $E$ to $l$. 
%That is, for each vector $v  \in E\vert_p$, there is a unique section $\nabla^0_{\ell(p)}(v)$ of $E\vert_{\ell(p)}$ which is flat with respect to $\nabla^0$. 
%Any flat frame of $B(E\vert_{\ell(p)})$ characterizes $\nabla^0$ restricted to $\ell$ in that any two $\nabla^0$-flat frames differ by an element of the general linear group.

%For example, consider any
%line sub-bundle $F$ of $E_\rho$ (such as $E_i$) that is invariant under the geodesic foliation of $T^1S$. %, i.e., the assignment $x\in T^1S \mapsto F_x$ where $F_x\subset \pi\inverse(x) \subset E_p$ is constant along leaves of the geodesic foliation.
%Then $\nabla^\rho$ defines a connection on the $F$-bundle over any leaf of the geodesic foliation of $T^1S$.

To an embedded $C^1$ path $\alpha \subset \mathcal M$ transverse to  $\mu$, denote by $T_\alpha= \alpha \cap \mu$ the \textit{transversal space} of $\mu$ in $\alpha$.
If $\zeta: E\to \mu$ is an $\R^d$-bundle, then $E\vert_\alpha$ is shorthand notation for the bundle $E$ restricted to $T_\alpha$.

\begin{definition}\label{def: flat connection}[Flat connection]
    Let $\mu$ be a lamination in a train track neighborhood $\cM$.
    A \textit{flat connection} $\nabla$ on $\zeta: E\to \mu$ extending a flat connection $\nabla^0$ along the leaves of $\mu$ is an assignment
    $$ \nabla: \alpha\in \{\text{embedded transversals to $\mu$ in $\mathcal M$} \} \mapsto \nabla_\alpha\in \{\textrm{continuous sections}~ T_\alpha\rightarrow \mathcal B(E)\vert_\alpha \}/ \GL(d,\R).$$
    Here $\GL(d,\R)$ is acting on the fibers of $E\vert_\alpha$ via a trivialization. The map $\nabla$ is required to satisfy
    \begin{enumerate}
        \item \textbf{Restriction.} If $\alpha'\subset \alpha$ then 
        \[\nabla_{\alpha'} = (\nabla_\alpha) \vert_{\alpha'} .\]
        %\item \textbf{Parametrization invariance} If $h:I\rightarrow I'$ is a homeomorphism between intervals in $\R$, and $\alpha, \beta$ satisfy $\alpha = \beta\circ h$, then $h^*\circ\nabla_\alpha = \nabla_\beta \circ h$ as maps $\alpha^*\lambda \rightarrow \beta^*E_2$
        \item \textbf{Compatibility with $\nabla^0$}. Suppose $\alpha$ and $\beta$ are transversely isotopic via $H$, %and let $H_*$ denote the induced homeomorphism $\alpha^*\lambda \to \beta^*\lambda$. 
        Then $\nabla$ satisfies
        \[H_* \circ \nabla_\alpha =  \nabla_\beta\]
        (i.e. along leaves, $\nabla$ restricts to $\nabla^0$). Here $H_*$ denotes $\nabla^0$-parallel transport along leaves parameterized by $H$.
    \end{enumerate}

    %Components of 
    The local flat sections of $\mathcal B(E)$ defining $\nabla$ are called the \textit{local flat frames} of $E$, the components of which are called the \textit{local flat sections} of $E$. 
    %For a subset $X$ admitting flat sections, denote by $\nabla_{X,v}$ the unique flat section over $X$ containing the vector $v$.
    We will denote by $X_*(v)$ the flat extension of a vector $v$ along a suitable subset $X$ by $\nabla$.   
\end{definition}

    The connection $\nabla^0$ is part of the data of $\nabla$, so that $\nabla$ may be used to parallel transport vectors along the leaves. of $\mu$.
    %In the presence of a H\"older structure on $E$, a flat connection $\nabla$ is H\"older if its local flat sections are H\"older functions.
    %These two notions of flat bundles over laminations coincide.
    To prove the following lemma, one necessarily defines $\nabla$ as the $\GL(d,\R)$-classes of local flat sections of the frame bundle $\mathcal B(E)$.

\begin{lemma}
    A flat vector bundle $\zeta: E\to \mu$ admits a unique flat connection $\nabla$ which is compatible in the sense that the local flat sections of $\zeta$ are equal to the local flat sections of $\nabla$.  %Conversely, a flat connection $\nabla$ on $\lambda$ induces a flat structure on the bundle $E$.
\end{lemma}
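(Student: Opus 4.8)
The plan is to construct $\nabla$ directly from the local flat sections of $\zeta$ and verify it satisfies the two axioms of Definition \ref{def: flat connection}. First I would observe that for any embedded transversal $\alpha \subset \mathcal M$ to $\mu$, the flat structure on $E$ restricts to a flat structure on $E|_\alpha$: the transversal $T_\alpha = \alpha \cap \mu$ is covered by the domains $U_\beta \cap \alpha$ of the bundle atlas, and the transition functions between these charts are (restrictions of) the constant linear maps $A_{\beta\alpha}$. Thus $E|_\alpha$ is a flat vector bundle over $T_\alpha$ in the ordinary sense, and it has a well-defined collection of flat frames, namely the sections of $\mathcal B(E)|_\alpha$ that are constant in each chart. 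I would \emph{define} $\nabla_\alpha$ to be the $\GL(d,\R)$-orbit (under a fixed reference trivialization of $E|_\alpha$) of any such flat frame; the existence-and-uniqueness exercise cited right before the definition (``given a point $(p,v) \in E$ there is exactly one local flat section $s$ with $s(p) = v$'') shows this orbit is nonempty and that any two flat frames over a connected piece of $T_\alpha$ differ by a global element of $\GL(d,\R)$, so $\nabla_\alpha$ is a single well-defined class. (If $T_\alpha$ is disconnected one simply makes a choice on each piece; the $\GL(d,\R)$-ambiguity absorbs this.)

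Next I would check the \textbf{Restriction} axiom: if $\alpha' \subset \alpha$, then a flat frame of $E|_\alpha$ restricts to a flat frame of $E|_{\alpha'}$, since being constant in the bundle charts is a local condition preserved under restriction; hence $\nabla_{\alpha'} = (\nabla_\alpha)|_{\alpha'}$ as classes. For \textbf{Compatibility with $\nabla^0$}: suppose $\alpha$ and $\beta$ are transversely isotopic via $H$, so $H$ slides $\alpha$ to $\beta$ along leaf segments of $\mu$. The key point is that $\nabla^0$-parallel transport along a leaf agrees with the flat structure of $\zeta$ restricted to that leaf — this is exactly the hypothesis that $\zeta$ is a flat bundle \emph{and} that $\nabla^0$ is the flat connection along leaves coming from that same flat structure (i.e. the flat local sections of $\zeta$ restricted to a leaf are the $\nabla^0$-parallel sections). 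Therefore, if $(s_1,\dots,s_d)$ is a local flat frame of $E|_\alpha$, then $(H_* s_1, \dots, H_* s_d)$ is a local flat frame of $E|_\beta$: at each point of $T_\beta$ the vectors $H_* s_i$ lie in the flat section of $\zeta$ determined by $s_i$ at the corresponding point of $T_\alpha$, and flatness is transported along the isotopy leaf-by-leaf. Hence $H_* \circ \nabla_\alpha = \nabla_\beta$. This shows $\nabla$ is a flat connection in the sense of the definition, and by construction its local flat sections coincide with the local flat sections of $\zeta$.

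For \textbf{uniqueness}: suppose $\nabla'$ is another flat connection on $\zeta : E \to \mu$ whose local flat sections are exactly the local flat sections of $\zeta$. Then over any transversal $\alpha$, both $\nabla_\alpha$ and $\nabla'_\alpha$ are represented by frames built from flat sections of $\zeta$; since over a connected transversal all such frames differ by a single $\GL(d,\R)$ element, $\nabla_\alpha = \nabla'_\alpha$ as $\GL(d,\R)$-classes. As this holds for every transversal and the collection $\{\nabla_\alpha\}_\alpha$ determines the flat connection, $\nabla = \nabla'$.

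The main obstacle — really the only subtle point — is pinning down precisely the compatibility between the ambient flat structure of $\zeta$ and the leafwise connection $\nabla^0$, i.e.\ making rigorous that $\nabla^0$-parallel transport along a leaf segment is the restriction of the flat structure. This is essentially definitional once one fixes the convention that ``flat vector bundle over $\mu$'' means the atlas $\mathcal A$ induces, on each leaf $l$, precisely the flat connection $\nabla^0|_l$; but it deserves an explicit sentence, since the whole content of the \textbf{Compatibility} axiom rests on it. Everything else is a routine bookkeeping of $\GL(d,\R)$-orbits of locally constant frames.
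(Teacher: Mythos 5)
Your construction is the one the paper alludes to in the sentence immediately preceding the lemma (``one necessarily defines $\nabla$ as the $\GL(d,\R)$-classes of local flat sections of the frame bundle $\mathcal B(E)$''), and the verification of the Restriction and Compatibility axioms and the uniqueness argument are all correct. You are also right to flag, and then resolve, the point that $\nabla^0$ must be taken to be the leafwise connection induced by the flat atlas of $\zeta$; without that identification the statement of the lemma would not even parse.

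The one place you slip is the parenthetical ``(If $T_\alpha$ is disconnected one simply makes a choice on each piece; the $\GL(d,\R)$-ambiguity absorbs this.)'' It does not: the $\GL(d,\R)$ in Definition~\ref{def: flat connection} acts by a \emph{single} global element on the fiber, so independent choices of flat frame on different components of $T_\alpha$ produce sections that are generally not in the same $\GL(d,\R)$-orbit, and $\nabla_\alpha$ would fail to be well-defined as a single class. The correct resolution is different: the chart domains $U_\beta$ of the flat atlas $\mathcal A$ are open in $\mu$ (not merely in $T_\alpha$), hence an individual chart can and does contain several components of $T_\alpha$ separated by gaps. Because the transitions between charts are constant, following the atlas along $\alpha$ produces a canonical parallel transport across each gap, and constancy of the transitions guarantees that the resulting class over all of $T_\alpha$ is independent of the chain of charts used. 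That is what makes $\nabla_\alpha$ a single well-defined $\GL(d,\R)$-orbit; the disconnectedness is bridged by the flat structure on the lamination itself, not by enlarging the quotient. With that sentence replaced, your proof is the paper's intended proof.
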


Given a train track neighborhood $\cM$ of $\mu$, and a flat connection $\nabla$ on a bundle $E$ over $\mu$, the following result is a useful consequence of the second item in Definition \ref{def: flat connection}. For a point $p$ on a lamination $\mu$ carried by a traintrack $\cM$, denote by $\ell(p)$ and $t(p)$ the leaf of $\lambda$ and the tie of $\cM$ containing $p$, respectively.
\begin{lemma}\label{lem: rectangles trivial hol}
    Let $(p,v)\in E$, and suppose $q\in \mu$ is such  that $\ell(p)$, $t(q)$, $\ell(q)$, and $t(p)$ bound a rectangle embedded in $\cM$ whose sides are labelled (respectively) $L_p, S_q, L_q, S_p$. 
    Then the $\nabla$-parallel transport around this rectangle is trivial, i.e.
    \[(L_q)_*\circ (S_p)_*(v)= (S_q)_* \circ (L_p)_*(v).\]
\end{lemma}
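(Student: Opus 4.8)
The plan is to reduce the claim to the Compatibility axiom of Definition \ref{def: flat connection}, applied to the transverse isotopy that slides the tie $t(p)$ onto the tie $t(q)$ across the given rectangle. The point is that all four of the transports appearing in the statement can be read off from a single local flat frame once this isotopy is in place.

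First I would fix coordinates on the embedded rectangle $R\subset\cM$. Since $R$ is bounded by the leaf segments $L_p\subset\ell(p)$, $L_q\subset\ell(q)$ and the tie segments $S_p\subset t(p)$, $S_q\subset t(q)$, the tie foliation of $\cM$ restricts $R$ to a product $R\cong I_\ell\times I_t$ in which $L_p=I_\ell\times\{0\}$ and $L_q=I_\ell\times\{1\}$ are the leaf-sides, the vertical arcs $\{t\}\times I_t$ are transverse to $\mu$, and the leaves of $\mu$ meeting $R$ are the horizontal segments $I_\ell\times\{c\}$. The corners are then $p=(0,0)$, $r:=L_p\cap S_q=(1,0)$, $q=(1,1)$, and $s:=L_q\cap S_p=(0,1)$; note $r,s\in\mu$ since they lie on the leaves $\ell(p),\ell(q)$, so all the transports below are defined. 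Next I would introduce the isotopy $H\colon[0,1]\times S_p\to R$, $H(t,(0,c))=(t,c)$, which carries $S_p$ to $S_q$ through the transverse arcs $\{t\}\times I_t$ (this is where embeddedness of $R$ in the train track neighborhood is used, to guarantee the slices stay transverse to $\mu$). On a point $(0,c)$ of the transversal space $T_{S_p}=S_p\cap\mu$ the curve $t\mapsto H(t,(0,c))$ traces the leaf segment $I_\ell\times\{c\}$; in particular the restriction of $H_*$ to the fiber over $p$ is precisely the $\nabla^0$-transport $(L_p)_*$ along $L_p$, and its restriction to the fiber over $s$ is the $\nabla^0$-transport $(L_q)_*$ along $L_q$.

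Then I would apply the axioms. Choose a representative local flat frame $\Phi$ for $\nabla_{S_p}$ over $T_{S_p}$. By Compatibility, $H_*\circ\nabla_{S_p}=\nabla_{S_q}$, so $\Psi:=H_*\circ\Phi$ is a representative local flat frame for $\nabla_{S_q}$ over $T_{S_q}$, and by the previous paragraph $\Psi(r)=(L_p)_*(\Phi(p))$ and $\Psi(q)=(L_q)_*(\Phi(s))$. Writing $v=\Phi(p)\cdot c$ for the coordinate vector $c$ of $v$ in the frame $\Phi(p)$: transverse flat transport along $S_p$ (defined by $\nabla_{S_p}$, i.e.\ by $\Phi$) gives $(S_p)_*v=\Phi(s)\cdot c$, and then $(L_q)_*(S_p)_*v=(L_q)_*(\Phi(s))\cdot c=\Psi(q)\cdot c$. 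On the other side, $(L_p)_*v=(L_p)_*(\Phi(p))\cdot c=\Psi(r)\cdot c$, and transverse flat transport along $S_q$ (defined by $\nabla_{S_q}$, i.e.\ by $\Psi$) gives $(S_q)_*(L_p)_*v=\Psi(q)\cdot c$. Both composites equal $\Psi(q)\cdot c$, which is the assertion.

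I expect the only delicate point to be the bookkeeping in this last step: correctly identifying the restrictions of $H_*$ to the corner fibers over $p$ and $s$ with the leafwise transports $(L_p)_*$ and $(L_q)_*$, and resolving the $\GL(d,\R)$-ambiguity in the flat frames once and for all by \emph{defining} $\Psi$ as $H_*\circ\Phi$ rather than choosing it independently. Everything else is unwinding Definition \ref{def: flat connection}.
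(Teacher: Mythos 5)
Your proof is correct and takes essentially the same approach as the paper: both recognize the rectangle as the image of a transverse isotopy $H$ from $S_p$ to $S_q$, apply the Compatibility axiom $H_*\circ\nabla_{S_p}=\nabla_{S_q}$, and conclude that the two composite transports compute the same value of a single flat section. Your version simply makes explicit the flat-frame bookkeeping and corner identifications that the paper's terser proof leaves implicit.
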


\begin{proof}

    We recognize the fact that the rectangle is the image of the boundary of a homotopy between $t(p)$ and $t(q)$, and apply the property from the definition of $\nabla$:
    \[(H)_* \circ \nabla_{t(p)} =  \nabla_{t(q)}.\]

    Consider the local flat section $\im(H)_*(v)$. The restriction of this section to $t(p)\cup \ell(q)$ and $\ell(p)\cup t(q)$ are exactly 
    \[(L_q)_*\circ (S_p)_*(v)\]
    and
    \[(S_q)_* \circ (L_p)_*(v),\]
    respectively. We conclude that the Lemma holds because $H_*(v)$ is well-defined.
\end{proof}

\begin{remark}\label{rmk: ties are enough}
    The same proof holds more generally for arbitrary pairs of homotopic embedded transversals. 
    In particular, a flat connection $\nabla$ on $\mu$ extending $\nabla^0$ is determined locally by its flat sections over a tie.
\end{remark}

Recall that collapsing the ties of $\cM$ induces a map $q:\cM \to \theta$ which is $C^1$ restricted to the leaves of $\mu$.
We define \textit{branch charts} for the bundle $E$ in the presence of $\theta$ as follows. Let $U_b =q\inverse(b)$, where $b$ is a branch of $\theta$, and let $V_b$ be the pre-image in $E$ of $U_b$. Let $T_b$ be the transversal space of $U_b$, i.e., the space of leaves of $\mu$ running through $U_b$. %understood as a quotient of $U_b$ (by the coordinate of $b$ itself). 

Given a flat connection $\nabla$ extending $\nabla^0$ as in Definition \ref{def: flat connection}, Lemma \ref{lem: rectangles trivial hol} guarantees the existence of a flat section of the frame bundle $\mathcal B(E)\vert_{U_b}$. Let $A_b$ be the linear map taking this frame to the standard frame in $\R^d$.
%Choose a transversal $t$ to $b$ and choose a $\nabla$-flat frame $B_t =(b_1,\cdots,b_d)\subset B(E)\vert_t$ over $t$. 
%For a point $p\in U$, let $\ell(p)\in T_b$ be the leaf containing $p$, and let $t(p)$ be the tie containing $p$. 
The branch chart is defined as
\begin{center}
    $\psi_b:V_b \to U_b \times \R^d$\\
    $(p,v) \mapsto\left(\ell(p),t(p), A_b v\right).$
\end{center}

A different choice of flat section over $U_b$ would effect $A_b$ by post-composition with an element of $\GL(d,\R)$.
Extending the open sets $U_b$ slightly over each switch to $U_b'$ (with pre-images $V_b'\subset E$), we see that the transitions between branch charts are constant.

\begin{lemma}
    Given a flat connection $\nabla$ on a vector bundle $E$ over a lamination, the atlas (described above) 
    \[\mathcal A(\nabla) = \left\{ \psi_b : V'_b \to U_b' \times \R^d \right\} \]
    of vector bundle charts is flat.
\end{lemma}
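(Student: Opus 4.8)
The plan is to check directly the two defining properties of a flat atlas from the definition of a flat vector bundle: that each $\psi_b$ is a genuine vector bundle chart, and that all transition maps between branch charts have $p$-independent linear part.

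The first point is essentially by construction and can be dispatched quickly. The base part $p\mapsto(\ell(p),t(p))$ of $\psi_b$ is the $(\text{leaf},\text{tie})$ coordinatization of $U_b'$, a homeomorphism onto its image; and for each $p$ the fiber map $v\mapsto A_b(p)v$ is the coordinate isomorphism $E|_p\to\R^d$ attached to the flat frame $(f^b_1,\dots,f^b_d)$ of $E|_{U_b'}$ whose existence is supplied by Lemma \ref{lem: rectangles trivial hol}. This also shows $\psi_b$ is continuous, since that flat frame is a continuous section and hence $p\mapsto A_b(p)$ is continuous.

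The substance is the transition maps. I would fix two branches $b,b'$ with overlapping extended domains, write $W=U_b'\cap U_{b'}'$, and compute $\psi_{b'}\circ\psi_b^{-1}$; at the point corresponding to $p\in W$ its linear part is $A_{b'}(p)A_b(p)^{-1}$, and the goal is to show this is independent of $p$. Over $W$ one has two flat frames of $E$, say $f^b=(f^b_j)$ and $f^{b'}=(f^{b'}_j)$. Pick $p_0\in W$ and let $M_0\in\GL(d,\R)$ be the change-of-frame matrix at $p_0$. The key observation is that the frame $p\mapsto\bigl(\sum_j (M_0)_{jk}f^b_j(p)\bigr)_k$ is again a flat frame of $E$ over $W$: in any chart of $\mathcal A(\nabla)$ each $f^b_j$ is represented by a constant section, hence so is any fixed linear combination. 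This new flat frame agrees with $f^{b'}$ at $p_0$, so by uniqueness of flat continuation (the exercise following the definition of flat vector bundle, propagated across $W$ using that the agreement locus is open and closed) the two frames coincide on all of $W$. Unwinding, this says $A_b(p)=M_0A_{b'}(p)$ for all $p\in W$, whence $A_{b'}(p)A_b(p)^{-1}=M_0^{-1}$ is constant, so the transition has the required form; its base part is a homeomorphism, being the re-identification of $W$ in the two $(\text{leaf},\text{tie})$ parametrizations.

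The one genuinely delicate point — and the place I would be careful — is the connectedness of $W$: two branches of $\theta$ may share more than one switch, so $W$ can be disconnected and a priori the change-of-frame matrix could jump between its components, destroying constancy of the linear part. I would handle this in the standard way, by taking the extensions $U_b'$ over the switches small and adjoining to the atlas one connected chart $U_s'$ over a small neighborhood of each switch $s$ of $\theta$ (built from a flat frame via Lemma \ref{lem: rectangles trivial hol} exactly as the branch charts were), so that every chart and every pairwise overlap in $\{U_b'\}\cup\{U_s'\}$ is connected; the argument above then applies verbatim, and flatness of the atlas follows.
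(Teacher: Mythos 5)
Your proof follows the same route as the paper's — use the flat frame defining each branch chart, and argue that two such frames differ over an overlap by a constant in $\GL(d,\R)$ — but you flesh out the argument considerably. The paper's proof is terse: it arranges triple intersections to be empty, asserts that adjacent branch charts ``overlap on a small isotopy of a tie,'' and appeals to Lemma \ref{lem: rectangles trivial hol} to get constancy of the transition. Your explicit open-and-closed propagation argument is a cleaner way to say the same thing. More importantly, you flag the possibility that the pairwise overlap $W=U_b'\cap U_{b'}'$ may be disconnected (when $b$ and $b'$ share two switches), in which case the change-of-frame matrix is a priori only locally constant, and you fix it by introducing switch charts. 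The paper's sentence ``overlap on a small isotopy of a tie'' tacitly assumes connectedness of the overlap, so your observation is a legitimate refinement that the paper's proof skips.

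One sentence in your argument as written is circular. You justify that $p\mapsto\bigl(\sum_j (M_0)_{jk}f^b_j(p)\bigr)_k$ is again a flat frame by saying ``in any chart of $\mathcal A(\nabla)$ each $f^b_j$ is represented by a constant section'' — but constancy of $f^b_j$ in a chart \emph{other than} $\psi_b$ is precisely what you are trying to prove. The correct and direct justification is that in Definition \ref{def: flat connection} the class $\nabla_\alpha$ is a $\GL(d,\R)$-orbit of frames over each transversal $\alpha$, so postcomposing a $\nabla$-flat frame by a fixed element of $\GL(d,\R)$ yields another representative of $\nabla_\alpha$; and along leaves, $\nabla^0$-parallelism is linear, so the modified frame is still $\nabla^0$-parallel. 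With that fix, the rest of the argument (agreement at $p_0$ propagating over the connected overlap) goes through, and the proposal is correct.
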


\begin{proof}
    One can construct $U_b'$ so that the triple-wise intersection of any such distinct charts is trivial, so the cocycle condition on transition maps is trivially satisfied. A pair of adjacent train track charts $U'_b$ and $U'_{b'}$ overlap on a small isotopy of a tie. The flat sections $A_b$ and $A_b'$ defining the branch charts differ on their overlap by a constant element of $\GL(d,\R)$ (see Lemma \ref{lem: rectangles trivial hol}). 
\end{proof}

Our next objective is to combinatorialize an $\R^d$-bundle $\zeta : E\to\mu$ with flat connection $\nabla$ and build a flat $\R^d$-bundle over a train track $\theta$ that carries $\mu$.
Choosing a point $p \in \theta$ gives a holonomy representation 
\[\hol: \pi_1(\theta, p) \to \GL(E\vert_p)\cong \GL(d,\R),\]
as follows. Each $[\alpha]\in \pi_1(\theta, p)$ lifts to a continuous concatenation of segments of leaves in $\lambda$ and segments of ties of $\mathcal M$, call it $\alpha$. Then let
\[\hol([\alpha]) = \overline\alpha_*\]
the $\nabla$-parallel transport around $\overline \alpha$.
Any two choices of such lifts $\alpha$ and $\alpha'$ differ by a sequence of rectangles, so that Lemma \ref{lem: rectangles trivial hol} guarantees that $\hol$ is well-defined.
This is the usual definition of holonomy.
%Flatness ensures that $\hol$ is independent of the choice of representative of $\alpha$ within its homotopy class.

\medskip

There is a flat $\R^d$-bundle $E^{\hol}$ over $\theta$ constructed by taking $\pi_1 \theta$-orbits:
\[E^{\hol} = ( \widetilde \theta \times \R^d) / \left[(x, v) \sim ([\alpha].x, \hol([\alpha]) v)\right].\] 
The bundle $E^{\hol}$ has a flat connection $\nabla^{\hol}$ obtained by pushing forward the trivial connection on $\R^d \times \widetilde \theta$ by the quotient projection.

Although $\mu$ does not generally have a reasonable universal cover (it is generally not path connected), using the train track $\theta$, we still recover the following direct analogue of the structure theorem for flat bundles (see, e.g., \cite[\S13]{Steenrod:topology}).
Its proof is essentially the content of this section.
%The following is a consequence of the construction of $E^ {\hol}$.

\begin{proposition}\label{prop: combinatorialized lamination bundles}
    In the category of flat bundles, $q^*(E^{\hol}, \nabla^{\hol})$ is isomorphic to $(E, \nabla)$. 
\end{proposition}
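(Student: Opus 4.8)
The plan is to recognize $(E,\nabla)$, through the branch-chart atlas constructed above, as the $q$-pullback of a flat bundle on $\theta$, and then to identify that flat bundle with the one attached to $\hol$.

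First I would record what the branch charts give us. By the construction of the atlas $\mathcal{A}(\nabla)=\{\psi_b\colon V'_b\to U'_b\times\R^d\}$ above, each $U'_b$ is $q$-saturated, that is $U'_b=q^{-1}(W'_b)$ for an open neighborhood $W'_b\subset\theta$ of the branch $b$ slightly extended over its switches, and the transition maps $\psi_{b'}\circ\psi_b^{-1}$ are of the form $(p,v)\mapsto(\varphi_{b'b}(p),A_{b'b}v)$ with $A_{b'b}=A_{b'}A_b^{-1}\in\GL(d,\R)$ \emph{constant}. Since $q$ is constant along ties, $\psi_b$ factors through $q$ in its base coordinate. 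Hence $\{W'_b,\,A_{b'b}\}$ is a locally constant $\GL(d,\R)$-valued transition cocycle on $\theta$; let $(E_\theta,\nabla_\theta)\to\theta$ be the flat bundle it defines. Because the cover $\{U'_b\}$ and the constant transitions $A_{b'b}$ of $\mathcal A(\nabla)$ are literally the $q$-pullbacks of the cover $\{W'_b\}$ and the transitions of $(E_\theta,\nabla_\theta)$, we get tautologically $q^*(E_\theta,\nabla_\theta)\cong(E,\nabla)$ as flat bundles over $\mu$.

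Next I would identify $(E_\theta,\nabla_\theta)$ with $(E^{\hol},\nabla^{\hol})$. Fix the basepoint $p\in\theta$ inside the branch $b_0$. For a loop $\alpha$ based at $p$, reading off the branches $b_0,b_1,\dots,b_m=b_0$ that $\alpha$ successively traverses, the $\nabla_\theta$-parallel transport of $\alpha$ in the $\psi_{b_0}$-trivialization is the ordered product $A_{b_0b_m}\cdots A_{b_1b_0}$. Lifting $\alpha$ to a concatenation $\overline\alpha$ of leaf segments and tie segments in $\mathcal M$ as in the definition of $\hol$, Lemma \ref{lem: rectangles trivial hol} shows this product equals $\hol([\alpha])$ up to conjugation by $A_{b_0}$ (and a different choice of the flat frames over the $U'_b$ replaces the cocycle $\{A_{b'b}\}$ by a cohomologous one and $\hol$ by a conjugate, so the isomorphism class is unaffected). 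Thus $(E_\theta,\nabla_\theta)$ is the flat bundle on the graph $\theta$ with holonomy $\hol$, and the classical structure theorem for flat bundles over a CW complex (e.g.\ \cite[\S13]{Steenrod:topology}) gives $(E_\theta,\nabla_\theta)\cong(E^{\hol},\nabla^{\hol})$ as flat bundles. Combining with the previous paragraph yields $q^*(E^{\hol},\nabla^{\hol})\cong(E,\nabla)$.

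The main obstacle is the bookkeeping in the middle step: one must verify that the holonomy defined combinatorially via lifts to leaf-and-tie concatenations agrees \emph{on the nose} with the product of branch-chart transition functions, while tracking the conjugation ambiguity from the choices of flat frames $A_b$. An alternative route, avoiding cocycle bookkeeping, is to pass to the cover $\widehat\mu:=\mu\times_\theta\widetilde\theta\to\mu$ obtained by pulling back the universal cover of $\theta$: by Lemma \ref{lem: rectangles trivial hol} the holonomy of $\nabla$ along loops in $\mu$ factors through $q_*\colon\pi_1\mu\to\pi_1\theta$, so the pulled-back connection on $\widehat\mu$ has trivial holonomy and therefore a global flat frame; this frame intertwines the $\pi_1\theta$-action by deck transformations on $\widehat\mu$ with the $\hol$-action on $\R^d$, and descending the resulting identification $\widehat E\cong\widehat\mu\times\R^d$ along the quotient by $\pi_1\theta$ produces the desired flat-bundle isomorphism $(E,\nabla)\cong q^*(E^{\hol},\nabla^{\hol})$ directly.
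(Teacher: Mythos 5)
Your proof is correct. The paper's ``proof'' is really just a one-sentence hint (``work in the universal cover of $\mathcal M$, with the branch charts as local trivializations''), which you fill out, and your final paragraph --- pulling back along $\widehat\mu = \mu\times_\theta\widetilde\theta$, using Lemma~\ref{lem: rectangles trivial hol} to see the holonomy of $\nabla$ factors through $q_*$, taking a global flat frame upstairs, and descending along the $\pi_1\theta$-action --- is essentially that hint spelled out. Your main route through the intermediate flat bundle $(E_\theta,\nabla_\theta)$ is a mild repackaging of the same content in the \v{C}ech/cocycle language rather than the covering-space language: since the cover $\{U_b'\}$ is $q$-saturated and the transitions $A_{b'b}$ are constant by Lemma~\ref{lem: rectangles trivial hol}, the branch-chart cocycle literally \emph{is} the $q$-pullback of a constant cocycle on $\theta$, and identifying the resulting bundle with $E^{\hol}$ is the standard structure theorem for flat bundles over a (connected) graph. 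The two routes buy the same thing; the cocycle version avoids invoking the universal cover but requires the small observation (which the paper makes when constructing $\mathcal A(\nabla)$, and which you should keep in mind) that the $U_b'$ can be chosen with empty triple intersections, so that the cocycle condition is automatic and the data genuinely determines a bundle over $\theta$.
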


The proof is no different than in the case of a manifold, though one works in the universal cover of the traintrack neighborhood $\mathcal M$. The branch charts give the necessary local trivializations.

\subsection{Key examples: Hitchin representations}\label{subsection: Hitchin bundles are flat}

In this section we apply our general discussion to line bundles coming from Borel-Anosov representations (see \S\ref{subsec: Anosov property} for definitions). In particular, we obtain Theorem \ref{thm:flat connection over lamination} and Proposition \ref{prop: Hitchin train track bundle}.
%We apply the \textit{slithering map} defined in \cite{BonDre} for Borel-Anosov representations.
We begin with a brief discussion of the \textit{slithering map} defined in \cite{BonDre} for Borel-Anosov representations.\footnote{Bonahon and Dreyer state their result for Hitchin representations and remark that that their proof ``likely'' extends also to a more general setting that includes all Borel-Anosov representations $\pi_1S \to \SL(d, \R)$.  Recently, In \cite[\S6]{MMMZ:pleated} show that the slithering construction is well-defined in a general setting.}

The limit curves $\xi$ and $\xi^*$ from Section \ref{subsec: Anosov property} may be understood as a single map to the space of full flags in $\R^3$; call this map $x\mapsto \mathcal F(x)$. These flags are transverse: for any distinct $x$ and $y$ in $\partial \pi_1S$, $\xi(x) \oplus \xi^*(y) =\R^3$ is direct \cite{Labourie:Anosov}. In general, Borel Anosov representations have a transverse limit map to the space of full flags, constructed from the Anosov splitting, see \S\ref{subsec: Anosov property}.

In particular, for any triple of points $x,y,z\in \partial \pi$, the flags $\mathcal F (x),\mathcal F (y),\mathcal F(z)$ are pairwise transverse. It is an exercise in linear algebra that for a pairwise transverse triple of full flags $\mathcal F_1, \mathcal F_2, \mathcal F_3$ of $\R^d$, there exists a unique element $g_1^{2,3}\in \GL(d,\R)$ which is unipotent, stabilizes $\mathcal F_1$ and has $g_1^{2,3}.\mathcal F_2 =\mathcal F_3$. 

The slithering map is the unique H\"older extension of this map across collections of parallel leaves in a maximal lamination. More precisely, if $g$ and $h$ are oriented leaves in a maximal lamination $\lambda$ which share an endpoint, $g^-=h^-$, then the slithering map $\Sigma_{hg}$ is the natural isomorphism $\R^d\rightarrow \R^d$ which acts trivially on $\mathcal F(g^-)$ and takes $\mathcal F (g^+)\mapsto \mathcal F(h^{+})$. %This map is exactly . 
Bonahon and Dreyer extended this rule to pairs of coherently oriented geodesics that do not share a common endpoint; this map is called the slithering map 
\[(h,g)\mapsto\Sigma_{hg}\in \SL(d,\R).\]
The slithering map obeys a composition rule when a geodesic $g'$ separates a pair of geodesics $g$ and $g''$. Namely 
\[\Sigma_{g''g}=\Sigma_{g''g'}\circ \Sigma_{g'g}.\]
It is also independent of the (coherent) orientation of the geodesics:
\[\Sigma_{\overline g' \overline g} = \Sigma_{g'g}\]
The assumption that $g'$ separates is necessary; generically the slithering map around a triangle is nontrivial. To be precise, if $x,y,z$ are leaves of a lamination forming an ideal triangle, then the composition 
\begin{equation}\label{eqn: slithering holonomy}
    \Sigma_{xy}\circ\Sigma_{yz}\circ\Sigma_{zx}
\end{equation}
is generically not a root of the identity\footnote{for the case that $\rho\in \textrm{Hit}_3$, this element is a square root of $I$ exactly when the triple ratio of the triangle is trivial.}. 
We direct the reader to Section 5 of \cite{BonDre} for the details. 

\begin{remark}
    While Bonahon-Dreyer define the slithering map only for maximal laminations, there is an analogous construction for arbitrary laminations; the map $(h,g)\mapsto \Sigma_{hg}$ is only defined when all of the gaps between $g$ and $h$ are bounded by asymptotic leaves. The proof of this fact follows with no adjustments at all.
    
    In particular, when $\cN$ is a snug train neighborhood track carrying $\lambda$, then $\Sigma_{hg}$ is defined whenever $h$ and $g$ are joined by a tie of $\widetilde \cN$; see \S\ref{subsec: surface theory}.
\end{remark}

\medskip
Given a lamination $\lambda$ on $S$ and an Anosov representation $\rho$, we will use the slithering map to construct a flat connection on the line bundles $F_2 = F_2^\rho$ over $\lambda$ (recall Definition \ref{def: F2 over lamination}).
%\footnote{Recall that we have embedded $\lambda$ into $\mathbb PTX$ in a canonical way and then pulled back the Anosov line splitting to obtain the line bundles $F_i$ over $\lambda$.} 
%$E_i=E_i^\rho$ over $\hlambda$, which descend to flat structures on $F_i = F_i^\rho$ over $\lambda$.
Flow invariance of the Anosov splitting from \S\ref{subsec: Anosov property} guarantees that $\nabla^\rho$ restricts to a flat connection on $F_2|_{\ell}$ for each leaf $\ell \subset \lambda$.

%Each of these line bundles is trivialized over each leaf $\ell\subset \hlambda$: for $p,q\in \ell$, it is true that $E_i\vert_p=\ell_* E_i\vert_q$, so that $\nabla^\rho$ induces a connection along the leaves of $\hlambda$. 

 %, and let $\widehat\cN\to \cN$ be induced by the orientation covering $\hlambda \to \lambda$.

\begin{definition}\label{def: slithering section}
    Let $\cN$ be any snug train track neighborhood for $\lambda$ equipped with a foliation by ties, and let $t$ be a tie.
    A non-zero section $\omega:t\to F_2\vert_t$ is a \textit{slithering section} if for all $p,q\in t\cap \lambda$
    \[\omega(q) = \Sigma_{\ell(q)\ell(p)}\omega(p) \]
\end{definition}
In other words, a slithering section is obtained by extending a non-zero vector over a tie using the slithering map.

\begin{lemma}[Slithering connection]\label{lemma: E_2 is flat}
    The collection of slithering sections define a flat H\"older connection on $F_2$ (extending $\nabla^\rho$ along the leaves) called the \textit{slithering connection}.
    %There exists a flat H\"older connection $\nabla$ on $F_i\to \lambda$ extending $\nabla^\rho$  in the sense of Definition \ref{def: flat connection}. %On each minimal component of $\lambda$, $\nabla$ is unique.
\end{lemma}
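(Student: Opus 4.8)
The plan is to verify that the assignment $t\mapsto\{\text{slithering sections over }t\}$ satisfies the two axioms of Definition \ref{def: flat connection} (restriction and compatibility with $\nabla^\rho$), together with the H\"older regularity claim. The restriction axiom is immediate: a slithering section over a tie $t$ restricts to a slithering section over any subarc, since the defining identity $\omega(q)=\Sigma_{\ell(q)\ell(p)}\omega(p)$ is a pointwise condition on $t\cap\lambda$. By Remark \ref{rmk: ties are enough}, it suffices to check compatibility on ties, and more precisely on the rectangles appearing in Lemma \ref{lem: rectangles trivial hol}. So the crux is: given $p,q\in\lambda$ with $\ell(p),t(q),\ell(q),t(p)$ bounding an embedded rectangle in $\cN$, the $\nabla^\rho$-parallel transport along the leaf segments $L_p\subset\ell(p)$, $L_q\subset\ell(q)$ agrees, after the slithering identifications along $S_p\subset t(p)$ and $S_q\subset t(q)$, with the composition going the other way around.

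First I would pin down how $\nabla^\rho$-parallel transport along a leaf of $\lambda$ interacts with the slithering maps. A leaf $\ell$ of $\lambda$ determines an endpoint pair $(\ell^-,\ell^+)\in\partial\pi_1S$, hence a flag pair $\mathcal F(\ell^\pm)$, and $E_2^\rho|_\ell$ is the flow-invariant line which (via the Anosov splitting) corresponds to the middle exterior power/line determined by this flag pair; $\nabla^\rho$-parallel transport along $\ell$ is exactly the flat identification coming from the trivial connection on $\widetilde S\times(\R^3)^*$ restricted to the lift of $\ell$. The key algebraic input is that the slithering map $\Sigma_{hg}$, being unipotent and fixing $\mathcal F(g^-)=\mathcal F(h^-)$ when $g,h$ share a negative endpoint, carries the line $E_2|_g$ to $E_2|_h$ \emph{compatibly} with the flat structure along any leaf joining a point of $g$ to a point of $h$; for general coherently oriented $g,h$ this follows from Bonahon--Dreyer's composition rule $\Sigma_{g''g}=\Sigma_{g''g'}\circ\Sigma_{g'g}$ by decomposing through a separating leaf and passing to the H\"older limit. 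Concretely, I would show that for a leaf segment $L$ running from $x\in g$ to $y\in h$ inside $\cN$, the $\nabla^\rho$-transport $\overline L_*:E_2|_x\to E_2|_y$ equals the restriction of $\Sigma_{hg}$ — this is essentially because both are determined by the same flag data and agree on the common endpoint flag. Granting this, walking around the rectangle $L_p,S_q,L_q,S_p$ produces a product of slithering maps of the form $\Sigma_{\ell(q)\ell(p)}\circ(\text{transport})\circ\Sigma_{\ell(p)\ell(q)}\circ(\text{transport})$, which collapses to the identity by the composition and orientation-independence rules $\Sigma_{g''g}=\Sigma_{g''g'}\Sigma_{g'g}$, $\Sigma_{\bar g'\bar g}=\Sigma_{g'g}$, $\Sigma_{gg}=\mathrm{id}$.

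With compatibility established, I would record that the slithering section through a given $(p,v)\in F_2$ over a tie is unique (this follows from $\Sigma$ being a well-defined isomorphism between the relevant lines), so the collection of slithering sections does define a genuine flat connection $\nabla$ in the sense of Definition \ref{def: flat connection}, extending $\nabla^\rho$ along leaves. For the H\"older statement, I would invoke the H\"older regularity of the slithering map proved by Bonahon--Dreyer (and extended in \cite{MMMZ:pleated}): $(h,g)\mapsto\Sigma_{hg}$ is H\"older on pairs of leaves joined by a tie of $\widetilde\cN$, and the Anosov limit flag map $\mathcal F$ is H\"older (Lemma \ref{lemma: E_2 has sections} and \S\ref{subsec: Anosov property}); composing, the branch charts $\psi_b$ built from slithering sections have H\"older transition functions, so $\nabla$ is a H\"older flat connection.

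The main obstacle I anticipate is the identification of $\nabla^\rho$-parallel transport along a leaf segment with the appropriate slithering map — i.e. checking that "slithering across a tie" and "flowing along a leaf" genuinely commute in the bundle $F_2$, not merely up to scale. This is where the flow-invariance of the Anosov splitting, the transversality of the limit flags, and the unipotence of $\Sigma$ all have to be used together carefully; the orientation-independence ($\Sigma_{\bar g'\bar g}=\Sigma_{g'g}$) and the passage from the shared-endpoint case to the general coherently-oriented case via a separating leaf and a H\"older limit are the delicate points. Everything else — restriction, uniqueness, the rectangle cancellation, and H\"older regularity — then follows formally from the structure developed in \S\ref{subsec: tt and flat connections} and the cited properties of the slithering map.
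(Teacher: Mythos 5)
The overall architecture you propose---reduce compatibility to the rectangle identity of Lemma \ref{lem: rectangles trivial hol} via Remark \ref{rmk: ties are enough}, and handle restriction and H\"older regularity directly---is sound and close to the paper's strategy in spirit. But the step you identify as the crux is stated incorrectly. You claim that for a ``leaf segment $L$ running from $x\in g$ to $y\in h$'', the $\nabla^\rho$-transport $\overline L_*:E_2|_x\to E_2|_y$ equals the restriction of $\Sigma_{hg}$. No such leaf segment exists: distinct leaves $g,h$ of a geodesic lamination are disjoint, so no segment of a leaf of $\lambda$ joins a point of $g$ to a point of $h$. If you instead meant a tie segment, the claim remains false: $\nabla^\rho$-parallel transport along a tie is, in the universal-cover trivialization $F^\rho\cong\widetilde S\times(\R^3)^*$, the identity on $(\R^3)^*$; this does not carry the line $E_2|_x$ into the (generally different) line $E_2|_y$, and cannot equal the nontrivial unipotent matrix $\Sigma_{hg}$. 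Likewise, the appeal to ``decomposing through a separating leaf and passing to the H\"older limit'' is not a step to supply---that is already Bonahon--Dreyer's construction and continuity proof for $\Sigma$, which you may simply cite.

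The observation that actually drives the compatibility check is simpler and is what the paper uses: (i) $\nabla^\rho$-transport along a leaf of $\lambda$ is the identity in the universal-cover trivialization, and (ii) $\Sigma_{\ell(q)\ell(p)}$ depends only on the two leaves $\ell(p)$ and $\ell(q)$, not on the particular tie through which they are joined nor on the points $p,q$. Granting (i) and (ii), both compositions around the rectangle of Lemma \ref{lem: rectangles trivial hol} are literally equal to $\Sigma_{\ell(q)\ell(p)}$ in coordinates, so they agree; equivalently, the pushforward of a slithering section over a tie $t$ by leaf transport to a tie $t'$ is again a slithering section. Since $F_2$ is a line bundle, nonzero sections are already frame sections and the slithering section over a transversal is unique up to scale (because $\Sigma$ is linear), which closes the compatibility verification. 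The H\"older claim then does follow, as you say, from the H\"older continuity of the slithering map.
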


\begin{proof}
    Let $\nabla$ be the local extension of all slithering sections by $\nabla^\rho$-parallel transport. Since $F_2$ is one-dimensional, non-zero sections are also sections of $\mathcal B(F_2)$. Certainly $\nabla$ satisfies the restriction condition and it is H\"older because the slithering map is.

    Over any transversal $t$, there exists a unique slithering section up to scale (because slithering maps are linear).
    Since $\nabla$ extends $\nabla^\rho$ and $\Sigma_{qp}^\rho$ depends only on $\ell(q)$ and $\ell(p)$, it follows that for two ties $t$ and $t'$ and an isotopy $H$ between them that $H_* \circ \nabla_t = \nabla_{t'}$.
\end{proof}

Lemma \ref{lemma: E_2 is flat} and Proposition \ref{prop: combinatorialized lamination bundles} imply Theorem \ref{thm:flat connection over lamination} and Proposition \ref{prop: Hitchin train track bundle} as corollaries. 

%after noting that the slithering construction is compatible with the involution $i$ defining $F^\rho$.

\section{Affine Laminations}\label{sec: affine laminations}

The equivariant transverse measures discussed in \S\ref{subsec: eq transverse measures} depended on a choice of norm on $E^\rho$.
In Section \ref{section: flat bundles over laminations} we found that this bundle restricted to 
%the tangents $\hlambda$ of 
a geodesic lamination $\lambda$ was flat.
In this section, we use the (scalar class) of a flat section determined by the slithering connection on a transversal arc to $\lambda$ to find a preferred \emph{affine} representative of our equivariant measured lamination.
We then study the space of such transversely affine measured laminations coming from a Hitchin representation $\rho$.

\subsection{Affine measures from $\ML(X)^{G}$}\label{subsection: affine measures from Hitchin world}
%We will show in this section that when an equivariant measure is written in terms of the flat section, we can recover an affine lamination.
Given $\mu\in \ML(X)^{G}$ let $\lambda'$ be its support, and choose a maximal lamination $\lambda\supset \lambda'$. %The results that follow will depend on this choice.

%Let $k$ be a transversal arc to $\lambda$.
%and $k$ a transversal arc to $\lambda$. 
By Theorem \ref{lemma: E_2 is flat}, the  bundle $F^\rho_2 \to \lambda$ from Definition \ref{def: F2 over lamination} is flat by way of the slithering connection.
Consider a snug train-track neighborhood $\mathcal N$ of $\lambda$ equipped with a foliation by ties. The collapse map $q:\cN \to \tau$ has image which is a train track, and $q$ exhibits that $\tau$ carries $\lambda$. 
Proposition \ref{prop: Hitchin train track bundle}
gives us a holonomy representation  
\[\chi: \pi_1 \tau\rightarrow \R^\times\] 
and states that $F^\rho_2\to \lambda$ is the pullback by $q$ of the flat bundle $F_\chi\to \tau$ with holonomy $\chi$. %(Proposition \ref{prop: combinatorialized lamination bundles})

We associate to $F_2$ over $\lambda$ the flat ray-bundle 
\[R=R^\rho_2(\lambda) = F^\rho_2(\lambda) /(\pm1).\]
This will ease some of the pressure on keeping track of orientations. The holonomy of $R$ is $|\chi|: \pi_1\tau \to \R_{>0}$. 

Let $k$ be tie of $\cN$ and suppose $\omega: k\cap \lambda \rightarrow R\vert_k$ is a flat section.
Recall that $s^1:T^1X \rightarrow E^\rho_2$ was a global unit norm section depending on a norm on $E^\rho$.
Denote by $[s^1]:\lambda \to R$ the section obtained by taking the class of $s^1(\widehat x)$ in $R|_{x}$, where $\widehat x$ is (either) tangent to $x$ in $\hlambda$. 

There is a continuous non-vanishing function $f:\lambda\cap k\rightarrow \R$ satisfying
\[[s^1]=f\omega \text{ on $\lambda\cap k$}.\]

%Recall that $ \mu_{\hk}$ is the flip invariant measure supported on $\hk\cap \hlambda$ satisfying $\pi_*(\mu_{\hk}) = 2 \mu_k$, where $\pi: T^1X \to X$ is the projection.

Unpacking the definitions of an equivariant transverse measure (Definition \ref{def: transverse measure}) and the construction of $R$ and $[s^1]$, we conclude that the bundle valued measure $\mu_{k}\otimes [s^1]$ satisfies for any transverse homotopy $H$ relating $k$ to $k'$

\begin{equation}\label{eqn: flow invariance bundle}
    H_*(\mu_{k}\otimes [s^1]) = (H_*\mu_{k})\otimes (H_* [s^1]) = \mu_{k'} \otimes [s^1].
\end{equation}
Compare with Lemma \ref{lemma: homotopy invariance of tensor}.

We define the measure $\nu_{k}$ with support contained in $k\cap \lambda$ by
\[f\nu_{k} = \mu_{k}.\]
Then the bundle-valued measure satisfies
\[\mu_{k}\otimes [s^1] = \nu_{k} \otimes \omega.\]

We define the \emph{first return homotopy} with respect to an orientation on $k$
\begin{equation}\label{eqn: first return homotopy}
    H: (k\cap \lambda) \times I \rightarrow \lambda
\end{equation}
as follows. The orientation on $k$ locally co-orients the leaves of $\lambda$ meeting $k$. For each $x\in k$, the image of $H$ is the segment of a leaf of $\lambda$ joining $x$ to its first  forward return.  %according to this co-orientation. 
The first return homotopy is only well-defined when for each $x\in k$, this leaf of $\lambda$ returns to $k$ in the forward direction.
%\footnote{The first return only defines a discrete dynamical system in $\hlambda$.}. 
The behavior of the first-return homotopy away from $\lambda$ is unimportant, as is its parametrization.

Using the first-return homotopy, we can associate to each point $x\in k$ a loop $\gamma_x$, which is simply the image under $H$ of $x$ followed by a subarc of $k$. We then treat the holonomy as a locally constant continuous function on $k\cap \lambda$, so that 
\[|\chi(x)| = |\chi (\gamma_x)|.\]

\begin{theorem}\label{thm: nu is affine}
    Assuming that the first return homotopy \eqref{eqn: first return homotopy} is defined, the measure $\nu_k$ defined above as
    \[\nu_k = \left(\frac{\omega}{[s^1]}\right)\mu_k\]
    is affine with holonomy $|\chi|$, i.e.,
    \begin{equation}\label{eqn: measure affine}
    H_*\nu_k = (|\chi|\circ H\inverse) \nu_k. 
    \end{equation} 
\end{theorem}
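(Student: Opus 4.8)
The plan is to unwind the three tensor-product identities already established and observe that the affine transformation law for $\nu_k$ is forced by the interplay between the flow-invariance of the bundle-valued measure $\mu_k \otimes [s^1]$ and the holonomy of the slithering connection around the first-return loops. First I would fix an orientation on the tie $k$ so that the first return homotopy $H$ of \eqref{eqn: first return homotopy} is defined, and write $k' = H(k \cap \lambda, 1) \subset k$ for the target of the first return map inside $k$. The key structural input is that $\mu_k \otimes [s^1]$ is \emph{flow-invariant} in the sense of \eqref{eqn: flow invariance bundle}; applying this to the transverse homotopy $H$ that slides $k \cap \lambda$ along leaves to $k' \cap \lambda$ gives $H_*(\mu_k \otimes [s^1]) = \mu_{k'} \otimes [s^1]$, where on the right $[s^1]$ is the \emph{same} global section restricted to $k'$.

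Next I would re-express both sides in terms of the flat section $\omega$ of the ray bundle $R \to \lambda$ over (a neighborhood of) $k$. On $k \cap \lambda$ we have $[s^1] = f\omega$ and $\mu_k = f\nu_k$, hence $\mu_k \otimes [s^1] = \nu_k \otimes \omega$. On $k' \cap \lambda$ the same section $\omega$ (now regarded as a flat section of $R$ over $k'$, which makes sense because $k' \subset k$ and $\omega$ is flat along $k$) satisfies $[s^1] = f'\omega$ for a function $f'$ on $k' \cap \lambda$, and $\mu_{k'} = f'\nu_{k'}$, so $\mu_{k'} \otimes [s^1] = \nu_{k'} \otimes \omega$ with $\nu_{k'}$ the restriction of $\nu_k$ to $k'$ (by the restriction compatibility of equivariant transverse measures). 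Now compute $H_*(\nu_k \otimes \omega) = (H_*\nu_k) \otimes (H_*\omega)$. Because $\omega$ is a slithering (flat) section and $H$ slides along leaves, $H_*\omega$ differs from the value of $\omega$ at the return point by exactly the $\nabla$-parallel transport around the loop $\gamma_x$, i.e. $H_*\omega = |\chi(x)|\,\omega$ when both are viewed as sections of the ray bundle $R$ over $k'$ — this is precisely the definition of the holonomy character $|\chi|$ from Proposition \ref{prop: Hitchin train track bundle} (using that the slithering connection is flat, Lemma \ref{lemma: E_2 is flat}, so the holonomy depends only on the homotopy class $\gamma_x$). Equating the two expressions for $H_*(\mu_k \otimes [s^1]) = \mu_{k'} \otimes [s^1]$ gives $(H_*\nu_k) \otimes (|\chi| \circ H^{-1})^{-1}\omega = \nu_{k'} \otimes \omega$, and cancelling the nowhere-zero flat section $\omega$ yields $H_*\nu_k = (|\chi| \circ H^{-1})\,\nu_{k'} = (|\chi| \circ H^{-1})\,\nu_k$ on $k'$, which is \eqref{eqn: measure affine}.

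The main obstacle I anticipate is bookkeeping the \emph{signs and base-point conventions} carefully: $[s^1]$ is a globally defined section of the ray bundle, but the slithering section $\omega$ is only defined up to scale over each tie, and the comparison function $f$ changes when one passes from $k$ to the subarc $k'$. One must check that the quotient $\omega/[s^1]$ appearing in the statement is genuinely well-defined as a (positive) scalar function independent of the chosen normalization of $\omega$ — this is true because both $H_*\nu_k$ and $\nu_k$ scale the same way under rescaling $\omega$, so the identity \eqref{eqn: measure affine} is scale-invariant. A second, more technical point is justifying that $H_*\omega$ really equals $|\chi(x)|\,\omega$ pointwise rather than merely up to a locally constant factor: this requires invoking the flatness of the slithering connection together with the fact (Lemma \ref{lem: rectangles trivial hol} and Remark \ref{rmk: ties are enough}) that parallel transport over the thin rectangles bounded by ties and leaf segments is trivial, so that the holonomy genuinely localizes to the loop $\gamma_x$ and defines the locally constant function $|\chi(\cdot)|$ on $k \cap \lambda$. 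Everything else is a routine disintegration-and-tensor computation building directly on Lemma \ref{lemma: homotopy invariance of tensor} and Definition \ref{def: transverse measure}.
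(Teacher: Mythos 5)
Your proposal takes essentially the same route as the paper: apply the flow-invariance identity \eqref{eqn: flow invariance bundle} to the first-return homotopy $H$, rewrite $\mu_k\otimes[s^1]$ as $\nu_k\otimes\omega$ using the flat slithering section, and factor out the holonomy of $\omega$ under $H_*$. One bookkeeping wobble to fix: you state $H_*\omega=|\chi(x)|\,\omega$, which would give $H_*\omega=(|\chi|\circ H^{-1})\,\omega$, yet in the final line you substitute $H_*\omega=(|\chi|\circ H^{-1})^{-1}\omega$; under the paper's convention $\hol([\alpha])=\bar\alpha_*$ from \S\ref{subsection: Hitchin bundles are flat} (see the footnote on the pushforward/deck-transformation inverse convention) the formula with the inverse is the correct one, and it is precisely that inverse which produces \eqref{eqn: measure affine}, so the prose sentence should be corrected to match the computation you actually carry out.
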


\begin{proof}
    By following the definition of $|\chi|$, we find that
\[H_* \omega = (|{\chi}|\circ H\inverse)\inverse \omega\]
and thus
\[H_*\left(\nu_{k}\otimes \omega \right) = H_*(\nu_{k}) \otimes H_*\omega =
%H_*(\nu_{k})\otimes (|{\chi}|\circ H\inverse)\inverse \omega =
\frac{1}{\left(|{\chi}|\circ H\inverse\right)}  H_*(\nu_{k})  \otimes \omega\]

From \eqref{eqn: flow invariance bundle} and the definition of $\nu_k$, we have 
\[H_*\left(\nu_{k}\otimes \omega \right) = \nu_{k}\otimes \omega.\]

The Theorem follows directly.
\end{proof}

%Briefly, we consider how the flip invariant measure $\nu_{\hk}$ on $\hk \subset T^1X$ transforms under first returns for the geodesic flow, even when the first return mapping is not defined $\nu_{\hk}$-a.e.
%Indeed, denote by 
%\[\sigma: \hk\cap \hlambda \to \hk \cap \hlambda\cup \{\emptyset\}\] 
%and
%\[\sigma\inverse: \hk\cap \hlambda \to \hk\cap \hlambda \cup \{\emptyset\}\]
%the first return map under the geodesic flow in forward and backward time, where  $\sigma\inverse (\emptyset)$ is the collection of points that do not return in forward time and $p$ satisfies $\sigma\inverse(p) = \emptyset$ if no point maps to $p$.

%For $p \in \hk\cap \hlambda$, denote by $\gamma_p$ the oriented loop in $\pi_1(\mathcal N)$ determined by projecting the segment of the leaf of $\hlambda$ connecting $p$ to $\sigma(p)$ along $k$ in  $\mathcal N$.
%A similar computation as above gives
%\begin{equation}\label{eqn: first return transformation}
   %\sigma_* \nu_{\hk} =|\chi(\gamma_{\sigma\inverse (\cdot)})| \nu_{\hk} 
%\end{equation}
%where $\chi(\gamma_{\sigma\inverse(p)}) = 0$ if $\sigma\inverse (p) = \emptyset$.
%and
% $|\chi|: \pi_1(\mathcal N) \to \R_{>0}$ is the holonomy.  

\begin{remark}
    In the case that $\lambda$ consists of more than one minimal component (i.e., the first return map to a single transversal is possibly not well-defined in forward or backward time), one can amend the above Theorem and its proof as follows. Choose a finite system of ties $\{k_1,\dots, k_n\}$, one for each minimal component of $\lambda$. Then the first return to the union
        \[k=\cup_{i=1}^n k_i\]
    is well-defined.
    The flat connection trivializes the bundle over each $k_i$, and the first return gives isomorphisms between this finite collection of vector spaces; this is the holonomy representation of the fundamental groupoid in this case. One arrives at the same result: the measure $\nu_k$ is affine; it transforms by the affine representation of the fundamental groupoid.
\end{remark}

\subsection{Integrating to an AIET}\label{subsec: AIET}
    There is a dynamical relationship between affine measures on laminations and \emph{affine interval exchange transformations} (AIETs) (see \cite{AIETflips} or \cite{Cobo:AIET,MMY:AIET} and there references therein).  The AIET that we construct has a non-trivial involutive symmetry.
    When $\lambda'= \lambda$ is (dynamically) minimal and (topologically) maximal we construct an AIET by integrating the transverse measure (Proposition \ref{prop: AIET from measure}). The same construction works for each minimal components of $\lambda$ which is not a nullset for the transverse measure, though there may be no such minimal component; see Figure \ref{fig: prototypes} or \S\ref{appendix}.

    For simplicity, assume that $\lambda = \lambda'$ is minimal and maximal, and let $k$ be a tie of a snug train track neighborhood of $\lambda$. 
    In the orientation double cover $\hlambda$, there is a first return map $\sigma: \hk \cap \hlambda \to \hk \cap \hlambda$ for the flow.
    Recall that $\hk\cap \hlambda = (\hk^+\cap \hlambda)\cup (\hk^-\cap \hlambda)$ where each of $\hk^\pm\cap \hlambda$ projects 
    homeomorphically to $k\cap \lambda$, and the two components are exchanged by $p\mapsto \overline p$. 
    Let $\widehat \nu$ be the local pushforward of $\nu_k$ under the inverse of the covering projection $\hk \to k$ (so that $\widehat \nu (\hk) = 2\nu_k(k)$).
    
    Let $p=k(0)$ be the initial point of $k$, and let $p^\pm$ be the two lifts of $p$ to $\hk$. Assume that the total $\nu$-mass of $k$ is one, and define a map $\mathcal I:\hk\cap \hlambda \to \R$ by integrating $\widehat\nu$:

    For $x\in \hk^+\cap \hlambda$ let
    \[\mathcal I(x) = \int_{p}^{\pi (x)} ~d\nu_k \]
    and for  $x\in \hk^-\cap \hlambda$ let
    \[\mathcal I(x) = 1+\int_{p}^{\pi(x)} ~d\nu_k,\]
    where the integrals are taken along $k$.
    There is an %$\mathcal I$-equivariant 
    involution on $[0,2]$, denoted by $r\mapsto \overline r$ given by the rule that for $r\in [0,2],$
    \[\overline {r} =1+r \mod{2}\]
    which satisfies
    \[\mathcal I(\overline x) =\overline {\mathcal I(x)}.\]

    Note that $\mathcal I$ is continuous if $\nu$ has no atoms. If $\nu$ has an atom $x\in \hk\cap \hlambda$, then $[0,2]\setminus \im \mathcal I$ contains an interval of length $\nu(x)$; let $J(x)$ be the closure of this interval so that $\mathcal I(x)$ is the right-hand endpoint of $J(x)$.

    \begin{lemma}\label{lemma: measurable inverse}
        There exists a Lebesgue a.e. defined map $\varrho:[0,2]\to \hk\cap \hlambda$ satisfying
        \[\varrho\circ \mathcal I =Id_{\hk\cap \hlambda}, ~ \text{$\widehat\nu$ a.e.}\]
    \end{lemma}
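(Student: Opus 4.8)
The plan is to construct $\varrho$ as a one-sided inverse of the integration map $\mathcal I$, defined on the complement of a Lebesgue-null set, and to check that it inverts $\mathcal I$ on a set of full $\widehat\nu$-measure. First I would record the structural facts about $\mathcal I$ that follow from its definition: it is a monotone (weakly increasing) map from $\hk^+\cap\hlambda$ into $[0,1]$ and from $\hk^-\cap\hlambda$ into $[1,2]$, because $\nu_k$ is a positive measure and $\mathcal I$ is obtained by integrating it along $k$ from the basepoint $p$. Monotonicity plus the fact that $\nu_k$ has full support equal to $k\cap\lambda$ means that $\mathcal I$ is injective \emph{except} that it is constant on maximal subintervals of $k$ disjoint from $\lambda$ — but such intervals meet $\hk\cap\hlambda$ in at most their endpoints, so after discarding the (countable, hence $\widehat\nu$-null) set of such endpoints, $\mathcal I$ is genuinely injective on the remaining full-measure subset of $\hk\cap\hlambda$.

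Next I would describe the image. Since $\lambda\cap k$ has $1$-dimensional Lebesgue measure zero (this is the measure-zero statement for transversals recorded in \S\ref{subsec: surface theory}) but carries the full mass of $\nu_k$, the complement $[0,2]\setminus\im\mathcal I$ is, by monotonicity, a countable union of disjoint open intervals: one interval $J(x)$ of length $\nu_k(\{x\})$ for each atom $x$ of $\widehat\nu$ (the text already introduces $J(x)$ and notes $\mathcal I(x)$ is its right endpoint), together with possibly one or two half-open intervals at the ends coming from $\partial k$ not meeting $\lambda$. The union of the $J(x)$ over all atoms $x$ has total Lebesgue measure $\sum_x \nu_k(\{x\}) \le \nu_k(k) <\infty$, so $[0,2]\setminus\im\mathcal I$ has finite measure, and in fact I only need that $\im\mathcal I$ has full Lebesgue measure up to the complement being a countable union of intervals — which is automatic. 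Now define $\varrho$ on $\im\mathcal I$ minus the discarded null set by $\varrho(r) = $ the unique $x\in\hk\cap\hlambda$ (in the injectivity domain) with $\mathcal I(x)=r$; extend $\varrho$ arbitrarily (say by a fixed constant, or by left-continuous interpolation) on the remaining Lebesgue-null set. By construction $\varrho\circ\mathcal I = \mathrm{Id}$ on the full-$\widehat\nu$-measure injectivity domain, which is the claimed identity.

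The remaining point is \emph{measurability} of $\varrho$: I would argue that $\varrho$ is monotone on each of the two pieces $[0,1]$ and $[1,2]$ wherever it is defined (it is the generalized inverse of a monotone function), hence Borel measurable; defining it on the leftover null set by a constant keeps it measurable. Pulling this together on the two pieces and using the involution $r\mapsto \bar r = 1+r \bmod 2$ to organize the bookkeeping between $\hk^+$ and $\hk^-$ completes the construction; one should note $\varrho(\bar r) = \overline{\varrho(r)}$, consistent with $\mathcal I(\bar x)=\overline{\mathcal I(x)}$.

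The main obstacle I anticipate is not any single hard estimate but the careful handling of the atomic part: when $\widehat\nu$ has atoms, $\mathcal I$ genuinely has jump discontinuities, its image omits the open intervals $J(x)$, and one must be scrupulous that (i) the set where $\varrho$ fails to invert $\mathcal I$ is Lebesgue-null (this is where $\sum_x\nu_k(\{x\})<\infty$ and the null-set of leaf-free subintervals of $k$ both enter), and (ii) the set where $\varrho\circ\mathcal I\ne\mathrm{Id}$ is $\widehat\nu$-null (this is the countable set of endpoints of leaf-free intervals, which carries no $\widehat\nu$-mass since $\widehat\nu$ is supported on $\hk\cap\hlambda$ and gives zero mass to any single non-atomic point). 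Keeping the two different null-ness requirements — one for Lebesgue measure on $[0,2]$, one for $\widehat\nu$ on $\hk\cap\hlambda$ — cleanly separated is the delicate part; everything else is the standard theory of generalized inverses of monotone functions.
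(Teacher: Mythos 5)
Your overall strategy — integrate $\widehat\nu$ to get a monotone function $\mathcal I$, invert it where it is injective, and handle the atomic jumps via the intervals $J(x)$ — is the same as the paper's, and the construction is sound in outline. However, there is a genuine gap in the way you dispose of the countable exceptional set.

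You assert that the set of endpoints of gaps of $k\cap\lambda$ is ``countable, hence $\widehat\nu$-null,'' and again in point (ii) that this set ``carries no $\widehat\nu$-mass since \dots $\widehat\nu$ gives zero mass to any single non-atomic point.'' Neither statement rules out the relevant bad case: an atom $a_i$ of $\widehat\nu$ can itself be the endpoint of a gap. Concretely, if $a_i$ is the left endpoint of a gap with right endpoint $y\in k\cap\lambda$, then $y$ is forced to be non-atomic (otherwise $\mathcal I(y)>\mathcal I(a_i)$), and one gets $\mathcal I(a_i)=\mathcal I(y)$. Under your procedure you discard both $a_i$ and $y$ as gap endpoints, so $\varrho$ is defined ``arbitrarily'' at $r=\mathcal I(a_i)$, and nothing guarantees $\varrho(\mathcal I(a_i))=a_i$. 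But $\widehat\nu(\{a_i\})>0$, so you have not verified $\varrho\circ\mathcal I = \mathrm{Id}$ on a set of full $\widehat\nu$-measure. Since $k\cap\lambda$ is a Cantor set, there is no a priori obstruction to atoms sitting at gap endpoints, so this case must be handled.

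The repair is exactly what the paper does: define $\varrho$ on \emph{all} of the closed interval $J(a_i)$ — including its right endpoint $\mathcal I(a_i)$ — to be the constant $a_i$. Because the paper's $\mathcal E$ is taken inside the complement of $\bigcup_i J(a_i)$, the value $r=\mathcal I(a_i)$ is never relegated to the ``undefined'' Lebesgue-null set even when $a_i$ bounds a gap; the definition $\varrho|_{J(a_i)}\equiv a_i$ wins, giving $\varrho(\mathcal I(a_i))=a_i$ unconditionally. The remaining non-atomic gap endpoints genuinely are $\widehat\nu$-null, and the rest of your argument (monotonicity, Borel measurability of a generalized inverse, the involution bookkeeping) goes through. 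You should also be explicit that the singleton-fiber claim needs a small case analysis — the paper distinguishes between $\widehat\nu_a$ trivial (use minimality to find mass strictly between $b$ and $d$) and $\widehat\nu_a$ nontrivial (use that atoms are dense along a $\sigma$-orbit) — but your appeal to full support plus monotonicity is equivalent in content.
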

        
    \begin{proof}
        Write $\widehat \nu = \widehat\nu_{a}+\widehat\nu_0$ where $\widehat\nu_a$ is the atomic part of $\widehat\nu$, supported on the countable set of atoms $\{a_i\}_{i\in \N}$.  Note that if $a$ is an atom of $\widehat\nu$, then every point of its $\sigma$ orbit is an atom as well.  Hence the atoms are dense if $\widehat\nu_a$ is non-trivial, by minimality.
        Define $\varrho\vert_{J(a_i)}: r\mapsto a_i$.
        %We claim that there is a countable set $E$ in $\hk\cap \hlambda$ such that $\mathcal I$ is injective away from $E$.
        
        We claim that there is a countable set $\mathcal E$ in the complement of $\cup J(a_i)$, such that if $r \in [0,2] \setminus \cup J(a_i)\cup E$, then $\mathcal I\inverse(r)$ is a singleton.  
        Indeed, suppose $r\in [0,2]\setminus \cup J(a_i)$, and suppose that there exist two distinct points $b,d\in \mathcal I \inverse (r)$ . Suppose further that there exists $c\in \hk\cap \hlambda$ between $b$ and $d$.  
        If $\nu_a$ is non-trivial then there is an atom $a_i$  between $b$ and $d$. This is a contradiction, because it implies that $\mathcal I (a) \neq \mathcal I (b)$. 
        If $\widehat\nu_a$ is trivial, then the existence of such a point $c$ is still a contradiction: by minimality $c$ is not isolated, hence there is a non-empty open set containing $c$ strictly between $b$ and $d$; since $\lambda$ is the support of $\widehat\nu$ this set has positive $\nu$-measure. %so the subarc between $b$ and $d$ has non-zero $\nu$-measure.
    
        We conclude in both cases that pairs of points $b$ and $d$ satisfying $\mathcal I (b) = \mathcal I (d)$ must bound a gap in $k\cap \lambda$.
        This is a countable set.
        Define $\varrho$ on $[0,2]\setminus \cup J(a_i) \cup \mathcal E$ by $r\mapsto \mathcal I \inverse(r)$.
        Standard arguments show that $\varrho$ is Borel measurable.
        %Therefore $\mathcal I$ is measurably injective on the complement $\hk \cap \hlambda \setminus \cup(I\inverse(J(a_i))$ and we may define $\varrho$ to be its measurable inverse on $[0,2]\setminus \cup J(a_i)$; the map $\varrho$ is simply not defined when $\mathcal I$ is not injective.
    \end{proof}
    
    We have recovered the following dynamical data from an affine measure.  It is an affine interval exchange transformation (with flips).

    \begin{proposition}\label{prop: AIET from measure}
        There exists a finite set $\{r_1,\dots,r_n\}\subset [0,2]$ and a piecewise affine map $\mathcal S:[0,2]\setminus \{r_1,\dots,r_n\}\rightarrow [0,2]$ with dense image so that the diagram
                
        \[ \begin{tikzcd}
        \left[0,2\right] \arrow{r}{\mathcal S} \arrow[swap]{d}{\varrho} & \left[0,2\right] \arrow{d}{\varrho} \\
        \hk\cap\hlambda \arrow{r}{\sigma}& \hk\cap\hlambda
        \end{tikzcd}
        \]

        \noindent commutes up to null sets.
        Moreover
        \begin{itemize}
        \item $\varrho_*Leb = \widehat\nu$,
        \item $\varrho(\mathcal S(\overline r))= \sigma(\overline{\varrho(r)})$, and
        \item The derivative $D_r\mathcal S = \chi (\pi(\gamma_{\varrho(r)}))\in \R^\times$, whenever $r\not\in \{r_1, ..., r_n\}$, where $\gamma_p$ is the oriented geodesic segment from $p$ to $\sigma(p)$, and $\pi(\gamma_p)$ is the corresponding loop in $\tau$.
        \end{itemize}
    \end{proposition}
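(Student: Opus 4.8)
## Proof Proposal for Proposition \ref{prop: AIET from measure}

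The plan is to use the measurable inverse $\varrho$ from Lemma \ref{lemma: measurable inverse} to transport the first return dynamics $\sigma$ on $\hk\cap\hlambda$ to a map $\mathcal S$ on $[0,2]$, and then verify that this transported map is piecewise affine with the asserted derivative. First I would \emph{define} $\mathcal S$ on the complement of a finite set as follows: for $r\in[0,2]$ outside the exceptional set, set $\mathcal S(r)=\mathcal I(\sigma(\varrho(r)))$. By construction the square commutes (up to null sets, since $\varrho\circ\mathcal I=\mathrm{Id}$ $\widehat\nu$-a.e. and $\mathcal I\circ\varrho=\mathrm{Id}$ off the gaps and the countable set $\mathcal E$). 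The identity $\varrho_*\mathrm{Leb}=\widehat\nu$ is essentially the definitional property of $\varrho$: $\mathcal I$ pushes $\widehat\nu$ forward to a measure on $[0,2]$ which is Lebesgue on $\im\mathcal I$ (since $\mathcal I(x)=\int_p^{\pi(x)}d\nu_k$ plus a possible shift by $1$ is, by definition, the cumulative distribution of $\widehat\nu$), and the atomic gaps $J(a_i)$ are exactly filled by the definition $\varrho|_{J(a_i)}\equiv a_i$; one checks $\widehat\nu(\{a_i\})=\mathrm{Leb}(J(a_i))$ and that $\mathrm{Leb}$ of the countable exceptional set is zero.

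Next I would establish that $\mathcal S$ is piecewise affine with the claimed derivative. The key computational input is Theorem \ref{thm: nu is affine}: the affine transformation rule $H_*\nu_k=(|\chi|\circ H^{-1})\,\nu_k$ for the first return homotopy $H$. Concretely, for $x\in\hk\cap\hlambda$ lying in a small arc $k'\subset k$ on which the first return to $\hk$ is given by following leaves and the combinatorics (which leaf-segment, which component $\hk^\pm$ one lands in) is constant, the map $\sigma$ restricted to $k'$ is a homeomorphism onto its image realized by the first return homotopy, and on the level of the integrated coordinate $\mathcal I$ one has, for $x,x'\in k'$,
\[
\mathcal I(\sigma(x'))-\mathcal I(\sigma(x))=\int_{\pi(x)}^{\pi(x')} d(H_*\nu_k)=\int_{\pi(x)}^{\pi(x')} (|\chi|\circ H^{-1})\,d\nu_k.
\]
Since the holonomy $|\chi|$ is locally constant on $k\cap\lambda$ (it depends only on the loop class $\gamma_x$, which is constant as $x$ ranges over a combinatorially constant arc), this is $|\chi(\gamma_x)|\cdot(\mathcal I(x')-\mathcal I(x))$. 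Passing to the $\varrho$-coordinate, this says $\mathcal S$ is affine with slope $\pm|\chi(\gamma_{\varrho(r)})|=\chi(\pi(\gamma_{\varrho(r)}))$ on the image under $\mathcal I$ of each such arc; the sign (flip) is determined by whether $\sigma$ preserves or reverses the co-orientation, equivalently whether $\chi$ (not just $|\chi|$) is positive or negative on $\gamma_x$, which accounts for $D_r\mathcal S\in\R^\times$ rather than $\R_{>0}$. The finitely many breakpoints $\{r_1,\dots,r_n\}$ are the $\mathcal I$-images of the finitely many points of $k\cap\lambda$ where the first-return combinatorics changes — these correspond to leaves hitting an endpoint of $k$ or a branch/switch of the train track $\tau$, of which there are finitely many by maximality of $\lambda$ and finiteness of $\tau$.

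The remaining bullets are then quick: $\varrho(\mathcal S(\overline r))=\sigma(\overline{\varrho(r)})$ follows by combining the commuting square with the symmetry identities $\mathcal I(\overline x)=\overline{\mathcal I(x)}$ (stated just before Lemma \ref{lemma: measurable inverse}) and the fact that $\sigma$ commutes with the antipodal involution $p\mapsto\overline p$ on $\hk\cap\hlambda$ (since $g_t\overline p=\overline{g_{-t}p}$ and first-return-in-forward-time for $p$ corresponds to first-return for $\overline p$); one then has $\varrho(\mathcal S(\overline r))=\sigma(\varrho(\overline r))=\sigma(\overline{\varrho(r)})$ using $\varrho\circ(\,\overline{\cdot}\,)=(\,\overline{\cdot}\,)\circ\varrho$ a.e.. Density of the image of $\mathcal S$ follows from minimality of $\lambda$: $\sigma$ has dense orbits on $\hk\cap\hlambda$, $\varrho$ is measure-theoretically surjective, so $\im\mathcal S\supset\mathcal I(\sigma(\hk\cap\hlambda))$ is dense. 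I expect the main obstacle to be the careful bookkeeping of the \emph{flips and the exceptional set}: one must check that the finitely many discontinuities of $\mathcal S$ genuinely come from a finite combinatorial subdivision of $k$ (so that "piecewise affine" has finitely many pieces), and that the null sets where the square fails to commute — the gaps $J(a_i)$, the countable set $\mathcal E$, and the $\sigma$-orbits of endpoints — can all be absorbed without destroying the a.e. statements. This is where maximality of $\lambda$ and snugness of the train track neighborhood (ensuring boundary leaves of complementary components of $k\setminus\lambda$ are asymptotic, hence the gap structure is controlled) are used essentially. I would remark at the end that when $\lambda$ has several minimal components one runs the same argument with a finite system of ties and the groupoid holonomy, exactly as in the Remark following Theorem \ref{thm: nu is affine}.
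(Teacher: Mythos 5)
The paper's ``proof'' of this proposition is a one-line pointer: ``follows from Theorem \ref{thm: nu is affine} and the construction; details left to the reader; see \cite{Cobo:AIET}.'' Your argument is exactly the elaboration the paper intends: define $\mathcal S=\mathcal I\circ\sigma\circ\varrho$, deduce commutativity and $\varrho_*\mathrm{Leb}=\widehat\nu$ from Lemma~\ref{lemma: measurable inverse} and the quantile-function property, use the affine transformation rule of Theorem~\ref{thm: nu is affine} with the locally constant holonomy to get the piecewise-affine structure and the derivative, and get finitely many break points from the finitely many changes of first-return combinatorics on a finite snug train track. That matches the paper's approach and is essentially complete.

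One factual slip worth flagging, though it does not damage the proof: you assert that $\sigma$ commutes with the antipodal involution on $\hk\cap\hlambda$. It does not. Since $g_t\overline p=\overline{g_{-t}p}$, the first \emph{forward} return of $\overline p$ is the antipode of the first \emph{backward} return of $p$, so $\sigma(\overline p)=\overline{\sigma^{-1}(p)}$, not $\overline{\sigma(p)}$. Fortunately the bullet $\varrho(\mathcal S(\overline r))=\sigma(\overline{\varrho(r)})$ does not need this: it follows from applying the (a.e.) commuting square at $\overline r$ and then the identity $\varrho(\overline r)=\overline{\varrho(r)}$, which is exactly the chain you write at the end of that paragraph. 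Drop the incorrect commutation claim and the rest stands. Also, the display $\mathcal I(\sigma(x'))-\mathcal I(\sigma(x))=\int_{\pi(x)}^{\pi(x')}d(H_*\nu_k)$ has an indexing slip (the integral should be over the image arc under $H$, equivalently you should push $\nu_k$ forward and integrate against the preimage), but the intended equality --- that the $\mathcal I$-increments scale by $|\chi(\gamma_x)|$ across a combinatorially constant arc --- is what the affine rule gives, and your conclusion is correct.
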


\begin{proof}
    The proposition follows from Theorem \ref{thm: nu is affine} and the construction; the details are left as an exercise for the reader.  See also \cite[\S2]{Cobo:AIET}.
\end{proof}

    \begin{remark}
        The derivative $D_r(\mathcal S)<0$ if and only if $r\in [0,1]$ and $\mathcal S(r) \in [1,2]$ or $r\in [1,2]$ and $\mathcal S(r) \in [0,1]$.
        In these cases, the loop $\pi(\gamma_{\rho(r)})$ is a loop in $\tau$ that begins and ends on the same side of our transversal $k$.  See  Figure \ref{fig:tt cover} for a schematic.
    \end{remark}

\begin{figure}[h]
    \centering
    \includegraphics[width=.9\linewidth]{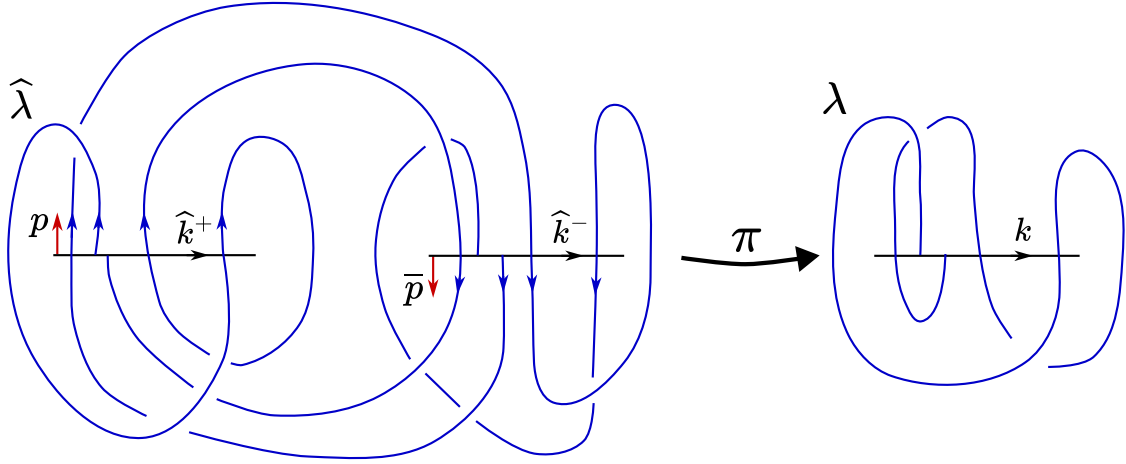}
    \caption{The figure depicts the orientation covering of a minimal geodesic lamination $\lambda$.
    The blue arcs represent homotopy classes of geodesic segments joining the relevant transversals; train tracks carrying $\hlambda$ (or $\lambda$) can be obtained by collapsing $\hk^\pm$ (or $k$) to points.}  %The orientation cover of a minimal lamination $\lambda$ represented on a train track with one switch determined by a transverse arc $k$. \mb{I want to discuss this caption and figure and make them cooperate.} }
    \label{fig:tt cover}
\end{figure}
    \begin{remark}
        As in \S\ref{subsection: affine measures from Hitchin world}, the results in this subsection have analogues when $\lambda$ is not dynamically minimal. As before, one would choose a finite system of transversals and integrate the equivariant measure from their union to achieve an affine interval exchange transformation. We omit the details for tedium.
    \end{remark}
    
\subsection{Relationship to affine laminations}\label{subsec: affine laminations}
For convenience choose a hyperbolic structure $X$ on a surface $S$ and a connected geodesic lamination $\lambda \subset X$. 
Let $\mathcal N$ be a snug train track neighborhood of $\lambda$.
Let $\chi:\pi_1 \cN \rightarrow \R_+$ be a homomorphism and $\widetilde{\mathcal{N}}$ be a covering space of $\mathcal N$ for which $\pi_1 \widetilde \cN<\ker \chi $. 
Let $\tlambda \subset \widetilde{\mathcal{N}}$ be the pre-image under the covering map of $\lambda$. 
%which is carried by the train track $\widetilde\tau$ corresponding to $\widetilde{\mathcal N}$\footnote{We will work with flat bundles over $\widetilde \tau$. There is implicitly (by the collapse map) a pullback bundle over $\widetilde{\mathcal{N}}$.}.

Following Hatcher and Oertel \cite{HO:affine}, 
an \textit{affine lamination} is a $\chi$-equivariant transverse measure supported on $\tlambda$. That is, a homotopy invariant transverse measure $\widetilde \nu$ satisfying
\begin{equation}\label{eqn: equiv for affine measure}
    \chi(\gamma)\gamma.\widetilde \nu = \widetilde \nu
\end{equation}
for all $\gamma\in \pi_1 \cN$ acting by deck transformations.\footnote{The relationship between the pushforward by a loop in a bundle and the deck transformation on a cover is inverse. This is the same convention from the previous subsection.}
Equivalently, for every transverse arc $k\subset \widetilde\cN$ and every $\gamma \in \pi_1\cN$, we have \[\widetilde\nu(\gamma.k) = \chi(\gamma)\widetilde\nu(k),\]
where $\widetilde \nu(k) = \int ~d\widetilde\nu_k$.

As always, there is a flat line  bundle $E_\chi\to \cN$ with holonomy $\chi$ defined as

\[E_\chi = \widetilde{\cN}\times \R / \chi.\]

The constants in $\R$ give flat sections of $\widetilde{\cN}\times \R $, i.e. sections $\widetilde w$  such that
\[\chi(\gamma)^{-1} \gamma. \widetilde w = \widetilde w.\]

Then \[\widetilde\nu \otimes \widetilde w\]
is invariant under $\pi_1 \cN$, and descends to a well-defined object on the bundle $E_\chi$. 
%This data on the cover $\widetilde{\tau}$ is an affine lamination.

There is a description of affine measure on a lamination which does not reference a covering space. Following the notation from \S\ref{subsec: tt and flat connections}, let 
\[\mathcal A = \{\psi_\alpha : \zeta\inverse (U_\alpha) \to U_\alpha \times \R\}\]
be an atlas of vector bundle charts for a flat $\R$-bundle $E$ over $\lambda$. In each chart $U_\alpha$, let $\nu_\alpha$ be a transverse measure to $\psi_\alpha(\lambda\cap U_\alpha)$ which is invariant under transverse homotopy in the chart, and $\omega_\alpha: U_\alpha\cap \lambda \rightarrow E\vert_\lambda$ a flat section in the chart. 

An affine measure on $\lambda$ valued in $E$ is then specified by the choice of an object of the form $\nu_\alpha\otimes \omega_\alpha$ on each chart. These choices are of course required to be coherent on overlaps of charts:
\[(\nu_\alpha\otimes \omega_\alpha)\vert_{U_\alpha\cap U_\beta} = (\nu_\beta\otimes \omega_\beta)\vert_{U_\alpha\cap U_\beta}.\]

%Note that if $\cN$ is a snug train track neighborhood of $\lambda$ and $\tau$ is the quotient of $\cN$ by collapsing the ties, then Proposition \ref{prop: combinatorialized lamination bundles} gives us a flat bundle over $\tau$ that is compatible with $E$ over $\lambda$.
%Pulling back this flat bundle over $\tau$ to $\mathcal N$, extends the original flat bundle over $\lambda$ to $\cN$.

If $\cN$ is a snug train track neighborhood of $\lambda$  with $q: \cN \to \tau$ the quotient by collapsing the ties, then by using Proposition \ref{prop: combinatorialized lamination bundles} and pullback by the map $q:\lambda\subset \cN \to \tau$, we may pass between bundles over the traintrack neighborhood, $\cN$, the lamination, $\lambda$, and the train track, $\tau$.

\begin{remark}
    The two notions of affine lamination outlined above are the same. That is, to each affine lamination in the sense of Hatcher--Oertel, there is an affine line bundle and an affine lamination described in local charts (and vice versa). The proof is an exercise in covering theory.
\end{remark}

The purpose of discussing affine laminations is the following result that ties together some of the main objects of study in this paper.

\begin{theorem}\label{thm: affine lamination}
    For $\mu\in \ML(X)^{G}$, let $\mathcal N$ be a (train track) neighborhood of $\lambda=\supp \mu$. Then the product
    \[\mu\otimes [s^1]=\nu\otimes\omega\]
    is an affine lamination on $\lambda$. The relevant bundle is $R^\rho_2\to \lambda$, the ray bundle got as a two-fold quotient of $F^\rho_2$. Its flat sections are the slithering sections constructed in Section \ref{subsection: Hitchin bundles are flat} producing the holonomy representation $|\chi|: \pi_1\tau \to \R_{>0}$.
\end{theorem}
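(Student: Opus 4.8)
The plan is to recognize that the content of the theorem is purely bookkeeping: all the hard analytic work has already been done, and what remains is to match up the definitions from \S\ref{subsec: eq transverse measures}, \S\ref{subsection: Hitchin bundles are flat}, and \S\ref{subsec: affine laminations}. First I would fix the snug train track neighborhood $\cN$ of the maximal completion $\lambda \supset \supp\mu$ together with its foliation by ties and collapse map $q:\cN\to\tau$, and invoke Lemma \ref{lemma: E_2 is flat} and Proposition \ref{prop: Hitchin train track bundle} to equip $F^\rho_2\to\lambda$ with the slithering connection and the holonomy character $\chi:\pi_1\tau\to\R^\times$; passing to the ray bundle $R=R^\rho_2(\lambda)=F^\rho_2/(\pm 1)$ gives the positive holonomy $|\chi|:\pi_1\tau\to\R_{>0}$, exactly as set up at the start of \S\ref{subsection: affine measures from Hitchin world}. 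The section $[s^1]:\lambda\to R$ is H\"older and non-vanishing by Lemma \ref{lemma: E_2 has sections}.

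Next I would verify the ``local charts'' characterization of an affine measure from \S\ref{subsec: affine laminations} directly on branch charts. On each branch $b$ of $\tau$, the branch chart from \S\ref{subsec: tt and flat connections} trivializes $R$ over $U_b=q^{-1}(b)$, and the slithering sections give a flat section $\omega_b$ there; set $\nu_b$ to be the measure on ties in $U_b$ determined by $f_b\nu_b=\mu_b$ where $[s^1]=f_b\omega_b$ on $U_b\cap\lambda$ (this $f_b$ is continuous and non-vanishing because both $[s^1]$ and $\omega_b$ are). The key computation is that $\nu_b\otimes\omega_b = \mu_b\otimes[s^1]$ is independent of the trivialization and of the tie chosen within $U_b$: this is precisely equation \eqref{eqn: flow invariance bundle}, which in turn follows from combining the $(g,G)$-equivariance of $\mu$ (Definition \ref{def: transverse measure}) with the flatness of $[s^1]$ under parallel transport along leaves, together with Remark \ref{rmk: ties are enough}, which says a flat connection over a lamination is determined locally by its flat sections over a tie. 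Coherence on overlaps $U_b\cap U_{b'}$ then holds because adjacent branch charts differ by a constant element of $\GL(1,\R)$ (the last Lemma of \S\ref{subsec: tt and flat connections}), and one checks $\nu_b\otimes\omega_b$ and $\nu_{b'}\otimes\omega_{b'}$ agree there. This exhibits $\nu\otimes\omega$ as an affine measure on $\lambda$ valued in $R$ in the sense of the local-charts definition.

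Finally I would translate this back to the Hatcher--Oertel formulation. Using Proposition \ref{prop: combinatorialized lamination bundles}, the flat bundle $(R,\nabla)\to\lambda$ is the pullback by $q$ of the flat bundle $F_{|\chi|}\to\tau$; pulling back to the cover $\widetilde\cN$ on which $\pi_1\widetilde\cN<\ker|\chi|$ trivializes this bundle, and the locally defined measures $\nu_b$ glue to a single transverse measure $\widetilde\nu$ on $\widetilde\lambda$ which, by the transformation law $H_*\nu_k=(|\chi|\circ H^{-1})\nu_k$ established in Theorem \ref{thm: nu is affine}, satisfies the equivariance $\chi(\gamma)\gamma.\widetilde\nu=\widetilde\nu$ required in \eqref{eqn: equiv for affine measure}. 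Invoking the Remark in \S\ref{subsec: affine laminations} that the two notions of affine lamination coincide completes the proof. The main obstacle is not conceptual but notational: carefully reconciling the three slightly different incarnations of the bundle (over $\cN$, over $\lambda$, over $\tau$) and the orientation/sign conventions relating $s^1$, $[s^1]$, the slithering sections, and the passage $F^\rho_2\rightsquigarrow R^\rho_2$, so that the gluing and equivariance identities are stated with the correct direction of the holonomy. Since Theorem \ref{thm: nu is affine} already packages the essential transformation law and the local-to-global machinery is in \S\ref{section: flat bundles over laminations}, I expect the argument to be short once these identifications are pinned down.
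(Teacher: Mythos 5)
Your proposal follows the same route as the paper's (very terse) proof: Lemma \ref{lemma: homotopy invariance of tensor} for homotopy invariance of $\mu\otimes[s^1]$, Lemma \ref{lemma: E_2 is flat} and Proposition \ref{prop: Hitchin train track bundle} for the slithering connection and holonomy $|\chi|$, and Theorem \ref{thm: nu is affine} for the equivariance of the factored measure on the cover. Inserting a branch-chart verification of the local-charts definition before translating to the Hatcher--Oertel formulation is a fine and reasonable expansion of the paper's citation-style argument.

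One of your explanations misstates the key mechanism, however. You attribute equation \eqref{eqn: flow invariance bundle} to ``the $(g,G)$-equivariance of $\mu$'' combined with ``the flatness of $[s^1]$ under parallel transport along leaves.'' The section $s^1$ is the \emph{unit-norm} section of $E^\rho_2$, not a flat one: its $\nabla^\rho$-parallel transport along flow lines scales by $G$, which is precisely the definition of $G$ in \eqref{eqn: G defined}. Equation \eqref{eqn: flow invariance bundle} holds because this $G$-scaling of $s^1$ under leaf-wise parallel transport is \emph{cancelled} by the inverse $G$-factor in the $(g,G)$-equivariance of $\mu$ (Definition \ref{def: transverse measure}); that cancellation is the entire content of Lemma \ref{lemma: homotopy invariance of tensor}. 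If $[s^1]$ were actually flat, no equivariance hypothesis on $\mu$ would be needed and the whole framework of \S\ref{sec:equivariant measures} would be superfluous. Since you cite the correct equation and the rest of the argument is intact, this is a misdescription rather than a hole, but it is worth correcting because it concerns the central idea of the construction.
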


\begin{proof}
    Lemma \ref{lemma: homotopy invariance of tensor} shows that $\mu\otimes [s^1]$ satisfies the required homotopy invariance. To show that it may be represented as a product of a flat section and a measure satisfying \eqref{eqn: equiv for affine measure} on $\widetilde{\mathcal N}$ is 
    Theorem \ref{thm: nu is affine} and Theorem \ref{lemma: E_2 is flat}.
\end{proof}

To each geodesic lamination and Hitchin representation $\rho: \pi_1S\to \SL(3,\R)$, we have a flat ray bundle $R_2^\rho(\lambda) = F_r^\rho(\lambda)/(\pm1)$.

\begin{definition}
For each Hitchin representation $\rho: \pi_1S \to \SL(3,\R)$, define
\[\ML^\rho(S) = \{(R_2^\rho(\lambda), \nu\otimes \omega)\}\]
as the set of pairs where $\lambda \subset S \text{ is a geodesic lamination and } \nu\otimes \omega \text{ is a nonzero affine measure on  }R_2^\rho(\lambda)$.

We consider this space with its natural weak-$*$ topology.\footnote{In the next section, we describe explicitly the weak-$*$ topology on this space as it requires some clarification.} 
\end{definition}

\medskip
If $\eta: \pi_1S \to \Aff^*(3,\R)$ has linear part $\rho$, and bending cocycle $\psi$ with support $\lambda$, then Theorem \ref{thm: equiv measures sphere} gives us a unique $\mu \in \ML(X)^G$ with $\psi = \mu \otimes s^1$.
Theorem \ref{thm: affine lamination} tells us that $\mu \otimes [s^1]$ is an affine measure $ \nu \otimes \omega$  on $R_2^\rho(\lambda)$.

In other words, there is a map
\begin{equation}\label{eqn: beta}
    \beta_+: \{\eta: \pi_1 S\rightarrow \Aff^*(3,\R)~\text{irreducible}\mid L(\eta)\in\SL(3,\R).\rho\}/\Aff^*(3,\R) \to \ML^\rho(S)
\end{equation}
recording the affine bending data from a $\Aff^*(3,\R)$ conjugacy class of representation with linear part (conjugate to) $\rho$. 
The `$+$' decoration on this map reflects the global choice of a component of $\partial \Xi_\eta$. There is a map $\beta_-$ defined analogously for the opposite component.

\begin{remark}
    The flat ray bundle $R^\rho_2(\lambda)$ contains enough data to reconstruct the flat line-bundle $F^\rho_2(\lambda)$. The fibers of $F_2$ covers $R_2$ two-to-one, and the holonomy of $F_2$ picks up a minus sign based on topological data which can be seen in a train track carrying $\lambda.$ Therefore no data is lost by considering only the ray bundle.
\end{remark}

\begin{remark}\label{rmk: no dual affine tree}
    The fact that the slithering connection has non-trivial holonomy in general around the boundary components of a snug train track neighborhood of $\lambda$ (see \eqref{eqn: slithering holonomy}) makes it difficult to define an affine structure on the (dual) dendrite which is a  quotient of the leaf space of $\lambda$.
    %The appropriate dual object might be an intermediate space between the leaf space and this dendrite which has a loop at each vertex.
    %The appropriate dual object would have to keep track of the holonomy around the vertex dual to each $2$-stratum.
    %There does seem, however, to be an affine structure on the dendrite with (infinitesimal) loops around the vertices.
\end{remark}
        
\subsection{Continuity of $\beta_+$}\label{subsec: continuity of weights}
To a train track $\theta$ with branches $b(\theta)$, the \emph{weight space} $W(\theta)$ is the vector subspace of $\R^{b(\theta)}$ cut out by the \emph{switch conditions}.  That is, a given $\phi \in W(\theta)$ satisfies for all switches
\[\sum \phi(b_i^{in}) = \sum \phi (b_j^{out}),\]
where  $b_1^{in} , ..., b_k^{in}$ are the incoming and $b_1^{out}, ..., b_\ell^{out}$ are the outgoing half-branches of $\theta$ at that (oriented) switch  \cite{PH, Thurston:notes}.
There is a non-negative cone $W_{\ge 0}(\theta)$ obtained by intersecting $W(\theta)$ with $\R_{\ge 0}^{b(\theta)}$.
Given a measure $\mu$ with support contained in a lamination carried by $\theta$, denote by $\mu(b)$ the integral of the constant function $1$ over a transversal to $b$. This function $w_\mu : b\mapsto \mu(b)$ is the non-negative weight system of $\mu$ on $\theta$.

The following lemma follows from \eqref{eqn: equiv for affine measure} and the definitions.
\begin{lemma}\label{lem: equivariant weight system}
    An affine lamination with holonomy $\chi$, thought of as a transverse measure $\widetilde \nu$ on $\widetilde \lambda$,  gives a weight system $w= w_\nu: b(\widetilde\tau) \rightarrow \R$ 
    satisfying 
        \[ w (\gamma.b) = \chi(\gamma)w(b) \text{ for all $\gamma \in \pi_1\tau$.}  \]
\end{lemma}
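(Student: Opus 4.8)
The statement is essentially an unpacking of the equivariance relation \eqref{eqn: equiv for affine measure} into the combinatorial language of weight systems on the train track $\tau$ (equivalently, on its lifted cover $\widetilde\tau$), so the proof is a direct verification. First I would recall that $\widetilde\nu$ is a transverse measure on $\widetilde\lambda \subset \widetilde{\mathcal N}$ that is invariant under transverse homotopy; since $\widetilde{\mathcal N}$ is a snug train track neighborhood carrying $\widetilde\lambda$ with collapse map to $\widetilde\tau$, the measure $\widetilde\nu$ assigns to each branch $b$ of $\widetilde\tau$ the number $w_\nu(b) := \widetilde\nu(k_b)$, where $k_b$ is any tie of $\widetilde{\mathcal N}$ crossing $b$; homotopy invariance of $\widetilde\nu$ through transverse arcs guarantees this is independent of the chosen tie, so $w_\nu: b(\widetilde\tau) \to \R$ is well-defined.

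Next I would check that $w_\nu$ lies in the weight space $W(\widetilde\tau)$, i.e.\ satisfies the switch conditions. This is the standard fact that at a switch of the train track, the ties on the incoming side subdivide into the ties on the outgoing side; since the branches of $\lambda$ meeting an incoming half-branch are exactly those meeting the outgoing half-branches (with no leaf created or destroyed at a switch, by the structure of a train track neighborhood), additivity of the measure $\widetilde\nu$ gives $\sum_i w_\nu(b_i^{in}) = \sum_j w_\nu(b_j^{out})$. This uses only that $\widetilde\nu$ is a genuine (positive, finitely additive, homotopy-invariant) transverse measure and the definition of a snug neighborhood from \S\ref{subsec: surface theory}.

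Finally, the equivariance. For $\gamma \in \pi_1\tau$ acting by deck transformations on $\widetilde\tau$ and $b$ a branch with tie $k_b$, the branch $\gamma.b$ has tie $\gamma.k_b$, and by definition $w_\nu(\gamma.b) = \widetilde\nu(\gamma.k_b) = (\gamma\inverse_*\widetilde\nu)(k_b)$. The defining relation \eqref{eqn: equiv for affine measure}, namely $\chi(\gamma)\,\gamma.\widetilde\nu = \widetilde\nu$ (equivalently $\gamma\inverse_*\widetilde\nu = \chi(\gamma)\widetilde\nu$, using the stated convention that pushforward by a loop and the deck transformation are inverse), then gives $w_\nu(\gamma.b) = \chi(\gamma)\,\widetilde\nu(k_b) = \chi(\gamma)\,w_\nu(b)$, which is exactly the claimed relation. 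One could phrase this even more briefly by invoking the reformulation of \eqref{eqn: equiv for affine measure} already recorded in \S\ref{subsec: affine laminations}: $\widetilde\nu(\gamma.k) = \chi(\gamma)\widetilde\nu(k)$ for every transverse arc $k$, applied to $k = k_b$.

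\textbf{Main obstacle.} There is no serious obstacle here — the lemma is a bookkeeping statement and the proof is a one-paragraph verification. The only point requiring a moment's care is the sign/inverse convention relating the deck-transformation action to the holonomy $\chi$ (the footnote in \S\ref{subsec: affine laminations} flags exactly this), so I would make sure the direction in which $\chi(\gamma)$ versus $\chi(\gamma)\inverse$ appears is consistent with \eqref{eqn: equiv for affine measure} as stated, and with how $w_\nu$ is defined via ties; getting this backwards would invert the character but is otherwise harmless.
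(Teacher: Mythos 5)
Your proof is correct and is exactly the argument the paper has in mind: the paper dispenses with the lemma by noting it ``follows from \eqref{eqn: equiv for affine measure} and the definitions,'' and your paragraph is the direct unpacking of that — define $w_\nu(b)=\widetilde\nu(k_b)$ on a tie over $b$, observe this is well defined and satisfies the switch conditions because $\widetilde\nu$ is an honest homotopy-invariant transverse measure on $\widetilde\lambda$, and then apply the reformulation $\widetilde\nu(\gamma.k)=\chi(\gamma)\widetilde\nu(k)$ already recorded in \S\ref{subsec: affine laminations}. Your care with the pushforward-versus-deck-transformation sign convention matches the paper's own footnote on this point, so the verification is complete and consistent.
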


\begin{remark}
    By cutting $\tau$ along a finite collection of points (called \emph{stops}), one can record these data as a weight system on a train track $\tau$ with stops and holonomy; see \cite{HO:affine}.
\end{remark}

Suppose $\mu_n \in \ML(X)^{G}$ have support $\lambda_n'$ and converge weak-$*$ to $\mu$ with support $\lambda'$. 
Choose maximal completions $\lambda_n$ of $\lambda_n'$.  
Passing to a subsequence, we assume that $\lambda_n$ converge in the Hausdorff topology to a geodesic lamination $\lambda$ that, necessarily, is maximal and contains $\lambda'$.
Let $\cN$ be a snug train track neighborhood of $\lambda$ collapsing to $\tau$.
Note that $\lambda_n$ is carried snugly by $\tau$ for large enough $n$.

According to Theorem \ref{thm: affine lamination} and Lemma \ref{lem: equivariant weight system}, for each $n$, there are corresponding affine measures $\nu_n \otimes \omega_n$ and holonomy representations 
\[|\chi_n|: \pi_1 \tau \to \R_{>0}\] 
and $|\chi_n|$-equivariant weight systems $w_n=w_{\nu_n}$.
We also have the corresponding holonomy and weight system, $\chi$ and $w = w_\nu$, for $\nu$.

The following theorem states that equivariant transverse measures map continuously to affine measured laminations with the weak-$*$ topology.
\begin{proposition}\label{prop: convergence in tt charts}
    For all $b\in b(\widetilde\tau)$, the weight systems
    $w_n(b) \to w(b)$, as $n \to \infty$.
\end{proposition}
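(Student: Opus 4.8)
The plan is to reduce the convergence of weight systems to the already-established weak-$*$ continuity of the map $\Phi$ from Proposition~\ref{prop: cocycle continuous injective homogeneous} (equivalently of $\beta_+$), together with the Hausdorff-continuity of laminations and of the slithering connection. First I would fix a branch $b$ of $\widetilde\tau$ and lift it to a transverse arc $\widetilde k$ in $\widetilde\cN$, projecting to a transversal $k\subset\cN$; since $\lambda_n\to\lambda$ in the Hausdorff topology and $\cN$ is a snug neighborhood of $\lambda$, for $n$ large each $\lambda_n$ is carried snugly by $\tau$, so $k$ is transverse to every $\lambda_n$ and to $\lambda$, and $\partial k$ avoids $\lambda$. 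By definition $w_n(b)=\nu_n(k)=\int_k d(\nu_n)_k$ and $w(b)=\nu(k)$, where $\nu_n,\nu$ are the affine representatives produced by Theorem~\ref{thm: nu is affine}; recall these satisfy $(\nu_n)_k=(\omega_n/[s^1])\,(\mu_n)_k$ and $\nu_k=(\omega/[s^1])\,\mu_k$, where $\omega_n,\omega$ are slithering-flat sections of $R^\rho_2$ over $k$ normalized, say, to have unit norm at the initial point $k(0)$.

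Next I would control the two factors separately. For the measures: by hypothesis $\mu_n\to\mu$ weak-$*$ in $\ML(X)^G$, which by definition of that topology means $(\mu_n)_k\to\mu_k$ weak-$*$ on $k$; since $\partial k$ is a $\mu_k$-null set, $k$ is a continuity set and the total masses converge, $\mu_n(k)\to\mu(k)$. For the Radon--Nikodym factor $f_n:=\omega_n/[s^1]$: the section $[s^1]$ is a fixed H\"older section of $R^\rho_2$ over the fixed neighborhood $\cN$; the slithering-flat sections $\omega_n$ over $k$ are determined by the slithering maps $\Sigma_{\ell(q)\ell(p)}$ along $k$, which depend only on the flags $\mathcal F$ at the relevant endpoints, hence H\"older-continuously on the leaves through $k$ (Lemma~\ref{lemma: E_2 is flat}). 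Because $\lambda_n\to\lambda$ in the Hausdorff topology on closed invariant sets of $\mathbb P T\cN$ (using Lemma~\ref{lem: lamination geometry} and Lemma~\ref{lem: tt geometry} to see components of $k\cap\cN_\epsilon(\lambda)$ are small and few), the functions $f_n$ converge uniformly on the relevant transversals to the limiting $f$, with a uniform H\"older bound. Then $\nu_n(k)=\int_k f_n\,d(\mu_n)_k\to\int_k f\,d\mu_k=\nu(k)$: split $k$ into finitely many subarcs $k^i$ of small diameter $O(\delta)$ as in the proof of Proposition~\ref{prop: cocycle continuous injective homogeneous}, approximate $f_n$ by its value at a sample point on each $k^i$ up to $O(\delta^\alpha)$, use $\mu_n(k^i)\to\mu(k^i)$ on continuity sets and $f_n\to f$ uniformly, and let $\delta\to 0$ after the usual triangle-inequality bookkeeping.

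The main obstacle I anticipate is the uniform convergence of the slithering sections $\omega_n$ over $k$ as $n\to\infty$: the slithering maps are only H\"older, and a priori the intervening leaves of $\lambda_n$ near a given point of $k$ move as $n$ grows, so one must argue that the composition of slithering maps along $k$ for $\lambda_n$ converges to that for $\lambda$. The key point is that all $\lambda_n$ are carried by the \emph{fixed} snug train track $\tau$, so the combinatorics of which fellow-travelling family of leaves a given point of $k$ belongs to stabilizes, and the slithering cocycle is computed from the flag curve $\mathcal F$ restricted to $\hlambda_n\to\hlambda$, which converges uniformly by H\"older-continuity of $\mathcal F$ and Hausdorff-continuity of the orientation covers (as invoked already in Proposition~\ref{prop: disint is equiv}). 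Once this uniform convergence of $f_n$ to $f$ is in hand, the remaining estimate is identical in structure to the one in Proposition~\ref{prop: cocycle continuous injective homogeneous}, and I would simply cite that argument rather than reproduce it. Finally, since the branch $b$ and the lift were arbitrary, and since any subsequence of $(\mu_n)$ has a further subsequence along which $\lambda_n$ converges Hausdorff to some maximal completion of $\lambda'$, the full sequence $w_n(b)\to w(b)$, completing the proof.
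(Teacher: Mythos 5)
Your proposal follows the same strategy as the paper: reduce to showing that the Radon--Nikodym factor between $\mu_n$ and $\nu_n$ (the ratio $\omega_n/[s^1]$, up to the choice of convention) converges, and then combine this with weak-$*$ convergence of $(\mu_n)_k$. Two remarks are worth making. First, the key input you identify in your last paragraph is isolated in the paper as Lemma~\ref{lem: slithering continuous} (Hausdorff continuity of slithering maps), which the paper attributes to the techniques of \cite[\S5]{BonDre}; the parenthetical citation of Lemma~\ref{lemma: E_2 is flat} in your second paragraph is off the mark, since that lemma only gives H\"older regularity of each slithering connection for a \emph{fixed} lamination, whereas what you need is convergence of the slithering cocycle as $\lambda_n\to\lambda$ Hausdorff. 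Relatedly, the assertion that the slithering maps ``depend only on the flags $\mathcal F$ at the relevant endpoints'' is not correct: for fellow-travelling leaves with no common endpoint, $\Sigma_{hg}$ is an infinite composition/integral over all intermediate leaves, which is exactly why the continuity statement is nontrivial and deserves to be isolated. Second, once you have uniform convergence of the extended factors $\tilde f_n\to\tilde f$ in hand, the $\delta$-decomposition borrowed from Proposition~\ref{prop: cocycle continuous injective homogeneous} is unnecessary machinery: the paper instead applies a direct triangle inequality
\[
\left|\int g f_n\,d\mu_n - \int g f\,d\mu\right| \le \left|\int g\tilde f_n\,d\mu_n - \int g\tilde f\,d\mu_n\right| + \left|\int g\tilde f\,d\mu_n - \int g\tilde f\,d\mu\right|,
\]
bounding the first term by $\|g\|_\infty\|\tilde f_n-\tilde f\|_\infty\,\mu_n(k)$ and the second by weak-$*$ convergence. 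That is cleaner; the sample-point discretization in Proposition~\ref{prop: cocycle continuous injective homogeneous} was needed there precisely because one lacked a single uniform convergence statement for the integrands.
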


The proof relies on a continuity property of the slithering map indicated in the following Proposition.  
The techniques of Section 5 of \cite{BonDre} apply readily to give the proof, which we omit.

\begin{lemma}\label{lem: slithering continuous}
Suppose $\lambda_n$ are complete geodesic laminations converging in the Hausdorff topology to $\lambda$.
        Let $k$ be a transversal to $\lambda$, and lift the situation to the universal cover of $S$ so that $\widetilde k$ meets $\widetilde\lambda_n$ transversely for all $n$ large enough.

    Let $\ell^0_n$ and $\ell^1_n$ be the first and last leaves of $\widetilde k\cap \tlambda_n$, ordered by $\widetilde k$. For each $n$, let $\Sigma_n\in \SL(d,\R)$ be the slithering map defined by $\tlambda_n$ from $\ell^0_n$ to $\ell^1_n$.
    Define analogously $\ell^0$, $\ell^1$, and $\Sigma$ for $\lambda$.

    Then $\Sigma_n \to \Sigma\in \SL(d,\R)$.
\end{lemma}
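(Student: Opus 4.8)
\textbf{Proof plan for Lemma \ref{lem: slithering continuous}.}

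The plan is to reduce the statement to the convergence of the elementary unipotent building blocks $g_1^{2,3}$ and then to an Arzel\`a--Ascoli/telescoping argument, exactly in the spirit of \S5 of \cite{BonDre}. First I would recall that the slithering map $\Sigma_n$ from $\ell^0_n$ to $\ell^1_n$ is, by the Bonahon--Dreyer construction, an (absolutely) convergent infinite product (or integral) of the elementary maps associated to the leaves of $\tlambda_n$ lying between $\ell^0_n$ and $\ell^1_n$; the rate of convergence of this product is controlled by the contraction estimates of the Anosov splitting, which are \emph{uniform} over all complete geodesic laminations because they depend only on $\rho$ and on the underlying hyperbolic metric $X$ (Definition \ref{def: Anosov}, constants $c_1,c_2$). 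The key point is that this uniformity gives an equicontinuity statement: the ``tails'' of the slithering products, i.e., the contribution of leaves of $\tlambda_n$ within distance $\delta$ of either endpoint $\ell^0_n$ or $\ell^1_n$, are bounded in operator norm by a quantity $\varepsilon(\delta)\to 0$ as $\delta \to 0$, \emph{independently of $n$}.

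Next I would set up the comparison. Fix $\delta>0$ small. Since $\lambda_n \to \lambda$ in the Hausdorff topology and $\widetilde k$ is a fixed compact transversal, the leaves of $\tlambda_n$ meeting $\widetilde k$ outside a $\delta$-neighborhood of the finitely many ``gaps'' of $\tlambda \cap \widetilde k$ converge, leaf by leaf and in bounded number, to leaves of $\tlambda$; this uses that $\tlambda$ is complete, so its complementary regions in a train-track neighborhood are ideal triangles with uniformly separated vertices. On this ``bulk'' part, the slithering map is a finite product of elementary unipotents $g^{2,3}_1(\mathcal F_1,\mathcal F_2,\mathcal F_3)$ whose defining flags are, by H\"older continuity of the limit curve $x\mapsto \mathcal F(x)$ (see \S\ref{subsec: convex proj}, \S\ref{subsec: Anosov property}), close to the corresponding flags for $\tlambda$. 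Hence the bulk parts of $\Sigma_n$ converge to the bulk part of $\Sigma$. Combining with the uniform tail bound $\varepsilon(\delta)$ and the triangle inequality in $\SL(d,\R)$, one gets
\[
\limsup_{n\to\infty}\|\Sigma_n - \Sigma\| \le 2\varepsilon(\delta),
\]
and letting $\delta \to 0$ finishes the argument.

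The step I expect to be the main obstacle is making the uniform tail estimate genuinely uniform over the sequence $\lambda_n$: a priori the number of leaves of $\tlambda_n$ crossing a $\delta$-sub-transversal, and the combinatorics of how they are nested, could behave badly along the sequence. The resolution is Lemma \ref{lem: tt geometry} together with the snugness of $\cN$: for $n$ large, $\lambda_n$ is carried snugly by the \emph{fixed} train track $\tau$, so the relevant transversals decompose into $O(\log 1/\delta)$ sub-intervals of diameter $O(\delta)$, with leaf-nesting controlled by $\tau$'s branch structure; the exponential contraction of the Anosov splitting then beats this logarithmic count, giving the bound $\varepsilon(\delta)$ uniformly. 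Once this uniformity is in hand, everything else is a routine $\varepsilon$--$\delta$ bookkeeping that I would defer to \cite[\S5]{BonDre}, as the lemma's statement already does.
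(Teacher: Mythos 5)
The paper gives no proof of this lemma, merely pointing to \cite[\S5]{BonDre}, so there is no in-paper argument to match against; your sketch is the natural one and follows exactly the Bonahon--Dreyer strategy the authors invoke. The decomposition you describe is correct once you make it precise: take the $O(\log 1/\epsilon)$ intervals of $\mathcal N_\epsilon(\lambda)\cap\widetilde k$ from Lemma~\ref{lem: tt geometry}; for $n$ large, $\tlambda_n\subset\mathcal N_\epsilon(\tlambda)$, so $\tlambda_n\cap\widetilde k$ is contained in the same intervals, and the gaps between consecutive intervals are crossed (for both $\tlambda$ and $\tlambda_n$, since both are complete) by a single elementary unipotent whose defining flags converge by H\"older continuity of $\xi,\xi^*$; the slithering submaps within each interval are uniformly $O(\epsilon^\alpha)$ by the Bonahon--Dreyer estimate, whose constants depend only on $\rho$. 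A telescoping then gives $\|\Sigma_n-\Sigma\|=O(\epsilon^\alpha\log(1/\epsilon))+o_n(1)$, and sending $\epsilon\to 0$ finishes. One small correction to your last paragraph: you do not need (and in general will not have) $\lambda_n$ carried snugly by a single fixed $\tau$---the combinatorial uniformity you want comes directly from applying Lemma~\ref{lem: tt geometry} to $\lambda$ alone, together with $\tlambda_n\subset\mathcal N_\epsilon(\tlambda)$ for $n$ large; snugness of a train track for $\lambda$ is not preserved for the approximating $\lambda_n$ and is not needed here.
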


\begin{proof}[Proof of Proposition]\ref{prop: convergence in tt charts}
    Let $k$ be a transversal to some branch $b$. Without loss of generality, assume that for all $n$,  $\omega_n$ is such  that its evaluation on the leftmost leaf of $\lambda_n$ over $b$ is equal to $s^1$ evaluated on the same leaf.
    
    For each $n$, we have $[s^1] = f_n \omega_n$, where $f_n: \lambda_n \cap k \to \R_{>0}$ is continuous. 
    Lemma \ref{lem: slithering continuous} gives that $f_n$ converge in the following sense: if $x_n \in \lambda_n \cap k \to x \in \lambda\cap k$, then $f_n(x_n) \to f(x)$.
    Thus we can extend $f$ continuously to a function $\tilde f : k \to \R_{>0}$, and then find continuous extensions $\tilde f_n : k \to \R_{>0}$ of $f_n$ so that \[\|\tilde f_n -\tilde f\|_\infty \to 0, \text{ as } n \to \infty.\]
    
    %Weak-$*$ convergence of $\mu_n$ then implies weak-$*$ convergence of $\nu_n = f_n \mu_n$ on $k$.    
    %Let $\tilde f$ be any continuous extension of $f: k\cap \lambda\to \R_{>0}$ to $k$.
    Using the triangle inequality, for any continuous $g: k \to \R$, we have

    \[\left| \int gf_n ~d\mu_n - \int gf~d\mu\right|
    \le \left|\int g\tilde f_n ~d\mu_n - \int g\tilde f ~d\mu_n \right| + \left| \int g \tilde f~d\mu_n - \int g\tilde f~d\mu \right|. \]
    The second term goes to zero by weak-$*$ convergence, while the first is bounded by \[\|g\|_\infty \|\tilde f_n - \tilde f\|_\infty \mu_n (k).\]
    Since $\mu_n(k) \to \mu(k)$, this term goes to zero as well, proving the Proposition.
\end{proof}

While $\ML(X)^G$ was constructed from a choice of norm and parameterization of the geodesic flow on $T^1S$, $\ML^\rho(S)$ does not reference any such choice. 

\begin{theorem}\label{thm: MLrho is a sphere!}
    The map $\beta_+$ defined in equation \eqref{eqn: beta} is a homeomorphism that is homogeneous with respect to positive scale.
    Consequently, the quotient by positive scale $\PML^\rho(S)$ is a sphere of dimension $6g-7$.
\end{theorem}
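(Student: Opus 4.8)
The strategy is to factor $\beta_+$ through the map $\Phi$ of Theorem~\ref{thm: equiv measures sphere} and the affinization map of Theorem~\ref{thm: affine lamination}, showing that $\beta_+$ is a composition of maps already known (or easily shown) to be homeomorphisms onto their images. Concretely, recall that Lemma~\ref{lem: cohomology parameterizes reps up to conj} identifies the domain of $\beta_+$ with (the nonzero part of) $H^1(\pi_1 S, \TAff^*(3,\R)_\rho) \cong H^1(S, F^\rho)$; restricting to \emph{irreducible} $\eta$ removes exactly the coboundaries by Lemma~\ref{lemma: coboundaries and reducible reps}, so the domain is $H^1(S,F^\rho)\setminus\{0\}$. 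Theorem~\ref{thm: equiv measures sphere} gives a scale-homogeneous homeomorphism $\Phi^{-1}: H^1(S,F^\rho)\setminus\{0\} \to \ML(X)^G$ (via $\Psi$ and the bending cocycle $\psi$). Then Theorem~\ref{thm: affine lamination} sends $\mu \in \ML(X)^G$ with support $\lambda$ to the affine lamination $\mu\otimes[s^1] = \nu\otimes\omega$ on $R^\rho_2(\lambda)$, which is by definition an element of $\ML^\rho(S)$. The plan is to check: (i) $\beta_+$ equals this composition; (ii) the affinization map $\mu \mapsto \nu\otimes\omega$ is a scale-homogeneous bijection onto $\ML^\rho(S)$; (iii) it is a homeomorphism for the weak-$*$ topologies. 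The dimension count $6g-7$ then follows since $H^1(S,F^\rho) \cong H^1(\pi_1 S, \R^3_\rho)$ has dimension $6g-6$ by Euler characteristic (the coefficient module is $3$-dimensional and $\rho$ has no invariant vectors or covectors since it is Hitchin, so $H^0 = H^2 = 0$), and projectivizing drops one dimension; Hausdorffness of the sphere plus compactness of $\PML^\rho(S)$ upgrades the continuous bijection to a homeomorphism exactly as in the proof of Theorem~\ref{thm: equiv measures sphere}.

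First I would verify (i): unwinding Definitions~\ref{def: bending cocycle}, \ref{def: bundle-valued transverse measure}, and the construction in \S\ref{subsection: affine measures from Hitchin world}, the bending cocycle $\psi$ of $\eta$ equals $\mu\otimes s^1$ for the $\mu$ produced by $\Psi$ (this is Lemmas~\ref{lem: bending cocycle is integration} and \ref{lemma: equivariance of measure from cocycle}), and then $\mu\otimes[s^1] = \nu\otimes\omega$ is precisely the affine datum $\beta_+$ records. So $\beta_+ = (\text{affinization})\circ\Phi^{-1}\circ(\text{cohomology class of }\psi)$, a composition of the claimed pieces. For (ii), the affinization map is invertible because the passage $\mu \leftrightarrow \nu\otimes\omega$ is reversible: given $\nu\otimes\omega$ on $R^\rho_2(\lambda)$, the fixed global section $s^1$ of $E^\rho_2$ and the comparison function $f$ with $[s^1] = f\omega$ recover $\mu_k = f\nu_k$ on each transversal, and the resulting collection is $(g,G)$-equivariant by reversing the computation in Theorem~\ref{thm: nu is affine}. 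Scale-homogeneity is immediate since all operations are $\R_{>0}$-linear in the measure. For (iii), continuity in one direction is Proposition~\ref{prop: convergence in tt charts} (phrased there for weight systems on branches of the train track, which is exactly the weak-$*$ topology on $\ML^\rho(S)$ described in \S\ref{subsec: continuity of weights}); continuity of the inverse is the same argument run backwards, using Lemma~\ref{lem: slithering continuous} to control the convergence of the rescaling functions $f_n \to f$ as the laminations converge Hausdorff.

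The main obstacle I anticipate is (iii), and specifically the bookkeeping around \emph{changing supports}: as $\mu_n \to \mu$ the supports $\lambda_n$ need not converge to $\supp\mu$ but only to some maximal completion $\lambda \supseteq \supp\mu$, and the flat bundles $R^\rho_2(\lambda_n)$ live over different laminations than $R^\rho_2(\lambda)$. The fix, as in the proofs of Propositions~\ref{prop: disint is equiv} and \ref{prop: cocycle continuous injective homogeneous}, is to pass to a subsequence with $\lambda_n \to \lambda$ Hausdorff, fix one snug train track neighborhood $\cN$ of $\lambda$ collapsing to $\tau$ that carries all $\lambda_n$ for large $n$, and compare everything inside the single bundle $F_\chi \to \tau$ of Proposition~\ref{prop: Hitchin train track bundle}; the subtlety is that the slithering holonomies $|\chi_n|$ also vary, and one must invoke Lemma~\ref{lem: slithering continuous} to see $|\chi_n| \to |\chi|$ on $\pi_1\tau$ so that the weight systems $w_n$ and $w$ live in comparable ($\chi_n$- resp.\ $\chi$-equivariant) weight spaces and the convergence $w_n(b) \to w(b)$ of Proposition~\ref{prop: convergence in tt charts} is meaningful. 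Once this is set up, the argument is a routine triangle-inequality estimate identical in spirit to Proposition~\ref{prop: cocycle continuous injective homogeneous}. The dimension and sphere assertions are then formal: $\PML^\rho(S)$ is the continuous bijective image of the compact $\PML(X)^G \cong \mathbb{S}^{6g-7}$ in a Hausdorff space, hence homeomorphic to it.
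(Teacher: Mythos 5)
Your proposal is correct and takes essentially the same route as the paper: both proofs factor $\beta_+$ through the identification $\ML(X)^G \cong H^1(S,F^\rho)\setminus\{0\}$ of Theorem~\ref{thm: equiv measures sphere}, use Proposition~\ref{prop: convergence in tt charts} for continuity, construct the inverse by moving the ratio $\omega/[s^1]$ across the tensor, and close with the continuous-injection-from-compact-to-Hausdorff argument. (Two small slips worth fixing: the coefficient module in the cohomology count should be $(\R^3)^*_\rho$ rather than $\R^3_\rho$, and the compactness you need is that of the source $\PML(X)^G$ — Hausdorffness of $\PML^\rho(S)$, checked via integration on small transversals, is what you need on the target side, exactly as you state in your final paragraph.)
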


\begin{proof}
    The proof of Theorem \ref{thm: equiv measures sphere} gives that $\ML(X)^G$ is identified with irreducible coaffine representations with linear part $\rho$ up to conjugation.
    Proposition \ref{prop: convergence in tt charts} proves continuity of the map  $\ML(X)^G\to \ML^\rho(S)$. 
    
    To see that $\beta_+$ is injective, consider two distinct points $\mu$ and $\mu' \in \ML(X)^G$. If their supports differ, then the bundle factor in $\beta_+(\mu)$ and $\beta_+(\mu')$ differ. If they are supported on the same lamination, then the measures differ on some transversal, and therefore the measure factors in the codomain differ.

    Integration on sufficiently small transversals distinguishes measures, so the weak-$*$ topology on $\ML^\rho(S)$ is Hausdorff.

    By taking positive scalar classes, the map $\beta_+$ induces a continuous injection from a compact space to a Hausdorff space, hence is a homeomorphism to its image.

    The inverse to $\beta_+$ is given by writing the measure $\nu\otimes \omega = \mu \otimes [s^1]$ and passing the ratio $\omega/[s^1]$ over the tensor. Then $\mu\in \ML(X)^G$, and applying Theorem \ref{thm: equiv measures sphere} again completes the proof.
\end{proof}

\section{Appendix}\label{appendix}
We review a motivating example due to Ungemach \cite{Ungemach:thesis}, with some simplifications and extensions. This appendix is designed to be largely self-contained, apart from the content of Section \ref{sec: background}. 

The apparent paradox of these examples is that while one has bent a representation along an isolated leaf of a lamination which accumulates to a simple closed curve, the data of such a deformation cannot be captured as a transverse measure on a lamination: such a leaf is never in the support of a transverse measure. 

Therefore, some more sophisticated technology must be developed to describe analytically the data of convex cocompact coaffine deformations in terms of bending data on laminations. This paper provides an answer to this question; \textit{equivariant} (affine) measures are the necessary extension, see Sections \ref{subsec: eq transverse measures} and \ref{subsection: affine measures from Hitchin world}.

\begin{example}\label{example: Weston's example}
    Let $\rho\in \Hit_3$ be non-Fuchsian, and let $\Omega\subset \RP^2 = \P(\R^4/[e_4])$ be the domain which $\rho(\pi_1 S)$ divides. Assume that there exist two disjoint non-homotopic simple closed curves $c_1$ and $c_2$ in $S$ so that $1$ is not an eigenvalue\footnote{Every non-Fuchsian $\rho\in \Hit_3$ has such curves; the proof of this fact is orthogonal to the current goals.} of $\rho(c_1)$ or $\rho(c_2)$. One may also choose $c_1=c_2$. Let $\Lambda_j(c_i)$ be the $j^{\textrm{th}}$ eigenvalue of $c_i$, ordered by magnitude.
    
    We may assume that $c_i$ is oriented so that $\Lambda_2(c_i)>1$ for $i=1,2$. Choose a lamination $\lambda\subset S$ consisting of exactly three leaves:
    \[\lambda = c_1 \cup c_2 \cup m\]
    where $m$ is an isolated leaf which accumulates to both $c_1$ and $c_2$. Assume also that for both $c_1$ and $c_2$, $m$ is asymptotic to $c_i$ in the negative direction with respect to their orientations.

    Let $\eta_0$ be the reducible coaffine representation with linear part $\rho$ which preserves the hyperplane $[\ker e^4]\subset \RP^3$ and let 
    \[dev_0:\widetilde S \rightarrow \Omega \subset [\ker e^4]\] 
    be a developing map\footnote{We obfuscate the difference between $\Omega\subset \P(\R^4/[e_4])$ and $\Omega\subset [\ker e^4]$.} so that $dev_0(\lambda)$ is a union of projective segments. The complement in $\widetilde S$ of the lamination is a disjoint union of countably many connected components:
    \[\sqcup_{i \in I} X_i = \widetilde S \setminus \tlambda.\]

\begin{figure}[h]
\includegraphics[width=.6\textwidth]{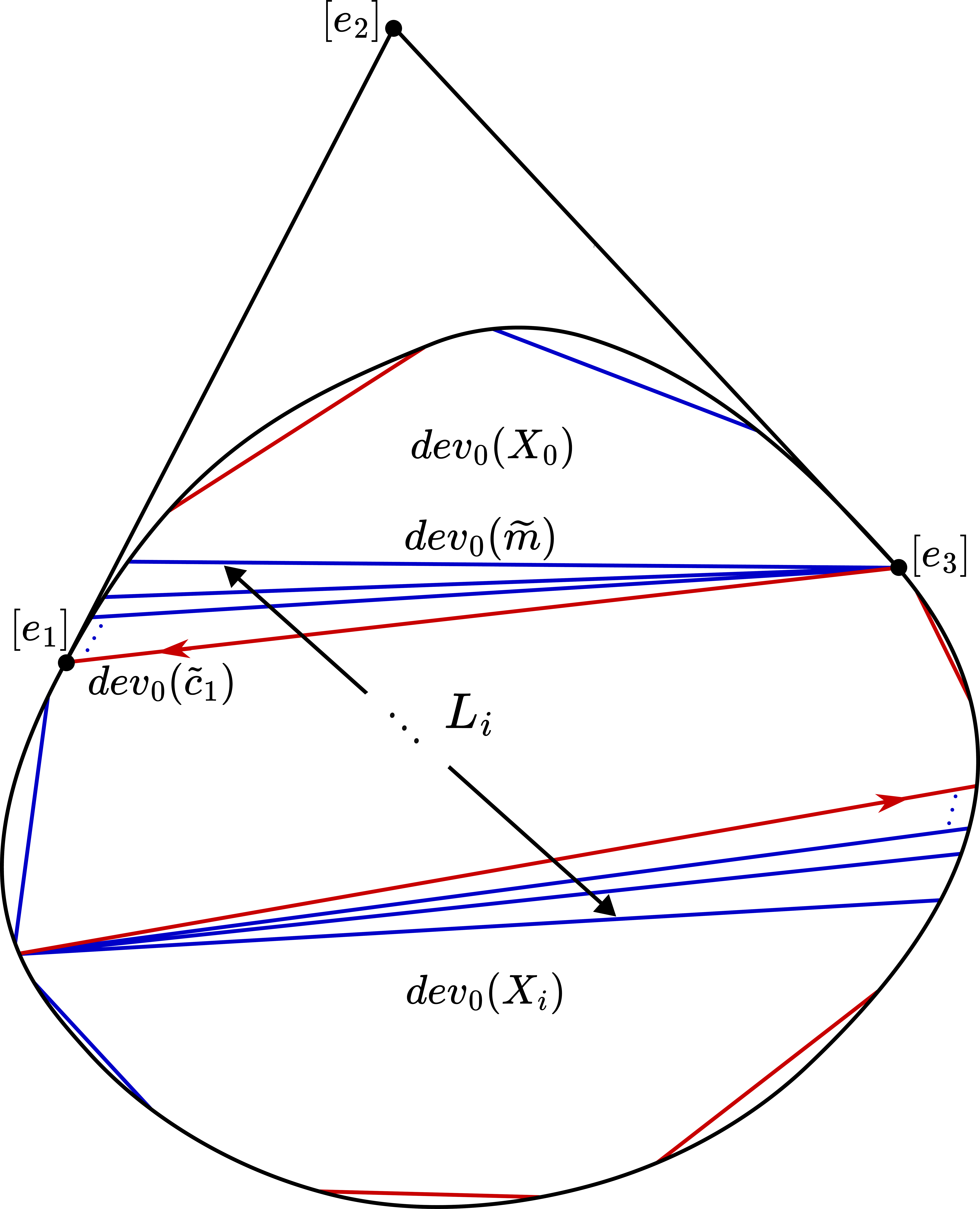}
\centering
\caption{The domain $\Omega$ with the lifts of $c_i$ in red, and $m$ in blue. Two regions and the leaves between them are depicted. }
\label{figure: Weston_develops}
\end{figure}

    Choose one such region $X_0$, and a point $\tilde p\in X_0$ projecting to $p\in S$, where $p$ is the basepoint implicit in $\pi_1 S$.  Assume (without loss of generality) that there is a lift $\tilde m \subset \partial X_0$. See Figure \ref{figure: Weston_develops}.
    
    For each region $X_i$, let 
    \[L_i = \{\ell\subset \tlambda \mid \ell ~\textrm{between}~ X_0 ~\textrm{and}~ X_i.\}\]
    %To make precise what `between' means, one may take any arc between a point $\tilde p$ and any point $x_i\in X_i$ which is transverse to $\tlambda$ without backtracking. The leaves of $\tlambda$ which intersect such an arc nontrivially are exactly the elements of $L_i$. 
    
    Let $M_i\subset L_i$ be those elements of $L_i$ which are translates of $\tilde m$. Because $m$ is not closed, each element of $M_i$ may be written as a $\eta_0$-translate of $\tilde m$ in a unique way, so that 
    \[M_i = \sqcup_{j\in J(i)} \eta_0(\gamma_j).\tilde m\]
    for some countable index set $J(i)$. 

    From the discussion in Example \ref{example: coaffine translation}, there exists a one-parameter subgroup $\{Z_t\}<\TAff^*(3,\R)$ preserving $dev_0 (\tilde m)$ pointwise.

    For each $t\in \R$ we define a developing map $dev_t$ which is equivariant with respect to a representation $\eta_t$.
    Define for each $i$ the translation
    \[B^t_i = \sum_{j\in J(i)} \eta_0(\gamma_j).Z_t \in \TAff^*(3,\R).\]

    Implicit in this definition is the claim that this sum converges. Asking the reader's momentary trust, we then define $dev_t$ and $\eta_t$ as follows:
    \[dev_t\vert_{X_i} = B^t_i.(dev_0\vert_{X_i})\]
    and
    \[\eta_t(\gamma) = B^t_{i(\gamma)}\eta_0(\gamma)\left(B^t_{i(\gamma)}\right)\inverse\]
    where $i(\gamma)\in I$ is the index of $\eta_0(\gamma).X_0$. This is essentially the standard construction of a developing map for a `bent' representation in the sense of Johnson and Millson \cite{JM:deformations}.
    
    It is an exercise to check that $dev_t$ is $\eta_t$-equivariant. The geometry of the cone $\mathscr C \Omega$ discussed in Section \ref{subsec: minimal domain coaffine} may be used to prove that $dev_t$ is a homeomorphism (one passes through the projection $[Q]$). 

    The interesting claim is that $B^t_i$ is well-defined, in particular that the countable sum defining it converges (absolutely). Briefly, it is sufficient to assume that $\tilde m$ and $\tilde c_1$ are asymptotic and to prove that the sum 
    
    \[\sum_{j=1}^\infty \eta_0(c_1)^n.Z_t\]
    converges absolutely. Choosing an ordered eigenbasis for $\eta_0(c_1)$ reveals the equality 
     \[\sum_{j=1}^\infty \eta_0(c_1)^n.Z_t = (b_1\sum_{j=1}^\infty \Lambda_1(c_1)^{-n}, b_2\sum_{j=1}^\infty \Lambda_2(c_1)^{-n}, 0,0)\]
    for some constants $b_1, b_2\in \R$, where we have used the identification of $\TAff^*(3,\R)$ with $\ker e_4$ to write a row vector rather than a matrix.  Certainly, these geometric sums are absolutely convergent since $\Lambda_1(c_1)>\Lambda_2(c_1)>1$. 

    Note the direction onto which $m$ was chosen to accumulate onto $c_1$ and $c_2$ was crucial: otherwise these sums would exponentially diverge. 

    It is also true that the image of $dev_t$ is convex, though it is somewhat tedious to prove. Ungemach makes the necessary argument in Section 6 of \cite{Ungemach:thesis}, or one may employ the techniques developed in Section \ref{subsec: minimal domain coaffine}. Thus, every lamination constructed in this way appears at the boundary of the convex hull for some convex cocompact coaffine representation. We omit the details of the proof for length. 
\end{example}

\bibliography{biblio}{}
\bibliographystyle{amsalpha.bst}
\Addresses
\end{document}